\def\Xint#1{\mathchoice
   {\XXint\displaystyle\textstyle{#1}}%
   {\XXint\textstyle\scriptstyle{#1}}%
   {\XXint\scriptstyle\scriptscriptstyle{#1}}%
   {\XXint\scriptscriptstyle\scriptscriptstyle{#1}}%
   \!\int}
\def\XXint#1#2#3{{\setbox0=\hbox{$#1{#2#3}{\int}$}
     \vcenter{\hbox{$#2#3$}}\kern-.5\wd0}}
\newcommand{\fint}{\Xint-}
\newcommand{\N}{\mathbb{N}}
\newcommand{\R}{\mathbb{R}}
\newcommand{\bP}{\mathbb{P}}
\newcommand{\cA}{\mathcal{A}}
\renewcommand{\epsilon}{\varepsilon}
\renewcommand{\phi}{\varphi}
\renewcommand{\hat}{\widehat}
\renewcommand{\tilde}{\widetilde}
\newcommand{\Hd}{\mathcal{H}}
\DeclareMathOperator{\diam}{diam}
\DeclareMathOperator{\dom}{dom}
\DeclareMathOperator{\im}{Image}
\DeclareMathOperator{\lip}{lip}
\DeclareMathOperator{\Lip}{Lip}
\DeclareMathOperator{\len}{len}
\DeclareMathOperator{\ms}{ms}
\DeclareMathOperator{\dist}{dist}
\DeclareMathOperator{\md}{md}
\DeclareMathOperator{\Tan}{Tan}
\DeclareMathOperator{\rad}{rad}
\newtheorem{theorem}{Theorem}[section]
\newtheorem{proposition}[theorem]{Proposition}
\newtheorem{lemma}[theorem]{Lemma}
\newtheorem{definition}[theorem]{Definition}
\newtheorem{corollary}[theorem]{Corollary}
\theoremstyle{remark}
\newtheorem{remark}[theorem]{Remark}
\begin{document}
\title{Differentiability and Poincar\'e-type inequalties in metric measure spaces}
\author{David Bate \and Sean Li}
\date{\today}
\address{Department of Mathematics, The University of Chicago, Chicago, IL 60637}
\email{bate@math.uchicago.edu}
\email{seanli@math.uchicago.edu}
\begin{abstract}
  We demonstrate the necessity of a Poincar\'e type inequality for those metric measure spaces that satisfy Cheeger's generalization of Rademacher's theorem for all Lipschitz functions taking values in a Banach space with the Radon-Nikodym property.  This is done by showing the existence of a rich structure of curve fragments that connect nearby points, similar in nature to Semmes's pencil of curves for the standard Poincar\'e inequality.  Using techniques similar to Cheeger-Kleiner \cite{cheeger-kleiner}, we show that our conditions are also sufficient.

  We also develop another characterization of \emph{RNP Lipschitz differentiability spaces} by connecting points by curves that form a rich structure of \emph{partial derivatives} that were first discussed in \cite{bate}.
\end{abstract}
\subjclass[2010]{Primary 30L99. Secondary 49J52, 53C23.}
\keywords{Lipschitz, differentiation, Radon-Nikodym property, Poincar\'e inequality, quasiconvexity}
\maketitle
\tableofcontents

\section{Introduction}
The last two decades have seen much activity in the development of first-order calculus in the setting of metric measure spaces.  One of the cornerstones of this field has been the theory of differentiation for functions on metric measure spaces as introduced by Cheeger in the seminal work \cite{cheeger} (see also \cites{gigli, kleiner-mackay}).  There, Cheeger says a function $f \colon X \to \R$ is differentiable with respect to another function $\phi \colon X \to \R^n$ at $x_0 \in X$ if there exists a $Df(x_0)\in L(\mathbb R^n,\mathbb R)$ such that the usual first order linear approximation holds:
\[f(x)-f(x_0) = Df(x_0)\cdot (\phi(x)-\phi(x_0))+o(d(x,x_0)).\]

Of course, this is just a definition, and there is no reason a function has to be differentiable anywhere.  Thus, we focus on {\it Lipschitz differentiability spaces} (or LDS for short), the class of metric measure spaces that admit a fixed Lipschitz coordinate functions $\phi \colon X \to \R^n$ with which one can differentiate any real valued Lipschitz function almost everywhere (a precise definition that permits a countable decomposition of the space is given in Definition \ref{d:LDS}).  In other words, Rademacher's theorem holds in LDS in a very concrete sense.

One may worry that Lipschitz differentiability spaces are a trivial class of metric measure spaces (for example, only differentiable manifolds).   Cheeger \cite{cheeger} showed that any doubling metric measure space satisfying a Poincar\'e inequality (a so called {\it PI space}) is a LDS.  Moreover, there have been many exotic constructions of PI spaces that exhibit very non-manifold behavior \cites{bourdon-pajot,laakso,cheeger-kleiner}, and so we see that LDS contain a rich class of nontrivial metric measure spaces.

However, the definition of a LDS does not require an underlying assumption of a Poincar\'e inequality and in some senses appears unecessarily strong.  When considering such spaces one is quickly lead to ask if it is a necessary condition.  Indeed, questions of this nature have been asked ever since LDS first appeared in their fullest generality in an article by the first author of this paper \cite{bate}.  For example, one variant of this question appears in \cite{cheeger-kleiner-schioppa} (see more below).  One of the main results of this paper is to prove that a Poincar\'e type inequality is in fact necessary.

In classical setting, the Poincar\'e inequality has been fundamental to many results in the theory of partial differential equations and the calculus of variations via Sobolev spaces.  For example, the Poincar\'e inequality shows that the Sobolev norm is equivalent to the norm of the weak derivative in $H^1$.  Since the latter is given by an inner product, this leads to weak solutions of the Dirichlet problem (among others) via the Riesz representation theorem.  The Poincar\'e inequality can also be used to deduce various embedding and regularity results.  Further, the best constant in the Poincar\'e inequality corresponds precisely to the minimum eigenvalue of the Laplacian.  The Poincar\'e inequality was also used in De Giorgi's proof of Hilbert's nineteenth problem.

PI spaces have also been extensively used in developments of analysis in non-Euclidean settings.  For example in the Riemannian setting, Saloff-Coste also showed that the Harnack inequality for a Riemannian manifold is equivalent to supporting a doubling measure and a 2-Poincar\'e inequality \cites{saloff-coste-1,saloff-coste-2}.  More generally, Heinonen-Koskela \cite{heinonen-koskela} introduced the notion of a Poincar\'e inequality in metric measure spaces and used it to develop the theory of quasiconformal maps in that setting.  More recently, the Poincar\'e inequality has been used to establish various equivalent generalizations of Sobolev spaces to metric measure spaces, see \cite{shanmu}.  We refer the reader to \cite{gigli} for a thorough treatise of first order calculus and of the Poincar\'e inequality on metric measure spaces.

Doubling is usually enough to import many crucial zero-order calculus statements like Vitali covering theorem, Lebesgue differentiation theorem, various maximal inequalities and so on.  In some sense, a space satisfying the Poincar\'e inequality means that rectifiable curves control the geometry of the space (see Definition \ref{d:PI}).  One can perform calculus along these curves and then try to relate them to analytic statements on the entire space.

In the setting of LDS, specifically in \cite{cheeger} and in the later refinement by Keith \cite{keith-lip-lip}, the connection to rectifiable curves was not used.  Instead, the Poincar\'e inequality was used in a way more reminiscent of the classical version: to obtain local control of a Lipschitz function entirely based upon its infinitesimal behaviour.  Moreover, this control is independent of the Lipschitz function.

The fact that this control is independent of the Lipschitz function is crucial in Cheeger and Kleiner's important extension of \cite{cheeger}.  In  \cite{cheeger-kleiner-rnp} they show that the same coordinate functions on a PI space can be used to differentiate Lipschitz functions taking value in any Banach space with the Radon-Nikodym property (RNP).  Recall that a Banach space $V$ has RNP if every Lipschitz function $f \colon \R \to V$ is differentiable (in the usual sense) almost everywhere.  This makes RNP Banach spaces a natural set of target spaces in the theory of Lipschitz differentiation.  We call the spaces satisfying Cheeger differentiability for RNP valued Lipschitz functions RNP Lipschitz differentiability spaces (or simply a RNP-LDS).  One trivially has that RNP-LDS are LDS as $\R$ has RNP.


Note that we must be slightly careful with how we ask about the necessity of a Poincar\'e inequality because, strictly speaking, there are trivial examples of LDS that do not satisfy a Poincar\'e inequality.  Since being a LDS is a property that passes to arbitrary positive measure subsets, a LDS can be totally disconnected like a fat Cantor set.  On the other hand, a Poincar\'e inequality is much more quantitative in nature.  For example, a result of Semmes (given in Cheeger's paper \cite{cheeger}) says that PI spaces must be path connected.

In this paper, we introduce the notion of an asymptotic nonhomogeneous Poincar\'e inequality or asymptotic NPI (see Definition \ref{d:ANPI}), a relaxed local version of the Poincar\'e inequality, and show that RNP-LDS must satisfy an asymptotic NPI.  This version of the Poincar\'e inequality makes sense even for disconnected spaces and so is a natural candidate to consider.  On the other hand, we do not throw away too much in this relaxation as we show that an asymptotic NPI, together with pointwise doubling, is enough to imply RNP Cheeger differentiability.  This gives us the following characterization of RNP-LDS:
{
\renewcommand{\thetheorem}{\ref{t:poincarechar}}
\begin{theorem}
  A metric measure space is a RNP-LDS if and only if it satisfies an asymptotic nonhomogeneous Poincar\'e inequality and all porous subsets have measure zero.
\end{theorem}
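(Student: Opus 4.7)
The plan is to prove the two directions separately. For sufficiency, I would follow the scheme of Cheeger--Kleiner \cite{cheeger-kleiner-rnp}, which establishes Cheeger differentiability of RNP-valued Lipschitz functions on PI spaces, and adapt it so that the global doubling and Poincar\'e hypotheses are replaced by their pointwise/asymptotic counterparts. Concretely, at a point $x_0$ where both hypotheses hold infinitesimally, one selects a candidate derivative $Df(x_0)$ from a countable dense family of linear maps on $\R^n$ and uses the asymptotic NPI to bound the average deviation of $f$ from its first-order expansion $Df(x_0) \cdot (\phi - \phi(x_0))$ on $B(x_0,r)$. Sending $r \to 0$ and using pointwise doubling to convert averaged smallness into pointwise smallness yields differentiability at almost every $x_0$. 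The RNP hypothesis on the target enters exactly as in \cite{cheeger-kleiner-rnp}, passing from integration-along-curves control to full Cheeger differentiability.

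For the necessity direction, the plan is to show that at almost every $x_0$ in an RNP-LDS, the space supports, at arbitrarily small scales $r$, a rich family of curve fragments emanating from $x_0$ and reaching a quantitatively substantial portion of $B(x_0,r)$, playing the role of Semmes's pencil of curves in the asymptotic setting. Integrating upper gradients along these fragments then produces the asymptotic NPI by standard pencil-of-curves manipulations. Pointwise doubling at almost every point is a relatively standard feature of LDS and follows, for instance, from the Lebesgue differentiation theorem applied to the reference measure together with measurability of the chart structure, so the substantive content is the existence of the pencils.

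The main obstacle is the necessity direction, since geometric curves must be extracted from a purely analytic hypothesis. The strategy is a contrapositive: if on a positive-measure set of points $x_0$ the pencil fails at infinitely many scales, one should construct an RNP-valued Lipschitz function --- most naturally by mapping into an $L^1$-type or martingale space with RNP --- whose oscillation near $x_0$ cannot be linearly approximated in the $\phi$-coordinates, contradicting the RNP-LDS property. The absence of curves is what permits the oscillations in different ``directions'' to be decoupled, so that no fixed linear map in $L(\R^n,V)$ can serve as $Df(x_0)$. Here the strengthening to Banach-valued targets is essential: scalar targets admit subtle cancellations that can mask the obstruction, whereas into a suitable RNP Banach space one can record each oscillation separately. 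Once the curve fragment structure is established, deriving the asymptotic NPI from it is a comparatively routine consequence.
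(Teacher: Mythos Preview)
Your overall architecture matches the paper's: necessity by contraposition, constructing a non-differentiable RNP-valued Lipschitz function when curve fragments are scarce, and sufficiency by adapting Cheeger--Kleiner. But the necessity direction, as you anticipate, carries the real content, and your sketch omits the two ideas that make it work. First, the paper does not build an abstract ``pencil'' and then integrate along it; instead it introduces the explicit $1$-Lipschitz function $f_x^\epsilon(z)=\rho_\epsilon(x,z)$, where $\rho_\epsilon$ is the infimum over fragments $\gamma\in\Gamma(x,z)$ of $\int_\gamma^*\epsilon = \epsilon\,\ms\gamma + (\len\gamma-\ms\gamma)$. One checks directly that $|(f_x^\epsilon\circ\gamma)'(t)|\leq\epsilon$ along every fragment, so the $\kappa$-universal Alberti representations force $\Lip(f_x^\epsilon,z)\leq\epsilon/\kappa$ for a.e.\ $z$; if connectivity fails at $x$ then $\Lip(f_x^\epsilon,x)$ stays bounded below. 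These pointwise-bad functions are then glued (Section~\ref{s:nondiff}) into a single map taking values in an $\ell_1$-sum of $\ell_{p_i}$'s, which is RNP, yielding the contradiction. Second, this first pass only gives $\rho_{\epsilon_0}(x,y)\leq\delta d(x,y)$ for some $\epsilon_0$ depending on $\delta$, which says nothing about $\rho_\epsilon/\epsilon$ as $\epsilon\to 0$. A bootstrap (Lemmas~\ref{l:pokingholes}--\ref{l:easybootstrap}) is essential: one fills the gaps of a good fragment by good fragments at smaller scale, iterating to obtain $\sup_{0<\epsilon<1}\limsup_{y\to x}\rho_\epsilon(x,y)/(\epsilon d(x,y))<\infty$, and only then does a layer-cake argument produce the asymptotic NPI. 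Your outline does not contain either of these mechanisms.

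For sufficiency, the paper takes a different route from the one you propose. Rather than selecting a candidate derivative from a countable dense family of linear maps, it first proves that an asymptotic NPI space is an ordinary LDS, using Keith's estimate $\Lip(f,x)\leq C'\lim_{r\to 0}r^{-1}\fint_{B(x,r)}|f-f_B|$ together with the characterization of LDS from \cite{bate} via null-along-fast-curves sets. Once the LDS structure and its Alberti representations are available, the candidate derivative of an RNP-valued $f$ is the explicit gradient $\nabla f(x)$ built from partial derivatives along the representing fragments; Lusin continuity of $x\mapsto\nabla f(x)$ on a large set $S$, followed by the asymptotic NPI applied uniformly to the scalar functions $u\circ(f-\ell)$ over a countable norming set $u\in D\subset B_{V^*}$, shows $\nabla f(x)=Df(x)$ a.e. Your dense-family approach is closer to Cheeger's original PI-space argument and may be workable, but it is a genuinely different path and you would have to supply the selection argument in the vector-valued setting.

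One small correction: pointwise doubling of an LDS is not a consequence of Lebesgue differentiation plus chart measurability. It follows from the fact that porous sets in an LDS are null (Theorem~\ref{t:porous-null}, from \cite{bate-speight}).
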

\addtocounter{theorem}{-1}
}
Note that the fact that all porous subsets have measure zero (see Definition \ref{def:porous} for the definition of a porous set) is a slightly stronger condition than a local version of the doubling condition on a measure.
Thus, first order calculus for RNP-valued Lipschitz functions requires---and is in fact equivalent to---relaxed local versions of doubling and the Poincar\'e inequality.  This is the first result showing the necessity of the Poincar\'e inequality (or a variant of it) for the existence of first order calculus in the general metric setting.

As demonstrated by Cheeger \cite{cheeger}, it is natural to study Gromov-Hausdorff tangents of spaces that satisfy a Poincar\'e inequality.  In the case of a RNP-LDS, we show that passing to a tangent eliminates the possibly disconnected nature of the space and that the asymptotic NPI improves to a nonasymptotic version.  This improvement is enough to allow us to deduce that the tangent is actually quasiconvex.
{
\renewcommand{\thetheorem}{\ref{t:tangentsareAPI}}
\begin{theorem}
  Let $(X,d,\mu)$ be a RNP-LDS.  Then for $\mu$-a.e. $x \in X$, any Gromov-Hausdorff tangent of $X$ at $x$ is quasiconvex and satisfies a nonhomogeneous Poincar\'e inequality (and so is also a RNP-LDS).
\end{theorem}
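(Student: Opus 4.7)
The plan is to reduce to Theorem~\ref{t:poincarechar} by transferring its two hypotheses --- pointwise doubling and the asymptotic NPI --- from $X$ to its Gromov-Hausdorff tangents, in such a way that in the tangent both conditions improve enough to force quasiconvexity and a non-asymptotic NPI. Since pointwise doubling holds $\mu$-a.e., at a typical $x$ the doubling constant along the blow-up sequence is controlled, so any tangent $(Y,d_Y,\nu,y_0)$ carries a doubling measure $\nu$; this is a standard blow-up argument.

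The main step is to upgrade the asymptotic NPI at $x$ to a genuine NPI on $Y$. The asymptotic NPI asserts that at $x$ the Poincar\'e-type inequality holds at scale $r$ modulo error terms that vanish as $r \to 0$, and is witnessed by a rich family of curve fragments (the pencil structure alluded to in the abstract). Fixing the sequence $r_i \to 0$ along which the tangent is taken, I would rescale this witness structure by $r_i^{-1}$: the rescaled curve fragments are equi-Lipschitz and, along Gromov-Hausdorff convergence, subconverge by Arzel\`a-Ascoli to curve fragments in $Y$, while the rescaled ambient measures converge weak-$*$ to $\nu$. Passing to the limit, the asymptotic error vanishes by construction, so the NPI on $Y$ holds without any error term, witnessed by a pencil of curve fragments in $Y$.

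Given a non-asymptotic NPI on $Y$ supported by such a pencil, quasiconvexity follows by a Semmes-type iteration: connect two points by a curve fragment whose gaps are much smaller than their distance, then fill each gap recursively with a smaller pencil at the appropriate scale. Doubling controls the total length accumulated across iterations, and geometric summability of the gap sizes produces a genuine rectifiable curve of length $\lesssim d_Y(y,y')$. Finally, since $Y$ is doubling (hence pointwise doubling) and satisfies a genuine NPI (hence an asymptotic NPI), Theorem~\ref{t:poincarechar} applied to $Y$ shows that it too is a RNP-LDS.

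The main obstacle is the second step, where one must simultaneously control the weak-$*$ convergence of the curve-fragment measures and the uniform convergence of the fragments themselves along the blow-up, so that every quantity appearing on the two sides of the NPI (lengths, measures of tubes, pointwise Lipschitz constants, function averages) passes cleanly to the limit and the asymptotic error term in the NPI on $X$ becomes exactly zero on $Y$.
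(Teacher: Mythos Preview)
Your overall strategy matches the paper's: pass the asymptotic NPI from $X$ to the tangent $Y$ via Gromov--Hausdorff limits (so the $o$-error term drops out), deduce quasiconvexity, then invoke the characterization. The limit step you identify as the crux is handled in the paper not via the upper-gradient form of the NPI but via the segment-inequality form $\fint\fint \tilde F_\rho(u,v;\lambda)\,d\mu\,d\mu$ of Theorem~\ref{t:ae-poincare}, together with the fact that $\gamma \mapsto \int_\gamma^* \rho$ is lower semicontinuous for continuous $\rho$ under Hausdorff convergence of fragments (Lemma~\ref{l:starlowersemicts}); this is the precise mechanism behind your Arzel\`a--Ascoli remark.

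The one substantive difference is your quasiconvexity argument. Your Semmes-type recursion can be made to work, but the paper's route is much shorter: once the non-asymptotic inequality $\fint\fint \tilde F_\rho \leq \tilde\zeta(\fint \rho)$ holds on $Y$, set $\rho \equiv 0$. The right-hand side vanishes, so $\tilde F_0(u,v;\lambda) = 0$ for all $u,v$ (it is nonnegative and Lipschitz). Since $Y$ is proper, $\Gamma(u,v;\lambda)$ is compact and the infimum is attained by some $\gamma$ with $\len\gamma - \ms\gamma = \int_\gamma^* 0 = 0$; the domain of $\gamma$ is then an interval, giving a genuine curve of length $\leq \lambda\, d(u,v)$. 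This one-line compactness argument replaces the entire iterated gap-filling, and it buys you quasiconvexity \emph{before} you need to talk about full curves at all; the paper then uses quasiconvexity (Lemma~\ref{l:qc-frag}) to upgrade the fragment NPI to one over honest curves.
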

\addtocounter{theorem}{-1}
}

Given Theorem \ref{t:poincarechar}, it was natural to ask how close a space having the nonhomogeneous Poincar\'e inequality was to achieving a true Poincar\'e inequality.  The nonhomogeneous Poincar\'e inequality resembles the Orlicz-Poincar\'e inequality, a slightly relaxed form of the Poincar\'e inequality, that has been studied in previous work \cites{heikkinen1,tuominen1,dejarnett}.  The NPI is weaker than the Orclicz-Poincar\'e inequality as the modulus is applied only to the average of the upper gradient rather than taking an Orlicz norm.  However, we did not have an example of a space that showed that the two notions were truly different.  A natural question then was to ask whether the NPI could be improved to Orlicz or even true Poincar\'e inequality.  Recently, Eriksson-Bique developed our methods further and resolved this problem by showing that spaces with an asymptotic nonhomogeneous Poincar\'e inequality are simply countable unions of subsets of true PI spaces \cite{eriksson-bique}.  This essentially reduces the study of RNP-LDS to the study of PI spaces.

We now turn our attention to another description of RNP-LDS.  Note that the condition of a space being a LDS is described in terms of functions on that space.  To verify the condition, one would need to check almost everywhere differentiability for every Lipschitz functions defined on that space.  It would be useful to have a condition guaranteeing Cheeger differentiability that is purely geometric.  This would allow us to check only a property intrinsic to the metric measure space itself instead of the Lipschitz functions on that space.

In \cite{bate}, a necessary geometric condition is given for a space to be a LDS (and thus a RNP-LDS) by showing that they admit a collection of \emph{Alberti representations} of the underlying measure.  An Alberti representation (see Section \ref{s:albertireps} and Definition \ref{d:AR}) is a Fubini-like disintegration of the measure into 1-rectifiable measures in the metric space.  They first appeared in the work of Alberti-Cs\"ornyei-Preiss \cite{acp} that characterized those measures on Euclidean space that satisfy the conclusion of Rademacher's theorem.  Analogous to how one uses Fubini's theorem on $\mathbb R^n$ to form a partial derivative of any Lipschitz function at almost every point, the existence of an Alberti representation provides a rich family of fragmented curves to form partial derivatives in the metric setting.  We also refer the reader to the work of Alberti and Marchese \cite{alberti-marchese} for further uses of Alberti representations (there called \emph{1-rectifiable measures}), and other questions related to Rademacher's theorem, in Euclidean spaces.

Simply having several Alberti representations (and hence a \emph{gradient} of several partial derivatives at almost every point) is not sufficient to guarantee Lipschitz differentiability, however; one needs some additional condition to describe when the gradient partial derivatives is actually a total derivative.  In \cite{bate}, a characterization of Lipshitz differentiability spaces was given by the requirement that the Alberti representations be \emph{universal} (see Definition \ref{d:universal}).  Roughly speaking, a collection of Alberti representations are universal if, for every real valued Lipschitz function, the norm of the gradient of partial derivatives is comparable to the pointwise Lipschitz constant at almost every point.

Note, however, that universality of the Alberti representations is functional in nature and having a geometric description of this phenomenon would be much more satisfying.  Indeed, such a geometric description is one of the motivations of \cite{acp} and \cite{alberti-marchese} in Euclidean space.  In this paper, we introduce the purely geometric condition of {\it connecting points} in a metric space by Alberti representations and show that it is equivalent to the space satisfying RNP Cheeger differentiability.
{
\renewcommand{\thetheorem}{\ref{t:differentiability}}
\begin{theorem}
  A metric measure space $(X,d,\mu)$ is a RNP Lipschitz differentiability space if and only if it admits a countable Borel decomposition $X = \bigcup_i U_i$ such that each $\mu|_{U_i}$ has a collection of $n_i$ Alberti representations that connect points.
\end{theorem}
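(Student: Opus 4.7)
The plan is to prove both directions of Theorem \ref{t:differentiability} by leveraging the Poincar\'e-type characterization of RNP-LDS established in Theorem \ref{t:poincarechar}. For the sufficiency direction, assume each $\mu|_{U_i}$ carries $n_i$ Alberti representations that connect points. I aim to verify the two hypotheses of Theorem \ref{t:poincarechar} on each $U_i$. Pointwise doubling at $\mu$-a.e.\ $x \in U_i$ follows from standard Alberti-representation arguments: the disintegration into $1$-rectifiable measures forces a one-dimensional lower bound on small balls, and iterating across the $n_i$ transverse representations yields the full doubling estimate. For the asymptotic NPI, I would use the connecting-points property to produce, given a real-valued Lipschitz $f$ and nearby points $x,y \in U_i$, a curve fragment from one of the Alberti representations whose upper-gradient integral controls $|f(x)-f(y)|$; averaging this bound against the disintegration will yield the required NPI modulus. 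Theorem \ref{t:poincarechar} then upgrades real-valued to full RNP-valued differentiability.

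For the necessity direction, start with an RNP-LDS $(X,d,\mu)$. Applying Theorem \ref{t:poincarechar}, $X$ decomposes countably into Borel charts on each of which the measure is pointwise doubling and satisfies an asymptotic NPI with a fixed Lipschitz chart $\phi \colon X \to \R^n$. The proof of that theorem produces, for such a chart, a rich family of curve fragments in the spirit of Semmes's pencil of curves, with any two nearby points being joined by curves whose upper-gradient averages give NPI-type bounds. The goal is to group these curves into $n$ measurable families whose velocity projections under $\phi$ span $\R^n$ with a quantitative transversality bound, and to disintegrate $\mu$ along them into genuine Alberti representations. A measurable selection argument, combined with a countable partition by cone directions in $\R^n$ and a Lebesgue-type density refinement, should produce the decomposition $X = \bigcup_i U_i$. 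By construction, every pair of nearby points in $U_i$ admits a curve fragment from one of the representations joining them, which is precisely the connecting-points condition.

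The main obstacle is the measurable selection and transversality step in the necessity direction: although the NPI provides many curve fragments, organizing them into exactly $n_i$ Alberti representations whose $\phi$-velocities have a uniform quantitative lower bound on linear independence requires careful bookkeeping across countable refinements, and a single pencil of curves must be carved into enough distinct directional families to recover the chart dimension. On the sufficiency side, the delicate point is that the NPI extracted from the connecting-points structure must be genuinely \emph{asymptotic}, so one must quantify how the supplied curve fragments behave on scales tending to zero and combine contributions from all $n_i$ representations to handle Lipschitz functions whose directional derivatives live in a small cone. Once these bookkeeping issues are resolved, the remaining arguments are routine comparisons of definitions, and the passage from real-valued to RNP-valued differentiability is handled entirely within Theorem \ref{t:poincarechar}.
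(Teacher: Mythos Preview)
Your proposal misses the central mechanism on both sides because it treats ``Alberti representations that connect points'' as if it were merely the existence of curve fragments joining nearby points. Look again at Definition \ref{d:connectedalbertireps}: the connecting-points condition is quantified over \emph{all} restrictions of curves $\sigma_1,\ldots,\sigma_n$, and the conclusion \eqref{e:alberticonnects} is asserted only for $\mu^*$-a.e.\ $x_0$, where $\mu^*$ is the part of $\mu$ absolutely continuous with respect to every $\cA_i\llcorner\sigma_i$. Your plan never engages with restrictions of curves or with $\mu^*$, and without them neither direction goes through.

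For the backward implication, your route through Theorem \ref{t:poincarechar} is a detour that would not recover the derivative. The paper's argument exploits the freedom to choose the $\sigma_i$: given a Lipschitz $f\colon X\to V$ and $\epsilon>0$, one restricts each $\cA_i$ to the set where $(\phi\circ\gamma)'(t)$ and $(f\circ\gamma)'(t)$ lie in prescribed $\epsilon$-balls $B(p_i,\epsilon)$, $B(f_i,\epsilon)$. On the resulting $\mu^*$, the connecting-points hypothesis supplies concatenated fragments in $\tilde\Gamma(x_0,x)$ along which the fundamental theorem of calculus gives $\|f(x)-f(x_0)-T(\phi(x)-\phi(x_0))\|\leq D'\epsilon\,d(x,x_0)$ for the linear $T$ determined by the $p_i,f_i$ (Lemma \ref{l:maindiff}). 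A countable decomposition over the choices of $(p_i,f_i)$ via Lemma \ref{l:decomposerestrictions} and then $\epsilon\to 0$ produces the derivative directly. Your NPI approach never isolates curves on which the partial derivatives are controlled, so the FTC step has no handle on the integrand. Separately, your claim that $n$ transverse Alberti representations force pointwise doubling by a ``one-dimensional lower bound'' is not a standard fact and is not used in the paper; the backward direction does not pass through doubling at all.

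For the forward implication, you propose to manufacture Alberti representations out of the NPI pencil of curves, but the representations already exist by Theorem \ref{t:LDS-UAR} (an RNP-LDS is a LDS, hence has universal Alberti representations). The actual task is to show these existing representations connect points, i.e.\ that for \emph{arbitrary} restrictions $\sigma_1,\ldots,\sigma_n$ the estimate \eqref{e:alberticonnects} holds $\mu^*$-a.e. The paper does this by showing that $y\mapsto\tilde\rho_\epsilon(x,y)$ has $\Lip\leq\epsilon/\kappa$ at $\mu^*$-a.e.\ point (via Corollary \ref{c:restrictedspeed} and the restricted analogue of Lemma \ref{l:rho-derivative}), so that a suitable indicator-type function is a $*$-upper gradient, and then invokes Theorem \ref{t:connectingpoints}. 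Your ``measurable selection and transversality'' step is aimed at the wrong target.
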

\addtocounter{theorem}{-1}
}

Intuitively, connecting points means that almost any point can be connected to any other sufficiently close point via a concatenation of fragmented curves that form the Alberti representations.  Here, we insist that the total length of the concatenated fragments is not too long and that the gaps in the concatenated fragments can be made arbitrarily small relative to the length if we choose the target point sufficiently close.  This is similar in spirit to how one can travel in a positive measure $S\subset \mathbb R^n$ from almost every point to any other sufficiently close point by concatenations of axis parallel line segments that travel \emph{mostly through $S$}.  A less trivial example of the same phenomenon is how one can travel from one point in the Heisenberg group to any other point via concatenating line segments parallel to the $x$ and $y$ directions.  Theorem \ref{t:differentiability} answers the question on \emph{accessibility} in RNP-LDS asked in \cite{cheeger-kleiner-schioppa} for the main space (rather than a tangent as discussed there).

One possible proof of Rademacher's theorem in $\R^n$ is to use this connectivity property and to apply the fundamental theorem of calculus along the concatenation of line segments.  With some standard measure theoretic ideas, it is then possible to show that the partial derivatives do indeed form a total derivative almost everywhere.  This idea is exactly the motivation for our definition of Alberti representations connecting points, and the backwards direction of Theorem \ref{t:differentiability} uses essentially the same strategy.  Thus, one can view Theorem \ref{t:differentiability} as an explanation of how the RNP Cheeger differentiation phenomenon works.  Concretely, a space is a RNP-LDS precisely because the fragmented curves from its Alberti representations can be concatenated to connect points in a way so that the fundamental theorem of calculus, and hence this proof of Rademacher's theorem, can be applied.

This also clarifies the role of RNP for the target space in the characterization of Theorem \ref{t:differentiability}.  The proof of Rademacher's theorem relies on combining partial derivatives of $f$ together to become a full derivative.  In our setting, partial derivatives will be interpreted as the derivative of $f$ along curve fragments from the Alberti representations.  Thus, the proof will work as long as we can take derivatives of $f$ along curves, and RNP Banach spaces are precisely the class of Banach spaces for which this can happen.  

The relationship between a rich structure of rectifiable curves and Poincar\'e inequalities has been extensively studied in the PI setting.  It is known that, under some mild additional assumptions, a metric measure space is a PI space if and only if every two continua can be connected by a collection of curves with \emph{large modulus}
(c.f. \cites{heinonen,keith-mod}).  Further, a common technique of constructing a PI space uses a method of Semmes \cite{semmes} that involves joining any two points in the space by a \emph{thick pencil of curves}.  One may consider the relationship between an asymptotic NPI and connecting points by Alberti representations as a qualitative, disconnected generalization of these statements.

Cheeger's formulation of Lipschitz differentiability can be interpreted as the existence of a finite dimensional measurable cotangent bundle $T^*X$.  In this formulation, Lipschitz functions give sections of $T^*X$ via their differentials.  One can also view Alberti representations as a generalization of a measurable vector field given by the ``derivatives'' of the curve fragments in the representations.  Indeed, this was first demonstrated by Schioppa \cite{schioppa-derivations}, where it is shown that, by averaging the derivative along curve fragments that pass through a given point, there is a correspondence between Alberti representations and \emph{Weaver derivations} \cites{weaver1, weaver2}.  Further, the notion of a measurable tangent bundle $TX$ was developed by Cheeger, Kleiner and Schioppa \cite{cheeger-kleiner-schioppa} and was used to deduce a theory of \emph{metric differentiation}, akin to Kirchheim and Ambrosio-Kirchheim \cites{kirchheim,ambrosio-kirchheim}, of arbitrary Lipschitz functions defined on a LDS.

This interpretation must be treated somewhat loosely, because an Alberti representation need not assign unique vectors to points since there a point can lie in multiple curves of a representation.  However, if we continue with this analogy, then connecting points can be viewed as a collection of Alberti representations generating the entire Lie algebra in the sense that one can travel anywhere locally by concatenating the curves coming from the Alberti representations.  In this sense, connecting points can also be viewed as a measurable analogue to H\"ormander's condition.  Recall smooth vector fields $X_1,...,X_n$ satisfy H\"ormander's condition if they generate the entire Lie algebra of vector fields.  Of course in a Lie algebra, the generation is done under Lie brackets, for which there is no real analogue of in our setting.

Since the only hypothesis on a RNP-LDS is that Lipschitz functions are differentiable almost everywhere, the proof of our results must rely on constructing a Lipschitz function that is not differentiable on some set and so conclude that such a set has measure zero.  From this point of view, the start of our approach is similar to that in \cite{bate}.  However, there is one crucial difference which ultimately leads to our requirement of an RNP valued Lipschitz function (compared to real valued).  In \cite{bate}, one is given some auxilliary Lipschitz functions $f_i\colon X\to \R$ that exhibit some bad behaviour on a set $S$ and use them to construct a Lipschitz function that is not differentiable on $S$.  In our case, however, due to the pointwise behaviour of connecting points by curves, the auxilliary functions only exhibit bad behaviour on a small neighbourhood of a single point.  Although every point sees the bad behaviour of some function, these functions cannot be combined to produce a \emph{single} real valued Lipschitz function that encompasses every point.  Therefore, we must resort to constructing Lipschitz functions that take values in larger spaces.

Finally we briefly discuss whether these characterizations hold for LDS, that is spaces for which Cheeger differentiability is only {\it a priori} known for real valued Lipschitz functions.  Equivalently, whether RNP-LDS is a strict subclass of LDS or not.  For a long time, the only known examples of LDS come from Cheeger's result and consist of PI spaces or countable unions of subsets thereof.  By Cheeger and Kleiner's result, this automatically upgrades them to RNP-LDS.  An initial motivation for studying spaces with Alberti representations that connect points was to attempt to construct a LDS that does not possess any version of a Poincar\'e inequality.  However, given the results presented here, this approach cannot work since such a space must be a RNP-LDS and hence satisfy some type of Poincar\'e inequality.

In this paper we do not try to address this issue and we will exclusively deal with RNP-LDS.  However, after the first preprint of this paper appeared, Schioppa \cite{schioppa-unrect} gave a construction of a LDS that is not a RNP-LDS, and hence none of our characterizations can possibly hold for this space.
Thus, this completes one relationship between Poincar\'e inequalities and differentiability in metric measure spaces: RNP-LDS are equivalent to spaces with asymptotic NPI, and by the results of Eriksson-Bique this is equivalent to being a countable union of subsets of PI spaces.  Further, Schioppa's example shows that this is strictly stronger than simply being a LDS.

We now briefly describe the outline of this paper.  In section \ref{s:preliminaries} we first give the required background on Poincar\'e inequalities, Lipschitz differentiability spaces and Alberti representations and the relationships between them.  We also introduce the necessary standard measure theoretic ideas and constructions.

Our main arguments begin in section \ref{s:findingcurves} by showing the existence of many rectifiable curve fragments that connect points in a RNP-LDS.  Extending upon this, section \ref{s:poincare} uses these families of curves to construct an asymptotic NPI in such a space.

This argument requires a construction of a non-differentiable RNP valued Lipschitz function, the details of which we defer until section \ref{s:nondiff}.  We reiterate that, in contrast to the construction in \cite{bate}, this construction uses, as its basic building block, functions that exhibit bad behaviour only at a single point (rather than a set of full measure).  

We conclude this study of NPI spaces in section \ref{s:tangents} by studying the limiting behaviour of this inequality into a Gromov-Hausdorff tangent at almost every point.

We then proceed to our second characterisation of an RNP-LDS, one which is based upon applying the fundamental theorem of calculus along curves that define a partial derivative of any given Lipschitz functions.  For this we first introduce the notion of a \emph{restricted} Alberti representation in section \ref{s:restrictedcurves}.  This concept allows us to just consider some of the curves that form an Alberti representation, and hence consider all the possible paths that may be constructed by concatenating such curves and, in particular consider those points that may be connected using such paths.

Finally, in section \ref{s:diff}, we use restricted Alberti representations to characterise exactly those metric measure spaces that are RNP-LDS.

\subsection{Acknowledgements}
The idea of using Alberti representations to join points to form the derivative in a Lipschitz differentiability space arose in conversations between the first author and David Preiss and later with the authors and Marianna Cs\"ornyei.  The idea of finding or disproving the existence of a Poincar\'e inequality in Lipschitz differentiability spaces was discussed in various conversations with David Preiss, Bruce Kleiner and Jeff Cheeger.  We are very grateful for the insight gained from these conversations.  We are also extremely grateful to the anonymous referee for a detailed report that highlighted several ways we could improve our exposition, and spotted an error in the definition of asymptotic NPI spaces, which has been corrected.  S. Li was supported by a postdoctoral research fellowship NSF DMS-1303910.

\section{Preliminaries}\label{s:preliminaries}
Throughout this paper, a metric measure space $(X,d,\mu)$ will refer to a complete and separable metric $(X,d)$ equipped with a finite Borel regular measure $\mu$.  Almost all of our main results immediately apply to a general metric space with a Radon measure by exhausting such spaces by a countable union of metric measure spaces of the above form.  The one exception is Theorem \ref{t:differentiability} which we cover in Remark \ref{r:connecting-char-Radon}.

There are several natural notions that arise when discussing Lipschitz functions.  Let $f : (X,d_X) \to (Y,d_Y)$ be a function and $x \in X$.  We then define
\begin{align*}
  \Lip(f,x_0) &:= \limsup_{r \to 0} \sup \left\{ \frac{d_Y(f(x),f(x_0))}{r} : 0 < d(x,x_0) < r\right\} \\
  &~= \limsup_{x \to x_0} \frac{d_Y(f(x),f(x_0))}{d_X(x,x_0)}, \\
  \lip(f,x_0) &:= \liminf_{r \to 0} \sup \left\{ \frac{d_Y(f(x),f(x_0))}{r} : 0 < d(x,x_0) < r\right\}
\end{align*}
Note that $\Lip$ satisfies a chain rule inequality for Lipschitz functions, that is
\begin{align*}
  \Lip(f \circ g,x) \leq \Lip(f,g(x)) \Lip(g,x).
\end{align*}

A metric space $(X,d)$ is \emph{metrically doubling} if there exist some $M \geq 1$ so that for all $x \in X$, $r > 0$, there exist some $\{y_1,...,y_n\} \subseteq X$ where $n \leq M$ so that
\begin{align*}
  B(x,r) \subseteq \bigcup_{i=1}^n B(y_i,r/2).
\end{align*}
Further, a metric measure space $(X,d,\mu)$ is \emph{doubling} if there exists some $C > 0$ so that for every $x \in X$ and $r > 0$, we have that
\begin{align*}
  \mu(B(x,r)) \leq C \mu(B(x,r/2)).
\end{align*}
By a standard volume packing argument, it easily follows that doubling metric measure spaces are also metrically doubling.

There is also an asymptotic version of a doubling measure.  A metric measure space is pointwise doubling at $x \in X$ if
\begin{align*}
  \limsup_{r \downarrow 0} \frac{\mu(B(x,2r))}{\mu(B(x,r))} < \infty.
\end{align*}
A metric measure space is \emph{pointwise doubling} if it is pointwise doubling almost everywhere.  By the standard argument for doubling measures, any pointwise doubling metric measure space is a Vitali space and in particular satisfies the Lebesgue density theorem (see Theorem 3.4.3 of \cite{hkst} for the standard argument).  It follows that any of its subset also is pointwise doubling.  We say that a subset $A \subset X$ is $(C,R)$-uniformly pointwise doubling if
\begin{align}
  \mu(B(x,r)) \leq C \mu(B(x,r/2)), \qquad \forall r < R, x \in A. \label{e:unif-pointwise-doubling}
\end{align}
Most doubling type arguments only require uniformly pointwise doubling.  Specifically, the conclusion of the following lemma.
\begin{lemma} \label{l:doubling-balls}
  Let $C > 1$, $R > 0$, and $A \subset X$ be $(C,R)$-uniformly pointwise doubling.  Then for every $x \in A$, $y \in X$ and $r'>0$ such that $x \in B(y,r') \subset B(x,R)$ and for every $r < R$, we have
  \begin{align*}
    \frac{\mu(B(x,r))}{\mu(B(y,r'))} \geq 4^{-s} \left( \frac{r}{r'} \right)^s.
  \end{align*}
  Here, $s = \log_2 C > 0$.
\end{lemma}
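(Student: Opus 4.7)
The plan is to cover $B(y,r')$ by a ball centered at $x$ and then apply iterated uniform doubling at $x$ down to scale $r$. First I set $R':=r'+d(x,y)$. Since $x\in B(y,r')$ implies $d(x,y)<r'$, we have $R'<2r'$. By the triangle inequality every $z\in B(y,r')$ satisfies $d(z,x)\le d(z,y)+d(y,x)<r'+d(x,y)=R'$, so $B(y,r')\subset B(x,R')$. Combined with the hypothesis $B(y,r')\subset B(x,R)$, this gives $B(y,r')\subset B(x,R_0)$, where $R_0:=\min(R',R)\le R$.

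Next I choose $k$ to be the smallest nonnegative integer with $R_0/2^k\le r$. Then
\[
k\le\log_2(R_0/r)+1\le\log_2(R'/r)+1<\log_2(r'/r)+2.
\]
Assuming for the moment that $R_0<R$, each of the scales $R_0/2^j$ for $j=0,\dots,k-1$ lies strictly below $R$, so the uniform pointwise doubling hypothesis \eqref{e:unif-pointwise-doubling} applies at $x$ at each of them. Iterating yields
\[
\mu(B(x,R_0))\le C^k\mu(B(x,R_0/2^k))\le C^k\mu(B(x,r)),
\]
using $R_0/2^k\le r$ together with monotonicity of $\mu$ in the final step.

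Combining this with the inclusion $\mu(B(y,r'))\le\mu(B(x,R_0))$ and the estimate
\[
C^k\le C^{\log_2(r'/r)+2}=C^2(r'/r)^{\log_2 C}=4^s(r'/r)^s,
\]
where $s=\log_2 C$, produces $\mu(B(y,r'))\le 4^s(r'/r)^s\mu(B(x,r))$, which rearranges to the claim.

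The only subtlety is the boundary case $R_0=R$, where \eqref{e:unif-pointwise-doubling} is not directly hypothesized at scale $R$. I would handle this by applying the estimate above to $B(y,r')\cap B(x,t)$ for arbitrary $t<R$: the same iteration gives $\mu(B(y,r')\cap B(x,t))\le\mu(B(x,t))\le 4^s(r'/r)^s\mu(B(x,r))$, since $t<R\le R'<2r'$ forces the corresponding exponent to stay below $\log_2(r'/r)+2$. Letting $t\to R^-$ and invoking monotone convergence together with $B(y,r')\subset B(x,R)$ recovers $\mu(B(y,r'))$ on the left. Apart from this minor boundary consideration, the proof is a routine iteration of the doubling inequality and I do not anticipate any real obstacle.
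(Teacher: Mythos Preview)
The paper states this lemma without proof, treating it as a routine consequence of iterating the uniform pointwise doubling inequality \eqref{e:unif-pointwise-doubling}. Your argument is exactly this standard iteration: enclose $B(y,r')$ in a ball $B(x,R_0)$ with $R_0<2r'$, halve the radius repeatedly down to scale $r$, and count the number of halvings to bound the constant by $C^{2+\log_2(r'/r)}=4^s(r'/r)^s$. This is correct and is precisely the computation the paper is suppressing.

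Your treatment of the boundary case $R_0=R$ is more careful than necessary for the paper's purposes (the lemma is only ever applied with strict containment, so the issue never arises), but your limiting argument is fine when balls are open; for closed balls one could instead simply note that the hypothesis $B(y,r')\subset B(x,R)$ together with $x\in B(y,r')$ already forces $r'<R$ and hence $R_0\le R'<2r'$, so one may always take $R_0<R$ after shrinking by an arbitrarily small amount. Either way there is no real obstacle, as you say.
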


The following lemma shows that we can decompose pointwise doubling metric measure spaces to a countable collection of metric doubling subsets.
\begin{lemma}[\cite{bate}*{Lemma 8.3}] \label{l:pointwise-doubling-decomp}
  If $(X,d,\mu)$ is pointwise doubling, then there exist a countable number of Borel subsets $A_i \subseteq X$ such that $\mu\left(X \backslash \bigcup_i A_i \right) = 0$ and $(A_i,d)$ is metrically doubling.
\end{lemma}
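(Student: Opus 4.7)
The plan is to decompose $X$ up to a null set into countably many Borel pieces carrying a quantitative form of the pointwise doubling property. For each triple $(n, m, \ell) \in \N^3$, I would set
\begin{equation*}
  A_{n, m, \ell} := \left\{x \in X : \mu(B(x, r)) \leq n\, \mu(B(x, r/2)) \text{ for all } r \in (0, 1/m],\ \mu(B(x, 1/m)) \geq 1/\ell \right\}.
\end{equation*}
Standard measure-theoretic considerations (Borel measurability of $x \mapsto \mu(B(x, r))$ for each fixed $r$, combined with monotonicity in $r$ to reduce to countably many scales) show each $A_{n,m,\ell}$ is Borel. The pointwise doubling hypothesis, together with $\mu(X \setminus \spt \mu) = 0$, ensures that $\mu$-a.e. $x$ lies in some $A_{n,m,\ell}$: the finiteness of the $\limsup$ supplies admissible $n$ and $m$, while $x \in \spt \mu$ supplies $\ell$. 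Enumerating these countably many sets yields the desired decomposition.

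The heart of the argument is showing that each $A := A_{n, m, \ell}$ is metrically doubling, which I would split into two scale regimes. For $x \in A$ and $r \leq 1/(4m)$, take a maximal $(r/2)$-separated subset $\{y_j\}_{j=1}^N$ of $B(x, r) \cap A$; the balls $B(y_j, r/4)$ are pairwise disjoint. Since each $y_j \in A$ and $2r \leq 1/m$, three iterations of the doubling inequality at $y_j$ give
\begin{equation*}
  \mu(B(x, r)) \leq \mu(B(y_j, 2r)) \leq n^3 \mu(B(y_j, r/4)).
\end{equation*}
Summing over $j$ and comparing with $\mu(B(x, 2r)) \leq n\, \mu(B(x, r))$ (from $x \in A$, $2r \leq 1/m$) yields $N \leq n^4$, provided $\mu(B(x, r)) > 0$, which holds since $x \in \spt \mu$.

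For the larger regime $r > 1/(4m)$, I would instead bound the $(1/(8m))$-packing number of $A$ uniformly. Given any $(1/(8m))$-separated collection $\{z_i\} \subset A$, the balls $B(z_i, 1/(16m))$ are disjoint, and four iterations of the doubling inequality at $z_i$ from scale $1/m$ down to $1/(16m)$ give $\mu(B(z_i, 1/(16m))) \geq 1/(n^4 \ell)$. Hence this collection has at most $n^4 \ell\, \mu(X)$ elements. Since $r/2 > 1/(8m)$, a maximal $(1/(8m))$-separated subset of $B(x, r) \cap A$ provides a cover by at most $n^4 \ell\, \mu(X)$ balls of radius $r/2$, completing the verification.

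The main obstacle is precisely this large-scale regime, since the pointwise doubling hypothesis furnishes no information at scales exceeding $1/m$. The stratification by $\ell$ circumvents this by imposing a uniform lower bound on $\mu(B(\cdot, 1/m))$; without such a stratification one could not rule out accumulation of points in $A$ carrying vanishingly small ball measures, which would defeat metric doubling at intermediate scales.
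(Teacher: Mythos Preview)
The paper does not give its own proof of this lemma; it is quoted from \cite{bate}*{Lemma 8.3}, so there is no in-paper argument to compare against. Your proof is correct and is essentially the standard one: stratify by a uniform doubling constant $n$, a threshold scale $1/m$, and a lower mass bound $1/\ell$ on $\mu(B(\cdot,1/m))$, then run a volume-packing argument at scales below $1/(4m)$ and a global finite-packing argument above (the latter using the standing hypothesis $\mu(X)<\infty$). One minor quibble: the measurability justification via ``monotonicity in $r$'' is really left-continuity of $r\mapsto\mu(B(x,r))$ for open balls (from inner regularity), which is what allows the reduction to countably many rational radii; this is routine but worth stating precisely.
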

One can also easily decompose a pointwise doubling metric measure space into uniformly pointwise doubling pieces.
\begin{lemma} \label{l:unif-pointwise-doubling-decomp}
  If $(X,d,\mu)$ is pointwise doubling, then there exist a countable number of Borel subsets $A_i \subseteq X$ such that $\mu\left(X \backslash \bigcup_i A_i \right) = 0$ and constants $C_i > 1$, $R_i > 0$ so that $(X,d,\mu)$ is $(C_i,R_i)$-uniformly pointwise doubling at each $x \in A_i$.
\end{lemma}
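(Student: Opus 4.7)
The plan is to stratify the points of $X$ by the doubling constant and the scale at which it holds, discretizing both into countable families. Concretely, for $n,k \in \N$, I would define
\[A_{n,k} := \bigcap_{j \geq k} \bigl\{ x \in X : \mu(B(x, 2^{-j+1})) \leq n\, \mu(B(x, 2^{-j})) \bigr\},\]
so that the quantification is over a \emph{countable} family of dyadic scales, and then enumerate the $A_{n,k}$ as the desired sequence $\{A_i\}$.

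Three things need to be checked. First, \emph{Borel measurability}: for each fixed $r > 0$ the function $x \mapsto \mu(B(x,r))$ is Borel (indeed lower semicontinuous for open balls, by applying Fatou's lemma to the indicators $\mathbf{1}_{B(x_m,r)}$ when $x_m \to x$), so $A_{n,k}$ is a countable intersection of Borel sets. Second, \emph{exhaustion}: at any $x$ where $(X,d,\mu)$ is pointwise doubling, the finiteness of the $\limsup$ supplies $M < \infty$ and $R > 0$ with $\mu(B(x,2r)) \leq M\, \mu(B(x,r))$ for all $r < R$; taking $n \geq M$ and $2^{-k+1} < R$ places $x \in A_{n,k}$, and hence $\mu\bigl(X \setminus \bigcup_{n,k} A_{n,k}\bigr) = 0$. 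Third, \emph{upgrading to continuous scales}: given $x \in A_{n,k}$ and $r < 2^{-k}$, I choose the integer $j \geq k$ with $2^{-j-1} < r \leq 2^{-j}$ and sandwich by monotonicity,
\[\mu(B(x,2r)) \leq \mu(B(x, 2^{-j+1})) \leq n\, \mu(B(x, 2^{-j})) \leq n^2\, \mu(B(x, 2^{-j-1})) \leq n^2\, \mu(B(x, r)),\]
using the defining inequality of $A_{n,k}$ at indices $j$ and $j+1$. This shows $A_{n,k}$ is $(n^2, 2^{-k+1})$-uniformly pointwise doubling in the sense of \eqref{e:unif-pointwise-doubling}.

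The only point requiring care is reconciling the uncountable quantifier ``for all $r < R$'' appearing in \eqref{e:unif-pointwise-doubling} with the need for the $A_i$ to be Borel: defined naively with all $r$, the sets $A_{n,k}$ would be uncountable intersections and their measurability would not be evident. Restricting to a geometric sequence of radii sidesteps this obstacle, at the harmless cost of squaring the doubling constant (permitted since $C_i$ may depend on $i$). Everything else is routine bookkeeping in the enumeration of $(n,k)$.
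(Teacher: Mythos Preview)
Your argument is correct: the dyadic stratification, the lower semicontinuity of $x\mapsto\mu(B(x,r))$ to secure Borel measurability, and the two-step sandwich to pass from dyadic to continuous radii all work as stated. The paper itself does not supply a proof of this lemma, treating it as routine; your write-up is exactly the kind of standard argument the authors have in mind, so there is nothing substantive to compare.
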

\begin{remark} \label{r:unif-pointwise-doubling}
  In Lemma \ref{l:unif-pointwise-doubling-decomp}, we are not saying that each $(A_i,d,\mu)$ are uniformly pointwise doubling, but that \eqref{e:unif-pointwise-doubling} is satisfied for balls in $X$ centered in $A_i$.
\end{remark}

When dealing with (pointwise) doubling measures, it is natural to consider porous sets.
\begin{definition}\label{def:porous}
A set $S \subseteq (X,d,\mu)$ is {\it $\eta$-porous} at $x \in S$ if there exist a sequence $(y_i) \subset X$ such that $y_i \to x$ and
\begin{align*}
  S \cap B(y_i,\eta d(x,y_i)) = \emptyset.
\end{align*}
It is said to be porous at $x$ if it is $\eta$-porous at $x$ for some $\eta > 0$.  Finally, a set $S \subseteq (X,d,\mu)$ is \emph{porous} if it is porous at each of its points.
\end{definition}
Indeed, by \cite{porosityandmeasures}, any metric measure spaces in which porous sets are null are automatically pointwise doubling.

A very useful fact is that we may pass to a subset of a metric space and the pointwise behaviour of a Lipschitz function will only differ at a porosity point of the subset.

\begin{lemma}\label{l:porosity-subset}
  Let $(X,d)$ and $(Y,d')$ be metric spaces, $f\colon X\to Y$ Lipschitz and $x\in S\subset X$.  If
  \begin{equation}\label{e:porosity-subset-1}\limsup_{X\ni y\to x}\frac{d'(f(y),f(x))}{d(y,x)}> \limsup_{S\ni y\to x}\frac{d'(f(y),f(x))}{d(y,x)}\end{equation}
  then $x$ is a porosity point of $S$.

  In particular, if $X$ is equipped with a measure for which all porous sets have measure zero, then
  \begin{equation}\label{e:porosity-subset-2}\limsup_{X\ni y\to x}\frac{d'(f(y),f(x))}{d(x,y)}= \limsup_{S\ni y\to x}\frac{d'(f(y),f(x))}{d(x,y)}\end{equation}
  for $\mu$-a.e. $x\in S$.
\end{lemma}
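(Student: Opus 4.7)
The plan is to argue directly from the definitions of the two limsups. Write $L$ and $L'$ for the left and right sides of \eqref{e:porosity-subset-1}, suppose $L > L'$, and denote by $K$ a Lipschitz constant of $f$. I would first fix $\delta > 0$ with $L - \delta > L'$ and extract a sequence $y_n\in X\setminus\{x\}$ with $y_n\to x$ and $d'(f(y_n),f(x)) > (L-\delta)\,d(y_n,x)$ coming from the definition of $L$. The claim to prove is that for some $\eta \in (0,1)$ depending only on $\delta$, $L'$, $L$ and $K$, the balls $B(y_n,\eta\,d(x,y_n))$ miss $S$ for all sufficiently large $n$; this exhibits $x$ as an $\eta$-porosity point of $S$.

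The main estimate is a triangle inequality computation. If some subsequence $y_{n_k}$ admitted a point $z_k \in S \cap B(y_{n_k},\eta\,d(x,y_{n_k}))$, then $d(z_k,x) \le (1+\eta)\,d(x,y_{n_k}) \to 0$, $z_k \ne x$ (using $\eta < 1$), and by the reverse triangle inequality in $Y$ combined with the Lipschitz bound on $f$,
\[
\frac{d'(f(z_k),f(x))}{d(z_k,x)} \;\ge\; \frac{(L-\delta)\,d(y_{n_k},x) - K\eta\,d(y_{n_k},x)}{(1+\eta)\,d(y_{n_k},x)} \;=\; \frac{L-\delta-K\eta}{1+\eta}.
\]
Choosing $\eta$ small enough that the right-hand side still exceeds $L'$, which is possible because $L-\delta > L'$, produces a sequence in $S\setminus\{x\}$ converging to $x$ whose difference quotients stay uniformly above $L'$, contradicting the definition of $L'$. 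I do not anticipate a genuine obstacle here; the entire argument is a quantitative tracking of the gap $L-L'$, and the only care needed is to pick $\eta$ small enough that neither the Lipschitz perturbation of $f$ in the numerator nor the inflation of the denominator $d(\cdot,x)$ consumes this gap.

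For the ``in particular'' clause, the inequality $\le$ in \eqref{e:porosity-subset-2} is automatic from $S\subset X$, so by the first part equality in \eqref{e:porosity-subset-2} can fail only at porosity points of $S$. The set of such points is itself porous as a subset of $X$: any porosity witness for $S$ at one of its points remains a porosity witness when $S$ is replaced by a subset of itself. Under the standing assumption that porous sets have $\mu$-measure zero, this exceptional set is $\mu$-null, which is the desired conclusion.
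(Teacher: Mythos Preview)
Your proof is correct and follows essentially the same approach as the paper's: both extract a sequence $y_n\to x$ realizing the gap between the two limsups, then use the triangle inequality and the Lipschitz bound on $f$ to show that small balls around $y_n$ must miss $S$, and both observe for the second part that the exceptional set inherits the porosity witnesses from $S$. Your write-up is simply more explicit about the constants than the paper's one-line triangle inequality.
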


\begin{proof}
  If \eqref{e:porosity-subset-1} is true then there exists $y_n\to x$ and an $\eta>0$ such that
  \[\lim_{n\to \infty}\frac{d'(f(y_n),f(x))}{d(y_n,x)}> \eta+\limsup_{S\ni y\to x}\frac{d'(f(y),f(x))}{d(x,y)}.\]
  In particular, the triangle inequality shows that, for sufficiently large $n$, $B(y_n,\eta d(y_n,x)/2\|f\|_{lip})$ is disjoint from $S$.  Thus $S$ is porous at $x$.

  Finally, if $X$ has such a measure and \eqref{e:porosity-subset-2} does not hold at some $x\in S$ then \eqref{e:porosity-subset-1} holds for $x$ (since a supremum over $S$ is certainly less than or equal to a supremum over $X$).  Therefore $S$ is porous at $x$ and so the set of all such $x$ is a porous subset of $S$ and so has measure zero.
\end{proof}

\subsection{Lipschitz differentiability spaces}\label{s:preliminarylds}
We now discuss a notion of differentiation on metric measure spaces.  These definitions were first present in the work of Cheeger \cite{cheeger} and were later refined by Keith \cite{keith-lip-lip}.

We will say that a Borel $U \subseteq X$ and Lipchitz $\phi \colon U \to \R^n$ form a \emph{chart of dimension $n$} $(U,\phi)$.  Now let $V$ be a Banach space and $f \colon X \to V$.  We say that $f$ is differentiable at $x_0\in U$ with respect to $(U,\phi)$ if there exists a {\it unique} linear transform $Df(x_0) \in L(\R^n,V)$ such that
\begin{align*}
  \lim_{X \ni x \to x_0} \frac{\|f(x) -f(x_0) - Df(x_0)(\phi(x) - \phi(x_0))\|}{d(x,x_0)} = 0.
\end{align*}
We then say that $Df(x_0)$ is the (Cheeger) derivative of $f$ at $x_0$.  We now define the classes of metric spaces for which there is a rich differentiability structure.

\begin{definition} \label{d:LDS}
  A metric measure space $(X,d,\mu)$ is said to be a \emph{Lipschitz differentiability space (LDS)} if there exist a countable collection of charts $(U_i,\phi_i)$ so that $X = \bigcup_i U_i$ and every Lipschitz map $f \colon X \to \R$ is differentiable at almost every point with respect to every chart.  If all charts $\phi_i$ take value in $\R^n$, then we say that $X$ is an $n$-dimensional Lipschitz differentiability space.
\end{definition}
We similarly define a subclass of Lipschitz differentiability spaces as follows.
\begin{definition}
  A metric measure space $(X,d,\mu)$ is said to be a \emph{Radon-Nikodym Lipschitz differentiability space (RNP-LDS)} if there exist a countable collection of charts $(U_i,\phi_i)$ with $X = \bigcup_i U_i$ such that, for every Banach space $V$ with the Radon-Nikodym property, every Lipschitz map $f : X \to V$ is differentiable at almost every point with respect to every chart.
\end{definition}

Recall that a Banach space $V$ has the Radon-Nikodym property (RNP) if every Lipschitz $\gamma \colon \R \to V$ is differentiable almost everywhere in the usual sense:
\[\lim_{h\to 0} \frac{\gamma(t+h)-\gamma(t)}{h}\]
exists for $\mathcal L^1$-a.e. $t\in \R$.  Note that, by Rademacher's theorem, $\R$ has the RNP and so we necessarily have that every RNP-LDS is a LDS.  Also, by definition, RNP Banach spaces are the largest subclass of Banach space for which one can hope to get nontrivial Lipschitz differentiability behavior.

In \cite{cheeger} Cheeger introduced the notion of Lipschitz differentiability with respect to charts in metric spaces and showed that PI spaces (see below) are Lipschitz differentiability spaces (though, without this name).  Later, it was shown by Cheeger and Kleiner \cite{cheeger-kleiner-rnp} that PI spaces are in fact RNP-Lipschitz differentiability spaces.

We now recall some facts on the nature of LDS (and hence of RNP-LDS) that we will require.
\begin{theorem}[\cite{bate-speight}*{Theorem 2.4 and Corollary 2.6}] \label{t:porous-null}
  Any porous set $S$ in a Lipschitz differentiability space is null.  In particular a Lipschitz differentiability space is pointwise doubling.
\end{theorem}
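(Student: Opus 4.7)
The plan is to prove the first claim, that porous sets in an LDS are null, by contradiction; the pointwise doubling conclusion then follows from the general measure-theoretic fact that a Borel measure on a separable metric space is pointwise doubling almost everywhere if and only if every porous set has measure zero.

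Suppose for contradiction that $S \subset X$ is porous with $\mu(S) > 0$. By a standard countable decomposition I may assume $S$ is $\eta$-porous for a fixed $\eta > 0$ and is contained in a single chart $(U,\phi)$ with $\phi : U \to \R^n$ of the LDS structure. The goal is to construct a real-valued Lipschitz function $f : X \to \R$ that fails to be Cheeger-differentiable on a positive-measure subset of $S$, contradicting the LDS definition applied to $(U,\phi)$. The basic building block is the distance function $\rho(x) := d(x,S)$, which is $1$-Lipschitz, vanishes on $S$, and satisfies $\Lip(\rho,x) \geq \eta$ at every $x \in S$ (any porosity witness $y$ gives $\rho(y) \geq \eta \, d(y,x)$). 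If $\rho$ itself were Cheeger-differentiable at $x \in S$ with derivative $L \in L(\R^n,\R)$, then $L$ would have to vanish on $\phi$-directions approached from within $S$ (since $\rho \equiv 0$ there) while producing values of size $\geq (\eta - o(1)) \, d(y_j,x)$ on $\phi(y_j) - \phi(x)$ for any porosity sequence $y_j \to x$. This single constraint is consistent with some nonzero $L$, so $\rho$ alone is insufficient.

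To close the argument I would modify $\rho$ by a multi-scale perturbation built from $\phi$, forcing the ``witnessed direction'' in $\R^n$ to rotate across scales. Concretely, at dyadic scales $r_k \downarrow 0$ add terms of the form $\epsilon_k \, \psi_k(\rho(x)) \, \langle v_k, \phi(x) \rangle$, where $\psi_k$ is a smooth cutoff supported near $\rho \approx r_k$, the weights $\epsilon_k$ are chosen small enough that the sum is globally Lipschitz (using summability against $\Lip(\psi_k)$), and the unit vectors $v_k \in S^{n-1}$ range through a dense sequence on the sphere. Any candidate Cheeger derivative $L$ at $x \in S$ would, by evaluating the perturbed $f$ on porosity witnesses at the scales $r_k$, be forced to satisfy $L \cdot v_k \approx \epsilon_k \eta$ for infinitely many $k$, which is incompatible with $L$ being a single linear functional.

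The main obstacle will be upgrading the obstruction from ``$f$ not differentiable at a single $x \in S$'' to ``$f$ not differentiable on a positive-measure subset of $S$ simultaneously''. This would rely on a measurable selection of porosity witnesses as a function of $x \in S$, combined with a Baire-category argument over the choices of $(v_k)$ and $(r_k)$ to produce a specific perturbation whose non-differentiability set has positive measure. Once the first claim is established, the general equivalence mentioned above yields the pointwise doubling conclusion without further work.
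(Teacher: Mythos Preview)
The paper does not prove this theorem; it is quoted verbatim from \cite{bate-speight} (their Theorem 2.4 and Corollary 2.6) and used as a black box.  So there is no in-paper argument to compare against, and I will evaluate your proposal on its own terms.

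Your opening observations are sound: $\rho=d(\cdot,S)$ is $1$-Lipschitz, vanishes on $S$, and has $\Lip(\rho,x_0)\ge\eta$ at every $\eta$-porosity point; and you are right that $\rho$ alone can be Cheeger-differentiable at isolated points of $S$ (your implicit worry is exactly the phenomenon $S=\{0\}\subset[0,1]$, where $\rho(x)=x$ has derivative $1$).  The difficulty is that your proposed fix does not work.  For the sum $\sum_k \epsilon_k\,\psi_k(\rho(x))\,\langle v_k,\phi(x)\rangle$ to be Lipschitz you need $\sum_k \epsilon_k\,\Lip(\psi_k)<\infty$; since $\psi_k$ is a bump at height $\approx r_k$ this forces $\epsilon_k/r_k$ to be summable, hence $\epsilon_k=o(r_k)$.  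But then, evaluating at a porosity witness $y$ with $d(y,x_0)\approx r_k$, the $k$-th perturbation contributes only $O(\epsilon_k)=o(r_k)=o(d(y,x_0))$, which is below the resolution of Cheeger differentiability.  In other words your perturbation is entirely absorbed into the $o(d)$ error and places \emph{no} new constraint on a candidate derivative $L$ beyond what $\rho$ already imposes.  Your asserted relation ``$L\cdot v_k\approx\epsilon_k\eta$'' therefore does not follow, and even if it did, $\epsilon_k\to 0$ makes it compatible with any fixed $L$.  The ``main obstacle'' you flag at the end is thus not the only gap: the construction itself is inert.

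The argument in \cite{bate-speight} avoids multi-scale perturbations altogether.  The key preliminary step is to show that in any chart $(U,\phi)$ of an LDS, at $\mu$-a.e.\ $x_0$ the set of accumulation directions of $(\phi(y)-\phi(x_0))/d(y,x_0)$ is genuinely \emph{two-sided}: for every nonzero $L\in(\R^n)^*$ one can find $y_j\to x_0$ with $L(\phi(y_j)-\phi(x_0))\le -c\,d(y_j,x_0)$ for some $c>0$.  (Your $[0,1]$-type examples are precisely the measure-zero exceptional set where this fails.)  Granting this, the unperturbed $\rho$ already does the job: if $D\rho(x_0)=L\neq 0$ existed, choosing such $y_j$ would give $\rho(y_j)=L(\phi(y_j)-\phi(x_0))+o(d(y_j,x_0))<0$ for large $j$, contradicting $\rho\ge 0$; hence $L=0$, contradicting $\Lip(\rho,x_0)\ge\eta$.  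So the substantive content you are missing is not a more elaborate test function but this two-sidedness lemma for charts; once it is in hand, the rest is the one-line minimum-point argument above, and pointwise doubling follows as you indicate.
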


A consequence of this is that subsets of a LDS are also a LDS, see \cite{bate-speight}*{Corollary 2.7}.  The same is true for a RNP-LDS.

\begin{lemma} \label{l:RNPLDS-subset}
  Let $(X,d,\mu)$ be a RNP Lipschitz differentiability space with charts $(U_i,\phi_i)$.  Then for any measurable $S \subseteq X$, $(S,d,\mu)$ is also a RNP Lipschitz differentiability space with charts $(U_i \cap S,\phi_i)$.
\end{lemma}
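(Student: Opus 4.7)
The plan is to establish the RNP-LDS property of $S$ by producing a Lipschitz extension of any given Banach-valued Lipschitz function to $X$, using the differentiability already available on $X$, and transferring uniqueness back to $S$ via the porosity lemma.

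Let $V$ be a Banach space with the RNP and $f\colon S\to V$ Lipschitz. I would first construct a Lipschitz extension $\tilde f\colon X\to V$. Since $X$ is pointwise doubling by Theorem~\ref{t:porous-null}, Lemma~\ref{l:pointwise-doubling-decomp} decomposes $X$ (modulo a null set) into countably many metrically doubling Borel pieces. After reducing via inner regularity to the case where $S$ is closed, a standard Whitney decomposition of each metrically doubling piece combined with a partition of unity and a Lipschitz cutoff yields such a $\tilde f$. The RNP-LDS property of $X$ applied to $\tilde f$ then gives, at $\mu$-a.e.\ $x_0\in U_i$, a unique derivative $D\tilde f(x_0)\in L(\R^n,V)$ with
\[
\|\tilde f(x)-\tilde f(x_0)-D\tilde f(x_0)(\phi_i(x)-\phi_i(x_0))\|=o(d(x,x_0))
\]
as $X\ni x\to x_0$. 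Since $f=\tilde f$ on $S$, restricting this limit to $x\in U_i\cap S$ exhibits $D\tilde f(x_0)$ as a derivative of $f$ at $x_0$ with respect to the chart $(U_i\cap S,\phi_i)$.

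For uniqueness, suppose $T\in L(\R^n,V)$ is a second such derivative of $f$ at $x_0$ in $S$. I would replace $V$ by $\overline{\operatorname{span}}\,\tilde f(X)$ (a closed subspace, hence still RNP) to assume $V$ is separable, and pick a countable dense subset $\{L_k\}\subset L(\R^n,V)$. By Theorem~\ref{t:porous-null} porous subsets of $X$ are $\mu$-null, so Lemma~\ref{l:porosity-subset} applied to each Lipschitz $L_k\circ\phi_i\colon X\to V$ gives, at $\mu$-a.e.\ $x_0\in S$, the equality
\[
\limsup_{X\ni x\to x_0}\frac{\|L_k(\phi_i(x)-\phi_i(x_0))\|}{d(x,x_0)}=\limsup_{S\ni x\to x_0}\frac{\|L_k(\phi_i(x)-\phi_i(x_0))\|}{d(x,x_0)}.
\]
Intersecting these countably many full-measure subsets of $S$ and observing that the common value is $\Lip(\phi_i)$-Lipschitz in $L$, the same equality holds for every $L\in L(\R^n,V)$ at $\mu$-a.e.\ $x_0\in S$. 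Taking $L=T-D\tilde f(x_0)$, the hypothesis on $T$ makes the right-hand limsup vanish, so the left-hand one does as well, and thus $T$ is also a derivative of $\tilde f$ at $x_0$ in $X$. Uniqueness of $D\tilde f(x_0)$ then forces $T=D\tilde f(x_0)$.

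The principal obstacle is the Banach-valued Lipschitz extension, since, unlike in the scalar LDS setting of \cite{bate-speight}*{Corollary~2.7} where McShane's extension suffices, general Banach targets need not admit universal Lipschitz extensions. The pointwise doubling of $X$ inherited from the RNP-LDS property is, however, enough to implement a Whitney-type construction. Given the extension, both the existence and uniqueness assertions follow almost directly from the RNP-LDS property of $X$ and Lemma~\ref{l:porosity-subset}.
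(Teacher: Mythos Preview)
Your overall strategy matches the paper's: extend $f$ to a Lipschitz $\tilde f\colon X\to V$, differentiate in $X$, restrict, and then argue uniqueness. The differences lie in execution.

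For the extension, the paper works piecewise on $S'\subset A_j\cap U_i$ with $A_j$ metrically doubling, and invokes two off-the-shelf results: Lang--Schlichenmaier (finite Nagata dimension, which follows from doubling) to extend from $S'$ to $A_j$, and Lee--Naor to extend from the doubling subset $A_j$ to all of $X$. Your ``Whitney plus partition of unity plus cutoff'' sketch is morally the content of these theorems, but it is underspecified---in particular you do not say how to pass from extensions on the individual doubling pieces $A_j$ to a single Lipschitz map on $X$, which is exactly what the Lee--Naor step supplies. Citing the two extension theorems, as the paper does, closes this cleanly.

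For uniqueness the paper takes a different shortcut: since $(S',d,\mu)$ is already a real-valued LDS by \cite{bate-speight}, the uniqueness criterion
\[
\limsup_{S'\ni x\to x_0}\frac{|(\phi(x)-\phi(x_0))\cdot v|}{d(x,x_0)}\ge\lambda\|v\|\qquad(v\in\mathbb S^{n-1})
\]
from \cite{bate-speight} holds at $\mu$-a.e.\ $x_0\in S'$, and this criterion concerns only $\phi$, not the target $V$. Your route---applying Lemma~\ref{l:porosity-subset} to a countable dense family $\{L_k\circ\phi_i\}$ and extending to all $L$ by Lipschitz dependence---is also valid and avoids invoking the real-valued subset result; it shows directly that any derivative of $f$ in $S$ lifts to a derivative of $\tilde f$ in $X$, where uniqueness is already known. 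The paper then uses one final application of Lemma~\ref{l:porosity-subset} to pass from the pieces $S'$ back to the full $S$; your argument effectively folds this into the uniqueness step.
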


\begin{proof}
Let $V$ be a RNP Banach space and $f : S \to V$ be Lipschitz.  By the case covered in \cite{bate-speight}, we know that $f$ is differentiable almost everywhere whenever $V$ has finite dimension.  Thus we may suppose that $V$ is infinite dimensional.

By Lemma \ref{l:pointwise-doubling-decomp} and Theorem \ref{t:porous-null}, we can get $\{A_j\}$ a countable set of metrically doubling subsets of $X$ such that $\mu\left(X \backslash \bigcup_j A_j\right) = 0$.  We first prove the result for $S'$ contained within a single $A_j$ and a single chart $U_i$.

As $S'$ lies in the metrically doubling $A_j$, it is itself metrically doubling.  Thus, by applying Theorem 1.6 of \cite{lee-naor} to $(S',X)$, we get that there exists a Lipschitz function $F : X \to V$ such that $F|_{S'} = f|_{S'}$.

By assumption, for almost all $x_0 \in S'$, we have some $DF(x_0) \in L(\R^n,V)$ so that
  \begin{align*}
    \lim_{X \ni x \to x_0} \frac{\|F(x) - f(x_0) - DF(x_0)(\phi(x)-\phi(x_0))\|}{d(x,x_0)} = 0.
  \end{align*}
In particular,
  \begin{align*}
    \lim_{S' \ni x \to x_0} \frac{\|f(x) - f(x_0) - DF(x_0)(\phi(x)-\phi(x_0))\|}{d(x,x_0)} = 0.
  \end{align*}
That is, $DF(x_0)$ is \emph{a} derivative of $F$ at $x_0$, we must show uniqueness.

As shown in \cite{bate-speight} (the same argument works for vector valued functions), uniqueness of the derivative at a point $x_0$ within a subset $S'$ is equivalent to the existence of a $\lambda>0$ such that
\[\limsup_{S'\ni x\to x_0} \frac{|(\phi(x)-\phi(x_0))\cdot v|}{d(x,x_0)} \geq \lambda \|v\|\]
for every $v\in\mathbb S^{n-1}$.  Since $(S',d,\mu)$ is a LDS by \cite{bate-speight}, this condition must be true for $\mu$-a.e. $x_0\in S'$ and so $(S',d,\mu)$ is also a RNP-LDS.

To prove the general case, for any measurable $S\subset X$ there exists a countable Borel decomposition of $S$ into disjoint sets $S_i$ and a null set such that each $S_i$ has the form of $S'$ above.  Given any $f\colon S\to V$, since each $S_i$ is a RNP-LDS, for every $i$ and almost every $x_0\in S_i$, we have a derivative $Df(x_0)$ of $f$ within $S_i$.  If $Df(x_0)$ is not a derivative of $f$ within $X$ then
\begin{multline*}\limsup_{X\ni x\to x_0}\frac{\|f(x_n)-f(x_0)-Df(x_0)(\phi(x)-\phi(x_0))\|}{d(x,x_0)}>0\\ = \limsup_{S\ni x\to x_0}\frac{\|f(x_n)-f(x_0)-Df(x_0)(\phi(x)-\phi(x_0))\|}{d(x,x_0)}.\end{multline*}
Thus, by Lemma \ref{l:porosity-subset}, the set of all such $x_0$ has measure zero.
\end{proof}

\subsection{Alberti representations}\label{s:albertireps}
Recall from \cite{bate} the set of \emph{curve fragments}
\[\Gamma = \{\gamma \colon K \subset \R \to X : K \text{ is compact, } \gamma \text{ is 1-Lipschitz}\}.\]
For $\gamma\in \Gamma$, the \emph{metric derivative} of $\gamma$ given by
\[\md_\gamma(t) = \lim_{h\to 0}\frac{d(\gamma(t+h),\gamma(t))}{|h|}\]
exists almost everywhere in the domain of $\gamma$ (see the following remark).  Note that, if it exists, $\md_\gamma(t)=\Lip(\gamma,t)$.  We identify each $\gamma \in \Gamma$ with a subset of $X\times \R$ via $\gamma \mapsto \{(\gamma(t),t): t\in \dom \gamma\}$ (the \emph{graph} of $\gamma$) and consider $\Gamma$ with the induced Hausdorff metric.

\begin{remark}\label{rmk:ambrosio}
The metric derivative of a Lipschitz function $\gamma\colon K\subset \R \to X$ was introduced in \cite{ambrosio} and represents the infinitesimal speed of $\gamma$.  If $K$ is an interval, it is shown that it exists almost everywhere and its integral is the arc length of $\gamma$.  However, if $K$ is a compact set we may isometrically embed $X$ into $\ell^\infty$ and extend $\gamma$ linearly on the complement of $K$ so that its domain is an interval.  Then at any density point of $K$ the limit in $K$ agrees with the limit of the extended function and so also exists almost everywhere.
\end{remark}

In \cite{bate}, the curve fragments were biLipschitz, but this will not be a matter as we will restrict to biLipschitz fragments in the following definition.  \begin{definition} \label{d:AR}
  Let $\bP$ be a probability measure on $\Gamma$ for which the set of non-biLipschitz fragments have measure zero.  For each biLipschitz $\gamma\in \Gamma$ let $\mu_\gamma$ be a measure such that $\mu_\gamma \ll \Hd^1 \llcorner \im \gamma$.  Then the pair $(\bP,\{\mu_\gamma\})$ is an \emph{Alberti representation} of $\mu$ if for every Borel set $B \subset X$, we have that
  \begin{align*}
    \mu(B) = \int_\Gamma \mu_\gamma(B) ~d\bP.
  \end{align*}
\end{definition}

If $V$ has the RNP and $f\colon X \to V$ is Lipschitz, then the existence of an Alberti representation guarantees, for almost every $x_0\in X$, a biLipschitz $\gamma\in \Gamma$ and a $t_0$ a density point of $\dom \gamma$ such that $\gamma(t_0)=x_0$ and such that $(f\circ\gamma)'(t_0)$ exists (see \cite{bate}*{Proposition 2.9}).  The existence of such a \emph{partial derivative} is precisely the motivation for relating Alberti representations and Lipschitz differentiability spaces.

Next we introduce some properties of Alberti representations.  Given a Lipschitz function $\phi : X \to \R^n$, we say that an Alberti representation of $\mu$ is in the \emph{$\phi$-direction} of a cone $C \subset \R^n$ if
\begin{align*}
  (\phi \circ \gamma)'(t) \in C \backslash \{0\}, \qquad \bP\mbox{--}a.e. ~\gamma \in \Gamma, \mu_\gamma\mbox{--}a.e. ~t \in \dom \gamma.
\end{align*}
A set of cones $C_1,...,C_k$ in $\R^n$ is said to be linearly independent if for any collection of nonzero vectors $v_i \in C_i$, we have that $\{v_1,...,v_k\}$ is linearly independent.  A collection of Alberti representations $\cA_1,...,\cA_n$ is then said to be \emph{$\phi$-independent} if there exist $n$ linearly independent cones $C_1,...,C_n$ in $\R^n$ such that each $\cA_i$ is in $\phi$-direction $C_i$.

Note that the existence of $n$ independent Alberti representations give rise to $n$ partial derivatives of any RNP valued Lipschiz function $f\colon X \to V$ at almost every point.  Precisely, let $(U,\phi)$ be an $n$-dimensional chart and $x \in U$.  Suppose there exist $\gamma_1,...,\gamma_n \in \Gamma$ so that $\gamma_i^{-1}(x) = 0$ is a density point of $\dom \gamma_i$ such that $(\phi \circ \gamma_i)'(0) \in \R^n$ exist and are linearly independent and $(f \circ \gamma_i)'(0) \in V$ also exist.  Then we can form the linear map $\nabla f(x) \in L(\R^n,\R)$ so that
\begin{align*}
  \nabla f(x) (\phi \circ \gamma_i)'(0) = (f \circ \gamma_i)'(0), \qquad \forall i.
\end{align*}
We call $\nabla f(x)$ a {\it gradient} of $f$ at $x$ with respect to $\phi$.  Note that in general, a gradient may not be unique since two different choices of the $\gamma_i$ may produce different partial derivatives.  However, this is not true in a LDS since the gradient agrees with the derivative at almost every point (see Theorem \ref{t:LDS-UAR} below).

Now let $\kappa > 0$, $V$ be a RNP Banach space, and $f : X \to V$ Lipschitz.  We say that a curve fragment $\gamma\in \Gamma$ has speed $\kappa$ (or $f$-speed $\kappa$) if for almost every $t \in \dom \gamma$
\begin{align*}
  \|(f \circ \gamma)'(t)\| \geq \kappa \Lip(f,\gamma(t)) \Lip(\gamma,t).
\end{align*}
We then say that an Alberti representation $\cA$ has \emph{speed $\kappa$} (or $f$-speed $\kappa$) if $\bP$-a.e. $\gamma \in \Gamma$ has $f$-speed $\kappa$.
\begin{definition}\label{d:universal}
Let $(U,\phi)$ be an $n$-dimensional chart in $(X,d,\mu)$ and $\cA_1,...,\cA_n$ a collection of $\phi$-independent Alberti representations of $\mu \llcorner U$, each with positive $\phi$-speed.  We say that $\cA_1,...,\cA_n$ is \emph{universal} if there exists a $\kappa > 0$ so that, for any Lipschitz function $f : X \to \R$, we have a Borel decomposition $X = X_1 \cup ... \cup X_n$ such that the Alberti representation on $\mu \llcorner U_i$ as induced by $\cA_i$ has $f$-speed $\kappa$.
\end{definition}
Similarly, we say that a collection of Alberti representations are RNP-universal if there exists {\it one} constant $\kappa > 0$ so that the above universality condition holds with constant $\kappa$ uniformly for every RNP Banach space $V$ and every Lipschitz function $f \colon X \to V$.

Universal Alberti representations were introduced in \cite{bate} as a characterization of Lipschitz differentiability:
\begin{theorem}[\cite{bate}*{Theorem 7.8}] \label{t:LDS-UAR}
  A metric measure space $(X,d,\mu)$ is a Lipschitz differentiability space if and only if there exists a countable Borel decomposition $X =\bigcup_i U_i$ such that each $\mu \llcorner U_i$ has a finite universal collection of Alberti representation.

  In this case, if $(U,\phi)$ is an $n$-dimensional chart and $\cA_1,...,\cA_n$ are Alberti representations that are universal and $\phi$-independent, then any gradient $\nabla f(x_0)$ of $f$ at $x_0$ with respect to the $\cA_i$ is the derivative of $f$ at $x_0$.
\end{theorem}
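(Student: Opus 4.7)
The plan is to handle the biconditional direction by direction and to observe that the ``in this case'' gradient-equals-derivative statement is a direct byproduct of the sufficiency argument.

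For sufficiency, fix an $n$-dimensional chart $(U,\phi)$ carrying $\phi$-independent Alberti representations $\cA_1,\ldots,\cA_n$ that are universal with constant $\kappa$, and let $f\colon X\to\R$ be Lipschitz. For $\mu$-a.e.\ $x_0\in U$, \cite{bate}*{Proposition 2.9} applied simultaneously to $\phi$ and $f$ on each $\cA_i$ yields curves $\gamma_i\in\cA_i$ through $x_0$ at density points where $(\phi\circ\gamma_i)'(0)$ and $(f\circ\gamma_i)'(0)$ both exist; $\phi$-independence then uniquely solves for the gradient $\nabla f(x_0)\in L(\R^n,\R)$. To upgrade this to a full derivative, fix a countable dense $D\subset L(\R^n,\R)$ and apply universality separately to each $g_v:=f-v\cdot\phi$ for $v\in D$, intersecting the resulting countably many full-measure sets to obtain a full-measure $E\subset U$ on which, for every $v\in D$, some index $j(v)$ has $\cA_{j(v)}$ possessing $g_v$-speed $\kappa$ at every point. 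For $x_0\in E$, set $v_0:=\nabla f(x_0)$ and choose $v_k\in D$ with $v_k\to v_0$; after passing to a subsequence with $j(v_k)\equiv j$, the universality estimate along the curve $\gamma_j$ reads
\[\bigl|(g_{v_k}\circ\gamma_j)'(0)\bigr|\;\geq\;\kappa\,\Lip(g_{v_k},x_0)\,\Lip(\gamma_j,0).\]
By the defining equation for $\nabla f(x_0)$, the left-hand side equals $|(v_0-v_k)\cdot(\phi\circ\gamma_j)'(0)|\to 0$, while continuity of $v\mapsto\Lip(g_v,x_0)$ sends the right-hand side to $\kappa\,\Lip(f-v_0\cdot\phi,x_0)\,\Lip(\gamma_j,0)$. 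Since $\Lip(\gamma_j,0)>0$ by biLipschitzness, this forces $\Lip(f-v_0\cdot\phi,x_0)=0$, i.e., $f$ is Cheeger differentiable at $x_0$ with derivative $v_0=\nabla f(x_0)$.

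For necessity, let $(X,d,\mu)$ be an LDS with charts $(U_i,\phi_i)$; pointwise doubling is Theorem~\ref{t:porous-null}. On each chart of dimension $n$ I would construct the required Alberti representations by contradiction: were no finite $\phi$-independent collection with positive $\phi$-speed available on some positive-measure $V\subset U$, a pushforward analysis in the spirit of Alberti--Cs\"ornyei--Preiss \cite{acp} would produce a Lipschitz $h\colon\R^n\to\R$ such that $h\circ\phi$ admits no candidate linear derivative on a full-measure subset of $V$, contradicting the LDS assumption. The uniform constant $\kappa$ demanded by universality would then come from a quantitative refinement of the same contradiction: a failure sequence $(f_k,\kappa_k)$ with $\kappa_k\downarrow 0$ can, after careful normalization and diagonalization, be assembled into a single Lipschitz $f\colon X\to\R$ witnessing non-differentiability on a positive-measure set.

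The main obstacle is the necessity direction, specifically bridging the qualitative ``existence in a cone'' structural statement for Alberti representations to the quantitative uniform speed $\kappa$ required by universality. This requires delicate cone decompositions in $\R^n$, a constructive procedure for lifting 1-rectifiable decompositions of $\phi_*\mu$ to biLipschitz curve fragments in $X$ (using pointwise doubling and Lemma~\ref{l:porosity-subset} to rule out porous obstructions), and the diagonal construction of a single ``universal witness'' Lipschitz function from a uniformity failure sequence.
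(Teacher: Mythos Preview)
This theorem is quoted from \cite{bate} and is not proved in the present paper, so there is no in-paper proof to compare against; what follows evaluates your sketch on its own terms and against the argument in \cite{bate}.

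Your sufficiency argument is essentially the right one and is close to the approach in \cite{bate}, but there is an order-of-quantifiers gap. You first select the curves $\gamma_1,\ldots,\gamma_n$ through $x_0$ via \cite{bate}*{Proposition 2.9}, and only afterwards invoke universality for each $g_v=f-v\cdot\phi$ to obtain the decompositions $X=X_1^v\cup\cdots\cup X_n^v$. The speed inequality you then write along $\gamma_j$ holds for $\bP_j$-a.e.\ curve at $\mu_\gamma$-a.e.\ parameter in $\gamma^{-1}(X_j^{v_k})$; nothing you have arranged forces the \emph{particular} curve $\gamma_j$ already fixed to lie in this $\bP_j$-full-measure set for every $v_k$. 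The fix is to reverse the order: first apply universality to all $g_v$, $v\in D$, producing countably many almost-everywhere conditions on curves and parameters; then, when choosing $\gamma_i$ through $x_0$ via the Alberti representation, intersect these countably many conditions so that each $\gamma_i$ is simultaneously generic for the speed estimate of every $g_v$ with $j(v)=i$. With that correction your limiting argument goes through. You should also note that uniqueness of the derivative (as opposed to mere existence of a linear approximant) uses the $\phi$-independence of the representations via the lower bound \eqref{e:phiisanorm}; you do not address this.

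Your necessity sketch correctly locates the difficulty but is not a proof. In \cite{bate} the existence of $\phi$-independent representations with positive $\phi$-speed is obtained not by pushing forward and invoking \cite{acp}, but by a direct structural dichotomy: either $\mu\llcorner V$ admits a representation in the $\phi$-direction of a given cone with a given speed, or there is a Borel set of full measure in $V$ that meets every such curve in a null set, from which one builds a real-valued Lipschitz function nondifferentiable on $V$. The uniform constant $\kappa$ is then produced by iteratively refining cones (this is \cite{bate}*{Theorem 9.5}, used in the present paper's proof of Proposition \ref{p:RNP-universal}), not by a diagonalization over a failure sequence $(f_k,\kappa_k)$. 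Your proposed diagonal construction would need to produce a single Lipschitz $f$ whose nondifferentiability set has positive measure, and it is not clear how to control the nondifferentiability loci of the pieces so that they overlap; the cone-refinement route in \cite{bate} avoids this issue entirely.
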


We have the analogous property for RNP Lipschitz differentiability spaces.
\begin{proposition} \label{p:RNP-universal}
  If $(X,d,\mu)$ is a RNP Lipschitz differentiability space, then there exists a countable Borel decomposition $X = \bigcup_i U_i$ such that each $\mu \llcorner U_i$ has a finite RNP-universal collection of Alberti representations.
\end{proposition}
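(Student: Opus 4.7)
The plan is to deduce this from Theorem \ref{t:LDS-UAR} by observing that the universality constant produced there depends only on the chart and the Alberti representations, not on the target space or the Lipschitz function being differentiated.  Since $\R$ has the RNP, every RNP-LDS is in particular an LDS, so Theorem \ref{t:LDS-UAR} immediately supplies a countable Borel decomposition $X = \bigcup_i U_i$ together with $\phi_i$-independent Alberti representations $\cA_1^{(i)}, \ldots, \cA_{n_i}^{(i)}$ of $\mu \llcorner U_i$ that are universal for real-valued Lipschitz functions with some $\kappa_i > 0$.  I will show that, after a further countable Borel decomposition of each $U_i$ to make a few chart-related quantities uniform, the same collection is RNP-universal with a constant $\kappa_i'$ depending only on $\kappa_i$, $n_i$, the apertures of the cones $C_j^{(i)}$, and the Lipschitz structure of $\phi_i$.

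Fix $i$ and drop subscripts.  Let $V$ be an RNP Banach space and $f \colon X \to V$ a Lipschitz function.  By the RNP-LDS hypothesis $f$ is Cheeger-differentiable at $\mu$-a.e.\ $x \in U$ with derivative $Df(x) \in L(\R^n, V)$.  Since $V$ has the RNP, $(f \circ \gamma)'(t)$ exists for $\bP_j$-a.e.\ $\gamma \in \cA_j$ and $\mu_\gamma$-a.e.\ $t$; at density points of $\dom \gamma$ where $(\phi \circ \gamma)'(t)$ also exists, the chain rule yields
\[
(f \circ \gamma)'(t) \;=\; Df(\gamma(t))\bigl[(\phi \circ \gamma)'(t)\bigr].
\]
Applying the real-valued universality separately to each coordinate of $\phi$ and intersecting the resulting decompositions produces a Borel decomposition $X = X_1 \cup \cdots \cup X_n$ on which, for $\bP_j$-a.e.\ $\gamma$ with $\gamma(t) = x \in X_j$, the vector $(\phi \circ \gamma)'(t) \in C_j$ captures a definite fraction of $\Lip(\phi, x) \md_\gamma(t)$.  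It remains to refine $X_j$ so that the cone $C_j$ also captures a definite fraction of the operator norm of $Df(x)$; combined via the chain rule, this then yields $\|(f \circ \gamma)'(t)\| \geq \kappa' \Lip(f, x) \md_\gamma(t)$ uniformly.

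This refinement rests on two ingredients.  First, $\phi$-independence provides a basis $v_1, \ldots, v_n$ with $v_j \in C_j$, which forces that for every $x$ there is some $j$ with $\|Df(x) v_j\| \gtrsim \|Df(x)\|_{\mathrm{op}}$.  Second, in an LDS the convex hull of the tangent set $\Tan(\phi, x)$ contains a ball of radius $r(x) > 0$ at $\mu$-a.e.\ $x$ (the quantitative form of uniqueness of the Cheeger derivative, as recalled in the proof of Lemma \ref{l:RNPLDS-subset}), and on a countable Borel decomposition of $U$ this $r(x)$ may be taken uniform, so that $\Lip(f, x) \asymp \|Df(x)\|_{\mathrm{op}} \Lip(\phi, x)$.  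Assigning each $x$ to an appropriate $j$ Borel-measurably then produces the final decomposition.  The main obstacle is that the direction $(\phi \circ \gamma)'(t) \in C_j$ is not exactly $v_j$ but a generic vector in $C_j$: this is handled by shrinking the cones $C_j$ (at the cost of a smaller $\kappa'$) so that $\|Df(x) v\|$ is comparable to $\|Df(x) v_j\|$ for all $v \in C_j$, and then intersecting with the earlier decomposition from the coordinate-wise universality.  Crucially, the target $V$ enters only through the chain rule, which holds for any RNP Banach space along $\bP_j$-a.e.\ curve fragment, so the resulting $\kappa_i'$ is independent of both $V$ and $f$.
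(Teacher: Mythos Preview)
Your approach is essentially the paper's: both arguments use the chain rule $(f\circ\gamma)'(t)=Df(\gamma(t))[(\phi\circ\gamma)'(t)]$, decompose $U$ according to which coordinate direction captures a definite fraction of $\|Df(x)\|$, and combine this with the $\phi$-speed of the representations and the inequality $\Lip(f,x)\leq\|Df(x)\|\Lip(\phi,x)$. Two points are worth tightening, though.

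First, the step where you ``apply the real-valued universality separately to each coordinate of $\phi$ and intersect the resulting decompositions'' is both unnecessary and does not yield what you claim: intersecting $n$ decompositions into $n$ pieces gives $n^n$ pieces, and there is no reason the relevant index $j$ should be the same for every coordinate. What you actually need here is just the positive $\phi$-speed of the $\cA_j$, which is already part of the definition of a universal collection; drop this step entirely.

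Second, the phrase ``shrinking the cones $C_j$ at the cost of a smaller $\kappa'$'' hides real content. You cannot simply restrict a given Alberti representation to curves whose $\phi$-derivative lies in a smaller cone and expect it still to represent $\mu$. What is needed is a refinement result producing, on each chart, Alberti representations in cones $C(e_j,\epsilon_n)$ of aperture depending only on $n$ and with positive $\phi$-speed. The paper does exactly this by invoking \cite{bate}*{Theorem 9.5} at the outset rather than starting from Theorem~\ref{t:LDS-UAR}; once the cones are narrow enough (depending only on $n$), the elementary linear-algebra estimate $\|T(v)\|\geq \tfrac{1}{2n}\|T\|\,\|v\|$ for $v\in C(e_j,\epsilon_n)$ when $\|T(e_j)\|=\max_k\|T(e_k)\|$ finishes the argument uniformly in $V$ and $f$. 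Your outline becomes a correct proof once you replace the first step by this direct appeal.
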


\begin{proof}
  Let $V$ be a RNP Banach space and $T \in L(\R^m,V)$.  Then by simple linear algebra, we have that
  \begin{align*}
    \Lip(T,0) = \|T\| \leq m \max_{1 \leq i \leq m} \|T(e_i)\|.
  \end{align*}
  Let $i \in \{1,...,m\}$ be the index achieving the maximum of the right hand side.  Then there exists some $\epsilon_m > 0$ depending only on $m$ so that for all $v \in \{w \in \R^n : w \cdot e_i \geq (1 - \epsilon) |w| \} =: C(e_i,\epsilon_m)$ we have
  \begin{align}
    \|T(v)\| \geq \frac{1}{2} \|T(e_i)\| \|v\| \geq \frac{1}{2m} \Lip(T,0) \|v\|. \label{e:small-cone}
  \end{align}

  Let $(U_i,\phi_i : U_i \to \R^{n_i})$ be the countable set of charts of $X$.  By Lemma 3.7 of \cite{bate}, we may suppose that for each $\phi_i$, there exists some $\lambda_i > 0$ so that $\Lip(\phi_i,x) > \lambda_i$ for almost all $x \in U_i$.  As $X$ is also a Lipschitz differentiability space, we have by Theorem 9.5 of \cite{bate} that, for each $(U_i,\phi_i)$, there exists a collection $\cA_{i,1},...,\cA_{i,n_i}$ of Alberti representations such that each $\cA_{i,j}$ is in the $\phi_i$-direction of $C(e_j,\epsilon_{n_i})$ with $\phi_i$-speed $\kappa_i > 0$.

  Let $f \colon X \to V$ be Lipschitz.  Let $N \subset U_i$ be the set of points $x \in U$ for which $Df(x)$ does not exist.  Then for $x \in U_i \backslash N$, we put $x \in V_j$ if
  \begin{align*}
    \|Df(x)e_j\| = \max_{1 \leq k \leq n_i} \|Df(x)e_k\|,
  \end{align*}
  breaking ties arbitrarily.

  Note that as
  \begin{align}
    \lim_{X \ni y \to x} \frac{\|f(y) - f(x) - Df(x)(\phi_i(y) - \phi_i(x))\|}{d(y,x)} = 0 \label{e:universal-f-diff}
  \end{align}
  whenever $f$ is differentiable at $x$, we get for almost every $x \in U_i$ that
  \begin{align}
    \Lip(f,x) \leq \Lip(Df(x),0) \Lip(\phi_i,x). \label{e:Lip-submult}
  \end{align}

  For $\bP_j$-a.e. $\gamma \in \Gamma$ and $\mu_\gamma$-a.e. $t \in \dom\gamma$ such that $\gamma(t) \in V_j$, we get that $(\phi_i \circ \gamma)'(t)$ and $(f \circ \gamma)'(t)$ exist and $(\phi_i \circ \gamma)'(t) \in C(e_j,\epsilon_{n_j})$.  Thus, we get that
  \begin{align*}
    \|(f \circ \gamma)'(t)\| &\overset{\eqref{e:universal-f-diff}}{=} \|Df(\gamma(t))(\phi_i \circ \gamma)'(t)\| \\
    &\overset{\eqref{e:small-cone}}{\geq} \frac{1}{2n_i} \Lip(Df(\gamma(t)),0) \|(\phi_i \circ \gamma)'(t)\|  \\
    &\geq \frac{\kappa_i}{2n_i} \Lip(Df(\gamma(t)),0) \Lip(\phi_i,\gamma(t)) \Lip(\gamma,t) \\
    &\overset{\eqref{e:Lip-submult}}{\geq} \frac{\kappa_i}{2n_i} \Lip(f,\gamma(t)) \Lip(\gamma,t).
  \end{align*}
\end{proof}

\begin{remark}
  Notice that, completely analogously to the real case given in \cite[Theorem 10.4]{bate}, a collection of $\kappa$ RNP-universal Alberti representations gives rise to a Lip-lip inequality for RNP valued Lipschitz functions.  Indeed, if $f\colon X \to V$ is Lipschitz, then for almost every $x$ there exists a $\gamma$ and a $t$ with $\gamma(t)=x$ such that
  \[\kappa\Lip(f,x)\leq (f\circ\gamma)'(t) \leq \lip(f,x).\]
\end{remark}

\subsection{The Poincar\'e inequality}
Given a real valued function $u$ on a metric space $(X,d)$, we say that a Borel function $\rho : X \to [0,\infty]$ is an {\it upper gradient} for $u$ if
\begin{align*}
  |u(\gamma(b)) - u(\gamma(a))| \leq \int_\gamma \rho ~ds
\end{align*}
for every rectifiable curve $\gamma : [a,b] \to X$.  Here, the line integral on the right must be independent of reparameterization.  Note that this definition is only meaningful if there are rectifiable curves in $X$ as otherwise any $\rho$ would work.  It also follows that there always exist upper gradients, even in spaces well connected by rectifiable curves, as we can take $\rho \equiv \infty$.  If $u$ is $L$-Lipschitz, then $\rho \equiv L$ is an upper gradient, but there may be better ones.

Using upper gradients, we define a Poincar\'e inequality as follows.
\begin{definition} \label{d:PI}
  Let $p \in [1,\infty)$.  A metric measure space $(X,d,\mu)$ is said to satisfy a $p$-Poincar\'e inequality in the sense of Heinonen and Koskela if there exists a $C > 0$ and a $\lambda \geq 1$ so that
  \begin{align*}
    \fint_{B(x,r)} |u - u_{B(x,r)}| ~d\mu \leq Cr \left( \fint_{B(x,\lambda r)} \rho^p ~d\mu \right)^{1/p}, \qquad \forall x \in X, r > 0,
  \end{align*}
  whenever $u : X \to \R$ is 1-Lipschitz with upper gradient $\rho : X \to [0,1]$.  Here, $u_{B(x,r)}$ is the average of $u$ on the ball $B(x,r)$.

  A doubling metric measure space supporting a $p$-Poincar\'e inequality for some $p \in [1,\infty)$ is called a {\it PI space}.
\end{definition}

Notice also that the ball on the right hand side is inflated by a factor of $\lambda$.  By H\"older's inequality, we easily see that a $p$-Poincar\'e inequality implies a $q$-Poincar\'e inequality when $q > p$.  Also note that a simple scaling argument shows that the Poincar\'e inequality is true for any Lipschitz function.

There are other versions of the Poincar\'e inequality where $u$ is allowed to be continuous or measurable.  Under the assumption that the underlying space is proper and doubling, these notions are all equivalent \cite{heinonen-koskela-note}.  We use the Lipschitz functions version, which is {\it a priori} the weakest, because the proof of Lipschitz differentiability from the Poincar\'e inequality only needs this version.

Recall that the notion of upper gradients is vacuous in a space without rectifiable curves.  It was proven by Semmes (although the proof was given in a paper of Cheeger) that PI spaces have a rich family of rectifiable curves.  Recall that a metric space $X$ is quasiconvex if there exists some $C \geq 1$ so that, for any $x$ and $y$ in $X$, there exists a 1-Lipschitz curve $\gamma$ connecting $x$ to $y$ such that
\begin{align*}
  \len(\gamma) \leq C d(x,y).
\end{align*}
Another way to interpret this condition is that the metric of quasiconvex spaces are biLipschitz to their corresponding path metric.  We have the following theorem:

\begin{theorem}[\cite{cheeger}*{Theorem 17.1}] \label{t:PI-quasiconvex}
  PI spaces are quasiconvex.
\end{theorem}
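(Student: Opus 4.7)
The plan is to prove quasiconvexity by constructing, for each pair $x,y\in X$ and each $\epsilon>0$, an $\epsilon$-chain from $x$ to $y$ whose total length is bounded by $C\cdot d(x,y)$ uniformly in $\epsilon$, and then extracting a limiting 1-Lipschitz curve. The constant $C$ will depend only on the Poincar\'e constants and the doubling constant.

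\emph{Step 1 (reduction to chain-connectivity via Arzel\`a--Ascoli).} Let $x,y\in X$, $r=d(x,y)$. Suppose I can show: there exists $C$ such that for every $\epsilon>0$ there is a finite sequence $x=z_0,z_1,\dots,z_n=y$ with $d(z_i,z_{i+1})<\epsilon$ and $\sum_i d(z_i,z_{i+1})\le Cr$. Parametrize each such chain by arclength to obtain a 1-Lipschitz map $\gamma_\epsilon\colon[0,L_\epsilon]\to X$ with $L_\epsilon\le Cr$, $\gamma_\epsilon(0)=x$, $\gamma_\epsilon(L_\epsilon)=y$, and image contained in $\overline{B(x,Cr)}$. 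Since $(X,d)$ is complete and doubling it is proper, so $\overline{B(x,Cr)}$ is compact; the family $\{\gamma_\epsilon\}$ is equicontinuous and pointwise precompact, hence Arzel\`a--Ascoli yields a 1-Lipschitz limit curve $\gamma$ from $x$ to $y$ of length at most $Cr$, which is the quasigeodesic we want.

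\emph{Step 2 (the test function).} Fix $\epsilon>0$ and define the $\epsilon$-accessible cost
\[
u_\epsilon(z):=\inf\Bigl\{\sum_{i=0}^{n-1} d(z_i,z_{i+1}) : z_0=x,\ z_n=z,\ d(z_i,z_{i+1})<\epsilon\Bigr\},
\]
with $\inf\emptyset=+\infty$. Truncate: $v:=\min(u_\epsilon, M)$ where $M:=100\lambda r$ (with $\lambda$ the Poincar\'e dilation constant). Then $v\colon X\to[0,M]$ is 1-Lipschitz, and a direct check shows that $\rho:=\chi_{\{v<M\}}$ is an upper gradient for $v$: indeed, along any rectifiable curve $\eta$, the value of $v$ can only change while $\eta$ stays in the region where $v<M$, and there it is 1-Lipschitz.

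\emph{Step 3 (pointwise chain bound via telescoping Poincar\'e).} The goal is to conclude $v(y)\le Cr$, which gives the chain bound. Following the standard chain-of-balls argument, choose a sequence of balls $B_k=B(y,2^{-k}r)$ for $k\ge 0$ (and similarly centred at $x$), apply the $p$-Poincar\'e inequality on each $B_k$ to bound $|v_{B_k}-v_{B_{k+1}}|$ by $C\,2^{-k}r\,(\fint_{\lambda B_k}\rho^p)^{1/p}\le C\,2^{-k}r$, and telescope to obtain
\[
|v(y)-v_{B_0}|\le \sum_{k\ge 0} C\,2^{-k}r \le C'r,
\]
at every Lebesgue point of $v$ (hence at a dense set of points near $y$). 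A parallel bound at $x$, combined with $v(x)=0$, gives $v(y)\le C''r$ at a.e.\ $y$, and a small perturbation (replacing $y$ by a nearby Lebesgue point and using that $v$ is 1-Lipschitz) delivers the bound for the given $y$. Thus $v(y)=u_\epsilon(y)\le C''r<M$, yielding the desired $\epsilon$-chain.

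\emph{Main obstacle.} The hardest step is Step 3: converting the averaged Poincar\'e inequality into a pointwise bound on $u_\epsilon(y)$. The telescoping argument is standard in the theory of PI spaces, but requires care because $\rho$ is an indicator function and the inequality only holds at Lebesgue points; one must also verify that $v$ stays strictly below the truncation threshold $M$ so that the upper gradient estimate remains nontrivial. Choosing $M$ proportional to $r$ with a sufficiently large constant (depending only on the Poincar\'e and doubling data) closes the loop. Once Step 3 is complete, Steps 1--2 are essentially formal.
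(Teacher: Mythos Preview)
The paper does not give its own proof of this theorem; it is stated with a citation to \cite{cheeger}*{Theorem 17.1} and no argument is supplied. So there is nothing in the paper to compare your proposal against.

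That said, your outline is essentially the standard Semmes argument (as presented in \cite{cheeger} and \cite{heinonen}): build the $\epsilon$-chain distance $u_\epsilon$, control it via a telescoping application of the Poincar\'e inequality to get a bound $u_\epsilon(y)\le C\,d(x,y)$ uniform in $\epsilon$, then pass to a limit by Arzel\`a--Ascoli using properness. One correction: your claim in Step~2 that $v$ is globally $1$-Lipschitz is false in general. The function $u_\epsilon$ (and hence $v$) is only $1$-Lipschitz at scales below $\epsilon$, i.e.\ $|u_\epsilon(z)-u_\epsilon(w)|\le d(z,w)$ whenever $d(z,w)<\epsilon$. This does not break the argument, because what you actually need is that $\rho\equiv 1$ (or your $\rho=\chi_{\{v<M\}}$) is an upper gradient for $v$ along \emph{rectifiable curves}, and that follows since any rectifiable curve can be subdivided into arcs of length less than $\epsilon$. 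Note, however, that the Poincar\'e inequality as formulated in this paper (Definition~\ref{d:PI}) is only asserted for globally $1$-Lipschitz $u$; you should appeal to the stronger form in \cite{cheeger} or \cite{heinonen-koskela} for continuous $u$ with an upper gradient. With that adjustment, your argument is correct and matches the proof you would find in the cited references.
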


Examples of PI spaces include $\R^n$ (classical), Carnot groups \cite{jerison}, and many fractal constructions \cites{bourdon-pajot,laakso,cheeger-kleiner}.
For more information about the Poincar\'e inequality, see \cite{heinonen} and the references contained therein.


\section{RNP-differentiability implies local connectivity}\label{s:findingcurves}

We will measure the ``path connectedness" of a metric space using the following quantities.  Recall that $\gamma \in \Gamma$ is a 1-Lipschitz function from a compact subset of $\R$ into $X$.

\begin{definition}
Let $(X,d)$ be a metric space.  For $\gamma \in \Gamma$ we define the \emph{length} of $\gamma$, $\len \gamma$ to be the length of the smallest interval that contains $\dom \gamma$.  We also define the \emph{mass} of $\gamma$ to be $\mathcal L^1(\dom\gamma)$.

For a function $\rho \colon X \to [0,1]$ we define
\[\int_\gamma^* \rho = \left(\len\gamma - \ms\gamma\right)+ \int_{\dom\gamma} \rho \circ \gamma \md_{\gamma}\]
(compare to the situation when $\dom\gamma$ is an interval described in Remark \ref{rmk:ambrosio}).
For $x,y \in X$ we define $\Gamma(x,y)$ to be those $\gamma \in \Gamma$ such that $\gamma(\inf\dom\gamma)= x$ and $\gamma(\sup\dom\gamma)= y$.  For $0<\epsilon < 1$ we define
\[\rho_\epsilon(x,y) = \inf_{\gamma\in\Gamma(x,y)} \int_\gamma^* \epsilon.
\]
If $A\subset X$ is Borel and $x,y \in A$ we also define
\[\rho_\epsilon^A(x,y) = \inf_{\substack{\gamma\in\Gamma(x,y),\\\mathrm{im}(\gamma) \subset A\cup \{x,y\}}} \int_\gamma^* \epsilon.\]
Note that both of these are 1-Lipschitz functions and define a metric on $X$.
\end{definition}

We first state the following proposition, whose proof will be given in Section \ref{s:nondiff}.  This is the key proposition that will allow us to conclude that sets that exhibit bad pointwise Lipschitz behavior must have measure zero in RNP-LDS.  The proof will be by contradiction.  Namely, if $S$ has positive measure, then we will glue the functions $f_x$ into one RNP-valued function that is not differentiable on a positive subset of $S$.  This contradicts the hypothesis $(X,d,\mu)$ is RNP-LDS.

\begin{proposition} \label{p:nondiff-construct}
Let $(X,d,\mu)$ be an RNP-LDS and $0<\delta<1$.  Suppose that $S\subset X$ is Borel such that, for every $\epsilon >0$ and almost every $x\in S$ there exists a 1-Lipschitz $f_x^\epsilon\colon X\to \R$ such that
\[\Lip(f_x^\epsilon,z) < \epsilon \quad \mu \text{-a.e. } z\in S\]
and
\[\Lip(f_x^\epsilon,x) > \delta.\]
Then $\mu(S) = 0$.
\end{proposition}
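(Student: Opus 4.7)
The plan is to argue by contradiction: assume $\mu(S)>0$ and glue the pointwise functions $f_x^\epsilon$ into a single Lipschitz map $F\colon X\to V$ into a separable Hilbert space $V$ (which has the Radon-Nikodym property) such that $F$ fails to be Cheeger-differentiable on a subset of $S$ of positive measure. This contradicts the RNP-LDS hypothesis. Standard decompositions via Lemma~\ref{l:pointwise-doubling-decomp} and Lemma~\ref{l:unif-pointwise-doubling-decomp}, together with passage to a single chart $(U,\phi)$ and restriction to a good subset of $S$ on which $\Lip(\phi,\cdot)$ is bounded above and below, reduce us to the case where $S\subset U$ is in a metrically doubling, uniformly pointwise doubling piece and every $x\in S$ is a density point.

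For each $k\in\N$ I apply the hypothesis with $\epsilon=2^{-k}$ and use a Lusin/measurable-selection argument (the space of 1-Lipschitz functions modulo additive constants, with uniform convergence on compact sets, is separable) to extract, up to a null set, a countable Borel partition $S=\bigsqcup_i S_{k,i}$ together with 1-Lipschitz functions $f_{k,i}\colon X\to\R$ satisfying $\Lip(f_{k,i},z)<2^{-k}$ for $\mu$-a.e.\ $z\in S$ and $\Lip(f_{k,i},x)>\delta$ for every $x\in S_{k,i}$. Assembling scale-by-scale: for a rapidly decreasing sequence $r_k\downarrow 0$ and a Vitali cover of each $S_{k,i}$ by disjoint balls $B_{k,i,j}=B(x_{k,i,j},r_k)$ with $x_{k,i,j}\in S_{k,i}$ whose 5-fold dilates cover $S_{k,i}$ up to null, I set
\[
F(x):=\sum_{k,i,j} c_k\,\chi_{k,i,j}(x)\bigl(f_{k,i}(x)-f_{k,i}(x_{k,i,j})\bigr)\, e_{k,i,j},
\]
where $\chi_{k,i,j}$ is a standard Lipschitz cutoff supported in $10 B_{k,i,j}$ and equal to $1$ on $5 B_{k,i,j}$, $\{e_{k,i,j}\}$ is an orthonormal family in $V$, and the weights $c_k>0$ satisfy $\sum c_k^2<\infty$. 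Uniform doubling gives bounded overlap of $\{10 B_{k,i,j}\}_j$, so each scale-$k$ block is uniformly Lipschitz into $V$ independently of $k$; the square-summability of $(c_k)$ then makes $F$ Lipschitz.

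The key step is non-differentiability of $F$. At a $\mu$-typical $x_0\in S$, for each $k$ there is a unique $(i_k,j_k)$ with $x_0\in 5B_{k,i_k,j_k}$ and $x_0\in S_{k,i_k}$; the nearby center $x_{k,i_k,j_k}$ witnesses an oscillation of $f_{k,i_k}$ of relative size $\delta$ at scale $r_k$, while $\Lip(f_{k,i_k},x_0)<2^{-k}$ makes $f_{k,i_k}$ nearly constant in the asymptotic neighbourhood of $x_0$. If $F$ had a Cheeger derivative $T\in L(\R^n,V)$ at $x_0$, then projecting $T$ onto $e_{k,i_k,j_k}$ and invoking the chain rule together with the LDS cone condition from \cite{bate-speight} would force $\|T_{k,i_k,j_k}\|\lesssim c_k\,2^{-k}$. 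Comparing the differentiability expansion of $F$ at a point $y_k$ near $x_{k,i_k,j_k}$ realising the $\delta$-oscillation and at $x_{k,i_k,j_k}$ itself (both within $O(r_k)$ of $x_0$), the $(k,i_k,j_k)$-coordinate of $F(y_k)-F(x_{k,i_k,j_k})$ has magnitude $\gtrsim c_k\delta r_k$ while the linear prediction via $T_{k,i_k,j_k}$ is only $\lesssim c_k 2^{-k} r_k$; provided $r_k$ is chosen small enough relative to $c_k$ and to the differentiability modulus of the hypothetical $T$, the remaining $o(d)$ error cannot close the gap, yielding the contradiction.

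The principal obstacle is precisely this scale balancing: one must arrange the scales $r_k$ to decay fast enough that, for any putative derivative $T$, the differentiability modulus of $T$ at scale $r_k$ is beaten by the weighted off-center oscillation $c_k\delta$. This is handled either by a diagonalisation against a countable dense family of moduli, or by a recursive definition of $r_k$ that incorporates the Lipschitz data of the partial sum of $F$ constructed so far (and after a further passage to a positive-measure subset on which the modulus is uniform, via an Egoroff-style argument). The other essential technical input is the measurable/Lusin selection in the second paragraph, which promotes the pointwise existential hypothesis on $\{f_x^\epsilon\}$ to a countable family $\{f_{k,i}\}$ with uniform-in-$x$ properties suitable for the multiscale assembly.
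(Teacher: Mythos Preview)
Your Lusin/measurable-selection step in the second paragraph is impossible as stated, and this is the essential gap.  You claim a countable Borel partition $S=\bigsqcup_i S_{k,i}$ with a single function $f_{k,i}$ satisfying both $\Lip(f_{k,i},z)<2^{-k}$ for $\mu$-a.e.\ $z\in S$ \emph{and} $\Lip(f_{k,i},x)>\delta$ for every $x\in S_{k,i}$.  But $S_{k,i}\subset S$, so if $\mu(S_{k,i})>0$ these two conditions are directly contradictory.  The hypothesis is genuinely pointwise: the exceptional null set for $\Lip(f_x^\epsilon,\cdot)<\epsilon$ depends on $x$ and always contains $x$ itself.  No separability or Lusin argument can consolidate the family $\{f_x^\epsilon\}_x$ into countably many functions each of which has large $\Lip$ on a \emph{positive-measure} set while keeping $\Lip$ small almost everywhere on $S$.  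This is exactly what the introduction flags when it says the auxiliary functions ``cannot be combined to produce a \emph{single} real valued Lipschitz function that encompasses every point''.  Your non-differentiability argument inherits this contradiction: you assert $x_0\in S_{k,i_k}$ and then immediately use $\Lip(f_{k,i_k},x_0)<2^{-k}$, which your own construction forbids.

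The paper's proof avoids this by never asking any one function to have large $\Lip$ on a set of positive measure.  At each scale it takes a \emph{finite} cover $B(x_1,r_{x_1}),\dots,B(x_p,r_{x_p})$ of a large subset of $S$, uses only the $p$ functions $f_{x_1}^\epsilon,\dots,f_{x_p}^\epsilon$ based at the centers, and maps into $\ell_p$ (with this particular $p$) via all $2p$ coordinates $|f_{x_i}^\epsilon(\cdot)-f_{x_i}^\epsilon(x_i)|$ and $|f_{x_i}^\epsilon(\cdot)-f_{x_i}^\epsilon(y_i)|$.  The triangle inequality (not any uniformity of $\Lip$) then forces one coordinate to oscillate by $\gtrsim\delta$ near every $x$ in the cover.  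Stacking these finite-dimensional blocks over scales gives a map into $\bigl(\sum_i\ell_{p_i}\bigr)_1$, and the contradiction is obtained not against Cheeger differentiability directly but against the $\kappa$-RNP-universality of the Alberti representations (Proposition~\ref{p:RNP-universal}): the partial derivative $\|(F_k\circ\gamma)'\|$ is uniformly $\lesssim 2^{-k}$ along every curve, yet $\Lip(F_k,\cdot)\gtrsim\delta$ on a set of positive measure.  This sidesteps your ``principal obstacle'' entirely, since no differentiability modulus ever enters.
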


We now demonstrate how the construction given in Proposition \ref{p:nondiff-construct} allows us to join almost every $x$ to every nearby $y$ by curves in $\Gamma$.  This demonstrates why we must assume that the $f^x$ above only behave badly around a single point ($x$).  In turn, this is the reason why we must construct a non-differentiable RNP valued Lipschitz function and hence why we must work inside an RNP-LDS.

\begin{lemma} \label{l:rho-derivative}
Let $(U,\varphi)$ be an $n$-dimensional chart in an RNP-LDS $(X,d,\mu)$ with $\kappa$-universal Alberti representations.  Then for any $x \in X$ and $\epsilon > 0$, the function $f_x^\epsilon(z) = \rho_\epsilon(x,z)$ satisfies
  \begin{align*}
    \Lip(f,z) \leq \frac{\epsilon}{\kappa}, \qquad \mu\mbox{--}a.e. ~z \in U.
  \end{align*}
\end{lemma}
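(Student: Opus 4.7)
First note that $f := f_x^\epsilon = \rho_\epsilon(x,\cdot)$ is $1$-Lipschitz, since $\rho_\epsilon$ is a $1$-Lipschitz metric on $X$. The strategy is to use the $\kappa$-universal Alberti representations on the chart $(U,\varphi)$ to sandwich $\Lip(f,z)$ at $\mu$-a.e. $z \in U$ between the partial derivative $|(f\circ\gamma)'(t)|$ along a well-chosen biLipschitz fragment $\gamma$ from one of the representations.

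Apply the universality hypothesis to the real-valued Lipschitz function $f$ to obtain a Borel decomposition $U = U_1 \cup \dots \cup U_n$ such that the representation induced by $\cA_i$ on $\mu \llcorner U_i$ has $f$-speed $\kappa$. For $\mu$-a.e. $z \in U_i$, we may then select a biLipschitz $\gamma$ from $\cA_i$ and $t \in \dom\gamma$ with $\gamma(t)=z$ such that $t$ is a density point of $\dom\gamma$, $\md_\gamma(t) = \Lip(\gamma,t) > 0$ exists (positivity from the biLipschitz property), $(f\circ\gamma)'(t)$ exists, and
\[|(f\circ\gamma)'(t)| \;\geq\; \kappa\,\Lip(f,z)\,\Lip(\gamma,t).\]

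The companion upper bound on $|(f\circ\gamma)'(t)|$ comes from using $\gamma$ itself as a competitor in the definition of $\rho_\epsilon$. For $s\in \dom\gamma$ near $t$, set $I := [\min(t,s),\max(t,s)]$ and consider the restriction $\gamma_s := \gamma|_{\dom\gamma \cap I}\in \Gamma(\gamma(t),\gamma(s))$. The triangle inequality for $\rho_\epsilon$ together with the definition of $\int_{\gamma_s}^*\epsilon$ yields
\[|f(\gamma(s)) - f(\gamma(t))| \;\leq\; \rho_\epsilon(\gamma(t),\gamma(s)) \;\leq\; \bigl(|s-t| - \mathcal{L}^1(\dom\gamma \cap I)\bigr) + \epsilon \int_{\dom\gamma \cap I} \md_\gamma.\]
Dividing by $|s-t|$ and letting $s\to t$ within $\dom\gamma$, the first bracket vanishes (since $t$ is a density point of $\dom\gamma$) while the integral term tends to $\epsilon\,\md_\gamma(t) = \epsilon\,\Lip(\gamma,t)$ (by the defining Lebesgue-point property of $\md_\gamma$). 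Thus $|(f\circ\gamma)'(t)| \leq \epsilon\,\Lip(\gamma,t)$, and combining with the universality bound and dividing by $\Lip(\gamma,t)>0$ delivers $\Lip(f,z) \leq \epsilon/\kappa$.

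The only genuine computation is the competitor estimate above; the rest is bookkeeping. The main thing to verify is the simultaneous genericity of the four conditions on $(\gamma,t)$ (density of $t$ in $\dom\gamma$, existence of $\md_\gamma(t)$, existence of $(f\circ\gamma)'(t)$, and the $f$-speed inequality) on a set of $\mu$-full measure in each $U_i$. This is precisely what is packaged into the definition of an Alberti representation together with the $\kappa$-universality hypothesis, so no new measure-theoretic work is required beyond invoking these standard facts.
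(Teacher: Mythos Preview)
Your proof is correct and follows essentially the same strategy as the paper: bound $|(f\circ\gamma)'(t)|$ by $\epsilon\,\Lip(\gamma,t)$ using a restriction of $\gamma$ as a competitor, then invoke $\kappa$-universality. The only notable difference is that you use the triangle inequality for the metric $\rho_\epsilon$ to obtain $|f(\gamma(s))-f(\gamma(t))|\le \rho_\epsilon(\gamma(t),\gamma(s))$ directly, whereas the paper explicitly concatenates a near-optimal $\gamma_t\in\Gamma(x,\gamma(t))$ with $\gamma|_{[t,s]}$ to build a competitor in $\Gamma(x,\gamma(s))$; your route is a clean shortcut that exploits something the paper already recorded (that $\rho_\epsilon$ is a metric). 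You also keep the factor $\md_\gamma(t)$ in the upper bound rather than using $\md_\gamma\le 1$, which makes the final division by $\Lip(\gamma,t)>0$ transparent.
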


\begin{proof}
For a fixed $x\in X$, let $f=f_x^\epsilon$, and pick a $\gamma\in\Gamma$.  We will show that $|(f\circ\gamma)'(t)| \leq \epsilon$ for $\mathcal L^1$-a.e. $t$.

Indeed, let $t$ be a point of density of $\dom \gamma$ such that $(f \circ \gamma)'(t)$ exists. Let $\delta > 0$ and $R(\delta) > 0$ be so that
  \begin{align}
    |\dom \gamma \cap [t-r,t+r]| > (1-\delta) 2r, \qquad \forall r < R(\delta) \label{e:dom-dense}
  \end{align}
and let $\gamma_t \in \Gamma(x,\gamma(t))$ and $s \in \dom\gamma \cap [t,t+R(\delta))$.  We can define a curve $\gamma_s \in \Gamma(x,\gamma(s))$ by taking $\gamma_t$ and then concatenating $\gamma|_{[t,s]}$ at the end, not allowing for any gaps between the domains of $\gamma_t$ and $\gamma|_{[t,s]}$.  By translating the domain if necessary, we may suppose that $\gamma_s(t) = \gamma(t)$ and that for $r > t$, $\gamma_s(r) = \gamma(r)$.

  We get that
  \begin{align*}
    f(\gamma(s)) = \inf_{\gamma \in \Gamma(x,\gamma(s))} \int_\gamma^* \epsilon \leq \int_{\gamma_s}^* \epsilon = \int_{\gamma_t}^* \epsilon + \int_{\gamma|_{[t,s]}}^* \epsilon.
  \end{align*}
  As $\gamma$ is 1-Lipschitz, we get by \eqref{e:dom-dense} that
  \begin{align*}
    \int_{\gamma|_{[t,s]}}^* \epsilon \leq (s-t) \epsilon + 2 \delta (s-t).
  \end{align*}
  Thus, for each $s \in [t,t+R)$, we get that
  \begin{align*}
    f(\gamma(s)) \leq \int_{\gamma_t}^* \epsilon + (\epsilon + 2 \delta)(s-t).
  \end{align*}
  Taking the infimum of the right hand side over $\gamma_t \in \Gamma(z,\gamma(t))$, we get
  \begin{align*}
    f(\gamma(s)) \leq f(\gamma(t)) + (\epsilon + 2 \delta)(s-t).
  \end{align*}
  Thus, we see that
  \begin{align*}
    \lim_{s \to t^+} \frac{f(\gamma(s)) - f(\gamma(t))}{s-t} \leq \epsilon + 2 \delta.
  \end{align*}
  Taking $\delta \to 0$ gives us
  \begin{align}
    \lim_{s \to t^+} \frac{f(\gamma(s)) - f(\gamma(t))}{s-t} \leq \epsilon. \label{e:fgamma-pos}
  \end{align}
  By reversing the direction of $\gamma$, we can prove that
  \begin{align}
    \lim_{s \to t^-} \frac{f(\gamma(s)) - f(\gamma(t))}{t-s} \leq \epsilon. \label{e:fgamma-neg}
  \end{align}
  Using the fact that $(f \circ \gamma)'(t)$ exists, \eqref{e:fgamma-pos} and \eqref{e:fgamma-neg} imply that $|(f\circ\gamma)'(t)|\leq \epsilon$.

  Let $(\bP_i,\mu_\gamma^i)$ be the $\kappa$-universal Alberti representations of $(U,\varphi)$.  Then, as shown above, for every $1\leq i \leq n$, $|(f\circ \gamma)'(t)| \leq \epsilon$ for $\mathbb P_i$-a.e. $\gamma \in \Gamma$ and $\mu_\gamma^i$-a.e. $t \in\dom\gamma$.  Therefore, for $\mu$-a.e. $z\in U$ we have $\Lip(f,z)\leq \epsilon/\kappa$, as required.
\end{proof}

\begin{proposition}\label{p:somecurves}
Let $(X, d, \mu)$ be a RNP-LDS and $0<\delta<1$.  Then for $\mu$-a.e. $x_0 \in X$, there exists an $\epsilon_0>0$ such that
\begin{align}
  \limsup_{x\to x_0} \frac{\rho_{\epsilon_0}(x,x_0)}{d(x,x_0)} < \delta. \label{e:initial-rho}
\end{align}
\end{proposition}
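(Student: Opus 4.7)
The plan is to argue by contradiction via Proposition~\ref{p:nondiff-construct}, using the metrics $\rho_\epsilon$ themselves as the single-point bad functions supplied by Lemma~\ref{l:rho-derivative}. Let
\[
B := \Bigl\{ x_0 \in X \colon \forall\, \epsilon > 0,\ \limsup_{x \to x_0} \frac{\rho_\epsilon(x,x_0)}{d(x,x_0)} \geq \delta \Bigr\}
\]
denote the set where the conclusion fails, and suppose towards a contradiction that $\mu(B) > 0$.

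First, I would reduce to working in a single chart with a uniform universality constant. By Proposition~\ref{p:RNP-universal}, $X$ admits a countable Borel decomposition $X = \bigcup_i U_i$ so that $\mu \llcorner U_i$ carries a finite $\kappa_i$-RNP-universal family of Alberti representations. Some $B \cap U_i$ has positive measure; fix such an $i$ and set $U := U_i$, $\kappa := \kappa_i$, and $B_U := B \cap U$.

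Next, I would build the test functions. Fix any $\delta' \in (0,\delta)$. For each $\epsilon > 0$ and each $x \in B_U$, define
\[
f_x^\epsilon(z) := \rho_{\kappa \epsilon/2}(x,z),
\]
which is $1$-Lipschitz in $z$ because $\rho_{\kappa\epsilon/2}$ is a metric on $X$. Applying Lemma~\ref{l:rho-derivative} with $\kappa\epsilon/2$ in place of $\epsilon$ gives $\Lip(f_x^\epsilon, z) \leq \epsilon/2 < \epsilon$ for $\mu$-a.e. $z \in U$, and hence for $\mu$-a.e. $z \in B_U$. On the other hand, since $f_x^\epsilon(x) = 0$ and $x \in B$,
\[
\Lip(f_x^\epsilon, x) = \limsup_{y \to x} \frac{\rho_{\kappa\epsilon/2}(x,y)}{d(x,y)} \geq \delta > \delta'.
\]
Thus the family $\{f_x^\epsilon\}_{x \in B_U,\, \epsilon > 0}$ verifies the hypothesis of Proposition~\ref{p:nondiff-construct} for $S := B_U$ with parameter $\delta'$, forcing $\mu(B_U) = 0$ and contradicting our choice of $i$.

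The argument as outlined is essentially bookkeeping once the two external tools are in hand. Lemma~\ref{l:rho-derivative} is what forces $\Lip(f_x^\epsilon, \cdot)$ to be small almost everywhere, and is where the RNP-universal Alberti representations enter. The genuinely hard part is Proposition~\ref{p:nondiff-construct}, whose proof will require gluing the single-point bad functions $f_x^\epsilon$ into a single non-differentiable RNP-valued Lipschitz function — this is precisely where the RNP hypothesis on the target space is forced, and it is deferred to Section~\ref{s:nondiff}.
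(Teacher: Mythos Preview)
Your proof is correct and follows essentially the same route as the paper's own argument: define the bad set, reduce to a chart with $\kappa$-universal Alberti representations, use Lemma~\ref{l:rho-derivative} to bound $\Lip(\rho_\epsilon(x,\cdot),z)$ by $\epsilon/\kappa$ almost everywhere, and invoke Proposition~\ref{p:nondiff-construct}. Your rescaling $\epsilon \mapsto \kappa\epsilon/2$ to obtain $\Lip(f_x^\epsilon,\cdot)<\epsilon$ directly, and the use of an auxiliary $\delta'<\delta$ to obtain the strict inequality $\Lip(f_x^\epsilon,x)>\delta'$, are minor cosmetic variations on the paper's choice of taking $\epsilon_i\searrow 0$ and $\delta/2$.
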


\begin{proof}
Suppose that the conclusion is false for some $0<\delta<1$.  Then there exists $S\subset X$ of positive measure such that, for any $\epsilon > 0$ and every $x\in S$,
\[\Lip(f_x^\epsilon,x)  > \delta/2,\]
for $f_x^\epsilon(z) = \rho_\epsilon(x,z)$.

Note that $X$ is covered by a countable collection of universal Alberti representations and so we may suppose that there exist a $\kappa>0$ and $U \supset S$ such that $\mu \llcorner U$ has $\kappa$-universal Alberti representations.  Therefore, if we pick $\epsilon_i \searrow 0$, by the previous lemma, for every $i\in \mathbb N$ and $\mu$-a.e. $x\in S$ we have
\[
  \Lip(f_{\epsilon_i}^x,z)\leq \epsilon_i/\kappa, \qquad \mu\text{-a.e.} ~z \in S.
\]
Then $S$ (being an RNP-LDS itself) satisfies the hypotheses of Proposition \ref{p:nondiff-construct} and so we must have $\mu(S)=0$, a contradiction.
\end{proof}

For any $0<\delta<1$ and any suitable $x,x_0$, the previous proposition finds a curve $\gamma\in\Gamma(x,x_0)$ with $\ms \gamma > (1-\delta) \len\gamma$.  That is, a curve fragment with only a very small amount of fragmentation.  However, $\len\gamma \leq \delta d(x,y)/\epsilon_0$ and, of course, $\delta/\epsilon_0$ may not be bounded as $\delta \to 0$.

Fortunately, there is another way to reduce $\delta$ without reapplying the proposition.  Under suitable conditions, we can join the gaps in $\gamma$ by other curve fragments that satisfy the same properties as $\gamma$.  We can then form a new curve $\gamma^*$ that travels mostly along $\gamma$ but travels along these new curves in the gaps of $\gamma$.  Thus $\gamma^*$ has even smaller fragmentation than $\gamma$.  Of course, we must do this in a certain way so that the length of $\gamma^*$ is not too much larger than that of $\gamma$.  By repeating this, we are able to obtain a curve that joins near by points with arbitrarily small fragmentation and controlled length.

In order to do this, we must make sure that the gaps in $\gamma$ begin and end at points that belong to some predetermined ``good set'' (so that the gap can be joined by another good curve fragment).  For simplicity, we will require that \emph{all} of the points in $\gamma$ are good.  We will use the following lemma (at a density point of the ``good set'') to ensure this.

\begin{lemma}\label{l:pokingholes}
For $0< \delta < 1$ suppose that $(X,d,\mu)$ is an RNP-LDS.  Then for $\mu$-a.e. $x \in X$ there exist $D,R,\epsilon_0 >0$ such that, whenever $0<r<R$ and $y\in X$ with $r/2 < d(x,y)<r$ and $A \subset X$ with
\[\frac{\mu(A\cap B(x,\delta r/\epsilon_0))}{\mu(B(x,\delta r/\epsilon_0))} < D,\]
we have
\[\frac{\rho_{\epsilon_0}^{A^c}(x,y)}{d(x,y)} < \delta.\]
\end{lemma}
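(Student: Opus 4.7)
The proof will proceed by contradiction, mirroring the strategy of Proposition \ref{p:somecurves} but with the obstacle set $A$ built into the candidate Lipschitz function. Suppose the conclusion fails on a Borel set $S \subset X$ with $\mu(S)>0$. By Proposition \ref{p:RNP-universal} together with Lemmas \ref{l:pointwise-doubling-decomp} and \ref{l:unif-pointwise-doubling-decomp}, we may assume $S$ sits in a piece where $\mu$ admits $\kappa$-RNP-universal Alberti representations and is uniformly pointwise doubling. Fix $\epsilon>0$ and set $\epsilon_0 := \epsilon\kappa$; the aim is to produce, for each $x\in S$, a 1-Lipschitz $f_x^\epsilon\colon X\to\R$ satisfying the hypotheses of Proposition \ref{p:nondiff-construct}, namely $\Lip(f_x^\epsilon, z) < \epsilon$ for $\mu$-a.e.\ $z\in S$ and $\Lip(f_x^\epsilon, x) > \delta$.

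The failure of the lemma at $x\in S$ gives, for every $D, R>0$, a witnessing triple $(r, y, A)$. Choosing sequences $D_n, R_n \downarrow 0$ that decay sufficiently fast, we obtain witnesses $(r_n, y_n, A_n)$ with $r_n \downarrow 0$ and $r_n/2 < d(x, y_n) < r_n$. Since only curves with image in $B(x, c r_n/\epsilon_0)$ are relevant to $\rho_{\epsilon_0}^{A_n^c}(x, y_n)$, we may replace $A_n$ by its intersection with the shell $B(x, c r_n/\epsilon_0)\setminus B(x, c r_{n+1}/\epsilon_0)$, placing the $A_n$ in pairwise disjoint annuli accumulating at $x$. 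Setting $A := \bigcup_n A_n$, define
\[
f_x^\epsilon(z) := \rho_{\epsilon_0}^{A^c}(x, z).
\]
This is 1-Lipschitz, and since $A\supset A_n$ gives $f_x^\epsilon(y_n) \geq \rho_{\epsilon_0}^{A_n^c}(x, y_n) \geq \delta d(x, y_n)$ for every $n$, we obtain $\Lip(f_x^\epsilon, x) \geq \delta$.

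The core remaining task is to establish $\Lip(f_x^\epsilon, z) < \epsilon$ for $\mu$-a.e.\ $z$, which is a restricted-path analogue of Lemma \ref{l:rho-derivative}. The proof of that lemma concatenated an almost-infimal curve $\gamma_t\colon x\to \gamma(t)$ with $\gamma|_{[t,s]}$; in the restricted setting we need the concatenation to land in $A^c$. Because the $A_n$ occupy disjoint annular shells, for any $z\neq x$ only finitely many $A_n$ lie near $z$, so the obstruction is local. To handle the local crossings, widen the gaps of the concatenated curve by excising $\gamma^{-1}(A)$ from $\gamma|_{[t,s]}$; this adds $\mathcal L^1(\gamma^{-1}(A)\cap[t,s])$ to $\len-\ms$, hence to $\int^*_{\epsilon_0}$. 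A Fubini-type argument on the Alberti representation, combining the measure bound $\mu(A_n)<D_n \mu(B(x,\delta r_n/\epsilon_0))$ with uniform doubling via Lemma \ref{l:doubling-balls}, shows that for $\bP$-a.e.\ curve $\gamma$ and $\mu_\gamma$-a.e.\ $t\in\dom\gamma$ the set $\gamma^{-1}(A)$ has $\mathcal L^1$-density zero at $t$, so the excision cost is $o(s-t)$. The concatenation estimate of Lemma \ref{l:rho-derivative} then transfers to yield $|(f_x^\epsilon\circ\gamma)'(t)|\leq \epsilon_0$ for $\bP$-a.e.\ $\gamma$ and $\mu_\gamma$-a.e.\ $t$, and by RNP-universality $\Lip(f_x^\epsilon, z) \leq \epsilon_0/\kappa = \epsilon$ for $\mu$-a.e.\ $z$. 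The main obstacle is this Fubini/density step: one must choose the threshold $D$ small enough, quantitatively depending on $\kappa$ and the doubling constant, so that the density of $\gamma^{-1}(A)$ along $\bP$-typical curves can be controlled against the packing exponent coming from Lemma \ref{l:doubling-balls}.
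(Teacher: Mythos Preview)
There is a genuine gap in the Fubini/density step, and it is not a matter of choosing $D_n$ small enough. Your claim is that for $\bP$-a.e.\ $\gamma$ and $\mu_\gamma$-a.e.\ $t$, the set $\gamma^{-1}(A)$ has $\mathcal L^1$-density zero at $t$. But by the Lebesgue density theorem, the set of $t$ where $\gamma^{-1}(A)$ has positive upper density coincides, up to a $\mathcal L^1$-null set, with $\gamma^{-1}(A)$ itself; since $\mu_\gamma \ll \mathcal H^1\llcorner\im\gamma$, your claim forces $\mu_\gamma(A)=0$ for $\bP$-a.e.\ $\gamma$, hence $\mu(A)=0$. Nothing in the construction gives this: each witness $A_n$ satisfies only $\mu(A_n)<D_n\,\mu(B(x,\delta r_n/\epsilon_0))$, and no choice of $D_n$ makes the individual $A_n$ null. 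Consequently the concatenation estimate yields $|(f_x^\epsilon\circ\gamma)'(t)|\le\epsilon_0$ only for $\mu_\gamma$-a.e.\ $t$ with $\gamma(t)\notin A(x)$, so you obtain $\Lip(f_x^\epsilon,z)\le\epsilon$ only for $\mu$-a.e.\ $z\in S\setminus A(x)$. Since $\mu(S\cap A(x))$ can be positive, the hypothesis of Proposition~\ref{p:nondiff-construct} is not verified.

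The paper circumvents this by a global rather than pointwise removal of the obstacles. Using Vitali, it selects a disjoint family of balls $B(x_i,r_i)$ covering $S$, each carrying a single witness $A_i$; doubling then gives $\mu(S\setminus\bigcup_i A_i)\ge(1-2D)\mu(S)$. Intersecting these sets over a sequence $D_m\to 0$ with $\prod(1-2D_m)>0$ produces a positive-measure $S'\subset S$ on which \emph{all} the obstacles have been excised at once. For $x\in S'$ the witnesses still force $\rho_\epsilon^{{S'}^c}(x,\cdot)/d$ to be large near $x$, and since $S'$ is itself an RNP-LDS one now invokes only the \emph{unrestricted} Proposition~\ref{p:somecurves} on $S'$, avoiding any restricted-path analogue of Lemma~\ref{l:rho-derivative}. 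The point is that the obstacle is removed from the space, not from the curves; your per-point set $A(x)$ cannot be removed in this way because it varies with $x$, which is exactly what the Vitali step handles.
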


\begin{proof}
  First notice that, if $d(x,y)<r$ and $\gamma \in \Gamma(x,y)$ satisfies
  \[\int_\gamma^* \epsilon_0 < \delta d(x,y),
  \]
  then $\im \gamma \subset B(x,\delta r/\epsilon_0)$.  Therefore, it suffices to prove the Lemma under the additional assumption that any such set $A$ in the hypothesis of the Lemma in fact satisfies $A \subset B(x, \delta r/\epsilon_0)$.  We suppose that this new conclusion is not true and aim for a contradiction.  Then there exists a Borel $S\subset X$ of positive measure for which the conclusion does not hold for every $D,R,\epsilon >0$ and some $\delta > 0$.

  For a moment, fix $D,R,\epsilon>0$.  Since $\mu$ is pointwise doubling, for $\mu$-a.e. $x\in S$ there exists $C_x \geq 1$ such that, for every $M\geq 1$,
  \begin{equation}\label{e:c-doubling}\limsup_{r\to 0}\frac{\mu(B(x,Mr))}{\mu(B(x,r))} < C_x^{\log_2 M}.\end{equation}
  Then by the Vitali covering theorem, there exists a disjoint covering of $S$ of the form
\[S \subset N \cup \bigcup_{i=1}^\infty B(x_i,r_i)\]
where $\mu(N)=0$ and, for each $i\in\mathbb N$, we have $0<r_i <R$,
\begin{equation}\label{e:high-density-S}\frac{\mu(B(x_i,r_i)\cap S)}{\mu(B(x_i,r_i))}>(1-D)\end{equation}
and there exist $A_i\subset B(x_i,\delta r_i/\epsilon)$ with
\begin{equation}\label{e:disconnected-S}\frac{\mu(B(x_i,\delta r_i/\epsilon)\cap A_i)}{\mu(B(x_i,\delta r_i/\epsilon))} < DC_{x_i}^{-\log_2 \delta/\epsilon}\end{equation}
and $y_i$ with $r_i/2 < d(x_i,y_i)<r_i$ such that
\[\frac{\rho_\epsilon^{A_i^c}(x_i,y_i)}{d(x_i,y_i)} \geq \delta.
\]
In particular,
\begin{align}\mu(B(x_i,\delta r_i/\epsilon)\cap A_i) &\overset{\eqref{e:c-doubling}\wedge \eqref{e:disconnected-S}}{<} D \mu(B(x_i,r_i))\nonumber\\
  &\overset{\eqref{e:high-density-S}}{\leq} 2D \mu(B(x_i,r_i)\cap S),\label{e:low-density-A}
  \end{align}
and for any $x\in B(x_i,r_i)$ we cannot have both
\[\frac{\rho_\epsilon^{A_i^c}(x_i,x)}{d(x_i,x)} < \frac{\delta}{6} \text{ and } \frac{\rho_\epsilon^{A_i^c}(x,y_i)}{d(x,y_i)} < \frac{\delta}{6}.
\]
Notice that this is also true for larger values of $\epsilon$.

Let
\[S_D = S\setminus\bigcup_{i=1}^\infty A_i,\]
and notice that
\begin{align*}
\mu(S_D) &\geq \mu(S) - \sum_{i=1}^\infty \mu((B(x_i,\delta r_i/\epsilon)\cap A_i))\\
&\overset{\eqref{e:low-density-A}}{>} \mu(S)-2D\sum_{i=1}^\infty \mu(B(x_i,r_i)\cap S)\\
&= (1-2D) \mu(S).\end{align*}
Therefore, if we choose $D_i \to 0$ fast enough such that $\prod_i (1-2D_i) > 0$ and arbitrarily choose $R_i,\epsilon_i \to 0$, the set
\[S' = \bigcap_{i=1}^\infty S_{D_i}
\]
satisfies $\mu(S') >0$.  By construction, for any $x \in S'$ and any $\epsilon>0$,
\[\limsup_{y\to x}\frac{\rho_\epsilon^{S'^c}(y,x)}{d(y,x)} \geq \frac{\delta}{6}.
\]

By choosing $\epsilon_n\to 0$ and applying Lemma \ref{l:porosity-subset}, we may replace the $\limsup$ with one over $y\in S'$ for $\mu$-a.e. $x\in S'$.  As $S'$ is a positive measure subset of $X$, it is itself a RNP-LDS.  We then get a contradiction of \eqref{e:initial-rho}.
\end{proof}

We now implement the argument discussed before the previous lemma.

\begin{lemma}\label{l:mainbootstrap}
Fix $0<\delta<1$ and a Borel set $Y\subset X$.  Suppose that all $x_0\in Y$ satisfy the conclusion of Lemma \ref{l:pokingholes} for a fixed $\epsilon_0>0$.  Suppose also that for $\epsilon >0$, all $x_0 \in Y$ satisfy
\begin{equation}\label{e:prebootstrap}
  \limsup_{x\to x_0}\frac{\rho_{\epsilon}(x,x_0)}{\epsilon d(x,x_0)} \leq \frac{\delta}{\epsilon_0}.\end{equation}
Then for $\mu$-a.e. $x_0\in Y$,
\begin{equation}\label{e:bootstrap}
  \limsup_{x\to x_0}\frac{\rho_{\delta\epsilon}(x,x_0)}{\delta\epsilon d(x,x_0)} \leq \frac{\delta}{\epsilon_0}.
\end{equation}
\end{lemma}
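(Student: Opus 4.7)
The plan is to take a curve $\gamma\in\Gamma(x,x_0)$ that nearly realises $\rho_\epsilon(x,x_0)$ (so that its total gap is of order $\epsilon\,d(x,x_0)$) and then to fill each gap of $\gamma$ with a curve supplied by Lemma~\ref{l:pokingholes} to obtain a new $\gamma^*\in\Gamma(x,x_0)$ that realises a good bound on $\int_{\gamma^*}^*\delta\epsilon$. By restricting to a positive-measure subset we may take the parameters $D,R$ from Lemma~\ref{l:pokingholes} uniform on $Y$, and we work at a density point $x_0\in S$ of both $S$ and $Y$ at which \eqref{e:prebootstrap} holds. Since this lemma will be iterated starting from small $\epsilon$, we may further assume $\delta\epsilon\leq\epsilon_0$.

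Write $g(\gamma):=\len\gamma-\ms\gamma$ and $m(\gamma):=\int_{\dom\gamma}\md_\gamma$, so that $\int_\gamma^*\epsilon=g(\gamma)+\epsilon\,m(\gamma)$. For any $\eta>0$ and any $x$ sufficiently close to $x_0$, \eqref{e:prebootstrap} furnishes $\gamma\in\Gamma(x,x_0)$ with
\[
g(\gamma)+\epsilon\,m(\gamma)\;\leq\;(\delta/\epsilon_0+\eta)\,\epsilon\,d(x,x_0).
\]
Enumerate the maximal open intervals in the complement of $\dom\gamma$ (inside the smallest interval containing it) as $\{(a_j,b_j)\}_j$, so $\sum_j(b_j-a_j)=g(\gamma)$ and $d(\gamma(a_j),\gamma(b_j))\leq b_j-a_j$. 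Granting (see below) that each $\gamma(a_j)\in Y$, Lemma~\ref{l:pokingholes} applied at $\gamma(a_j)$ with $A=\emptyset$ and target $y=\gamma(b_j)$ furnishes $\gamma_j\in\Gamma(\gamma(a_j),\gamma(b_j))$ with
\[
g(\gamma_j)+\epsilon_0\,m(\gamma_j)\;=\;\int_{\gamma_j}^*\epsilon_0\;\leq\;\delta\,d(\gamma(a_j),\gamma(b_j))\;\leq\;\delta(b_j-a_j);
\]
the scale admissibility $d(\gamma(a_j),\gamma(b_j))<R$ is automatic once $x$ is close enough to $x_0$.

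Concatenating $\gamma$ with the $\gamma_j$'s across $[a_j,b_j]$ produces $\gamma^*\in\Gamma(x,x_0)$ with $g(\gamma^*)=\sum_j g(\gamma_j)$ and $m(\gamma^*)=m(\gamma)+\sum_j m(\gamma_j)$. Using $\delta\epsilon\leq\epsilon_0$,
\begin{align*}
\int_{\gamma^*}^*\delta\epsilon
&=\sum_j\bigl(g(\gamma_j)+\delta\epsilon\,m(\gamma_j)\bigr)+\delta\epsilon\,m(\gamma) \\
&\leq\sum_j\bigl(g(\gamma_j)+\epsilon_0\,m(\gamma_j)\bigr)+\delta\epsilon\,m(\gamma) \\
&\leq\delta\sum_j(b_j-a_j)+\delta\epsilon\,m(\gamma)\;=\;\delta\bigl(g(\gamma)+\epsilon\,m(\gamma)\bigr) \\
&\leq\delta(\delta/\epsilon_0+\eta)\,\epsilon\,d(x,x_0).
\end{align*}
Dividing by $\delta\epsilon\,d(x,x_0)$ and letting $x\to x_0$ (so $\eta\to 0$) yields \eqref{e:bootstrap}.

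The main obstacle is arranging $\gamma(a_j),\gamma(b_j)\in Y$ for every $j$. One handles this by first passing to $\gamma|_{\dom\gamma\cap\gamma^{-1}(Y)}$, which forces all gap endpoints into $Y$ at the cost of inflating $g(\gamma)$ by $\mathcal L^1(\dom\gamma\setminus\gamma^{-1}(Y))$. Working at a density point of $Y$, and using universality of the Alberti representations from Proposition~\ref{p:RNP-universal} together with the Lebesgue density theorem, one arranges that approximately optimal curves for $\rho_\epsilon(x,x_0)$ spend $\mathcal L^1$-a.e.\ of their domain in $Y$; the extra gap contribution is then $o(\epsilon\,d(x,x_0))$ and folds into the error term $\eta$ above, leaving the remaining computation intact.
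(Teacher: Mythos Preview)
Your computation is fine once you grant that the gap endpoints $\gamma(a_j)$ lie in $Y$, but the last paragraph does not actually deliver this. The near-optimal curve $\gamma$ for $\rho_\epsilon(x,x_0)$ is chosen only to make a functional small; it need have no relationship whatsoever with the measure $\mu$, the set $Y$, or the curves appearing in any Alberti representation. Density of $Y$ at $x_0$ controls the $\mu$-measure of $B(x_0,r)\setminus Y$, not the $\mathcal L^1$-measure of $\dom\gamma\setminus\gamma^{-1}(Y)$ for an arbitrary fragment $\gamma$ passing near $x_0$, and universality concerns derivatives along curves from the representations, not where minimising curves for $\rho_\epsilon$ live. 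So the assertion that ``approximately optimal curves for $\rho_\epsilon(x,x_0)$ spend $\mathcal L^1$-a.e.\ of their domain in $Y$'' is unjustified, and without it the restriction $\gamma|_{\gamma^{-1}(Y)}$ may create uncontrollable extra gap.

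The paper resolves exactly this obstacle by \emph{reversing the roles} of the two ingredients. Rather than taking the initial curve from \eqref{e:prebootstrap} and filling its gaps via Lemma~\ref{l:pokingholes}, the paper takes the initial curve from Lemma~\ref{l:pokingholes} --- crucially with $A$ equal to the complement of the set $S'_\alpha$ of points where \eqref{e:prebootstrap} holds quantitatively at scale $\alpha$ --- so that $\mathrm{im}(\gamma)\subset S'_\alpha\cup\{x,y\}$ is forced automatically. The gap endpoints then lie in $S'_\alpha$, where the hypothesis applies, and the gaps are filled using \eqref{e:prebootstrap}. The arithmetic that follows is essentially yours with the roles of $\epsilon$ and $\epsilon_0$ exchanged. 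This is precisely why Lemma~\ref{l:pokingholes} was formulated with a set-avoidance condition and not merely with $A=\emptyset$: it supplies the mechanism for confining the curve to the good set that your argument is missing.
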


\begin{proof}
Let $M:=\delta/\epsilon_0$ and $\eta \in (0,1)$.  For $\alpha>0$ let $S'_\alpha$ be the Borel set of those $x_0\in Y$ for which
\[\sup_{z \in B(x_0,\alpha)\setminus\{x_0\}} \frac{\rho_{\epsilon}(x_0,z)}{\epsilon d(x_0,z)} < M + \eta.\]
For $\mu$-a.e. $z\in S'_\alpha$ we let $D(z), R(z)>0$ be given by the hypothesis on $Y$.  Also, by the Lebesgue density theorem, for $\mu$-a.e. $z\in S'_\alpha$ there exists an $R'(z)$ such that, for all $0<r<R'(z)$,
\[\frac{\mu(B(z,r)\cap S'_\alpha)}{\mu(B(z,r))} > 1-D.\]
We let $S_\alpha$ be the Borel set of those $z\in S'_\alpha$ for which $\alpha<D(z),R(z),R'(z)$, so that $S_\alpha$ increases to a set of full measure in $Y$ as $\alpha \to 0$.

  Fix an $\alpha>0$, let $0<\eta<1$ and fix $x\in S_\alpha$ and $y \in X$ with $0<d(x,y)<\alpha$.  Then, by the conclusion of Lemma \ref{l:pokingholes}, there exists a $\gamma \in \Gamma(x,y)$ with $\mathrm{im}(\gamma) \subset S_\alpha' \cup \{x,y\}$ and
  \begin{align}
    \int_\gamma^* \epsilon_0 = \epsilon_0\ms \gamma + (\len\gamma-\ms\gamma)\leq \delta d(x,y) < \alpha. \label{e:epsilon0-*integral}
  \end{align}
Let $[a,b]$ be the smallest interval that contains $\dom\gamma$ and write $(a,b) \setminus \dom\gamma = \cup_1^\infty (a_i,b_i)$.  Let $N\in\N$ be such that $\sum_{i>N}b_i-a_i < \eta$.

By construction, each $\gamma(a_i) \in S'_\alpha$ and we get from the 1-Lipschitzness of $\gamma$ that
\[d(\gamma(a_i), \gamma(b_i)) \leq |b_i -a_i| \overset{\eqref{e:epsilon0-*integral}}{<} \alpha.\]
Therefore, by \ref{e:prebootstrap},
\[\rho_{\epsilon}(\gamma(a_i),\gamma(b_i)) < \epsilon (M+ \eta) d(\gamma(a_i),\gamma(b_i))\]
and so there exists $\gamma_i \in \Gamma(\gamma(a_i),\gamma(b_i))$ with
\[\int_{\gamma_i}^* \epsilon \leq \epsilon (M+\eta) d(\gamma(a_i),\gamma(b_i)).\]
We define a new curve $\gamma^* \in \Gamma(x,y)$ by traveling along $\gamma_i$ instead of $\gamma|_{(a_i,b_i)}$ for each $i \leq N$ (and expanding the domain of $\gamma$ appropriately).  Then,
\begin{align*}
\frac{1}{\delta\epsilon}\int_{\gamma^*}^* \delta\epsilon &= \ms \gamma^* + \frac{1}{\delta\epsilon}(\len\gamma^*-\ms\gamma^*)\\
&\leq \ms\gamma + \sum_{i=1}^N \ms\gamma_i+ \frac{1}{\delta\epsilon}\left( \sum_{i=1}^N \len\gamma_i - \ms\gamma_i\right) + \frac{1}{\delta\epsilon} \left(\mathcal \sum_{i>N} b_i-a_i \right) \\
&\leq \ms\gamma +\frac{1}{\delta\epsilon}\sum_{i=1}^N \left(\epsilon \ms\gamma_i + \len\gamma_i-\ms\gamma_i\right)+ \frac{\eta}{\delta\epsilon}\\
&\leq \ms\gamma +\frac{1}{\delta\epsilon}\sum_{i=1}^N \int_{\gamma_i}^* \epsilon+ \frac{\eta}{\delta\epsilon}\\
&\leq \ms\gamma + \frac{M + \eta}{\delta}\sum_{i=1}^N d(\gamma(a_i),\gamma(b_i)) + \frac{\eta}{\delta\epsilon}\\
&\leq \ms\gamma + \frac{1}{\epsilon_0} (\len\gamma-\ms\gamma)+ \frac{\eta}{\delta\epsilon}(1 + b-a)\\
&= \frac{1}{\epsilon_0} \int_\gamma^*\epsilon_0+ \frac{\eta}{\delta\epsilon} ( 1 + b-a)\\
&\overset{\eqref{e:epsilon0-*integral}}{\leq} \frac{\delta}{\epsilon_0}d(x,y) + \frac{\eta}{\delta\epsilon} (1 + b-a)
\end{align*}
This is true for every $0<\eta<1$ and so
\[\frac{\rho_{\delta\epsilon}(x,y)}{\delta\epsilon d(x,y)} \leq \frac{\delta}{\epsilon_0}=M.\]

This shows that every $x\in S_\alpha$ satisfies the conclusion of the lemma.  The proof is completed by taking a countable union over $\alpha_n \to 0$.
\end{proof}

By repeatedly applying the previous lemma, we show that an RNP-LDS satisfies a fragmented version of the quasiconvex condition.

\begin{lemma}\label{l:easybootstrap}
Let $(X,d,\mu)$ be an RNP-LDS.  Then for $\mu$-a.e. $x_0\in X$,
\[\sup_{0<\epsilon <1}\limsup_{x\to x_0}\frac{\rho_{\epsilon}(x,x_0)}{\epsilon d(x,x_0)} < \infty.\]
\end{lemma}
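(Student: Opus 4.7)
The plan is to iterate Lemma \ref{l:mainbootstrap} along the geometric sequence $\epsilon_0, \delta\epsilon_0, \delta^2\epsilon_0, \ldots$ using Proposition \ref{p:somecurves} as the base case, and then interpolate to all $\epsilon \in (0,1)$ by monotonicity of $\rho_\epsilon$ in $\epsilon$. First I would fix once and for all some $\delta \in (0,1)$, say $\delta = 1/2$. Since both Proposition \ref{p:somecurves} (applied with this $\delta$) and Lemma \ref{l:pokingholes} produce parameters $\epsilon_0 > 0$ (and, for the latter, also $D$ and $R$) that \emph{a priori} depend on the point $x_0$, I would write $X = \bigcup_k Y_k$ as a countable Borel decomposition on which a common value $\epsilon_0(k) > 0$ simultaneously witnesses both conclusions for every $x_0 \in Y_k$. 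Since a countable union of null sets is null, it suffices to prove the lemma on a single $Y_k$, which I fix and rename $Y$, writing $\epsilon_0 = \epsilon_0(k)$.

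Next, the base case: for $\mu$-a.e.\ $x_0 \in Y$, Proposition \ref{p:somecurves} gives
\[\limsup_{x\to x_0} \frac{\rho_{\epsilon_0}(x,x_0)}{\epsilon_0\, d(x,x_0)} < \frac{\delta}{\epsilon_0}.\]
Lemma \ref{l:mainbootstrap} then applies to this set with $\epsilon = \epsilon_0$ and yields, for $\mu$-a.e.\ $x_0$ in it, the same bound with $\epsilon_0$ replaced by $\delta\epsilon_0$. Iterating produces a descending chain of full-measure Borel subsets $S_0 \supset S_1 \supset \cdots$ of $Y$ such that every $x_0 \in S_n$ satisfies
\[\limsup_{x\to x_0} \frac{\rho_{\delta^n\epsilon_0}(x,x_0)}{\delta^n\epsilon_0\, d(x,x_0)} \leq \frac{\delta}{\epsilon_0}.\]
Setting $S_\infty = \bigcap_{n\ge 0} S_n$, the set $S_\infty$ still has full measure in $Y$ and at every $x_0 \in S_\infty$ the above bound holds for every $n$ simultaneously.

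Now I would extend the bound from the discrete sequence $\{\delta^n\epsilon_0\}$ to all $\epsilon \in (0,1)$. The key observation is that for fixed $\gamma \in \Gamma$ the quantity $\int_\gamma^* \epsilon = (\len\gamma - \ms\gamma) + \epsilon\int_{\dom\gamma}\md_\gamma$ is nondecreasing in $\epsilon$, so $\rho_\epsilon(x,y)$ is nondecreasing in $\epsilon$. For $\epsilon \in (0,\epsilon_0]$ pick the unique $n \geq 0$ with $\delta^{n+1}\epsilon_0 < \epsilon \leq \delta^n\epsilon_0$; then monotonicity gives
\[\frac{\rho_\epsilon(x,x_0)}{\epsilon\, d(x,x_0)} \leq \frac{\rho_{\delta^n\epsilon_0}(x,x_0)}{\delta^{n+1}\epsilon_0\, d(x,x_0)} = \frac{1}{\delta}\cdot\frac{\rho_{\delta^n\epsilon_0}(x,x_0)}{\delta^n\epsilon_0\, d(x,x_0)},\]
whose $\limsup$ is at most $1/\epsilon_0$ on $S_\infty$. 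For $\epsilon \in [\epsilon_0, 1)$ I would instead fix a near-optimal curve $\gamma^*$ for $\rho_{\epsilon_0}(x,x_0)$; since both summands defining $\int_{\gamma^*}^*\epsilon_0$ are nonnegative, they are each individually bounded by $\delta d(x,x_0)$ (in particular $\int_{\dom\gamma^*}\md_{\gamma^*} \leq \delta d(x,x_0)/\epsilon_0$), which gives
\[\frac{\rho_\epsilon(x,x_0)}{\epsilon\, d(x,x_0)} \leq \frac{(\len\gamma^* - \ms\gamma^*) + \epsilon\int_{\dom\gamma^*}\md_{\gamma^*}}{\epsilon\, d(x,x_0)} \leq \frac{\delta}{\epsilon} + \frac{\delta}{\epsilon_0} \leq \frac{2\delta}{\epsilon_0}.\]
Taking the $\sup$ of both regimes yields a uniform bound (of order $1/\epsilon_0$) at every $x_0 \in S_\infty$.

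The steps are mostly bookkeeping built on top of Lemma \ref{l:mainbootstrap}, so the only obstacle is conceptual: one must correctly propagate the iterated null sets through a countable intersection (no issue since we only need countably many steps, indexed by $n \in \mathbb{N}$), and handle the range $\epsilon \in [\epsilon_0,1)$, which is not directly accessible from the bootstrap and requires the explicit curve-based estimate above. Once these are in place the conclusion follows by taking a countable union over the decomposition pieces $Y_k$.
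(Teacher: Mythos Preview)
Your proof is correct and follows essentially the same route as the paper: decompose $X$ into pieces on which Lemma~\ref{l:pokingholes} holds with a fixed $\epsilon_0$, then iterate Lemma~\ref{l:mainbootstrap} along the geometric sequence $\delta^n\epsilon_0$. Two minor simplifications the paper exploits: first, the base case need not invoke Proposition~\ref{p:somecurves} separately, since Lemma~\ref{l:pokingholes} with $A=\emptyset$ already yields $\limsup_{x\to x_0}\rho_{\epsilon_0}(x,x_0)/d(x,x_0)<\delta$; second, because $\epsilon\mapsto \rho_\epsilon(x,x_0)/\epsilon$ is nonincreasing (each $(\len\gamma-\ms\gamma)/\epsilon$ decreases in $\epsilon$), the supremum over $0<\epsilon<1$ is the limit as $\epsilon\to 0$, so it suffices to check the sequence $\epsilon_n=\delta^n\epsilon_0$ and your explicit interpolation and treatment of $\epsilon\in[\epsilon_0,1)$ become unnecessary.
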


\begin{proof}
First observe that the supremum in the conclusion of the lemma is in fact a limit as $\epsilon \to 0$.  Therefore it suffices to prove that the limit is finite for countable $\epsilon_n\to 0$.

Fix $0<\delta<1$.  Then by Lemma \ref{l:pokingholes}, there exists a countable number of Borel sets $Y_i$ that cover almost all of $X$ such that every $x_0\in Y_i$ satisfies the conclusion of the lemma for $\epsilon_0=1/i$.  It suffices to prove the result for $\mu$-a.e. $x_0 \in Y_i$.  From now on we fix this value of $\epsilon_0$ and write $Y$ for $Y_i$.

Note that, by putting $A=\emptyset$ into Lemma \ref{l:pokingholes}, we immediately have
\[\limsup_{x\to x_0} \frac{\rho_{\epsilon_0}(x,x_0)}{d(x,x_0)} < \delta\]
for $\mu$-a.e. $x_0\in Y$.  Therefore \eqref{e:prebootstrap} is true for $\epsilon=\epsilon_0$ and so, by Lemma \ref{l:mainbootstrap}, \eqref{e:bootstrap} is true for $\epsilon=\epsilon_0$.  Thus \eqref{e:prebootstrap} is true for $\epsilon=\delta \epsilon_0$ and so we may apply Lemma \ref{l:mainbootstrap} again to deduce \eqref{e:bootstrap} for $\epsilon = \delta \epsilon_0$ (with the understanding that all of these assertions are true for $\mu$-a.e. $x_0\in Y$).  We continue repeatedly applying the lemma for $\epsilon_n= \delta^n \epsilon_0 \to 0$ and thus completing the proof.
\end{proof}

Finally, we see that we can also obtain a conclusion similar to that of Lemma \ref{l:pokingholes} but for all $\epsilon > 0$.

\begin{theorem}\label{t:connectingpoints}
  Let $(X,d,\mu)$ be an RNP-LDS.  Then for $\mu$-a.e. $x_0\in X$ there exist $\lambda >0$ and, for every $\epsilon>0$, a $D,R>0$ such that, whenever $y\in X$ with $0<r/2 <d(x,y)<r<R$ and $A \subset X$ with
  \[\frac{\mu(B(x,\lambda r) \cap A)}{\mu(B(x,\lambda r))} < D,\]
  we have
  \[\frac{\rho_{\epsilon}^{A^c}(x,y)}{\epsilon d(x,y)} < \lambda.\]
\end{theorem}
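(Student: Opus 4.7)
The plan is to combine the obstacle hole-poking of Lemma \ref{l:pokingholes} with an iterative bootstrap analogous to Lemmas \ref{l:mainbootstrap} and \ref{l:easybootstrap}, carrying the obstacle $A$ through each step. The key new ingredient beyond Lemma \ref{l:easybootstrap} will be a Vitali-covering argument that augments the obstacle with the set of points where $A$ fails a density condition at some scale, so that gap-filling curves avoiding $A$ can be obtained at every relevant scale. Fix $\delta\in(0,1)$. By Theorem \ref{t:porous-null} and Lemma \ref{l:unif-pointwise-doubling-decomp}, we decompose $X$ into countably many Borel pieces on which $\mu$ is uniformly pointwise doubling; combined with Lemma \ref{l:pokingholes}, it suffices to prove the theorem for $\mu$-a.e.~$x_0$ in a positive-measure Borel set $Y$ on which the constants $\epsilon_0, D_0, R_0$ from Lemma \ref{l:pokingholes} and the doubling constant are uniform, and on which $\mu$-a.e.~point is a density point of $Y$ by Lebesgue differentiation. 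Take $\lambda$ to be a constant multiple of $\delta/\epsilon_0$, with the factor depending only on the doubling constant.

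Set $\epsilon_n:=\delta^n \epsilon_0$. We prove by induction on $n\geq 0$ the statement $P_n$: for $\mu$-a.e.~$x_0\in Y$ there exist $D_n, R_n>0$ realising the conclusion of the theorem at speed $\epsilon_n$, with the uniform $\lambda$. The base case $P_0$ is immediate from Lemma \ref{l:pokingholes}. For the inductive step, let $Y_n\subset Y$ be a full-measure Borel subset on which $P_n$ holds, and fix a density point $x_0\in Y_n$. Given data $(y, r, A)$ satisfying the hypothesis of $P_{n+1}$, we augment the obstacle as
\[
\tilde A \;:=\; A \;\cup\; B \;\cup\; Y_n^c, \qquad B \;:=\; \Bigl\{ x\in X \;:\; \sup_{0<r'<R'}\frac{\mu(A\cap B(x,r'))}{\mu(B(x,r'))}>D_n \Bigr\},
\]
for an upper scale $R'$ comparable to $\lambda r$. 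A Vitali-covering argument using uniform pointwise doubling yields $\mu(B\cap B(x_0,\lambda r))\leq C_V(D_{n+1}/D_n)\mu(B(x_0,\lambda r))$, and the density-point property makes $\mu(Y_n^c\cap B(x_0,\lambda r))/\mu(B(x_0,\lambda r))=o(1)$ as $r\to 0$; therefore, choosing $D_{n+1}$ small enough relative to $D_n$ keeps the density of $\tilde A$ in $B(x_0,\lambda r)$ below the threshold of Lemma \ref{l:pokingholes}.

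Applying Lemma \ref{l:pokingholes} with obstacle $\tilde A$ produces an outer curve $\gamma\in\Gamma(x_0,y)$ with $\mathrm{im}(\gamma)\subset Y_n\cap A^c\cap B^c$ and $\int_\gamma^*\epsilon_0<\delta d(x_0,y)$. The crucial feature is that every gap endpoint $\gamma(a_i)\in B^c$ satisfies the density hypothesis of $P_n$ at every sub-scale $r'<R'$; hence $P_n$ applied at $\gamma(a_i)$ produces a filler $\gamma_i\in\Gamma(\gamma(a_i),\gamma(b_i))$ with $\mathrm{im}(\gamma_i)\subset A^c$ and $\int_{\gamma_i}^*\epsilon_n\leq \lambda\epsilon_n(b_i-a_i)$. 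Truncating to the $N$ largest gaps so that the tail sums to at most $\eta$ and concatenating the fillers into $\gamma$ to form $\gamma^*$, the algebraic computation from the proof of Lemma \ref{l:mainbootstrap} (which uses the identity $\lambda\epsilon_0=\delta$) bounds $\int_{\gamma^*}^*\epsilon_{n+1}$ by $\lambda\epsilon_{n+1}d(x_0,y)$ up to an error that vanishes as $\eta\to 0$. This establishes $P_{n+1}$, and for arbitrary $\epsilon>0$ picking $n$ with $\epsilon_n\leq\epsilon$ together with the monotonicity $\rho_\epsilon^{A^c}\leq\rho_{\epsilon_n}^{A^c}$ in $\epsilon$ yields the theorem with $D:=D_n$ and $R:=R_n$.

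The main obstacle is precisely the control of the density of $A$ at the gap scales: a direct application of Lemma \ref{l:doubling-balls} to transfer the density bound at $x_0$ into a bound at a gap endpoint $\gamma(a_i)$ would lose a factor of $(r/(b_i-a_i))^s$, which is unbounded because individual gap sizes are not bounded below. The Vitali augmentation $B$ resolves this by forcing the gap endpoints into a set on which the density hypothesis of $P_n$ holds uniformly across all relevant sub-scales, allowing $P_n$ to be applied without any scale-dependent loss.
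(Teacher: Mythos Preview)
Your approach differs from the paper's. The paper does not carry the obstacle through an inductive bootstrap; instead it first establishes the obstacle-free statement (Lemma~\ref{l:easybootstrap}) and then derives Theorem~\ref{t:connectingpoints} by a short contradiction: assuming failure on a positive-measure $S$, a Vitali-covering construction parallel to the one inside the proof of Lemma~\ref{l:pokingholes} manufactures a positive-measure $S'\subset S$ on which $\sup_{0<\epsilon<1}\limsup_{x\to x_0}\rho_\epsilon(x,x_0)/(\epsilon\,d(x,x_0))=\infty$ for the $\rho_\epsilon$ computed with curves restricted to $S'$; after Lemma~\ref{l:porosity-subset} this contradicts Lemma~\ref{l:easybootstrap} applied inside the RNP-LDS $S'$. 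The point is that the obstacle is absorbed once, into the construction of $S'$, rather than tracked through every bootstrap step.

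Your direct scheme with the maximal-function augmentation $B$ is sound in outline, but there is a genuine gap in the uniformity bookkeeping. You state $P_n$ with point-dependent $D_n,R_n$, yet the threshold in the definition of $B$ must be a single number, and applying $P_n$ at a gap endpoint $\gamma(a_i)$ requires the density of $A$ to fall below $D_n(\gamma(a_i))$ at scales below $R_n(\gamma(a_i))$, which have no reason to dominate $D_n(x_0),R_n(x_0)$. Membership of $\gamma(a_i)$ in $Y_n\cap B^c$ does not bridge this. The fix is standard but must be made explicit: at each stage pass to a subset $Y_n'\subset Y_n$ on which $D_n,R_n$ are uniformly bounded below, augment the obstacle with $(Y_n')^c$ rather than $Y_n^c$, and recover $\mu$-a.e.\ $x_0$ by countably exhausting $Y_n$ by such $Y_n'$. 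Two smaller points: you should invoke the reduction $A\subset B(x_0,\lambda r)$ (as in the proof of Lemma~\ref{l:pokingholes}) so that the weak-type bound on $\mu(B)$ does not require control of $A$ outside $B(x_0,\lambda r)$; and the arithmetic from Lemma~\ref{l:mainbootstrap} closes only with the exact identity $\lambda=\delta/\epsilon_0$ you yourself cite, so the ``constant multiple depending on the doubling constant'' should just be~$1$ --- any larger factor makes the constant grow from step to step.
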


\begin{proof}
  Suppose that the conclusion is false.  Then there exists an $S\subset X$ of positive measure such that, for every $\lambda \in \mathbb N$ and $x_0\in S$, there exists an $\epsilon_\lambda(x_0) >0$ for which the conclusion is false for all $D,R>0$.

  Analogously to the proof of Lemma \ref{l:pokingholes}, choose $D_m>0$ with $\prod (1-D_m) > 0$ and for each $m\in \mathbb N$ construct a set $S_m\subset S$ using the Vitali covering theorem with $\mu(S_m)\geq (1-D_m)\mu(S)$ such that, for each $x_0\in S_m$ there exists a $y\in X$ with $d(y,x_0)<1/m$ and
  \[\frac{\rho_{\epsilon_m}^{S_m}(x_0,y)}{\epsilon_m d(x_0,y)} \geq m.\]
  Notice that this is also true for all $\epsilon < \epsilon_m(x_0)$.

  Therefore $S'=\cap S_m$ has positive measure but
  \[\sup_{0<\epsilon<1}\limsup_{x\to x_0}\frac{\rho_\epsilon^{S'^c}(x,x_0)}{\epsilon d(x,x_0)} = \infty\]
  for $\mu$-a.e. $x_0\in S'$.  By Lemma \ref{l:porosity-subset} we may take the $\limsup$ over $x\in S'$, contradicting Lemma \ref{l:easybootstrap} and the fact that $S'$ is an RNP-LDS.
\end{proof}

\section{Non-homogeneous Poincar\'e inequalities}\label{s:poincare}

We would like to show that RNP-Lipschitz differentiability implies an asymptotic and non-homogeneous form of the Poincar\'e inequality.  However, to discuss the Poincar\'e inequality, we need a notion of upper gradients.  The original definition does not work for us because, as mentioned before, $X$ may lack any rectifiable curves.  Instead, we use the *-integral to define upper gradients for 1-Lipschitz functions via curve fragments.  Specifically, we say that $\rho : X \to [0,1]$ is a *-upper gradient of the 1-Lipschitz function $u : X \to \R$ if for all $x,y \in X$ and $\gamma \in \Gamma(x,y)$ we have
\begin{align*}
  |u(x) - u(y)| \leq \int_\gamma^* \rho.
\end{align*}

Note that every 1-Lipschitz function $u$ has an *-upper gradient by taking $\rho \equiv 1$.  Here, it is crucial that $u$ is 1-Lipschitz and not continuous or even arbitrarily $L$-Lipschitz for $L \geq 1$.  This is because the *-integral assigns mass at a density of 1 to the gaps in $\gamma$.  Otherwise, we are not guaranteed that *-upper gradients exist.  Of course in the definition of the *-integral, the use of 1 was arbitrary.

More generally, we have the following.
\begin{lemma} \label{l:Lip-ug}
  Let $f : X \to \R$ be 1-Lipschitz.  Then $\Lip f$ is a *-upper gradient of $f$.
\end{lemma}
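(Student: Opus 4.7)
The plan is to reduce the inequality to the one-dimensional fundamental theorem of calculus applied to a suitable Lipschitz extension of $f\circ\gamma$, and then estimate the resulting integral separately on $\dom\gamma$ and on its complement inside the smallest enclosing interval.

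Concretely, given $\gamma\in\Gamma(x,y)$, let $[a,b]$ be the smallest interval containing $\dom\gamma$, so that $\gamma(a)=x$ and $\gamma(b)=y$, and write the open set $[a,b]\setminus\dom\gamma=\bigsqcup_{i}(a_i,b_i)$. First I would extend $f\circ\gamma$ to a function $F\colon[a,b]\to\R$ by affine interpolation on each gap $(a_i,b_i)$. Since $f$ is $1$-Lipschitz and $\gamma$ is $1$-Lipschitz, $f\circ\gamma$ is $1$-Lipschitz on $\dom\gamma$, and an elementary check shows the linearly interpolated extension $F$ is $1$-Lipschitz on $[a,b]$. By the classical FTC for Lipschitz functions,
\[
|f(y)-f(x)|=|F(b)-F(a)|\leq\int_a^b |F'(t)|\,dt=\int_{\dom\gamma}|F'(t)|\,dt+\sum_i\int_{a_i}^{b_i}|F'(t)|\,dt.
\]

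On each gap $(a_i,b_i)$ the derivative $F'$ is the constant slope of the affine piece, with $|F'|\leq 1$ (again by the 1-Lipschitz property), so the total contribution of the gaps is bounded by $\sum_i(b_i-a_i)=\len\gamma-\ms\gamma$, which is exactly the gap term appearing in $\int_\gamma^*\Lip f$. It then remains to bound the integrand on $\dom\gamma$ by $\Lip(f,\gamma(t))\,\md_\gamma(t)$ almost everywhere; this is the main technical step.

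For this pointwise inequality, at $\mathcal L^1$-a.e.\ $t\in\dom\gamma$ the metric derivative $\md_\gamma(t)$ exists (by the remark following the definition of $\md$), $(f\circ\gamma)'(t)=F'(t)$ exists, and $t$ is a density point of $\dom\gamma$. For such $t$ pick $s_n\to t$ with $s_n\in\dom\gamma$ and $s_n\ne t$. If $\gamma(s_n)\ne\gamma(t)$ eventually, I would factor
\[
\frac{|f(\gamma(s_n))-f(\gamma(t))|}{|s_n-t|}=\frac{|f(\gamma(s_n))-f(\gamma(t))|}{d(\gamma(s_n),\gamma(t))}\cdot\frac{d(\gamma(s_n),\gamma(t))}{|s_n-t|},
\]
let $n\to\infty$, and observe that the second factor converges to $\md_\gamma(t)$ while the $\limsup$ of the first factor is at most $\Lip(f,\gamma(t))$ (since $\gamma(s_n)\to\gamma(t)$ in $X$). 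The degenerate case $\md_\gamma(t)=0$ is handled directly: the $1$-Lipschitz property of $f$ gives $|f(\gamma(s))-f(\gamma(t))|\le d(\gamma(s),\gamma(t))=o(|s-t|)$, forcing $(f\circ\gamma)'(t)=0$ and matching the zero right-hand side.

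Combining the two estimates,
\[
|f(y)-f(x)|\leq(\len\gamma-\ms\gamma)+\int_{\dom\gamma}\Lip(f,\gamma(t))\,\md_\gamma(t)\,dt=\int_\gamma^*\Lip f,
\]
which is the desired inequality; note $\Lip f\colon X\to[0,1]$ since $f$ is $1$-Lipschitz, so $\Lip f$ is an admissible candidate for a $*$-upper gradient. The only subtle point is the a.e.\ pointwise comparison $|(f\circ\gamma)'(t)|\leq\Lip(f,\gamma(t))\md_\gamma(t)$ at density points of $\dom\gamma$; once that is established, the rest is bookkeeping with FTC and the definition of $\int^*$.
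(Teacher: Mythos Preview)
Your proof is correct and is precisely the ``straightforward modification of Proposition 1.11 of \cite{cheeger}'' that the paper alludes to without spelling out: one applies the fundamental theorem of calculus to a Lipschitz extension of $f\circ\gamma$ on $[a,b]$, uses the chain-rule bound $|(f\circ\gamma)'(t)|\le \Lip(f,\gamma(t))\md_\gamma(t)$ on $\dom\gamma$, and absorbs the gaps using the $1$-Lipschitz bound, exactly as you do.
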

The proof is a straightforward modification of Proposition 1.11 of \cite{cheeger}

We first show that RNP-LDS satisfy a pointwise Poincar\'e style inequality for indicator functions of measurable subsets (see \cite[Theorem 9.5]{heinonen} for analogous statements in PI spaces).  As it relates connectivity of points to densities of sets, this can be thought of as a ``geometric'' Poincar\'e style inequality.  We require the following quasiconvexity type condition on $\Gamma$: For $x,y \in X$ and $\lambda \geq 1$, we let
\begin{align*}
  \Gamma(x,y;\lambda) = \{\gamma \in \Gamma(x,y) : \len(\gamma) \leq \lambda d(x,y) \}.
\end{align*}

\begin{lemma} \label{l:geom-PI}
  Let $(X,d,\mu)$ be a RNP-LDS.  Then for $\mu$-a.e. $x \in X$, there exist $\tilde{\lambda}_x \geq 1$ and functions $\tilde{o}_x,\tilde{\zeta}_x$ so that for every measurable $A \subseteq X$ and $y \in X$ we have
  \begin{align}
    \inf_{\gamma \in \Gamma(x,y;2\tilde{\lambda}_x)} \int_\gamma^* {\bf 1}_A  \leq d(x,y)\tilde{\zeta}_x \left( \fint_{B(x,\tilde{\lambda}_x d(x,y))} {\bf 1}_A ~d\mu \right) + \tilde{o}_x(d(x,y)). \label{e:geom-PI}
  \end{align}
  Here the $\tilde{o}_x : [0,\infty) \to \R$ and $\tilde{\zeta}_x : [0,1] \to \R$ are increasing and satisfy $\tilde{o}_x(0) = \tilde{\zeta}_x(0) = 0$ and $\tilde o_x(r)/r \to 0$ as $r\to 0$.
\end{lemma}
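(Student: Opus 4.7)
By Lemma \ref{l:unif-pointwise-doubling-decomp} and Theorem \ref{t:connectingpoints}, for $\mu$-a.e.\ $x \in X$ I may assume there is a constant $\lambda = \lambda_x$, parameters $D(\epsilon), R(\epsilon) > 0$ for each $\epsilon > 0$ as given by Theorem \ref{t:connectingpoints}, and a uniform pointwise doubling constant $C_x$ valid at sufficiently small scales. Set $\tilde\lambda_x := 3\lambda$.

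Given $y$ and measurable $A$, write $r = d(x,y)$, $\sigma = \fint_{B(x, \tilde\lambda_x r)} \mathbf{1}_A \, d\mu$, and take the auxiliary radius $r' := \tfrac{3}{2} r$, so that $r'/2 < r < r'$ and $\lambda r' = \tfrac{1}{2} \tilde\lambda_x r$. For $r$ small enough, doubling at $x$ gives
\[
\frac{\mu(A \cap B(x, \lambda r'))}{\mu(B(x, \lambda r'))} \;\leq\; \frac{\mu(A \cap B(x, \tilde\lambda_x r))}{\mu(B(x, \lambda r'))} \;\leq\; C_x \sigma.
\]
Thus whenever $\epsilon > 0$ satisfies $C_x \sigma < D(\epsilon)$ and $r' < R(\epsilon)$, Theorem \ref{t:connectingpoints} produces $\gamma \in \Gamma(x, y)$ with $\im \gamma \subset A^c \cup \{x, y\}$ and $\int_\gamma^* \epsilon < \lambda \epsilon r$. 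Unpacking the $*$-integral, $\epsilon \ms\gamma < \lambda \epsilon r$ and $\len\gamma - \ms\gamma < \lambda \epsilon r$, so $\len\gamma < (1 + \epsilon)\lambda r < 2\tilde\lambda_x r$, placing $\gamma \in \Gamma(x, y; 2\tilde\lambda_x)$. Since the image avoids $A$ outside the two endpoints, $\mathbf{1}_A \circ \gamma = 0$ almost everywhere on $\dom \gamma$, and hence $\int_\gamma^* \mathbf{1}_A = \len\gamma - \ms\gamma < \lambda \epsilon r$.

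To optimise in $\epsilon$, set $\epsilon_\sigma := \inf\{\epsilon > 0 : D(\epsilon) > C_x \sigma\}$ and $\epsilon_r := \inf\{\epsilon > 0 : R(\epsilon) > \tfrac{3}{2} r\}$, and define $\tilde\zeta_x(\sigma) := 2\lambda \epsilon_\sigma$ and $\tilde o_x(r) := 2\lambda r \epsilon_r$. The threshold sets shrink as $\sigma$ or $r$ grow, so both $\tilde\zeta_x$ and $\tilde o_x$ are increasing; and since $D(\epsilon), R(\epsilon) > 0$ for each $\epsilon > 0$, one checks directly that $\tilde\zeta_x(0) = \tilde o_x(0) = 0$ and that $\tilde o_x(r)/r = 2\lambda \epsilon_r \to 0$ as $r \to 0$. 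Choosing $\epsilon$ marginally above $\max(\epsilon_\sigma, \epsilon_r)$ in the previous paragraph yields
\[
\inf_{\gamma \in \Gamma(x, y; 2\tilde\lambda_x)} \int_\gamma^* \mathbf{1}_A \;\leq\; \lambda \epsilon r \;\leq\; \lambda \epsilon_\sigma r + \lambda \epsilon_r r \;=\; \tfrac{1}{2} \tilde\zeta_x(\sigma)\, r + \tfrac{1}{2} \tilde o_x(r),
\]
which is \eqref{e:geom-PI}.

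The main technical subtlety is handling the degenerate regimes where no admissible $\epsilon$ exists, namely when $C_x \sigma \geq \sup_\epsilon D(\epsilon)$, when $\tfrac{3}{2} r \geq \sup_\epsilon R(\epsilon)$, or when $r$ is too large for doubling at $x$ to apply at scale $\tilde\lambda_x r$. In every such case, the trivial fragment $\gamma : \{0, r\} \to X$ with $\gamma(0) = x$, $\gamma(r) = y$ is $1$-Lipschitz, lies in $\Gamma(x, y; 2\tilde\lambda_x)$, and satisfies $\int_\gamma^* \mathbf{1}_A \leq r$; these tail contributions are absorbed by extending $\tilde\zeta_x$ (or $\tilde o_x$) to equal a sufficiently large monotone value on the saturated region, which does not disturb the vanishing behaviour at the origin.
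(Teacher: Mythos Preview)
Your proof is correct and follows the same route as the paper: apply Theorem~\ref{t:connectingpoints} to obtain a fragment avoiding $A$, bound $\int_\gamma^* \mathbf{1}_A$ by the gap $\len\gamma-\ms\gamma<\lambda\epsilon\,d(x,y)$, and then read off $\tilde\zeta_x$ and $\tilde o_x$ as (essentially) the inverses of the moduli $D(\epsilon)$ and $R(\epsilon)$. Two minor differences are worth noting: the paper sets $\tilde\lambda_x=\lambda$ and simply lets the auxiliary radius tend to $d(x,y)$, thereby avoiding the pointwise-doubling detour and the extra factor $C_x$ that your choice $r'=\tfrac32 r$, $\tilde\lambda_x=3\lambda$ introduces; and the paper first replaces $D$ and $R$ by suboptimal continuous strictly increasing moduli, which is exactly what you need (but do not say) to guarantee that a \emph{single} $\epsilon$ just above $\max(\epsilon_\sigma,\epsilon_r)$ satisfies both $D(\epsilon)>C_x\sigma$ and $R(\epsilon)>\tfrac32 r$ simultaneously.
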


\begin{proof}
  By Theorem \ref{t:connectingpoints}, for $\mu$-a.e. $x \in X$ (which we fix for the rest of the proof) there exists some $\lambda \geq 1$ and moduli $D(\epsilon), R(\epsilon) > 0$ so that if $r= |x-y| \leq R(\epsilon)$ and $A \subseteq X$ is Borel such that $\mu(A \cap B(x,\lambda r)) < D(\epsilon) \mu(B(x,\lambda r))$, then there exists some $\gamma \in \Gamma(x,y)$ so that $\text{im}(\gamma) \subset A^c \cup \{x,y\}$ and
  \begin{align}
    \int_\gamma^* \epsilon \leq \lambda \epsilon d(x,y). \label{e:star-ineq-1}
  \end{align}
  Note that as $\text{im}(\gamma) \subset A^c \cup \{x,y\}$, we then have that
  \begin{align}
    \int_\gamma^* {\bf 1}_A \leq \int_\gamma^* \epsilon. \label{e:star-ineq-2}
  \end{align}
  We have that
  \begin{align*}
    (\len \gamma - \ms \gamma) + \epsilon \ms \gamma = \int_\gamma^* \epsilon \overset{\eqref{e:star-ineq-1}}{\leq} \lambda \epsilon d(x,y).
  \end{align*}
  This implies that $\ms \gamma \leq \lambda d(x,y)$ and $\len \gamma - \ms \gamma \leq \lambda \epsilon d(x,y)$.  Thus, $\len \gamma \leq (\lambda + \lambda \epsilon) d(x,y) \leq 2 \lambda d(x,y)$ and so $\gamma \in \Gamma(x,y;2\lambda)$.  Thus, we have for $A,x,y$ satisfying the above conditions that
  \begin{align}
    \inf_{\gamma \in \Gamma(x,y;2\lambda)} \int_\gamma^* {\bf 1}_A \overset{\eqref{e:star-ineq-1} \wedge \eqref{e:star-ineq-2}}{\leq} \lambda \epsilon d(x,y). \label{e:star-ineq-3}
  \end{align}

  By taking suboptimal $D$ and $R$ if necessary, we may also assume that $D$ and $R$ are both continuous, strictly increasing, and satisfy $R(0) = D(0) = 0$.  Define the functions
  \begin{align}
    \tilde{\zeta}(t) &:= \lambda D^{-1}(t), \notag \\
    \tilde{o}(t) &:= \lambda R^{-1}(2t)t. \label{e:moduli-set}
  \end{align}
  It follows from the properties of $D$ and $R$ that $\tilde{\zeta}$ and $\tilde{o}$ are continuous, increasing, $\tilde{\zeta}(0) = \tilde{o}(0) = 0$ and $\tilde o(r)/r \to 0$ as $r\to 0$.

  Let $r = d(x,y)$ and $\epsilon_0 = R^{-1}(2r)$.  We want to verify \eqref{e:geom-PI}.  If $A$ is a Borel set such that $\mu(A \cap B(x,\lambda r)) < D(\epsilon_0) \mu(B(x,\lambda r))$, then as $r < R(\epsilon_0)$, we have that
  \begin{align*}
    \inf_{\gamma \in \Gamma(x,y;2\lambda)} \int_\gamma^* {\bf 1}_A  \overset{\eqref{e:star-ineq-3}}{\leq} \lambda \epsilon_0 d(x,y) = \tilde{o}(d(x,y)).
  \end{align*}
  Now suppose $A$ is a Borel set such that $\mu(A \cap B(x,\lambda r)) = D(\epsilon) \mu(B(x,\lambda r))$ for some $\epsilon > \epsilon_0$.  
  Then as $r < R(\epsilon)$, we get that
  \begin{align*}
    \inf_{\gamma \in \Gamma(x,y;2\lambda)} \int_\gamma^* {\bf 1}_A  \overset{\eqref{e:star-ineq-3}}{\leq} d(x,y)\lambda \epsilon = d(x,y)\zeta_{1}(D(\epsilon)) = d(x,y)\zeta_{1}\left( \fint_{B(x,\lambda d(x,y))} {\bf 1}_A ~d\mu\right).
  \end{align*}
  Thus, we see that regardless of $A$, \eqref{e:geom-PI} is satisfied.
\end{proof}

We now show that the pointwise moduli and $\lambda_x$ of Lemma \ref{l:geom-PI} can be chosen from only a countable number of such moduli and $\lambda_x$.

\begin{lemma} \label{l:unif-geom-PI}
  Let $(X,d,\mu)$ be a RNP-LDS.  Then there is a countable Borel decomposition $X = \bigcup_i U_i$ so that for each $i$ there exist $\tilde{\lambda}_i \geq 1, \tilde{\zeta}_i : [0,1] \to \R$, and $\tilde{o}_i : [0,\infty) \to \R$ so that for $\mu$-a.e. $x \in U_i$ and every $y \in X, A \subseteq X$, we have
  \begin{align}
    \inf_{\gamma \in \Gamma(x,y;2\tilde{\lambda}_i)} \int_\gamma^* {\bf 1}_A  \leq d(x,y)\tilde{\zeta}_i \left( \fint_{B(x,\tilde{\lambda}_i d(x,y))} {\bf 1}_A ~d\mu \right) + \tilde{o}_i(d(x,y)). \label{e:unif-geom-PI}
  \end{align}
  Here, $\tilde{\zeta}_i$ and $\tilde{o}_i$ are continuous, increasing, and satisfy $\tilde{\zeta}_i(0) = \tilde{o}_i(0) = 0$ and $\tilde o_i(r)/r \to 0$ as $r\to 0$.
\end{lemma}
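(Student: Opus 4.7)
The plan is to combine Lemma \ref{l:geom-PI} with two exhaustion-style decompositions: one to make pointwise doubling uniform, and one to discretise the pointwise data $(\tilde{\lambda}_x, \tilde{\zeta}_x, \tilde{o}_x)$ supplied by Lemma \ref{l:geom-PI} into countably many triples. First I would apply Lemma \ref{l:unif-pointwise-doubling-decomp} to partition $X$ modulo null into countably many Borel subsets $Y$ on each of which $\mu$ is uniformly pointwise doubling; it then suffices to exhibit the required decomposition on each such $Y$.

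On a single $Y$, apply Lemma \ref{l:geom-PI} to obtain pointwise $\tilde{\lambda}_x, \tilde{\zeta}_x, \tilde{o}_x$ at a.e.~$x \in Y$ and partition by the integer part of $\tilde{\lambda}_x$: set $Y_n := \{x \in Y : n-1 < \tilde{\lambda}_x \leq n\}$ and $\tilde{\lambda}_n := n$. Because $n/\tilde{\lambda}_x \leq 2$ on $Y_n$, uniform doubling on $Y$ provides a constant $C_n$ with
$$\fint_{B(x, \tilde{\lambda}_x r)} {\bf 1}_A \, d\mu \leq C_n \fint_{B(x, n r)} {\bf 1}_A \, d\mu$$
for the relevant $x$ and $r$, so replacing $\tilde{\zeta}_x(t)$ by $\tilde{\zeta}_x(\min(C_n t, 1))$ absorbs the enlargement of the ball; and since the infimum on the left of \eqref{e:geom-PI} only decreases when $\tilde{\lambda}_x$ is enlarged to $n$, the inequality persists with these adjusted parameters.

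The main obstacle is to discretise the moduli $\tilde{\zeta}_x, \tilde{o}_x$ themselves, because the space of admissible continuous moduli admits no countable cofinal family under pointwise domination. The resolution is to exhaust instead the underlying countable parameter data: by the proof of Lemma \ref{l:geom-PI}, $\tilde{\zeta}_x$ and $\tilde{o}_x$ arise via \eqref{e:moduli-set} from the pointwise-positive functions $D_x, R_x$ produced by Theorem \ref{t:connectingpoints}. For fixed $n$ and every $k \in \mathbb{N}$, the level sets $\{x \in Y_n : D_x(1/k) \geq 1/m\}$ exhaust $Y_n$ modulo null as $m \to \infty$, and similarly for $R_x$. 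For each $\ell \in \mathbb{N}$, by countable additivity we can pick integers $m_k^{(n,\ell)}$ with
$$\mu\bigl(\{x \in Y_n : \min(D_x(1/k), R_x(1/k)) < 1/m_k^{(n,\ell)}\}\bigr) < 2^{-k-\ell}$$
for every $k$; set $U_{n,\ell} := \{x \in Y_n : \min(D_x(1/k), R_x(1/k)) \geq 1/m_k^{(n,\ell)} \text{ for every } k \in \mathbb{N}\}$, so that $\mu(Y_n \setminus U_{n,\ell}) < 2^{-\ell+1}$ and $\bigcup_\ell U_{n,\ell}$ covers $Y_n$ modulo null.

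Finally, on each $U_{n,\ell}$ take continuous strictly increasing extensions $D_{n,\ell}, R_{n,\ell}$ of the prescribed dyadic values with $D_{n,\ell}(0) = R_{n,\ell}(0) = 0$, and define $\tilde{\zeta}_{n,\ell}, \tilde{o}_{n,\ell}$ by the formulas in \eqref{e:moduli-set} using $n$ in place of $\lambda$ and post-composing with the doubling adjustment $t \mapsto \min(C_n t, 1)$. By construction these dominate $\tilde{\zeta}_x, \tilde{o}_x$ on $U_{n,\ell}$, so \eqref{e:unif-geom-PI} holds there with $\tilde{\lambda}_i := n$ and the corresponding moduli. A disjointification of the countable family $\{U_{n,\ell}\}_{n,\ell}$, indexed over all choices of underlying uniformly-doubling piece $Y$, yields the desired Borel decomposition.
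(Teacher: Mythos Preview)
Your strategy---recover the underlying parameters $(\lambda_x, D_x, R_x)$ from Theorem \ref{t:connectingpoints} via \eqref{e:moduli-set} and then exhaust by countably many discrete choices of those parameters---is the same as the paper's. The genuine gap is measurability: you never verify that $Y_n = \{x : n-1 < \tilde\lambda_x \leq n\}$ or the level sets $\{x \in Y_n : \min(D_x(1/k), R_x(1/k)) \geq 1/m\}$ are Borel, or even $\mu$-measurable. Neither Lemma \ref{l:geom-PI} nor Theorem \ref{t:connectingpoints} asserts any measurable dependence of $\tilde\lambda_x, D_x, R_x$ on $x$, and there is no canonical choice of these data. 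Without this, your step ``by countable additivity we can pick $m_k^{(n,\ell)}$ with $\mu(\{\ldots\}) < 2^{-k-\ell}$'' has no content, and the resulting $U_{n,\ell}$ need not be Borel.

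Establishing this measurability is precisely the substance of the paper's proof. For each fixed tuple $(\lambda, D, R, \epsilon)$ the paper shows that the set $S(\lambda, D, R, \epsilon)$ of points $x$ satisfying the conclusion of Theorem \ref{t:connectingpoints} with those parameters is \emph{closed}: given $x_n \to x$ with $x_n \in S(\lambda,D,R,\epsilon)$, one uses inner (or outer) regularity of $\mu$ on balls to transfer the density hypothesis on $A$ from $B(x,\lambda r)$ to nearby $B(x_n,\lambda r_n)$, and then splices a short jump onto a good fragment for $x_n$ to produce one for $x$. Once closedness is in hand, the exhaustion over countable $\lambda, D, R$ and $\epsilon_k \to 0$ proceeds exactly as you outline. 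As a side remark, the paper avoids your detour through uniform pointwise doubling to renormalise $\tilde\lambda_x$: it simply lets $\lambda$ range over $\mathbb N$ as one more discrete parameter in the exhaustion, so no appeal to Lemma \ref{l:unif-pointwise-doubling-decomp} is needed for this lemma.
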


Note that the integral on the right hand side of \eqref{e:unif-geom-PI} is performed over a ball in the whole of $X$, not just $U_i$.

\begin{proof}
We first do a few reductions.  By the proof of Lemma \ref{l:geom-PI} (using \eqref{e:moduli-set} for example), it suffices to show that there is a countable Borel decomposition $X = \bigcup_i U_i$ so that for each $i$ there exists $\lambda_i \geq 1$ and $D_i(\epsilon),R_i(\epsilon) > 0$ so that the conclusion of Theorem \ref{t:connectingpoints} holds with these quantities uniformly for $\mu$-a.e. $x \in U_i$.  Further, it further suffices to demonstrate the existence of $D_i(\epsilon_k)$ and $R_i(\epsilon_k)$ for a countable sequence $\epsilon_k \to 0$ (for each $i$).

Let $D,R,\epsilon \in (0,1)$, $\lambda \geq 1$, and let $S(\lambda,D,R,\epsilon)$ be the set of $x \in X$ for which the following holds: whenever $r < R$ and $y \in X$ with $d(x,y) \in (r/2,r)$ and $A \subseteq X$ is Borel such that
  \begin{align}\label{e:small-density}
    \frac{\mu(B(x, \lambda r) \cap A)}{\mu(B(x, \lambda r))} < D,
  \end{align}
then $\rho_\epsilon^{A^c}(x,y) \leq \lambda \epsilon d(x,y)$.  Then each $S(\lambda,D,R,\epsilon)$ is closed.  We will prove this for the two cases depending on whether balls are assumed to be open or closed.

Indeed, first suppose that balls are open.  For fixed $D,R,\epsilon,\lambda$ suppose that $x_n \in S(\lambda, D,R,\epsilon)$ with $d(x_n,x) \searrow 0$.  Let $A\subset X$ satisfy \eqref{e:small-density} and $y \in X$ be so that $d(y,x) \in (r/2,r)$.  If we set $r_n = r - (1 + \frac{1}{n}) d(x_n,x)$ we get that $\overline{B(x_n, \lambda r_n)}$ is an increasing sequence of closed sets filling up $B(x,\lambda r)$ and so by inner regularity we have
\begin{align}
  \mu(B(x_n,\lambda r_n)) \nearrow \mu(B(x,\lambda r)). \label{e:inner-reg-conv}
\end{align}
As $\mu(B(x_n,\lambda r_n) \cap A) \leq \mu(B(x,\lambda r) \cap A)$, we get from \eqref{e:small-density} and \eqref{e:inner-reg-conv} that there exists $N_1 \geq 1$ so that $\mu(B(x,\lambda r_n) \cap A) < D \mu(B(x_n,\lambda r_n)$ for all $n \geq N_1$.  As $x_n \to x$, $r_n \to r$, and $d(x,y) \in (r/2,r)$, there then exist $N_2 \geq N_1$ so that $d(y,x_n) \in (r_n/2,r_n)$ for all $n \geq N_2$.  Thus, as $x_n \in S(\lambda, D,R, \epsilon)$, we have $\rho_\epsilon^{A^c}(x_n,y) \leq \lambda \epsilon d(x_n,y)$ for all $n \geq N_2$.

Fix an $n \in \N$.  For any curve $\gamma \in \Gamma(x_n,y)$ so that $\text{im}(\gamma) \subset A^c \cup \{x,y\}$ (and we suppose $\inf \dom(\gamma) = 0$), we have that the curve
\begin{align*}
  \tilde{\gamma} : \{-d(x,x_n)\} \cup (\dom(\gamma) \backslash [0,d(x,x_n))) &\to X \\
  t &\mapsto \begin{cases}
    x & t = -d(x,x_n), \\
    \gamma(t) & t \neq -d(x,x_n),
  \end{cases}
\end{align*}
is in $\Gamma(x,y)$ and has $\text{im}(\tilde{\gamma}) \subset A^c \cup \{x,y\}$.  Thus,
\begin{align*}
  \rho_\epsilon^{A^c}(x,y) \leq \inf_{\gamma \in \Gamma(x_n,y)} \int_{\tilde{\gamma}}^* \epsilon \leq 2 d(x,x_n) + \rho_\epsilon^{A^c}(x_n,y) \leq 2 d(x,x_n) + \lambda \epsilon (d(x,y) + d(x_n,x)).
\end{align*}
Taking $n \to \infty$, we get $\rho_\epsilon^{A^c}(x,y) \leq \lambda \epsilon d(x,y)$, as needed.

If balls are closed, then we instead take $B(x,\lambda_n r_n) \searrow B(x,\lambda r)$ (and hence $B(x,\lambda_n r_n)\cap A \searrow B(x,\lambda r)\cap A$) by inflating $r_n$ and then apply outer regularity to get the same statement.

By Theorem \ref{t:connectingpoints}, for each $\epsilon \in (0,1)$, $S(\lambda,D,R,\epsilon)$ converges to a set of full measure as $D,R\to 0$ and $\lambda \to \infty$.  For any $\delta > 0$, by taking a sufficiently large $\lambda$ and for each $\epsilon$ suitably small $D$ and $R$ and then intersecting over a countable choice of $\epsilon_k \to 0$, we can obtain a subset $S \subset X$ satisfying the property with $\mu(S) \geq \mu(X) - \delta$ (recall that we are always under the assumption that $X$ has finite measure).  To finish, one then takes a countable sequence $\delta_k \to 0$ and exhaust $S$ by such constructed subsets corresponding to each $\delta_k$.
\end{proof}

We now show that the previous geometric Poincar\'e style inequality can be upgraded to handle measurable functions.

\begin{lemma} \label{l:funct-PI}
  Let $U_i$ be any of the Borel pieces from the decomposition of Lemma \ref{l:unif-geom-PI}.  Then there exist $\hat{\lambda}_i \geq 1$ and functions $o_i,\hat{\zeta}_i$ so that for $\mu$-a.e. $x \in U_i$, every measurable $\rho : X \to [0,1]$ and $y \in X$ we have
  \begin{align}
    \inf_{\gamma \in \Gamma(x,y;2\hat{\lambda}_i)} \int_\gamma^* \rho  \leq d(x,y) \hat{\zeta}_i \left( \fint_{B(x,\hat{\lambda}_i d(x,y))} \rho ~d\mu \right) + \hat{o}_i(d(x,y)). \label{e:funct-PI}
  \end{align}
  Here the $\hat{o}_i : [0,\infty) \to \R$ and the $\hat{\zeta}_i : [0,1] \to \R$ are continuous, increasing, and satisfy $\hat{o}_i(0) = \hat{\zeta}_i(0) = 0$ and $\hat o_i(r)/r \to 0$ as $r\to 0$.
\end{lemma}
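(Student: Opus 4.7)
The plan is to bootstrap the functional inequality from the set-theoretic one (Lemma \ref{l:unif-geom-PI}) via a simple truncation/layer-cake argument. For a measurable $\rho \colon X \to [0,1]$ and a threshold $t \in (0,1]$, decompose pointwise
\begin{equation*}
  \rho(z) \leq t + {\bf 1}_{A_t}(z), \qquad A_t := \{z \in X : \rho(z) > t\}.
\end{equation*}
Since each $\gamma \in \Gamma$ is $1$-Lipschitz we have $\md_\gamma \leq 1$ a.e., and for $\gamma \in \Gamma(x,y;2\tilde\lambda_i)$ a direct calculation gives
\begin{equation*}
  \int_\gamma^* \rho = (\len\gamma - \ms\gamma) + \int_{\dom\gamma} \rho\circ\gamma\,\md_\gamma \leq t\,\ms\gamma + \int_\gamma^* {\bf 1}_{A_t} \leq 2\tilde\lambda_i\,t\,d(x,y) + \int_\gamma^* {\bf 1}_{A_t},
\end{equation*}
where the key point is that the $(\len\gamma - \ms\gamma)$ term (which prevents $\int_\gamma^*$ from being linear in the integrand) is absorbed precisely into $\int_\gamma^* {\bf 1}_{A_t}$, and $\ms\gamma \leq \len\gamma \leq 2\tilde\lambda_i d(x,y)$ on short curves.

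Writing $\alpha := \fint_{B(x,\tilde\lambda_i d(x,y))} \rho\, d\mu$, Markov's inequality yields
\begin{equation*}
  \fint_{B(x,\tilde\lambda_i d(x,y))} {\bf 1}_{A_t}\, d\mu \leq \frac{\alpha}{t}.
\end{equation*}
Applying Lemma \ref{l:unif-geom-PI} to $A_t$ (provided $t \geq \alpha$, so that $\alpha/t \in [0,1]$ lies in the domain of $\tilde\zeta_i$), and taking the infimum over $\gamma \in \Gamma(x,y;2\tilde\lambda_i)$ in the previous display, gives
\begin{equation*}
  \inf_\gamma \int_\gamma^* \rho \leq 2\tilde\lambda_i\,t\,d(x,y) + d(x,y)\,\tilde\zeta_i(\alpha/t) + \tilde o_i(d(x,y)).
\end{equation*}

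Finally, optimize in $t$. For $\alpha \in (0,1]$ choose $t = \sqrt\alpha$, so that $\alpha/t = \sqrt\alpha \in [0,1]$, and set
\begin{equation*}
  \hat\zeta_i(\alpha) := 2\tilde\lambda_i\sqrt\alpha + \tilde\zeta_i(\sqrt\alpha),\qquad \hat o_i := \tilde o_i,\qquad \hat\lambda_i := \tilde\lambda_i.
\end{equation*}
When $\alpha = 0$, we have $\rho = 0$ $\mu$-a.e. on the ball, so $A_t$ has zero density there for every $t > 0$; the previous display holds for all $t > 0$ (with $\tilde\zeta_i(0) = 0$), and letting $t \to 0^+$ collapses the right-hand side to $\tilde o_i(d(x,y))$, which matches the proposed formula at $\hat\zeta_i(0) = 0$. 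The resulting $\hat\zeta_i$ and $\hat o_i$ are continuous, increasing, vanish at $0$, and $\hat o_i(r)/r \to 0$, as required.

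The only subtlety is the non-linearity of $\int_\gamma^*$ in its integrand; this is handled cleanly by the decomposition $\rho \leq t + {\bf 1}_{A_t}$ together with $\md_\gamma \leq 1$, after which everything reduces to a Markov/layer-cake estimate against Lemma \ref{l:unif-geom-PI}. The specific choice $t = \sqrt\alpha$ is not essential—any $t = t(\alpha) \to 0$ with $\alpha/t(\alpha) \to 0$ produces an admissible modulus $\hat\zeta_i$.
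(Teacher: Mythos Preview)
Your proof is correct and follows essentially the same approach as the paper's: both use the truncation $\rho \leq t\,\mathbf{1}_{\{\rho\leq t\}} + \mathbf{1}_{\{\rho>t\}}$, absorb the $(\len\gamma-\ms\gamma)$ term into $\int_\gamma^*\mathbf{1}_{\{\rho>t\}}$, bound the super-level set density by Markov, and invoke Lemma~\ref{l:unif-geom-PI}. The only cosmetic difference is that the paper sets $\hat\zeta_i(r)=\inf_{t\in(0,1)}\bigl[2\lambda t+\tilde\zeta_i(r/t)\bigr]$ while you make the explicit choice $t=\sqrt\alpha$; as you observe, any $t(\alpha)\to 0$ with $\alpha/t(\alpha)\to 0$ suffices.
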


\begin{proof}
  Let $\lambda,o,\zeta$ be the constant and moduli guaranteed by Lemma \ref{l:unif-geom-PI} so that \eqref{e:unif-geom-PI} holds (we omit the tildes and $i$ subscript for notational simplicity).  Now let $y \in X$ and $\rho : X \to [0,1]$ be measurable.  For $t \in [0,1]$, let $A_t = \rho^{-1}([0,t])$.  We then have that
  \begin{align}
    \frac{\mu(A_t^c \cap B(x,\lambda d(x,y)))}{B(x, \lambda d(x,y))} \leq \frac{1}{t} \fint_{B(x, \lambda d(x,y))} \rho ~d\mu. \label{e:markov-ineq}
  \end{align}
  We then have for any $t$ that
  \begin{align*}
    \inf_{\gamma \in \Gamma(x,y;2\lambda)} \int_\gamma^* \rho  &\leq \inf_{\gamma \in \Gamma(x,y;2\lambda)} \int_\gamma^* t {\bf 1}_{A_t} + {\bf 1}_{A_t^c} \\
    &\leq 2\lambda t d(x,y)+ d(x,y)\inf_{\gamma \in \Gamma(x,y;2\lambda)} \int_\gamma^* {\bf 1}_{A_t^c}  \\
    &\overset{\eqref{e:unif-geom-PI}}{\leq} 2\lambda t d(x,y)+ d(x,y)\zeta\left( \fint_{B(x, \lambda d(x,y))} {\bf 1}_{A_t^c} ~d\mu \right) + o(d(x,y)) \\
    &\overset{\eqref{e:markov-ineq}}{\leq} 2\lambda td(x,y) + d(x,y)\zeta\left( \frac{1}{t} \fint_{B(x, \lambda d(x,y))} \rho ~d\mu \right) + o(d(x,y)).
  \end{align*}
  Thus, if we set $\hat{\lambda}_i = \lambda$, $\hat{o}_i = o$, and
  \begin{align*}
    \hat{\zeta}_i(r) = \inf_{t \in (0,1)} 2\lambda t + \zeta(r/t),
  \end{align*}
  we get \eqref{e:funct-PI}.  That $\hat{\zeta}_i$ is increasing and satisfies $\hat{\zeta}_i(0) = 0$ is an easy exercise.
\end{proof}

Given a measurable $\rho : X \to [0,1]$, we define
\begin{align*}
  \tilde{F}_\rho(x,y;\lambda) := \inf_{\gamma \in \Gamma(x,y;\lambda)} \int_\gamma^* \rho.
\end{align*}
This should be compared to the definition in \cite{cheeger} that does not use the *-integral.  The next two lemmas prove an asymptotic nonhomogeneous quasiconvex analogue of the segment inequality of \cite{cheeger-colding} for RNP-LDS.  As in the Riemannian case, this inequality will be the means by which we prove our asymptotic nonhomogeneous Poincar\'e inequality.

\begin{lemma} \label{l:poincare-unif-unif}
  Let $U_i$ be any of the Borel pieces from the decomposition of Lemma \ref{l:funct-PI}.  Then there exists some $\lambda_i \geq 1$ and $\zeta_i : [0,1] \to \R$ so that $\mu$-a.e. $z \in U_i$, there exists some $o_z : [0,\infty) \to \R$ so that for every measurable $\rho : X \to [0,1]$, we have
  \begin{align}
    \fint_{B(z,r)} \fint_{B(z,r)} \tilde{F}_\rho(x,y;\lambda_i) ~d\mu(x) ~d\mu(y) \leq r \zeta_i \left( \fint_{B(z,(\lambda_i + 1)r)} \rho ~d\mu \right) + o_z(r). \label{e:unif-funct-PI}
  \end{align}
  Here $o_z$ satisfies
  \begin{align}
    \lim_{t \to 0} \frac{o_z(t)}{t} = 0, \label{e:to-o}
  \end{align}
  and $\zeta_i$ is continuous increasing and satisfies $\zeta_i(0) = 0$.
\end{lemma}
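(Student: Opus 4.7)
The plan is to bootstrap the pointwise estimate of Lemma \ref{l:funct-PI} into the doubly averaged statement by a near/far split of the pairs $(x,y) \in B(z,r) \times B(z,r)$ based on the size of $d(x,y)/r$, and to use uniform pointwise doubling to consolidate averages of $\rho$ onto a single ball of radius comparable to $r$. First I would further refine the decomposition of Lemma \ref{l:funct-PI} via Lemma \ref{l:unif-pointwise-doubling-decomp} so that each $U_i$ is also $(C_i, R_i)$-uniformly pointwise doubling, and set $\lambda_i := 2\hat{\lambda}_i$ and $s_i := \log_2 C_i$, where $\hat{\lambda}_i, \hat{\zeta}_i, \hat{o}_i$ are the constant and moduli supplied by Lemma \ref{l:funct-PI} on this piece. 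I would then fix a Lebesgue density point $z \in U_i$, restrict to $r < R_i/(\lambda_i+2)$, and abbreviate $M := \fint_{B(z,(\lambda_i+1)r)} \rho \, d\mu$.

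For each $\delta \in (0,1)$ I would split $B(z,r)\times B(z,r)$ into the near region $\{d(x,y) < \delta r\}$ and the far region $\{d(x,y) \geq \delta r\}$. On the near region the trivial bound $\tilde{F}_\rho(x,y;\lambda_i) \leq \lambda_i d(x,y) < \lambda_i \delta r$ (coming from $\rho \leq 1$ and the nonemptiness of $\Gamma(x,y;\lambda_i)$ for $\mu$-a.e.\ $x \in U_i$ and every $y$, which Lemma \ref{l:funct-PI} guarantees) contributes at most $\lambda_i \delta r$ to the double average. On the far region I would apply Lemma \ref{l:funct-PI} pointwise, using that $B(x, \hat{\lambda}_i d(x,y)) \subset B(z,(\lambda_i+1)r)$ together with Lemma \ref{l:doubling-balls} to get
\[
\mu(B(x, \hat{\lambda}_i d(x,y))) \geq c_\delta \, \mu(B(z,(\lambda_i+1)r)), \qquad c_\delta := 4^{-s_i}\left(\frac{\hat{\lambda}_i \delta}{\lambda_i+1}\right)^{s_i},
\]
whence $\fint_{B(x, \hat{\lambda}_i d(x,y))} \rho \, d\mu \leq c_\delta^{-1} M$. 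Since $d(x,y) \leq 2r$ and (after extending $\hat{\zeta}_i$ to be constant past $1$ if needed) $\hat{\zeta}_i$ is increasing, the far contribution is at most $2r\hat{\zeta}_i(c_\delta^{-1} M) + \hat{o}_i(2r)$.

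Adding the two regions yields, for every admissible $\delta$,
\[
\fint_B\fint_B \tilde{F}_\rho(x,y;\lambda_i) \, d\mu(x) \, d\mu(y) \leq 2r\hat{\zeta}_i(c_\delta^{-1} M) + \lambda_i \delta r + \hat{o}_i(2r),
\]
plus an additive contribution of size $O\bigl(r\,\mu(B(z,r)\setminus U_i)/\mu(B(z,r))\bigr)$ coming from pairs with $x \notin U_i$ (where we use the trivial bound for an appropriate $\lambda_i$ after enlarging the decomposition if needed). Minimizing in $\delta$ suggests defining
\[
\zeta_i(t) := \inf_{\delta \in (0,1)} \bigl[ 2\hat{\zeta}_i(c_\delta^{-1} t) + \lambda_i \delta \bigr],
\]
which is increasing, continuous, and satisfies $\zeta_i(0) = 0$ (choose $\delta$ small first, then $t$ small), and setting $o_z(r) := \hat{o}_i(2r) + 2\lambda_i r \cdot \mu(B(z,r) \setminus U_i)/\mu(B(z,r))$, which satisfies $o_z(r)/r \to 0$ by Lemma \ref{l:funct-PI} and the Lebesgue density theorem at $z$. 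The principal difficulty is the scale mismatch: Lemma \ref{l:funct-PI} averages $\rho$ over the ball $B(x,\hat{\lambda}_i d(x,y))$, whose radius vanishes with $d(x,y)$, whereas the target inequality requires a single ball of radius $\sim r$. The near/far split absorbs the small-$d(x,y)$ pairs via the linear-in-$d$ trivial bound, while uniform pointwise doubling reconciles the scales on the far region at the cost of the $c_\delta^{-1}$ inflation, which is then absorbed into the definition of $\zeta_i$.
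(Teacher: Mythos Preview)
Your argument is correct, but it handles the central scale-mismatch differently from the paper. The paper does not split into near and far pairs. Instead, after passing without loss of generality to a concave $\hat{\zeta}$, it sets $\tilde{\zeta}(t):=\hat{\zeta}(t^{1/s})$ (with $s=\log_2 C$) and uses concavity together with Lemma \ref{l:doubling-balls} to prove the single inequality
\[
r'\,\tilde{\zeta}\!\left(\frac{t}{\mu(B(x,r'))}\right)\le 4(2\lambda+1)r\,\tilde{\zeta}\!\left(\frac{t}{\mu(B(z,(2\lambda+1)r))}\right)
\]
valid for all $r'\le (\lambda+1)r$. This absorbs both the vanishing radius $r'=\hat{\lambda}d(x,y)$ and the prefactor $d(x,y)$ in one stroke, so no parameter $\delta$ and no optimization are needed; the resulting $\zeta_i$ is simply $4(2\lambda+1)\tilde{\zeta}$. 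The paper's split of $B(z,r)^2$ is instead by membership in $U_i$: pairs with at least one coordinate in $U_i$ get Lemma \ref{l:funct-PI} (applied from whichever coordinate lies in $U_i$), and only pairs with both coordinates outside $U_i$ receive the trivial bound, giving a squared density term in $o_z$ rather than your linear one.

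Your near/far split plus minimization over $\delta$ is a legitimate substitute: it is more elementary (no concavity hypothesis or power reparametrization), at the cost of a more implicit $\zeta_i$ whose continuity requires a short separate check. Two small cleanups: the bound $\tilde{F}_\rho(x,y;\lambda_i)\le d(x,y)$ holds for \emph{all} pairs via the two-point fragment, so you need not invoke Lemma \ref{l:funct-PI} for the near region; and on the far region you should, as the paper does, apply Lemma \ref{l:funct-PI} from $y$ when $y\in U_i$ but $x\notin U_i$, which reduces the residual ``bad'' set to $(B(z,r)\setminus U_i)^2$ and sharpens your $o_z$ term (though your linear version already suffices for \eqref{e:to-o}).
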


As before, the balls $B(z,r)$ and $B(z,(2\lambda+1)r)$ are in $X$ and not restricted to $S$.

\begin{proof}
  We may suppose $U_i$ has positive measure and $z$ is a density point of $U_i$.  Let $\lambda,\zeta,o$ be the constants and moduli guaranteed for $U_i$ by Lemma \ref{l:funct-PI} so that \eqref{e:funct-PI} holds (we omit the hat and $i$ subscript for notational simplicity).  We may also suppose that $U_i$ is uniformly $(C,R)$-doubling.  We define
  \begin{align*}
    o_z(t) :=
      \begin{cases}
        2t \cdot \left[  o(2t) + \left( \frac{\mu(B(z,t) \backslash U_i)}{B(z,t)} \right)^2 \right], & t < \frac{R}{2\lambda+1}, \\
        2t & t \geq \frac{R}{2\lambda + 1}.
      \end{cases}
  \end{align*}
  It follows from the fact that $o(0) = 0$ and the fact that $z$ is a density point of $S$ that \eqref{e:to-o} is satisfied.

  We may suppose without loss of generality that $\zeta$ is concave.  Define
  \begin{align*}
    \tilde{\zeta}(t) := \zeta(t^{1/s}),
  \end{align*}
  where $s$ is the constant from Lemma \ref{l:doubling-balls}.  Note then that for any $\eta \in (0,1)$, we have that
  \begin{align*}
    \tilde{\zeta}(\eta^s t) = \zeta(\eta t^{1/s}) \geq \eta \zeta(t^{1/s}) = \eta \tilde{\zeta}(t), \qquad \forall t > 0.
  \end{align*}
  In the above inequality, we used concavity of $\zeta$ along with $\zeta(0) = 0$.  Thus, by Lemma \ref{l:doubling-balls}, we get that if $(\lambda+1)r < R$, $x \in U_i$, and $B(x,r') \subseteq B(z,(\lambda+1)r)$ then for all $t>0$
  \begin{multline}
    r' \tilde{\zeta}\left( \frac{1}{\mu(B(x,r'))} t \right) \leq 4(2\lambda+1)r \tilde{\zeta} \left( \frac{1}{\mu(B(x,r'))} \left(\frac{r'}{4(2\lambda+1)r} \right)^s t \right) \\
    \leq 4(2\lambda+1)r \tilde{\zeta}\left( \frac{1}{\mu(B(z,(2\lambda +1)r))} t \right). \label{e:tilde-zeta-ineq}
  \end{multline}

  Let $r \in (0,R/(2\lambda+1))$, $A_1 = (B(x,r) \backslash U_i) \times (B(x,r) \backslash U_i)$, and $A_2 = (B(x,r) \times B(x,r)) \backslash A_1$.  Let $(x,y) \in A_2$ and suppose $x \in U_i$.  Then $B(x,\lambda d(x,y)) \subseteq B(z,(2\lambda+1)r)$ and so we get that
  \begin{multline}
    \tilde{F}_\rho(x,y;2\lambda) \overset{\eqref{e:funct-PI}}{\leq} d(x,y) \tilde{\zeta} \left( \fint_{B(x,\lambda d(x,y))} \rho ~d\mu \right) + d(x,y) o(d(x,y)) \\
    \overset{\eqref{e:tilde-zeta-ineq}}{\leq} 4(2\lambda +1)r \tilde{\zeta} \left( \fint_{B(z,(2\lambda +1)r))} \rho ~d\mu \right) + 2r o(2r). \label{e:x-in}
  \end{multline}
  The same inequality holds if $y \in U_i$.  We always have the trivial inequality
  \begin{align}
    \tilde{F}_\rho(x,y;2\lambda) \leq d(x,y), \qquad \forall x,y \in X. \label{e:neither}
  \end{align}

  We now verify \eqref{e:unif-funct-PI}.  By the definition of $o_z$ and the trivial bound \eqref{e:neither}, we have that \eqref{e:unif-funct-PI} is satisfied when $r \geq R/(2\lambda+1)$.  Thus, we may assume that $r < R/(2\lambda+1)$.  Then
  \begin{align*}
    \fint_{B(z,r)} &\fint_{B(z,r)} \tilde{F}_\rho(x,y;2\lambda) ~d\mu(x) ~d\mu(y) \\
    &= \frac{1}{\mu(B(z,r))^2} \left[ \int_{A_2} \tilde{F}_\rho(x,y;2\lambda) d(\mu \times \mu) + \int_{A_1} \tilde{F}_\rho(x,y;2\lambda) d(\mu \times \mu) \right] \\
    &\overset{\eqref{e:x-in} \wedge \eqref{e:neither}}{\leq} 4(2\lambda+1)r \tilde{\zeta} \left( \fint_{B(z,(\lambda+1)r))} \rho ~d\mu \right) + 2r o(2r) + 2r \left( \frac{\mu(B(z,r) \backslash U_i)}{B(z,r)} \right)^2
  \end{align*}
  Taking $\zeta_i(t) = 4(2\lambda+1) \tilde{\zeta}(t)$ and $\lambda_i = 2\lambda$ completes the proof.
\end{proof}

\begin{theorem} \label{t:ae-poincare}
  Let $(X,d,\mu)$ be a RNP-Lipschitz differentiability space.  Then there exist a countable Borel decomposition $X = \bigcup_i U_i$, $\lambda_i \geq 1$, increasing $\zeta_i : [0,1] \to \R$ and $o_i : [0,\infty)\to R$ such that for almost every $z \in U_i$ and every measurable $\rho : X \to [0,1]$ we have
  \begin{align}
    \fint_{B(z,r)} \fint_{B(z,r)} \tilde{F}_\rho(x,y;\lambda_i) ~d\mu(x) ~d\mu(y) \leq r \zeta_i \left( \fint_{B(z,(\lambda_i+1)r)} \rho ~d\mu \right) + o_i(r). \label{e:theorem-funct-PI}
  \end{align}
  Here $o_i$ and $\zeta_i$ satisfy
  \begin{align*}
    \lim_{t \to 0} \zeta_i(t) = 0, \text{ and } \lim_{t \to 0} \frac{o_i(t)}{t} = 0,
  \end{align*}
\end{theorem}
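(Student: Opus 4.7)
The plan is to obtain the decomposition by first invoking Lemma~\ref{l:poincare-unif-unif} and then further refining each piece so that the pointwise error term $o_z$ becomes independent of $z$. Fix one of the pieces $U_i$ produced by Lemma~\ref{l:poincare-unif-unif}, along with the associated uniform $\lambda_i$ and $\zeta_i$. Inspecting the proof of that lemma, for $\mu$-a.e.\ $z \in U_i$ the error can be taken to be
\[
o_z(t) = 2t\!\left[\,o(2t) + \Bigl(\tfrac{\mu(B(z,t)\setminus U_i)}{\mu(B(z,t))}\Bigr)^{2}\right]
\]
for $t$ below a fixed threshold (and $o_z(t) = 2t$ otherwise), where $o$ is the fixed modulus produced by Lemma~\ref{l:funct-PI} on $U_i$. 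Only the density ratio $g(z,t) := \mu(B(z,t)\setminus U_i)/\mu(B(z,t))$ depends on $z$, so it suffices to make its decay rate uniform in $z$ after passing to a countable Borel subdecomposition of $U_i$.

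Since every RNP-LDS is pointwise doubling (Theorem~\ref{t:porous-null}), the Lebesgue density theorem applies and $g(z,t)\to 0$ as $t\to 0$ for $\mu$-a.e.\ $z\in U_i$. Define the measurable functions
\[
h_n(z) := \sup_{0<t\le 1/n} g(z,t), \qquad n\in\mathbb{N},
\]
which satisfy $h_n\searrow 0$ pointwise on a full-measure subset of $U_i$. Because $\mu$ is finite, Egorov's theorem produces a countable family of Borel sets $V_{i,m}\subseteq U_i$ with $\mu(U_i\setminus\bigcup_m V_{i,m})=0$ such that $h_n\to 0$ uniformly on each $V_{i,m}$. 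Setting
\[
\eta_{i,m}(t) := \sup_{z\in V_{i,m}}\,g(z,t), \qquad o_{i,m}(t) := 2t\bigl[o(2t)+\eta_{i,m}(t)^{2}\bigr]
\]
(extended by $2t$ above the threshold), the uniform convergence of $h_n$ forces $\eta_{i,m}(t)\to 0$ and hence $o_{i,m}(t)/t\to 0$ as $t\to 0$. Replacing $o_{i,m}$ by $\tilde o_{i,m}(t):=\sup_{s\le t}o_{i,m}(s)$ makes it increasing without affecting the decay rate.

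Reindexing $\{V_{i,m}\}_{i,m}$ as the desired $\{U_j\}_j$ and assigning to each the parameters $\lambda_j := \lambda_i$, $\zeta_j := \zeta_i$, $o_j := \tilde o_{i,m}$ inherited from its parent $U_i$, we obtain the required countable Borel decomposition of $X$: the inequality \eqref{e:theorem-funct-PI} holds on each $V_{i,m}$ because it already holds for $\mu$-a.e.\ $z\in U_i$ with $o_z(t)\le o_{i,m}(t)\le \tilde o_{i,m}(t)$.

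The only subtlety is justifying that $h_n$ is measurable and that Egorov's theorem yields Borel (rather than merely measurable) sets. The first point reduces to measurability of $z\mapsto \mu(B(z,t))$ and $z\mapsto \mu(B(z,t)\setminus U_i)$ for fixed $t$, which follows from standard arguments for Borel regular measures together with the observation that the supremum in the definition of $h_n$ may be taken over rational $t$ by monotonicity of $t\mapsto \mu(B(z,t))$ up to a countable set of jumps. Borelness of $V_{i,m}$ can be arranged by choosing the sets in Egorov's construction from a Borel filtration (e.g.\ $\{h_n\le 1/k\}$ is Borel), which is the main technical point to verify carefully.
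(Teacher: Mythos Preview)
Your proposal is correct and follows essentially the same route as the paper: invoke Lemmas \ref{l:unif-geom-PI}, \ref{l:funct-PI}, and \ref{l:poincare-unif-unif} to get pieces $U_i$ with uniform $\lambda_i,\zeta_i$ but point-dependent $o_z$, then take a further countable Borel decomposition of each $U_i$ to make the error term uniform. The paper leaves this last refinement as a single sentence; your Egorov argument based on the explicit form of $o_z$ from the proof of Lemma \ref{l:poincare-unif-unif} is a clean way to carry it out (and the measurability of $h_n$ is indeed routine, since $t\mapsto g(z,t)$ is one-sided continuous so the supremum over $(0,1/n]$ coincides with the supremum over rationals).
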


\begin{proof}
  Lemmas \ref{l:unif-geom-PI}, \ref{l:funct-PI}, and \ref{l:poincare-unif-unif} almost give the conclusion needed except that the $o$ error term may still depend on the point in $U_i$.  However, we can take a further countable Borel decomposition of each $U_i$ into pieces $V_{i,j}$ so that \eqref{e:theorem-funct-PI} holds for a single $o_{i,j}$ term on the whole of $V_{i,j}$.  The union of countable decompositions $\{V_{i,j}\}_{i,j}$ for each chart (of which there are a countable number) is the one that satisfies the theorem.
\end{proof}

Using the previous theorem, we can produce the more standard form of a Poincar\'e inequality involving the average of a Lipschitz function.

\begin{theorem} \label{t:true-PI}
  Let $(X,d,\mu)$ be a RNP-Lipschitz differentiability space.  Then there exist a countable Borel decomposition $X = \bigcup_i U_i$, $\lambda_i \geq 1$, increasing $\zeta_i : [0,1] \to \R$ and $o_i : [0,\infty) \to \R$ such that for almost every $z \in U_i$ and every 1-Lipschitz function $u : X \to \R$ with *-upper gradient $\rho : X \to [0,1]$, we have
  \begin{align}
    \fint_{B(z,r)} | u - u_{B(z,r)} | ~d\mu \leq r \zeta_i \left( \fint_{B(z,\lambda_i r)} \rho ~d\mu \right) + o_i(r). \label{e:true-PI}
  \end{align}
  Here $o_i$ and $\zeta_i$ satisfy
  \begin{align*}
    \lim_{t \to 0} \frac{o_i(t)}{t} = 0 \text{ and } \lim_{t \to 0} \zeta_i(t) = 0.
  \end{align*}
\end{theorem}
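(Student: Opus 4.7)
The plan is to derive Theorem \ref{t:true-PI} as a direct consequence of the ``segment inequality'' already obtained in Theorem \ref{t:ae-poincare}, combined with the definition of a *-upper gradient and the standard averaging trick. Since Theorem \ref{t:ae-poincare} provides the countable Borel decomposition $X = \bigcup_i U_i$ with constants $\lambda_i$ and moduli $\zeta_i, o_i$ already, I would simply inherit this decomposition (possibly replacing $\lambda_i$ by $\lambda_i + 1$ at the end so the ball radius in the final inequality matches the statement).

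Fix one piece $U_i$, let $z \in U_i$ be one of the full-measure points for which \eqref{e:theorem-funct-PI} holds, and let $u \colon X \to \R$ be $1$-Lipschitz with *-upper gradient $\rho \colon X \to [0,1]$. The first step is to observe that, by the very definition of a *-upper gradient, for every $x,y \in X$ and every $\gamma \in \Gamma(x,y)$ one has $|u(x)-u(y)| \leq \int_\gamma^* \rho$; taking the infimum over the restricted class $\Gamma(x,y;\lambda_i)$ (allowing $+\infty$ if this class is empty) yields the pointwise comparison
\[
|u(x) - u(y)| \;\leq\; \tilde{F}_\rho(x,y;\lambda_i) \qquad \text{for all } x,y \in X.
\]

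The second step is the standard double-average estimate. Using that $u_{B(z,r)}$ is the $\mu$-average of $u$ over $B(z,r)$ and applying the triangle inequality under the integral sign,
\begin{align*}
\fint_{B(z,r)} \bigl| u - u_{B(z,r)} \bigr| \, d\mu
&\leq \fint_{B(z,r)} \fint_{B(z,r)} |u(x)-u(y)| \, d\mu(x)\, d\mu(y) \\
&\leq \fint_{B(z,r)} \fint_{B(z,r)} \tilde{F}_\rho(x,y;\lambda_i) \, d\mu(x)\, d\mu(y).
\end{align*}
The third and final step is to plug in Theorem \ref{t:ae-poincare}, which bounds the last double integral by $r\,\zeta_i\bigl(\fint_{B(z,(\lambda_i+1)r)} \rho\, d\mu\bigr) + o_i(r)$. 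Renaming $\lambda_i+1$ as the new $\lambda_i$ in the statement of Theorem \ref{t:true-PI} produces exactly \eqref{e:true-PI}, and the asymptotic properties of $\zeta_i$ and $o_i$ are inherited verbatim from Theorem \ref{t:ae-poincare}.

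I do not anticipate a genuine obstacle here: all of the analytical content (pointwise doubling, the quasiconvex $\lambda_i$-bounded-length selection of curves, and the passage from indicator functions to arbitrary measurable $\rho$) has already been absorbed into Theorem \ref{t:ae-poincare}. The only mild point of care is that the *-upper gradient inequality requires $u$ to be $1$-Lipschitz (so that the length$-$mass defect of $\gamma$ is correctly penalized), which is exactly the hypothesis of the theorem; and that one should verify $\Gamma(x,y;\lambda_i) \ne \emptyset$ for a.e.\ pair $(x,y)$ in $B(z,r)\times B(z,r)$, which is already implicit in the finiteness of the right-hand side of Theorem \ref{t:ae-poincare}.
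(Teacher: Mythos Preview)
Your proposal is correct and follows essentially the same route as the paper: both derive Theorem \ref{t:true-PI} from Theorem \ref{t:ae-poincare} by combining the *-upper gradient inequality $|u(x)-u(y)| \leq \tilde F_\rho(x,y;\lambda_i)$ with the double-average bound $\fint|u-u_B| \leq \fint\fint |u(x)-u(y)|$, and then relabel $\lambda_i+1$ as $\lambda_i$. Your aside about $\Gamma(x,y;\lambda_i)$ possibly being empty is harmless but unnecessary, since the pointwise inequality $|u(x)-u(y)|\leq \tilde F_\rho(x,y;\lambda_i)$ holds trivially when the infimum is $+\infty$, and the finiteness of the double integral is already guaranteed by the right-hand side of Theorem \ref{t:ae-poincare}.
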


Again, the balls for all the integrals of \eqref{e:true-PI} are taken in $X$ not $U_i$.

\begin{proof}
  By Theorem \ref{t:ae-poincare}, we get a countable decomposition $U_i$ so that for each $i$ we have some $\lambda_i \geq 1$ so that for almost every $z \in U_i$, we have
  \begin{align}
    \fint_{B(z,r)} \fint_{B(z,r)} \tilde{F}_\rho(x,y;\lambda_i) ~d\mu(x) ~d\mu(y) \leq r \zeta_i \left( \fint_{B(z,(\lambda_i+1)r)} \rho ~d\mu \right) + o_i(r), \label{e:funct-PI-2}
  \end{align}
  where $o_i$ and $\zeta_i$ have the required properties.

  We have by Jensen's inequality and the definition of *-upper gradient that
  \begin{align*}
    \fint_{B(z,r)} | u - u_{B(z,r)} | ~d\mu &\leq \fint_{B(z,r)} \fint_{B(z,r)} |u(x) - u(y)| ~d\mu(x) ~d\mu(y) \\
    &\leq \fint_{B(z,r)} \fint_{B(z,r)} \inf_{\gamma \in \Gamma(x,y)} \int_\gamma^* \rho ~d\mu(x) ~d\mu(y) \\
    &\leq \fint_{B(z,r)} \fint_{B(z,r)} \tilde{F}_\rho(x,y;\lambda_i) ~d\mu(x) ~d\mu(y).
  \end{align*}
  This, along with \eqref{e:funct-PI-2} finishes the proof.  In the statement of \eqref{e:true-PI}, we have relabeled $\lambda_i + 1$ as $\lambda_i$.
\end{proof}

This behaviour is exactly our notion of an asymptotic non-homogeneous Poincar\'e inequality.

\begin{definition}\label{d:ANPI}
Let $(X,d,\mu)$ be a metric measure space for which porous sets are null.  We then say that $X$ supports an asymptotic non-homogeneous Poincar\'e inequality (or that $X$ is an asymptotic NPI space for short) if it satisfies the conclusion of Theorem \ref{t:true-PI}.
\end{definition}

Recall that, it follows from \cite{porosityandmeasures}, that metric measure spaces in which porous sets are null are automatically pointwise doubling.

\begin{remark}
  It may seem strange that, in the definition of an asymptotic NPI, we must restrict to 1-Lipschitz functions.  This is a side effect of the fact that, to do analysis on Lipschitz functions on possibly disconnected spaces, we need some bound on how the function can fluctuate across gaps.  By setting the Lipschitz constant to be 1, we know then that the function can fluctuate at a rate of at most 1 across any gaps.  These upper bound estimates are crucially used in the definition of asymptotic NPI and so 1 is ``baked'' into some of the terms in the inequality (specifically, the notion of *-upper gradient).  One could of course choose any arbitrarily Lipschitz constant $L \geq 1$ to bound the fluctuation, but this would then change the corresponding terms and so the inequality would need to be rewritten to account for this.
\end{remark}

We now show that if $X$ is an asymptotic NPI space, then $X$ is a RNP-Lipschitz differentiability space.  First, we will need a lemma.

\begin{lemma}\label{lem:pointwise-keith}
  Let $(X,d,\mu)$ be a metric measure space in which all porous sets have measure zero.
  Then for every $x\in X$ there exists a $C_x\geq 1$ such that, for any Lipschitz $f\colon X \to \R$,
  \begin{equation*}
    \Lip(f,x) \leq C_x \lim_{r \to 0} \frac{1}{r} \fint_{B(x,r)} |f - f_{B(x,r)}| ~d\mu
  \end{equation*}
  for $\mu$-a.e.\ $x\in X$.
\end{lemma}

\begin{proof}
  As mentioned above, $(X,d,\mu)$ is pointwise doubling and so, by applying Lemma \ref{l:unif-pointwise-doubling-decomp}, we obtain a countable number of Borel sets $A_i\subset X$ with $\mu(X\setminus \cup_i A_i)=0$ such that $(X,d,\mu)$ is uniformly pointwise doubling at each $x\in A_i$ for each $i\in \N$.  Fix an $i\in \N$ and let $Y=A_i$.

  Proposition 4.3.3 of \cite{keith-lip-lip} says that for any doubling metric measure space, there exists a $C' > 0$ depending on the doubling constant so that for any Lipschitz $f : X \to \R$, we have that
  \begin{equation*}
    \Lip(f,x) \leq C' \lim_{r \to 0} \frac{1}{r} \fint_{B(x,r)} |f - f_{B(x,r)}| ~d\mu
  \end{equation*}
  for almost every $x$.  The same proof shows, in our case, that
  \begin{equation*}
    \Lip(f|_Y,x) \leq C' \lim_{r \to 0} \frac{1}{r} \fint_{B(x,r)} |f - f_{B(x,r)}| ~d\mu.
  \end{equation*}  
  One has to simply restrict any $y,z$ appearing in the proof to the set $Y$ and use Lemma \ref{l:doubling-balls} in place of the doubling condition (however, the integrals in the proof do \emph{not} need to be restricted to $Y$).
  Since all porous subsets of $X$ have measure zero, Lemma \ref{l:porosity-subset} gives
  \begin{equation*}
    \Lip(f,x) = \Lip(f|_Y,x) \leq C' \lim_{r \to 0} \frac{1}{r} \fint_{B(x,r)} |f - f_{B(x,r)}| ~d\mu,
  \end{equation*}
  for $\mu$-a.e.\ $x\in Y$.
  Since the choice of $i\in \N$ was arbitrary, this completes the proof.
\end{proof}

We first show that any asymptotic NPI space is a regular Lipschitz differentiability space.  The proof will resemble the proof of Theorem 10.2 of \cite{bate}.

\begin{lemma} \label{l:PI-LDS}
  Let $(X,d,\mu)$ be an asymptotic NPI space.  Then $(X,d,\mu)$ is a Lipschitz differentiability space.
\end{lemma}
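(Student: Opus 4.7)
The plan is to invoke Keith's theorem \cite{keith-lip-lip}: a pointwise doubling metric measure space is a Lipschitz differentiability space provided the Lip-lip inequality holds, i.e.\ there exists $K \geq 1$ so that $\Lip(f,x) \leq K \lip(f,x)$ for $\mu$-a.e.\ $x$ and every Lipschitz $f \colon X \to \R$. Asymptotic NPI spaces are pointwise doubling by Definition \ref{d:ANPI}, so the task reduces to deriving the Lip-lip inequality from the NPI; one can do this separately on each chart piece $U_i$ from Definition \ref{d:ANPI} with a 1-Lipschitz $f \colon X \to \R$, producing a constant $K_i$ that may depend on $i$.

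The strategy has two ingredients. First, for each $\epsilon > 0$, construct a *-upper gradient $\rho_\epsilon$ of $f$ whose averages over small balls near a generic $x_0 \in U_i$ are bounded by $\lip(f,x_0) + \epsilon$ plus vanishing error. The natural candidate is
\[
\rho_\epsilon = (\lip f + \epsilon)\,\mathbf{1}_{A_\epsilon} + \mathbf{1}_{A_\epsilon^c},
\]
where $A_\epsilon$ is a suitable Borel exhaustion of $X$ (up to a null set) of points at which the $\liminf$ defining $\lip$ is well-approximated at some controlled scale. Verifying that $\rho_\epsilon$ is a *-upper gradient proceeds by decomposing $\gamma \in \Gamma$ into pieces whose image lies in $A_\epsilon$ (where the derivative bound on $f \circ \gamma$ is sharp up to $\epsilon$, using $d(\gamma(s),\gamma(t)) \leq |s-t|$) and its complement (where the $\mathbf{1}_{A_\epsilon^c}$ summand takes over), while finitely many coarse-scale gaps in $\dom \gamma$ are absorbed into $\len \gamma - \ms \gamma$ using that $f$ is 1-Lipschitz. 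This is a mild refinement of Lemma \ref{l:Lip-ug}.

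Plugging $\rho_\epsilon$ into the NPI at a Lebesgue density point $x_0 \in U_i \cap A_\epsilon$ of $\lip f$ and using Lebesgue differentiation yields, for small $r$,
\[
\fint_{B(x_0,r)} |f - f_{B(x_0,r)}|\, d\mu \leq r\,\zeta_i(\lip(f,x_0) + 2\epsilon) + o_i(r).
\]
Second, convert this averaged oscillation bound into a pointwise bound involving $\Lip(f,x_0)$. Pick $y_n \to x_0$ witnessing $\Lip(f,x_0) = L$; setting $r_n = 3\,d(y_n,x_0)$, a Chebyshev level-set argument combined with pointwise doubling (Lemma \ref{l:doubling-balls}) shows the left-hand side is bounded below by $c L r_n + o(r_n)$ for a constant $c > 0$ depending only on the doubling data, forcing $L \leq K_i(\lip(f,x_0) + O(\epsilon))$. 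Letting $\epsilon \to 0$ and taking a countable union over $i$ produces the global Lip-lip inequality, and Keith's theorem delivers the LDS conclusion.

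The main obstacle is this second step: the left-hand side of the NPI is an $L^1$-averaged oscillation, whereas $\Lip(f,x_0)$ is a pointwise supremum. Lifting one to the other requires careful Chebyshev/doubling bookkeeping together with the decay $o_i(r)/r \to 0$, which is exactly where the nonhomogeneous nature of the NPI is essential; the error must not swamp the useful term as $r \to 0$. An alternative route, following Keith's original tactic, is to perturb $f$ by a Lipschitz bump concentrated on the witnessing sequence $\{y_n\}$ in order to convert the pointwise supremum into an averaged quantity before invoking the NPI.
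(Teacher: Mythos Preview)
Your two ingredients are both sound and in fact match the paper's approach: $\lip f$ is indeed a *-upper gradient (see Remark \ref{r:lip-lip}), and the conversion from averaged oscillation to $\Lip(f,x_0)$ is precisely Keith's Proposition 4.3.3, which the paper invokes directly rather than re-deriving via Chebyshev.

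The genuine gap is in your final step. Combining the two ingredients yields
\[
\Lip(f,x_0) \leq c^{-1}\,\zeta_i\bigl(\lip(f,x_0)+2\epsilon\bigr),
\]
not the homogeneous bound $\Lip(f,x_0) \leq K_i\bigl(\lip(f,x_0)+O(\epsilon)\bigr)$ that you write. The modulus $\zeta_i$ from the asymptotic NPI is only required to be continuous, increasing, and vanish at zero; there is no reason for $\zeta_i(t)\leq Ct$. So you obtain only the \emph{non-homogeneous} Lip-lip inequality \eqref{e:Lip-lip}, and Keith's theorem, which requires the homogeneous form $\Lip\leq K\lip$, does not apply. (As Remark \ref{r:lip-lip} points out, the non-homogeneous Lip-lip inequality does self-improve to Lip-lip equality, but only \emph{a posteriori}, once LDS is established by other means.)

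The paper circumvents this by abandoning Keith's theorem at this point and instead invoking the characterization of LDS from \cite{bate}*{Theorem 8.10}: it suffices to show that every $\tilde B$-set $S$ (a set meeting every positive-$f$-speed fragment in $\Hd^1$-null intersection) inside $\{\Lip f>\epsilon\}$ has $\mu(S)=0$. One constructs the *-upper gradient $\rho=\delta\Lip(f,\cdot)\mathbf{1}_S+\Lip(f,\cdot)\mathbf{1}_{S^c}$ with $\delta$ chosen so small that $\delta t<\zeta_i^{-1}(t/C')$ for $t\in(\epsilon,1]$; plugging into \eqref{e:Lip-lip} at a point of $S$ then gives $\delta\Lip(f,x)\geq\zeta_i^{-1}(\Lip(f,x)/C')$, a contradiction. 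The key point is that this argument needs only the non-homogeneous estimate, because one can choose $\delta$ after knowing $\zeta_i$ and the lower bound $\epsilon$ on $\Lip f$.
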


\begin{proof}
  We may suppose without loss of generality that each $\zeta_i$ in the definition of an asymptotic NPI space is continuous and injective.  For the entire proof we fix an $i\in\N$ and a $\zeta_i$ and $U_i \subset X$ appearing in this definition.
  
  Suppose $f \colon X \to \R$ is 1-Lipschitz and let $\rho : X \to [0,1]$ be its *-upper gradient.  
  By applying \eqref{e:true-PI}, Lemma \ref{lem:pointwise-keith}, letting $r \to 0$, and applying Lebesgue's differentiation theorem, we get
  \begin{align}
    \Lip(f,x) \leq C_x \zeta_i(\rho(x)) \label{e:Lip-lip}
  \end{align}
  for almost every $x \in U_i$, for some $C_x\geq 1$.

  Now let $S \subset U_i \cap \{x : \Lip(f,x) > 0\}$ be any set that satisfies $\Hd^1(\gamma \cap S) = 0$ whenever $\gamma \in \Gamma(X)$ has positive $f$-speed.  We will show $\mu(S)$ has measure 0.  By Theorem 8.10 of \cite{bate}, this proves the lemma (see Definition 8.5 of \cite{bate} for the definition of $\tilde{B}$ sets, which are precisely what $S$ are).
  It obviously suffices to show that $S \cap \{x : \Lip(f,x) > \epsilon\}$ has measure 0 where $\epsilon > 0$ is arbitrary.
  
  Let $\delta > 0$ be so that
  \begin{align}
    \delta < \frac{\zeta_i^{-1}(t/C_x)}{t} \label{e:delta-defn}
  \end{align}
  whenever $t \in (\epsilon,1]$.  By the definition of $S$, for any $\gamma \in \Gamma(X)$ we have that
  \begin{align*}
    |(f \circ \gamma)'(t)| \leq \delta \Lip(f,\gamma(t)) \Lip(\gamma,t)
  \end{align*}
  for almost every $t \in \gamma^{-1}(S)$.  Thus, if we define
  \begin{align*}
    \rho(x) = \begin{cases}
      \delta \Lip(f,x) & x \in S \\
      \Lip(f,x) & x \notin S,
    \end{cases}
  \end{align*}
  then $\rho$ is an *-upper gradient of $f$.  However, for almost every $x \in S$, we have that
  \begin{align*}
    \delta \Lip(f,x) = \rho(x) \overset{\eqref{e:Lip-lip}}{\geq} \zeta_i^{-1}\left(\frac{ \Lip(f,x)}{C_x} \right).
  \end{align*}
  This, along with the fact that $\Lip(f,x) > \epsilon$, contradicts \eqref{e:delta-defn}.  Thus, $\mu(S) = 0$.
\end{proof}

\begin{remark} \label{r:lip-lip}
  In \cite{keith-lip-lip}, Keith defined the ``Lip-lip inequality'' on a metric measure space $(X,d,\mu)$ if there exists a $C\geq 1$ such that, for every Lipschitz $f\colon X\to \mathbb R$,
  \[\Lip(f,x) \leq C \lip(f,x)\]
  for $\mu$-a.e. $x\in X$ (recall Section \ref{s:preliminaries} for the definition of $\lip$).  Keith proved that any doubling metric measure space with a Lip-lip inequality is a LDS.  He also showed that any PI space satisfies this condition (thus extending the work of Cheeger).

  More recently, the converse to Keith's result was given in \cite{bate} where the constant $C$ is allowed to vary with the point $x$.  This was then extended by Schioppa \cite{schioppa-derivations} to the ``Lip-lip equality'' (where $C=1$ everywhere).

  It is easy to see that $\rho(x)=\lip(f,x)$ is an *-upper gradient of a Lipschitz function $f$.  Therefore, the above proof shows that a pointwise doubling measure with the non-homogeneous Lip-lip condition as given in \eqref{e:Lip-lip} implies Lipschitz differentiability.  Hence, the non-homogeneous Lip-lip inequality self improves to a Lip-lip equality.
\end{remark}

We can now go from non-homogeneous Poincar\'e inequality back to RNP-LDS.

\begin{theorem} \label{t:PI-RNP-LDS}
  Let $(X,d,\mu)$ be an asymptotic NPI space.  Then $X$ is a RNP-LDS.
\end{theorem}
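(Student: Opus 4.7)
The plan is to leverage Lemma~\ref{l:PI-LDS}, which gives that $X$ is a (real-valued) LDS, together with the universal Alberti representations from Theorem~\ref{t:LDS-UAR} and the Radon--Nikodym property of the target, to bootstrap real-valued differentiability into RNP-valued differentiability. Let $X = \bigcup_i U_i$ be the chart decomposition with $\phi_i \colon U_i \to \R^{n_i}$, and for each chart let $\cA_1,\ldots,\cA_{n_i}$ be the $\phi_i$-independent universal Alberti representations of $\mu|_{U_i}$ provided by Theorem~\ref{t:LDS-UAR}.

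Fix a chart $(U,\phi)$ with $\phi \colon U \to \R^n$, a Lipschitz $f \colon X \to V$ with $V$ an RNP Banach space (which I may take separable, since $f(X)$ is), and the corresponding $\cA_1,\ldots,\cA_n$. For $\mu$-a.e.\ $x_0 \in U$, I would combine the Fubini-type disintegration of each $\cA_j$ with the RNP property of $V$ (applied to $f \circ \gamma$ for $\bP_j$-a.e.\ $\gamma$) to produce biLipschitz curve fragments $\gamma_1,\ldots,\gamma_n$ with density points $t_j \in \dom \gamma_j$ satisfying $\gamma_j(t_j) = x_0$, such that each $(\phi \circ \gamma_j)'(t_j) \in \R^n$ and $(f \circ \gamma_j)'(t_j) \in V$ exist, and the vectors $(\phi \circ \gamma_j)'(t_j)$ are linearly independent (from the $\phi$-independence of the cones). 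This determines a unique linear map $\nabla f(x_0) \in L(\R^n, V)$ via $\nabla f(x_0)\bigl((\phi \circ \gamma_j)'(t_j)\bigr) = (f \circ \gamma_j)'(t_j)$.

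The crux is to show that $\nabla f(x_0)$ is actually the Cheeger derivative, i.e.\ that
\[ h(x) := \|f(x) - f(x_0) - \nabla f(x_0)(\phi(x) - \phi(x_0))\| = o(d(x,x_0)). \]
The function $h \colon X \to \R$ is Lipschitz with $h(x_0)=0$, so by the LDS structure of $X$ it has a unique Cheeger derivative $L_h$ at $x_0$ with respect to $\phi$. Setting $g := f - f(x_0) - \nabla f(x_0)(\phi - \phi(x_0))$, the construction of $\nabla f(x_0)$ gives $(g \circ \gamma_j)'(t_j) = 0$ for each $j$; since $g(x_0) = 0$, this yields $h(\gamma_j(t)) = \|g(\gamma_j(t))\| = o(|t-t_j|)$ and hence $(h \circ \gamma_j)'(t_j) = 0$. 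The ``gradient equals derivative'' part of Theorem~\ref{t:LDS-UAR} applied to $h$ then forces $L_h = 0$, and so $h(x) = L_h(\phi(x)-\phi(x_0)) + o(d(x,x_0)) = o(d(x,x_0))$ as required. Uniqueness of $\nabla f(x_0)$ as a derivative is immediate from the vectors $(\phi \circ \gamma_j)'(t_j)$ spanning $\R^n$.

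The main technical issue is verifying that the set of $x_0 \in U$ where this construction succeeds has full measure; this is the RNP-valued analogue of \cite{bate}*{Proposition 2.9}. It follows directly from disintegrating $\mu|_U = \int \mu_\gamma \, d\bP_j$: each $f \circ \gamma$ is Lipschitz into an RNP space, hence differentiable at $\mathcal{L}^1$-a.e.\ point of $\dom \gamma$, which translates to $\mu_\gamma$-a.e.\ point of $\im \gamma$ via $\mu_\gamma \ll \Hd^1 \llcorner \im \gamma$, and then to $\mu|_U$-a.e.\ $x_0$ upon integrating against $\bP_j$. Notably, no RNP-valued Poincar\'e-type inequality is required: the NPI enters only indirectly, through its consequence that $X$ is a real-valued LDS, after which the RNP of $V$ handles everything pointwise along curves.
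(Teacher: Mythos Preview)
Your argument has a genuine gap at the step ``so by the LDS structure of $X$ it has a unique Cheeger derivative $L_h$ at $x_0$'' and the subsequent invocation of the gradient-equals-derivative clause of Theorem~\ref{t:LDS-UAR} at that specific point. Both of these are almost-everywhere statements whose exceptional null set depends on the function. Here the function $h = h_{x_0}$ depends on $x_0$ through $\nabla f(x_0)$, so you are working with an uncountable family $\{h_{x_0}\}_{x_0 \in U}$ and asking that each $h_{x_0}$ be differentiable (with gradient equal to derivative) precisely at its own index point $x_0$. Nothing in the LDS axioms forces the diagonal set $\{x_0 : h_{x_0} \text{ is not differentiable at } x_0\}$ to be null, and you have supplied no mechanism to control it.

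This is not a technicality. Your argument uses the asymptotic NPI only through Lemma~\ref{l:PI-LDS}; everything after that would apply verbatim to an arbitrary LDS. Hence, if correct, it would prove that every LDS is an RNP-LDS, a question the paper explicitly leaves open. The paper's proof avoids the trap by using the NPI \emph{directly}: after making $x \mapsto \nabla f(x)$ continuous on a large set $S$ via Lusin and fixing a countable norming set $D \subset B_{V^*}$, it applies \eqref{e:true-PI} at a density point $x \in S$ to each $\tfrac{1}{2}(u\circ f - u \circ \ell)$, $u \in D$, with *-upper gradient $\Lip(u\circ f - u\circ \ell, \cdot)$. The key point is that \eqref{e:true-PI} holds on a fixed full-measure set of centers \emph{uniformly over all} $1$-Lipschitz test functions and *-upper gradients, so the null set does not move with $\ell$ (which depends on $x_0$) or with $u$; this uniformity is exactly what is missing from your approach.
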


\begin{proof}
  The proof will resemble the proof of Theorem 1.5 of \cite{cheeger-kleiner-rnp}, however we already have a candidate for a the derivative of a RNP valued Lipschitz function: the gradient with respect to a collection of Alberti representations.

  Let $V$ be a RNP Banach space and $f : X \to V$ be 1-Lipschitz.  As $X$ is separable, $f(X)$ is separable and so we can and will suppose $V$ is separable.  Thus, there exists a countable set $D \subset B_{V^*}$ so that
  \begin{align}
    \|x\| = \sup_{u \in D} |u(x)|, \qquad \forall x \in V. \label{e:norming}
  \end{align}

  By Lemma \ref{l:PI-LDS}, we know that $X$ is a Lipschitz differentiability space and so $X$ admits a countable Borel decomposition into charts $(U_i,\phi_i : X \to \R^{n_i})$ such that $\R$-valued Lipschitz functions on $X$ are differentiable almost everywhere with respect to the charts.

  Fix a chart $(U,\phi : X \to \R^n)$.  Then there exists a full measure set $A \subseteq U$ so that for each $x \in A$ there exist $\gamma_1,...,\gamma_n \in \Gamma(X)$ such that $\gamma_i^{-1}(x)$ is a density point of $\dom \gamma_i$ (which we will call $t$), $u_i(x) = (\phi \circ \gamma_i)'(t)$ are linearly independent in $\R^n$, and $v_i(x) = (f \circ \gamma_i)'(t) \in V$ exist.  For each $x \in A$, we can then then define $\nabla f(x) \in L(\R^n,V)$ so that $\nabla f(x)(u_i(x)) = v_i(x)$.  Moreover, by Proposition 2.9 of \cite{bate}, the assignment $\nabla f\colon x \mapsto \nabla f(x)$ may be chosen in a measurable way.

  By Lusin's theorem, we have that for any $\delta > 0$, there exists a compact set $S \subseteq A$ so that $\mu(S) \geq \mu(U) - \delta$ and $\nabla f$ is continuous on $S$.  For $u \in V^*$, we have that $u(v_i(x)) = u((f \circ \gamma_i)'(t)) = (f_u \circ \gamma_i)'(t)$ by the chain rule.  Thus, we have that $u \circ \nabla f(y)(u_i(x)) = (f_u \circ \gamma_i)'(t)$ and so by Theorem \ref{t:LDS-UAR}, we have that
  \begin{align}
    u \circ \nabla f(x) = Df_u(x). \label{e:composition}
  \end{align}
for $\mu$-a.e. $x\in U$.

  Let $x \in S$ be a density point and $\epsilon > 0$.  By continuity of $\nabla f$ on $S$, there exists some $r > 0$ so that
  \begin{align}
    \|\nabla f(y) - \nabla f(x)\| < \epsilon, \qquad \forall y \in B(x,r) \cap S. \label{e:cont-1}
  \end{align}
  In particular,
  \begin{align}
    |Df_u(y) - Df_u(x)| \overset{\eqref{e:composition}}{=} |u \circ \nabla f(y) - u \circ \nabla f(x)| \overset{\eqref{e:cont-1}}{<} \epsilon, \qquad \forall y \in B(x,r) \cap S, u \in D. \label{e:cont-2}
  \end{align}
  Define the function
  \begin{align*}
    \ell : X &\to V \\
    z &\mapsto f(x) + \nabla f(x) (\phi(z) - \phi(x)),
  \end{align*}
  so that, if $\ell_u = u \circ \ell$, we get that $D\ell_u(y) = u \circ \nabla f(x) = Df_u(x)$.  Note that $\ell$ is 1-Lipschitz.  We have
  \begin{align*}
    |Df_u(y) - D\ell_u(y)| = |Df_u(y) - Df_u(x)| \overset{\eqref{e:cont-2}}{<} \epsilon, \qquad \forall y \in B(x,r) \cap S, u \in D.
  \end{align*}
  In particular, we see from the definition of the derivative of $f_u-\ell_u$ at $y$ that
  \[\Lip(f_u-\ell_u,y) < \epsilon \Lip(\phi,y)\leq\epsilon\Lip \varphi \qquad \forall y \in B(x,r) \cap S, u \in D.\]

  Since the pointwise Lipschitz constant of a 1-Lipschitz function is always an *-upper gradient, we may apply \eqref{e:true-PI} to the 1-Lipschitz function $\frac{1}{2}(f_u - \ell_u)$ and use the fact that $x \in S$ is a density point to get that
  \begin{align*}
    \sup_{y \in B(x,r)} \frac{|u \circ (f(y) - \ell(y))|}{r} = \sup_{y \in B(x,r)} \frac{|f_u(y) - \ell_u(y)|}{r}
  \end{align*}
  converges to 0 as $r \to 0$ uniformly in $u \in D$.  Using \eqref{e:norming} we get that $\nabla f(x)$ is the derivative of $f$ at $x$ with respect to the chart map $\phi$.

  Thus, we get that $f$ is differentiable on a full measure set of points in $S$.  Letting $\delta \to 0$ shows the differentiability of $f$ almost everywhere in $U$.
\end{proof}



By combining Theorems \ref{t:porous-null}, \ref{t:true-PI} and \ref{t:PI-RNP-LDS} we have our first characterisation of RNP-LDS.

\begin{theorem}\label{t:poincarechar}
A metric measure space $(X,d,\mu)$ is a RNP Lipschitz differentiability space if and only if it satisfies an asymptotic non-homogeneous Poincar\'e inequality.
\end{theorem}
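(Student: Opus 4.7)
The plan is to observe that this theorem is essentially a bookkeeping combination of two major results already established in Sections \ref{s:findingcurves}--\ref{s:poincare}, namely Theorem \ref{t:true-PI} (necessity) and Theorem \ref{t:PI-RNP-LDS} (sufficiency), together with the standard fact that LDS are pointwise doubling (Theorem \ref{t:porous-null}). I would organize the proof as two one-line reductions, one per direction.

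For the forward implication, assume $(X,d,\mu)$ is an RNP-LDS. Then in particular it is an LDS, so Theorem \ref{t:porous-null} gives that all porous sets are $\mu$-null, hence $(X,d,\mu)$ is pointwise doubling. The asymptotic non-homogeneous Poincar\'e inequality of Definition \ref{d:ANPI} is by definition precisely the conclusion of Theorem \ref{t:true-PI}, so this direction is immediate.

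For the backward implication, assume $(X,d,\mu)$ is pointwise doubling and supports an asymptotic non-homogeneous Poincar\'e inequality. Then Theorem \ref{t:PI-RNP-LDS} applies verbatim and gives that $(X,d,\mu)$ is an RNP-LDS. There is nothing more to do.

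In short, the proof I would write is one paragraph that simply invokes the two theorems. All the real work sits in the proofs of those theorems: Theorem \ref{t:true-PI} depends on the curve-fragment construction culminating in Theorem \ref{t:connectingpoints} (which in turn requires the RNP-valued non-differentiability construction in Proposition \ref{p:nondiff-construct}), followed by the promotion from the geometric indicator-function inequality of Lemma \ref{l:unif-geom-PI} to the measurable-function inequality of Lemma \ref{l:funct-PI} and then to the averaged form of Lemma \ref{l:poincare-unif-unif}; Theorem \ref{t:PI-RNP-LDS} depends on the non-homogeneous Lip-lip inequality derived from \eqref{e:keith} and the Alberti-representation gradient construction tested against a countable norming set in $V^*$. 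Since each of these ingredients is assumed available, the only potential obstacle at this stage would be a mismatch between Definition \ref{d:ANPI} and the quantitative conclusions of Theorem \ref{t:true-PI} and the hypotheses of Theorem \ref{t:PI-RNP-LDS}, but Definition \ref{d:ANPI} is phrased precisely as the conclusion of Theorem \ref{t:true-PI} so no such mismatch arises.
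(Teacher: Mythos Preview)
Your proposal is correct and matches the paper's own proof, which is literally the single sentence ``By combining Theorem \ref{t:true-PI} and Theorem \ref{t:PI-RNP-LDS} we have our first characterisation of RNP-LDS.'' Your explicit mention of Theorem \ref{t:porous-null} for pointwise doubling is a small but appropriate addition, since Definition \ref{d:ANPI} is stated for pointwise doubling spaces and the paper leaves this implicit.
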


\section{Construction of a non-differentiable RNP valued Lipschitz function}\label{s:nondiff}

The goal of this section is to prove Proposition \ref{p:nondiff-construct}.  We show that, if there is a collection of $\R$-valued Lipschitz functions that satisfy the hypotheses of Proposition \ref{p:nondiff-construct}, then we can glue them together in a way to get a \emph{single} RNP-valued function that satisfies similar properties for many points, at a fixed scale.

\begin{lemma} \label{l:scale-nondiff}
  Let $(X,d,\mu)$ be a metric measure space and $0<\delta,\epsilon <1$.  Suppose that $A\subset X$ such that, for almost every $x \in A$, there exists a 1-Lipschitz function $f_x^\epsilon : X \to \R$ and a sequence $(y_i) \subset X$ such that $y_i \to x$,
  \begin{align}
    \Lip(f_x^\epsilon,z) \leq \epsilon, \qquad \mu\mbox{--}a.e. ~z \in X, \label{e:f_xi-lip-upper}
  \end{align}
  and
  \begin{align*}
    |f_x^\epsilon(x)-f_x^\epsilon(y_i)| > \delta d(x,y_i).
  \end{align*}
  Then for any $\eta,r>0$ there exist a subset $A' \subseteq A$ such that $\mu(A') \geq \mu(A) - \eta$, a $p \in [1,\infty)$,  and a 3-Lipschitz map $G : X \to \ell_p$ with the following properties:
  \begin{enumerate}
    \item $\Lip(G,z) \leq 3\epsilon$ for $\mu$-a.e. $z \in X$,
    \item $\|G(x)\|_p \leq r$ for every $x \in X$,
    \item For every $x \in A'$, there exists $y\in X$ with $0<d(x,y)<r$ and
    \begin{align*}
       \frac{\|G(x) - G(y)\|_p}{d(x,y)} > \frac{\delta}{8}.
    \end{align*}
  \end{enumerate}
\end{lemma}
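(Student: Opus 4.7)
Each hypothesis-given function $f_x^\epsilon$ is a witness to a single bad direction at the single point $x$. The plan is to select finitely many centers $x_1,\dots,x_N \in A$ (at varying scales) via a Vitali-type covering, and to stack truncations of the corresponding $f_{x_k}^\epsilon$ as the coordinates of a single $\ell_p^N$-valued map $G$. The key observation is that the distortion witnessed by the $k$-th coordinate at the pair $(x_k, y_{x_k})$ transfers, with a small Lipschitz loss, to the pair $(x, y_{x_k})$ whenever $x$ is close enough to $x_k$. Since every coordinate is 1-Lipschitz and has pointwise Lipschitz constant $\leq \epsilon$ almost everywhere, finite-dimensionality lets us absorb the $N^{1/p}$ inflation from the $\ell_p$-sum into the target constant~3 by picking $p$ sufficiently large.

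\smallskip\noindent\textbf{Construction.} For $\mu$-a.e.\ $x \in A$, I would use the hypothesis to select a partner $y_x$ from the given sequence with $0 < s_x := d(x,y_x) < r/2$ and $|f_x^\epsilon(x) - f_x^\epsilon(y_x)| > \delta s_x$. Apply the standard $5r$-covering lemma to $\{B(x, \delta s_x/80) : x \in A\}$ (radii are bounded by $\delta r/160$) to extract a disjoint countable subfamily $\{B(x_k, \delta s_{x_k}/80)\}_{k \geq 1}$ whose $5$-enlargements $B(x_k, \delta s_{x_k}/16)$ cover $A$ modulo a null set. Truncate to $N$ indices with $\mu\bigl(A' \bigr) \geq \mu(A) - \eta$, where $A' := A \cap \bigcup_{k=1}^N B(x_k, \delta s_{x_k}/16)$. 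Pick $p \in [1,\infty)$ with $N^{1/p} \leq 3$, set $M_k := \delta s_{x_k}/4$, and define for $k \leq N$
\[
g_k(z) := \max\bigl\{-M_k,\ \min\{M_k,\ f_{x_k}^\epsilon(z) - f_{x_k}^\epsilon(x_k)\}\bigr\}, \qquad G(z) := (g_1(z), \dots, g_N(z), 0, 0, \dots) \in \ell_p.
\]

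\smallskip\noindent\textbf{Verification.} Each $g_k$ is a composition of a 1-Lipschitz truncation with $f_{x_k}^\epsilon$, so $g_k$ is 1-Lipschitz and $\Lip(g_k, z) \leq \Lip(f_{x_k}^\epsilon, z) \leq \epsilon$ for a.e.\ $z \in X$. The $\ell_p$ triangle inequality gives $\|G(z)-G(z')\|_p \leq N^{1/p} d(z,z') \leq 3 d(z,z')$, so $G$ is 3-Lipschitz, and with finiteness of $N$ one gets $\Lip(G, z) \leq (\sum_{k=1}^N \Lip(g_k,z)^p)^{1/p} \leq N^{1/p}\epsilon \leq 3\epsilon$ for a.e.\ $z$. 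The norm bound follows from $|g_k(z)| \leq M_k \leq \delta r/8$, giving $\|G(z)\|_p \leq N^{1/p} \delta r/8 \leq 3\delta r/8 < r$. Finally, for $x \in A'$, choose $k \leq N$ with $d(x,x_k) < \delta s_{x_k}/16$ and take $y := y_{x_k}$. Since $|f_{x_k}^\epsilon(y_{x_k}) - f_{x_k}^\epsilon(x_k)| > \delta s_{x_k} > M_k$, the truncation saturates: $|g_k(y)| = M_k$, while $g_k(x_k) = 0$. Using that $g_k$ is 1-Lipschitz,
\[
|g_k(x) - g_k(y)| \geq M_k - d(x,x_k) \geq \tfrac{\delta}{4} s_{x_k} - \tfrac{\delta}{16} s_{x_k} = \tfrac{3\delta}{16} s_{x_k}, \qquad d(x,y) \leq \tfrac{17}{16} s_{x_k} < r,
\]
and hence $\|G(x)-G(y)\|_p/d(x,y) \geq |g_k(x)-g_k(y)|/d(x,y) \geq 3\delta/17 > \delta/8$.

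\smallskip\noindent\textbf{Main obstacle.} The subtle point is the three-way tension between (i) the uniform norm bound $\|G\|_p \leq r$, which pushes the truncation levels $M_k$ down, (ii) the pointwise Lipschitz bound $3\epsilon$ and the global Lipschitz bound $3$, which together force $N^{1/p} \leq 3$, and (iii) the distortion lower bound $\delta/8$, which requires $M_k \gtrsim \delta s_{x_k}$ and forces the covering radii to be proportionally small (here $\delta s_x/80$). Choosing $M_k = \delta s_{x_k}/4$, Vitali radii proportional to $\delta s_x$, and then inflating $p$ until $N^{1/p} \leq 3$ reconciles all three; the proof uses no doubling or other geometric structure on $(X,d,\mu)$, because the only multiplicity one must absorb is the \emph{finite} count $N$, and $p$ can be taken as large as $\log N/\log 3$.
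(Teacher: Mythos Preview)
Your proof is correct and follows essentially the same strategy as the paper: select finitely many centers, stack truncated copies of the witness functions as coordinates of an $\ell_p$-valued map, and choose $p$ large enough that the $N^{1/p}$ inflation stays below $3$.

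Two minor differences are worth noting. First, the paper uses \emph{two} coordinates per center, namely $\min(|f_{x_i}^\epsilon(\cdot)-f_{x_i}^\epsilon(x_i)|,r/3)$ and $\min(|f_{x_i}^\epsilon(\cdot)-f_{x_i}^\epsilon(y_i)|,r/3)$, and then argues via the triangle inequality that for any $x\in B(x_i,2d(x_i,y_i))$ at least one of the two distances $|f_{x_i}^\epsilon(x)-f_{x_i}^\epsilon(x_i)|$, $|f_{x_i}^\epsilon(x)-f_{x_i}^\epsilon(y_i)|$ exceeds $\tfrac{\delta}{2}d(x_i,y_i)$; this lets the covering balls have radius comparable to $s_{x_i}$ itself. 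Your single-coordinate saturation argument (forcing $|g_k(y_{x_k})|=M_k$ while $|g_k(x)|$ is small) is a clean alternative, at the cost of needing covering balls of the smaller radius $\delta s_{x_k}/16$. Second, your invocation of the $5r$-covering lemma is harmless but unnecessary: you never use the disjointness of the extracted subfamily, so one can simply note that $\{B(x,\delta s_x/16)\}$ covers $A$ up to a null set and, since $\mu$ is finite, truncate directly to a finite subfamily capturing measure $\geq \mu(A)-\eta$. The paper does exactly this (no Vitali), and in fact takes $p$ equal to the number of centers, exploiting $(2p)^{1/p}\leq 3$ for all $p\geq 1$ rather than choosing $p$ after the fact.
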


\begin{proof}
  We may suppose without loss of generality that $f_x^\epsilon(x) = 0$ for all $x \in A$ for which $f_x^\epsilon$ exists.  By assumption, we can find some $y_x$ such that $d(y_x,x) < r/12$ and $|f_x^\epsilon(y_x) - f_x^\epsilon(x)| > \delta d(y_x,x)$.  Setting $r_x = 2d(x,y_x)$, we get that $\mu\left( A \backslash \bigcup_{x \in A} B(x,r_x) \right) = 0$.  Note that
  \begin{align}
    r_x \leq \frac{r}{6}, \qquad \forall x \in A. \label{e:r_x-bound}
  \end{align}

  Recall that we are supposing $X$ and hence $A\subset X$ have finite measure.  Thus, there exists some $A' \subseteq A$ such that $\mu(A') \geq \mu(A) - \eta$ and some finite subset $\{x_1,...,x_p\} \subseteq A'$ such that
  \begin{align*}
    A' \subseteq B(x_1,r_{x_1}) \cup ... \cup B(x_p,r_{x_p}).
  \end{align*}
  This $p$ will be the $p$ given by the lemma.  Let $y_i = y_x$ and $r_i = r_{x_i}$.

  Define the function
  \begin{align*}
    G : X &\to \ell_p \\
    x &\mapsto (\min(|f_{x_1}^\epsilon(x) - f_{x_1}^\epsilon(x_1)|,r/3),..., \min(|f_{x_p}^\epsilon(x) - f_{x_p}^\epsilon(x_p)|,r/3),\\
    &\qquad \min(|f_{x_1}^\epsilon(x) - f_{x_1}^\epsilon(y_1)|,r/3),..., \min(|f_{x_p}^\epsilon(x) - f_{x_p}^\epsilon(y_p)|,r/3),0,0,...).
  \end{align*}
  As $G$ is the $\ell_p$-sum of $2p$ 1-Lipschitz functions, we get that
  \begin{align*}
    \|G(x) - G(y)\|_p \leq \left( 2p d(x,y)^p \right)^{1/p} \leq 3 d(x,y),
  \end{align*}
  and so $G$ is 3-Lipschitz.  We also have that
  \begin{align*}
    \|G(x)\|_p \leq \left( 2p (r/3)^p \right)^{1/p} \leq r.
  \end{align*}
  Note that $G$ maps to a subspace supported on a finite set of coordinates of $\ell_p$.  Thus, we have by \eqref{e:f_xi-lip-upper} that
  \begin{align*}
    \Lip(G,z) \leq \left( 2p \epsilon^p \right)^{1/p} \leq 3\epsilon, \qquad \mu\text{-a.e.} ~z \in X.
  \end{align*}

  To prove the last property,  let $x\in A'$ be contained in $B(x_i,r_i)$ and note that, as $(x,y) \mapsto |f_{x_i}^\epsilon(x) - f_{x_i}^\epsilon(y)|$ satisfies the triangle inequality, we get that
    \begin{align}
      \max( |f_{x_i}^\epsilon(x) - f_{x_i}^\epsilon(x_i)|,  |f_{x_i}^\epsilon(x) - f_{x_i}^\epsilon(y_i)| ) > \frac{\delta}{2} d(x_i,y_i). \label{e:xy-ineq}
    \end{align}
    If we let $y\in \{x_i,y_i\}$ achieve the maximum then $d(x,y)\leq 2r_i<r$ and
    \begin{align*}
      |f_{x_i}^\epsilon(x) - f_{x_i}^\epsilon(y)| > \frac{\delta}{2} d(x_i,y_i) = \frac{\delta}{8} 2r_i \geq \frac{\delta}{8} d(x,y).
    \end{align*}

\end{proof}

The following lemma allows us to conclude that, after excising a negligible portion of $X$, a function with small pointwise Lipschitz constant actually has small Lipschitz behavior at small enough scales.

\begin{lemma} \label{l:lipschitz-decomp}
  Let $f : (X,d_X,\mu) \to (Y,d_Y)$ be a Lipschitz function and suppose $\mu(X) < \infty$ and $\Lip(f,x) < \epsilon$ $\mu$-a.e. $x \in X$ for some $\epsilon > 0$.  Then for any $\delta > 0$, there exists a subset $X' \subseteq X$ such that $\mu(X') \geq \mu(X) - \delta$ and an $R > 0$ so that if $x,y \in X'$ for which $d_X(x,y) < R$, then
  \begin{align*}
    d_Y(f(x),f(y)) \leq \epsilon d_X(x,y).
  \end{align*}
\end{lemma}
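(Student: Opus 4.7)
The plan is to quantify the strict hypothesis $\Lip(f,x) < \epsilon$ by introducing a nonincreasing sequence of ``Lipschitz constants at scale $1/n$'' and then to exhaust $X$, up to $\delta$-error, through the finite-measure continuity of the resulting ascending Borel family. No Egorov-type argument is needed: the sets $\{L_n \leq \epsilon\}$ already supply the exhaustion directly.

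Concretely, for each $n \in \N$ I would set
\[
L_n(x) := \sup\left\{ \frac{d_Y(f(x),f(y))}{d_X(x,y)} : y \in X,\ 0 < d_X(x,y) < 1/n \right\}.
\]
Since $X$ is separable (the standing assumption of the paper) and $f$ is continuous, for each fixed $x$ the supremum is realised along a fixed countable dense subset $D \subset X$: given any admissible $y$, approximate by $y_k \in D$ with $y_k \to y$, noting $d_X(x,y_k) \to d_X(x,y) > 0$ so that $0 < d_X(x,y_k) < 1/n$ eventually and $d_Y(f(x),f(y_k))/d_X(x,y_k) \to d_Y(f(x),f(y))/d_X(x,y)$. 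Hence $L_n$ is a countable supremum of Borel functions of $x$, and is therefore Borel. The sequence $L_n$ is nonincreasing in $n$ because the set of admissible $y$ shrinks, and comparing with the second representation of $\Lip(f,x)$ recorded in Section~\ref{s:preliminaries} one sees that $L_n(x) \searrow \Lip(f,x)$ for every $x \in X$.

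Now let $A_n := \{x \in X : L_n(x) \leq \epsilon\}$, which is a Borel set with $A_n \subseteq A_{n+1}$. By the assumed strict inequality $\Lip(f,x) < \epsilon$ holding $\mu$-a.e., any such $x$ satisfies $L_n(x) < \epsilon$ once $n$ is large enough, so $\mu(\bigcup_n A_n) = \mu(X)$. Since $\mu(X) < \infty$, continuity of measure produces an $N$ with $\mu(A_N) \geq \mu(X) - \delta$. Setting $X' := A_N$ and $R := 1/N$, whenever $x,y \in X'$ satisfy $d_X(x,y) < R$ the very definition of $L_N(x)$ gives
\[
d_Y(f(x),f(y)) \leq L_N(x)\, d_X(x,y) \leq \epsilon\, d_X(x,y),
\]
as required.

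There is no substantive obstacle: the only mildly delicate point is the Borel measurability of $L_n$, which is immediate from separability together with continuity of $f$. The argument crucially uses the strict inequality $\Lip(f,\cdot) < \epsilon$ in the hypothesis (rather than $\leq \epsilon$), since it is precisely this strictness that guarantees every almost-every $x$ lies in some $A_n$.
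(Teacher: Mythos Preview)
Your proof is correct and essentially identical to the paper's: both define the sets $A_R=\{x:\sup_{0<d_X(x,y)<R}d_Y(f(x),f(y))/d_X(x,y)\le\epsilon\}$ (you take $R=1/n$) and use continuity of the finite measure on this increasing family to extract $X'=A_R$ with $\mu(X')\ge\mu(X)-\delta$. The only cosmetic difference is that the paper observes each $A_R$ is closed (if $x_k\to x$ with $x_k\in A_R$, then for any $y$ with $0<d_X(x,y)<R$ eventually $d_X(x_k,y)<R$, and one passes to the limit), which is a bit quicker than your separability argument for Borel measurability of $L_n$.
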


\begin{proof}
For $R>0$ define the set $A_R$ to be those $x\in X$ such that
  \begin{align*}
    \sup_{y \in B(x,R)} \frac{d_Y(f(x),f(y))}{d_X(x,y)} \leq \epsilon.
  \end{align*}
  Then each $A_R$ is closed and, by assumption, converge to a set of full measure as $R\to 0$.  Thus, there exists some $R > 0$ so that $\mu(A_R) \geq \mu(X) - \delta$.  We let $X' = A_R$.  It follows easily from definition now that if $x,y \in X'$ such that $d_X(x,y) < R$, then $d_Y(f(x),f(y)) \leq \epsilon d_X(x,y)$.
\end{proof}

We now come to the proof of Proposition \ref{p:nondiff-construct}.  Here, we glue the functions from Lemma \ref{l:scale-nondiff} together to get one RNP-valued function with large pointwise Lipschitz constant, but small partial derivatives.  This contradicts our assumption that there exists a RNP-universal collection of Alberti representations.

\begin{proof}[Proof of Proposition \ref{p:nondiff-construct}]
  By Proposition \ref{p:RNP-universal}, there exists a Borel decomposition $X = \bigcup_i U_i$ so that each $U_i$ has a finite collection of RNP-universal Alberti representations.  It suffices to prove the result for $S\subset U_i$, some $i\in \mathbb N$ and so now we fix such an $i$ and let $X_0=U_i$.



  Assume for contradiction that $\mu(S) > 0$.  We will construct a sequence of sets $S_i \subseteq S$, $X_i \subseteq X_0$, a sequence of numbers $r_i,R_i \in (0,1)$, and a sequence of 3-Lipschitz maps $G_i : X \to \ell_{p_i}$ for $p_i \in [1,\infty)$ with the following properties:
  \begin{enumerate}[(i)]
    \item $S_i \subseteq X_i$,
    \item $\mu(S_i) \geq (1 - 2^{-i-1}) \mu(S)$,
    \item $R_i < r_i/2$, \label{e:R<r}
    \item $r_i = 2^{-i} R_{i-1}$, \label{e:r-R}
    \item $\|G_i(x)\| \leq r_i$ for all $x \in X$, \label{e:absolute-bound}
    \item $\|G_i(x) - G_i(z)\|_{p_i} \leq 3\cdot2^{-i+1} d(x,z)$ if $x,z \in X_i$ and $d(x,z) < R_i$, \label{e:lipschitz-bound}
    \item For every $x \in S_i$, there exists $y \in X$ with $d(x,y)<r_i$ and
    \begin{align*}
      \frac{\|G_i(x) - G_i(y)\|_{p_i}}{d(y,z)} > \frac{\delta}{8}.
    \end{align*} \label{e:bad-balls}
  \end{enumerate}

  To do so, for each $m\in\mathbb N$ we iteratively apply Lemma \ref{l:scale-nondiff} with $A = S$ and $\epsilon = 2^{-m}$ to get $A' = \tilde{S}_m \subseteq S$ such that $\mu(\tilde{S}_m) \geq (1 - 2^{-m-2}) \mu(S)$, $r_m = 2^{-m} R_{m-1}$ (with $r_1 = 1$), and a map $G_m : X \to \ell_{p_m}$ that satisfies the conclusions of the lemma.
  We then apply Lemma \ref{l:lipschitz-decomp} to get a set $X_m \subseteq X$ such that
  \begin{align*}
    \mu(X_m) \geq \mu(X) - 2^{-m-2} \mu(S)
  \end{align*}
  and an $R_m < r_m/2$ so that for $x,z \in X_m$, $\|G_m(x)-G_m(z)\|_{p_m} \leq 3\cdot2^{-m+1} d(x,z)$ when $d(x,z) < R_m$.  Take $S_m = X_m \cap \tilde{S}_m \subseteq S$.  Then $\mu(S_m) \geq \mu(\tilde{S}_m) - 2^{-m-2} \mu(S) \geq (1- 2^{-m-1}) \mu(S)$.  It is evident that all the properties are now satisfied.

  Let $S_\infty = \bigcap_{i \in \N} S_i \subseteq S$ and $X_\infty = \bigcap_{i \in \N} X_i \subseteq X$.  We then have that $S_\infty \subseteq X_\infty$,
  \begin{align}
    \mu(S_\infty) \geq \frac{1}{2} \mu(S) > 0. \label{e:Sinfty-bound}
  \end{align}
  For each $k \in \N$, define the map
  \begin{align*}
    F_k : X_\infty &\to \left( \sum_i \ell_{p_i} \right)_1 \\
    x &\mapsto (0,0,...,0,G_k(x),G_{k+1}(x),...)
  \end{align*}
  where the first $k-1$ coordinates are 0.  Note that as $p_i \in [1,\infty)$ for all $i$, we get that the target space is an $\ell_1$-sum of RNP Banach spaces and so is itself RNP (see {\it e.g.} p. 219 of \cite{diestel-uhl}).

  We first claim that each $F_k$ is Lipschitz.  Let $x,y \in X_\infty$.  Suppose $d(x,y) \in [R_{i+1},R_i)$ for some $i$.  Then for $j > i + 1$, we use Property \eqref{e:absolute-bound} to get that
  \begin{align*}
    \|G_j(x) - G_j(y)\|_{p_j} \leq 2r_j \overset{\eqref{e:r-R}}{=} 2^{-j+1} R_j \leq 2^{-j+1} d(x,y).
  \end{align*}
  On the otherhand, for $j \leq i$, we use Property \eqref{e:lipschitz-bound} to get that
  \begin{align*}
    \|G_j(x) - G_j(y)\|_{p_j} \leq 3\cdot2^{-j+1} d(x,y).
  \end{align*}
  As $G_{i+1}$ is 3-Lipschitz, we get that
  \begin{align*}
    \|F_k(x) - F_k(y)\| = \sum_{j=k}^\infty \|G_j(x) - G_j(y)\| \leq \left( 3 + \sum_{j=k}^\infty 2^{-j+1} \right) d(x,y) \leq 4 d(x,y).
  \end{align*}

  We claim that for every biLipschitz $\gamma\in \Gamma$ and $\mathcal L^1$-a.e. $t \in \dom \gamma$,
  \begin{align}
    \|(F_k \circ \gamma)'(t)\| \leq 2^{-k+4} \Lip(\gamma,t). \label{e:small-partial}
  \end{align}
  Fix $t \in \dom(\gamma)$ such that $(F_k \circ \gamma)'(t)$ and $\{(G_j \circ \gamma)'(t)\}_{j=0}^\infty$ all exist and so $(F_k \circ \gamma)'(t) = \sum_{j=k}^\infty (G_j \circ \gamma)'(t) e_j$.  Thus,
  \begin{align*}
    \|(F_k \circ \gamma)'(t)\| \leq \sum_{j=k}^\infty \|(G_j \circ \gamma)'(t)\|_{p_j} = (*).
  \end{align*}
  As $(G_j \circ \gamma)'(t)$ exists, we must have
  \begin{align*}
    \|(G_j \circ \gamma)'(t)\|_{p_j} = \Lip(G_j \circ \gamma,t) \leq \Lip(G_j,\gamma(t)) \Lip(\gamma,t) \overset{\eqref{e:lipschitz-bound}}{\leq} 3 \cdot 2^{-j+1} \Lip(\gamma,t).
  \end{align*}
  Altogether we get
  \begin{align*}
    (*) \leq \sum_{j=k}^\infty 3 \cdot 2^{-j+1} \Lip(\gamma,t) \leq 2^{-k+4} \Lip(\gamma,t).
  \end{align*}

 Property \eqref{e:bad-balls} gives, for every $x \in S_\infty$, a sequence $X\ni y_n \to x$ with
  \begin{align}\label{e:big-Lip}
     \frac{\|G_i(x) - G_i(y_n)\|_{p_i}}{d(x,y)} > \frac{\delta}{8}
  \end{align}
 By Lemma \ref{l:porosity-subset} we can find such $y_n \in X_\infty$ for almost every $x\in S_\infty$.  By Lemma \ref{l:RNPLDS-subset}, $X_\infty$ is an RNP-LDS and so by Proposition \ref{p:RNP-universal} has a countable decomposition into sets each with a collection of RNP-universal Alberti representations.  As $\mu(S_\infty) > 0$, it has a positive measure subset $U$ for which $\mu|_U$ has a RNP-universal set of Alberti representations with constant $\kappa > 0$.  However, we have that each $F_k$ is Lipschitz and if $k$ is sufficiently large, then \eqref{e:small-partial} and \eqref{e:big-Lip} contradict the $\kappa$ RNP-universality bound of $\mu|_U$.  Thus, our initial assumption that $\mu(S) > 0$ is false.
\end{proof}

\section{Tangents of RNP-differentiability spaces}\label{s:tangents}

In this section, we show that tangents of RNP-Lipschitz differentiability spaces are quasiconvex RNP-Lipschitz differentiability spaces.  We first review tangent spaces.  To do this, we must recall the notion of pointed Gromov-Hausdorff convergence.

We say that a sequence of pointed metric measure spaces $(X_i,x_i)$ converge to $(Y,y)$ in the pointed Gromov-Hausdorff sense if there exist isometric embeddings $\iota_i : (X_i,x_i) \to (Z,z)$ and $\iota : (Y,y) \to (Z,z)$ into another pointed metric space $(Z,z)$ such that for each $R > 0$, we have Hausdorff convergence in $B(z,R)$:
\begin{align*}
  \lim_{i \to \infty} \sup_{y \in B(z,R) \cap \iota(Y)} \dist_Z\left( \{y\}, \iota_i(X_i) \right) &= 0, \\
  \lim_{i \to \infty} \sup_{y \in B(z,R) \cap \iota_i(X_i)} \dist_Z\left( \{y\}, \iota(Y) \right) &= 0.
\end{align*}
We then say that $(Y,y)$ is the pointed Gromov-Hausdorff limit of $(X_i,x_i)$.

For $\lambda > 0$ and metric space $(X,d)$, we let $\frac{1}{\lambda}X$ denote the metric space $(X,d')$ where $d'(x,y) = \frac{1}{\lambda} d(x,y)$.  Let $X$ now also have a measure $\mu$.  Then we say $(Y,\nu,y)$ is a tangent space of $(X,\mu)$ at $x$ if there is a sequence $\lambda_i \to 0$ so that $( \lambda_i^{-1} X,x )$ pointed Gromov-Hausdorff converges to $(Y,\rho,y)$ and for the defining isometric embeddings $\iota_i : (\lambda_i^{-1} X,x) \to (Z,z)$ and $\iota : (Y,y) \to (Z,z)$ we have the weak convergence in $Z$
\begin{align*}
  (\iota_i)_\# \frac{\mu}{\mu(B(x,\lambda_i))} \rightharpoonup \iota_\# \nu.
\end{align*}

Note that the tangent space can depend on the sequence $\lambda_i \to 0$ and so need not be unique.

\begin{remark}[Tangents of pointwise doubling metric spaces]
If $(X,d,\mu)$ is pointwise doubling, then by lemma \ref{l:unif-pointwise-doubling-decomp}, there exists a countable number of Borel sets $A_i\subset X$ with $\mu(X\setminus \cup_i A_i)=0$ such that each $A_i$ is complete and metrically doubling.  Thus we can apply Gromov's tangent theory inside each $A_i$.  Note that we must work inside the $A_i$, in particular because we have no control over the null set that is not contained within any $A_i$.

However, such tangents are unique in the following sense.  Let $\mu(X\setminus \cup_i A_i)=\mu(X\setminus \cup_i B_i)=0$ be two such decompositions that in addition satisfy the uniform doubling estimates ensured by Lemma \ref{l:unif-pointwise-doubling-decomp}.  Then if $x$ is a density point of $A_i\cap B_j$ for some $i,j$ then $\Tan(A_i,x,\mu)=\Tan(B_j,x,\mu)$.  This is a consequence of the fact that tangents at density points of subsets agree with the tangents of the big space, see \cite{ledonne}*{Proposition 3.1}.  Note that this result of Le Donne is written for doubling measures, however the proof only relies on the estimates given by Lemma \ref{l:unif-pointwise-doubling-decomp} and so the result is true in our case.
\end{remark}

Under the assumption that $(X,\mu)$ is complete and pointwise doubling (as is our case), we get that tangent spaces exist for almost everywhere and are complete and doubling.  We let $\Tan(X,\mu,p)$ denote the set of tangent spaces of $(X,\mu)$ at $p \in X$.


First we see that the *-integral is well behaved under taking a limit.  Recall that the metric on $\Gamma$ is Hausdorff distance between graphs.  See \cite{bate}*{Definition 2.1}.

\begin{lemma}\label{l:starlowersemicts}
Let $(X,d)$ be a metric space, $\rho\colon X\to [0,1]$ continuous and  Suppose that $\gamma_n \to \gamma_\infty \in \Gamma$.  Then
\[\int_{\gamma_\infty}^* \rho \leq \liminf_{n\to \infty} \int_{\gamma_n}^* \rho.\]
\end{lemma}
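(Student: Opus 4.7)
My plan is to establish lower semicontinuity via a Riemann-type lower approximation that separately resolves the large gaps of $\gamma_\infty$ and the variation of $\rho \circ \gamma_\infty$ on its domain, then transfer the approximation to $\gamma_n$ through the Hausdorff convergence of graphs.

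For the lower approximation, I would work with partitions $\mathcal P = \{t_0 < \cdots < t_N\} \subset \dom \gamma$ with $t_0 = \min \dom \gamma$ and $t_N = \max \dom \gamma$, and set
\[
\sigma(\gamma, \mathcal P, \rho) := \sum_{(t_j,t_{j+1}) \text{ a gap of } \gamma} (t_{j+1} - t_j) \;+\; \sum_{\dom \gamma \cap (t_j, t_{j+1}) \neq \emptyset} \rho_j^{\min}\, d(\gamma(t_j), \gamma(t_{j+1})),
\]
where $\rho_j^{\min} = \inf_{t \in \dom \gamma \cap [t_j,t_{j+1}]} \rho(\gamma(t))$. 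The bound $\sigma(\gamma, \mathcal P, \rho) \leq \int_\gamma^* \rho$ reduces on each non-gap interval to $\mathcal L^1([t_j,t_{j+1}] \setminus \dom \gamma) + \int_{\dom \gamma \cap [t_j, t_{j+1}]} \md_\gamma \geq d(\gamma(t_j), \gamma(t_{j+1}))$, which follows by comparing with the $1$-Lipschitz linear extension of $\gamma$ to $[t_j, t_{j+1}]$ obtained via an $\ell^\infty$-embedding of $X$ (as in the metric-derivative remark of the paper), whose total variation bounds $d(\gamma(t_j),\gamma(t_{j+1}))$ from above.

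Given $\epsilon > 0$, I would then construct a partition $\mathcal P$ of $[\min \dom \gamma_\infty, \max \dom \gamma_\infty]$ so that (i) every gap of $\gamma_\infty$ of length at least some threshold $\eta = \eta(\epsilon)$ appears as one of its intervals (the uncounted small gaps then have total measure at most $\epsilon$), and (ii) on every non-gap interval the oscillation of $\rho \circ \gamma_\infty$ is at most $\epsilon$. A direct computation, using that $\sum_j d(\gamma_\infty(t_j), \gamma_\infty(t_{j+1}))$ converges to the total variation of the linear extension as the mesh goes to $0$, yields $\sigma(\gamma_\infty, \mathcal P, \rho) \geq \int_{\gamma_\infty}^* \rho - \epsilon$ once the mesh is small enough. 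To transfer this to $\gamma_n$, Hausdorff convergence of the graphs in $X \times \R$ produces, for each $t_j$, points $t_j^n \in \dom \gamma_n$ with $t_j^n \to t_j$ and $\gamma_n(t_j^n) \to \gamma_\infty(t_j)$. Because $\dom \gamma_n$ lies within an $\epsilon_n$-neighbourhood of $\dom \gamma_\infty$, any stray points of $\dom \gamma_n$ inside a large gap $(t_j, t_{j+1})$ of $\gamma_\infty$ must cluster within $\epsilon_n$ of $t_j$ or $t_{j+1}$; choosing each $t_j^n$ on the correct side of the gap's midpoint makes $(t_j^n, t_{j+1}^n)$ itself a gap of $\gamma_n$ of length tending to $t_{j+1} - t_j$. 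Continuity of $\rho$ and of the distance function then yield $\sigma(\gamma_n, \mathcal P_n, \rho) \to \sigma(\gamma_\infty, \mathcal P, \rho)$.

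Assembling everything, $\int_{\gamma_\infty}^* \rho - \epsilon \leq \sigma(\gamma_\infty, \mathcal P, \rho) = \lim_n \sigma(\gamma_n, \mathcal P_n, \rho) \leq \liminf_n \int_{\gamma_n}^* \rho$, and letting $\epsilon \to 0$ finishes the proof. The main obstacle is the third step: preserving the gap/non-gap classification of each partition interval when transferring $\mathcal P$ to $\gamma_n$. This is precisely where Hausdorff convergence of the graphs (as opposed to, say, mere pointwise convergence of the $\gamma_n$) is essential, and the midpoint selection is the device that quarantines the stray domain points of $\gamma_n$ away from the large gaps of $\gamma_\infty$.
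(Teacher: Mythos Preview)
Your approach is sound and genuinely different from the paper's. The paper works pointwise on a fixed interval $[a,b]\supset\dom\gamma_\infty$: it extends each $\rho\circ\gamma_n$ by the constant $1$ off $\dom\gamma_n$ to a function $\rho_n\colon[a,b]\to[0,1]$, identifies $\int_a^b\rho_n$ with $\int_{\gamma_n}^*\rho$ up to a small error, and then compares $\rho_n$ with $\rho_\infty$ by a four-case split according to membership of $t$ in $\dom\gamma_\infty$ and $\dom\gamma_n$; the crucial one-sided bound is that on $\dom\gamma_\infty\setminus\dom\gamma_n$ one has $\rho_n=1\geq\rho_\infty$, while the bad set $\dom\gamma_n\setminus\dom\gamma_\infty$ has small measure by Hausdorff closeness of the domains. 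Your Riemann-sum lower approximation is more elaborate but has the advantage of tracking the metric-derivative weight $\md_\gamma$ explicitly through $d(\gamma(t_j),\gamma(t_{j+1}))$ and the total-variation limit, whereas the paper's displayed identification $|\int_{\gamma_n}^*\rho-\int_a^b\rho_n|\leq|(a,b)\setminus\dom\gamma_n|$ does not obviously account for this weight. Your scheme also makes the transfer step conceptually clean, since $\sigma$ depends on $\gamma$ only through the finite data $\{t_j,\gamma(t_j)\}$ and the gap/non-gap pattern, exactly the information preserved under graph-Hausdorff convergence. One small point worth tightening: for the claimed convergence $\sigma(\gamma_n,\mathcal P_n,\rho)\to\sigma(\gamma_\infty,\mathcal P,\rho)$ you only need (and only get) $\liminf_n\rho_j^{\min}(\gamma_n)\geq\rho_j^{\min}(\gamma_\infty)-O(\epsilon)$ rather than equality; this follows from uniform continuity of $\rho$ on the common compact image together with your oscillation condition (ii) applied on adjacent intervals, and is all that the final chain of inequalities requires.
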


\begin{proof}
Let $\epsilon>0$.  Since $\im\gamma_\infty$ is compact, there exists a $0<\delta_1<1$ such that
\begin{equation}\label{e:rhounifcts}
d(\gamma_\infty(t),z)< \delta \Rightarrow |\rho(\gamma_\infty(t))-\rho(z)|<\epsilon.\end{equation}
Let $\delta_2>0$ such that the open neighbourhood $B(\dom\gamma_\infty,\delta_2)$ satisfies
\begin{equation}\label{e:smallnhood}|B(\dom\gamma_\infty,\delta_2)\setminus\dom\gamma_\infty|<\epsilon,\end{equation}
let $\delta=\min(\delta_1,\delta_2,\epsilon)$ and $N\in\N$ such that $d(\gamma_\infty,\gamma_n)<\delta$ for all $n>N$.  Finally we let $a=\inf B(\dom\gamma_\infty,\delta_2)$ and $b=\sup B(\dom\gamma_\infty,\delta_2)$.  Note that necessarily $b-a \leq \diam \dom\gamma_\infty+2$ since $\delta\leq \delta_1 < 1$.

For each $n\in\N \cup\{\infty\}$ define $\rho_n\colon [a,b]\to [0,1]$ by
\[\rho_n(t) = \begin{cases} \rho(\gamma_n(t)) & t\in\dom\gamma_n\\
1 & \text{otherwise}\end{cases}.\]
Then
\[\left|\int_{\gamma_n}^* \rho - \int_a^b \rho_n\right| \leq |(a,b)\setminus \dom\gamma_n| \leq \epsilon\]
for each $n\in\N\cup\{\infty\}$.  For each $n\in\N$, there are four cases to consider for $t\in[a,b]$:
\begin{enumerate}
  \item If $t\in\dom\gamma_\infty \cap \dom\gamma_n$ then $|\rho_n(t)-\rho_\infty(t)|<\epsilon$ by \eqref{e:rhounifcts}.
  \item If $t\not\in \dom\gamma_\infty \cup \dom\gamma_n$ then $\rho_n(t)=\rho_\infty(t)$.
  \item If $t\in\dom\gamma_\infty \setminus \dom\gamma_n$ then
  \[\rho_n(t) = \rho(\gamma_n(t)) \leq 1 = \rho_\infty(t).\]
  \item If $t\in\dom\gamma_n\setminus \dom\gamma_\infty$ then $t\in B(\dom\gamma_\infty,\delta_2)\setminus \dom\gamma_\infty$, which has measure less than $\epsilon$ by \eqref{e:smallnhood}.
\end{enumerate}

Therefore
\begin{align*}\int_a^b \rho_n -\rho_\infty &\geq -\int_{(1)} \epsilon + \int_{(2)} 0 + \int_{(3)}0 - \int_{(4)}1\\
&\geq -\epsilon(\diam\dom\gamma_\infty+2) -\epsilon\end{align*}
and so
\[\int_{\gamma_n}^* \rho -\int_\gamma^* \rho \geq -\epsilon(\diam\dom\gamma_\infty + 4).\]
\end{proof}

We have the following proposition.

\begin{proposition} \label{l:limit-PI-frag}
  Suppose $X$ is a RNP-LDS.  Then for almost every $x \in X$ there exists a $\tilde{\zeta} : [0,\infty) \to \R$ and $\lambda \geq 1$ so that for every $(Y,d,\nu) \in \Tan(X,\mu,x)$, one has for all $w \in Y$, $r > 0$, and $\rho : Y \to [0,1]$ that
  \begin{align}
    \fint_{B(w,r)} \fint_{B(w,r)} \tilde{F}_\rho(u,v;\lambda) ~d\nu(u) ~d\nu(v) \leq \tilde\zeta \left( \fint_{B(w,4\lambda r)} \rho ~d\nu \right). \label{e:limit-tPI}
  \end{align}
\end{proposition}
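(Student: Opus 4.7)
The plan is to apply the asymptotic NPI (Theorem \ref{t:ae-poincare}) on $X$ at scales comparable to the tangent's scaling factor, letting the $o_i$ error term vanish in the limit. Begin with the countable Borel decomposition $X=\bigcup_i U_i$ with uniform constants $(\lambda_i,\zeta_i,o_i)$ from Theorem \ref{t:ae-poincare}. For $\mu$-a.e.\ $x$, the point $x$ lies in some $U_i$ and is a Lebesgue density point of $U_i$; fix such an $x$, set $\lambda:=\lambda_i$, and (after trivial absorption of constants, which is why the statement allows the slack $4\lambda r$) take $\tilde\zeta$ comparable to $\zeta_i$. Work with a fixed tangent $(Y,\nu,w_0)\in \Tan(X,\mu,x)$ coming from a scaling sequence $\lambda_n\to 0$, so that $\mu_n := \mu/\mu(B(x,\lambda_n))$ converges weakly to $\nu$ under isometric embeddings $\iota_n\colon (\lambda_n^{-1}X,x)\to(Z,z)$ and $\iota\colon (Y,w_0)\to(Z,z)$; write $d':=d/\lambda_n$ for the rescaled metric on $X$.

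Fix $w\in Y$, $r>0$, and a continuous $\rho\colon Y\to[0,1]$; the general measurable case will follow by Lusin approximation and monotonicity of both sides in $\rho$. Extend $\rho$ continuously to $Z$ (Tietze) and set $\rho_n:=\rho\circ\iota_n$. Since $x$ is a density point of $U_i$, one has $\mu_n(U_i^c\cap B_{d'}(x,R))/\mu_n(B_{d'}(x,R))\to 0$ for every $R>0$, so $\iota_n(U_i)$ fills out $\iota(Y)$ in the $\nu$-a.e.\ sense and we can select $x_n\in U_i$ with $\iota_n(x_n)\to\iota(w)$ for $\nu$-a.e.\ $w$ (the remaining $w$ are dealt with by continuity of both sides at the end). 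The elementary scaling rule for the starred integral---reparameterizing $\gamma\in\Gamma$ by $\gamma'(t):=\gamma(\lambda_n t)$ gives a $1$-Lipschitz curve in $(X,d')$ with $\int_{\gamma'}^*\rho=\lambda_n^{-1}\int_\gamma^*\rho$, and consequently $\tilde F_{\rho_n}(u,v;\lambda)$ computed in $(X,d')$ equals $\lambda_n^{-1}$ times the same quantity in $(X,d)$---combined with Theorem \ref{t:ae-poincare} applied at $x_n$ with radius $r\lambda_n$ and divided by $\lambda_n$, yields
\[
\fint_{B_{d'}(x_n,r)}\fint_{B_{d'}(x_n,r)}\tilde F_{\rho_n}(u,v;\lambda)\,d\mu_n\,d\mu_n \;\le\; r\,\zeta_i\!\left(\fint_{B_{d'}(x_n,(\lambda+1)r)}\rho_n\,d\mu_n\right) + \frac{o_i(r\lambda_n)}{\lambda_n},
\]
with $\tilde F_{\rho_n}$ computed in $(X,d')$ and the error $o_i(r\lambda_n)/\lambda_n\to 0$ by the defining property of $o_i$.

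To pass to $n\to\infty$, the right-hand side is routine: weak convergence of $\mu_n$ to $\nu$, uniform convergence $\rho_n\to\rho$ on compacts, and continuity of $\zeta_i$ combine (for the cofinal set of radii with $\nu(\partial B(w,\cdot))=0$) to give $r\,\zeta_i\!\left(\fint_{B(w,(\lambda+1)r)}\rho\,d\nu\right)$. The subtler step is a liminf for the left-hand side, which I would derive from the pointwise lower semicontinuity
\[
\tilde F_\rho(u,v;\lambda) \;\le\; \liminf_{n\to\infty}\tilde F_{\rho_n}(u_n,v_n;\lambda)\qquad \text{whenever } u_n\to u,\ v_n\to v \text{ in } Z,
\]
where the left-hand side is computed in $Y$ and the right-hand side in $(X,d')$. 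To prove this, pick near-optimal $\gamma_n\in\Gamma(u_n,v_n;\lambda)$ in $(X,d')$; since the graphs are $1$-Lipschitz with length bounded by $\lambda(d(u,v)+\epsilon_n)$, Arzel\`a--Ascoli produces a subsequential Hausdorff limit $\gamma_\infty$, which one shows lies in $\Gamma(u,v;\lambda)$ inside $\iota(Y)$; Lemma \ref{l:starlowersemicts} combined with uniform convergence $\rho_n\to\rho$ on compacts then gives $\int_{\gamma_\infty}^*\rho\le\liminf\int_{\gamma_n}^*\rho_n$. Together with the weak convergence $\mu_n^{\otimes 2}\rightharpoonup\nu^{\otimes 2}$, a standard Portmanteau argument for lower semicontinuous integrands upgrades this pointwise bound to
\[
\fint_{B(w,r)}\fint_{B(w,r)}\tilde F_\rho(u,v;\lambda)\,d\nu\,d\nu \;\le\; \liminf_{n\to\infty}\fint_{B_{d'}(x_n,r)}\fint_{B_{d'}(x_n,r)}\tilde F_{\rho_n}(u,v;\lambda)\,d\mu_n\,d\mu_n.
\]

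Combining the two limits delivers the desired inequality for continuous $\rho$ and $\nu$-a.e.\ $w$; Lusin approximation from below, together with monotonicity of $\tilde F_\rho$ and $\zeta_i$ in $\rho$, extends it to measurable $\rho\colon Y\to[0,1]$, and continuity in $w$ of both sides fills in the remaining $w\in Y$. The ball-radius slack $4\lambda r$ over the naturally produced $(\lambda+1)r$ is harmless and is used to absorb the extra constants appearing from the doubling of $\nu$ (inherited from the pointwise doubling of $\mu$ at $x$) when one adjusts the outer ball. The main technical obstacle is the pointwise lower semicontinuity of $\tilde F$ under tangent convergence: one must manage curves whose basepoints vary with $n$ inside a rescaling ambient space, and verify that uniform convergence of $\rho_n$ on the limiting graphs is strong enough to invoke Lemma \ref{l:starlowersemicts}. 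The Portmanteau upgrade, the weak-convergence arguments on the right-hand side, and the final extensions to general $\rho$ and general $w$ are then comparatively standard.
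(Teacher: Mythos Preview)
Your proposal is correct and follows essentially the same route as the paper: take the decomposition from Theorem~\ref{t:ae-poincare}, embed the rescaled spaces and the tangent into a common ambient $Z$, extend $\rho$ continuously, use Lemma~\ref{l:starlowersemicts} plus compactness of curve fragments to get lower semicontinuity of $\tilde F_\rho$ under Gromov--Hausdorff convergence, and then pass both sides to the limit via weak convergence so that the $o_i$ term vanishes. The only notable differences are cosmetic---the paper handles the radius mismatch by explicit ball inflation (hence the $4\lambda r$) rather than by choosing $\nu$-continuity radii, uses Fatou where you invoke Portmanteau, and cites \cite{cheeger}*{Lemma~5.18} for the reduction to continuous $\rho$ where you sketch a Lusin/monotonicity argument; the latter is the one place your write-up is slightly less precise than the paper's, since the infimum defining $\tilde F_\rho$ makes monotone approximation of $\rho$ a little delicate, so you may want to either flesh that step out or simply cite the same lemma.
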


\begin{proof}

  Let $X=\cup_i U_i$ be the decomposition given by Theorem \ref{t:ae-poincare}, fix some $i\in \N$ and let $K\subset U_i$ be compact with uniform pointwise doubling constants.  Then for any density point $x$ of $K$, $\Tan(X,\mu,x)=\Tan(K,\mu,x)$ and so we may suppose $K=X$ and fix $\lambda=\lambda_i$, $\zeta=\zeta_i$, and $o = o_i$.  Without loss of generality, we may assume $\zeta$ is continuous.  By \cite{cheeger}*{Lemma 5.18}, we may suppose that $\rho$ is continuous.  The proof of this lemma holds in our situation as written provided one interprets convergence and compactness to be in $\Gamma$ (which is compact because $X$ is).

  Let $(Y,d,\nu,y)\in\Tan(X,\mu,x)$.  Then there exists a pointed metric space $(Z,z)$ and isometric embeddings $\iota_i : (s_i^{-1}X_i,x)\to (Z,z)$ and $\iota: (Y,y) \to (Z,z)$.  From now on, we will identify $Y$ and $X_i := s_i^{-1}X$ with their isometric images in $Z$ and we also view $\mu_i$ and $\nu$ as measure on $Z$ via pushfoward.  We will continuously extend $\rho$ on $Y$ to all of $Z$ so it is also defined on each $X_i$.

  Let $u,v \in Z$ and $A \subseteq Z$ be a closed subset.  We define
  \begin{align*}
    F_\rho^A(u,v) = \inf_{\gamma \in \Gamma(u,v)} \int_{\gamma \cap A}^* \rho
  \end{align*}
  where $\gamma \cap A$ is the restricted curve $\gamma|_{\gamma^{-1}(A)}$ except we do not throw away the endpoints (in case $u,v \notin A$).  It follows easily from the definition of the *-integral that $F_\rho$ is 1-Lipschitz on $Z \times Z$.  As $A$ is closed, $\gamma \cap A$ still has a compact domain.  It follows from definitions that for $u,v \in A$ we have
  \begin{align*}
    F_\rho^A(u,v) = \inf_{\gamma \in \Gamma_A(u,v)} \int_\gamma^* \rho = \tilde{F}_\rho(u,v)
  \end{align*}
  where the $\tilde{F}_\rho$ is the usual definition of $\tilde{F}_\rho$ in  the metric space $A$ with respect to the restricted function $\rho|_A$.  Thus, we are trying to prove that
  \begin{align*}
    \fint_{B(w,r)} \fint_{B(w,r)} F^Y_\rho(u,v;\lambda) ~d\nu(u) ~d\nu(v) \leq \tilde{\zeta}\left( \fint_{B(w,(\lambda+1)r)} \rho ~d\nu \right).
  \end{align*}

  Let $u,v \in Y \subset X$.  For each $i\in\N$ let $\gamma_i\in\Gamma(u_i,v_i;\lambda)$ with
  \[\int_{\gamma_i \cap X_i}^* \rho \leq F_\rho^Y(u_i,v_i;\lambda) + \frac{s_i d(u_i,v_i)}{i}\]
  and, by taking a subsequence if necessary, let $\gamma\in \Gamma(u,v;\lambda)$ such that $\gamma_i \cap X_i \to \gamma$ in $\Gamma(Z)$.  Note that $\im (\gamma_i \cap X_i) \subset X_i \cup Y$ are compact and as $X_i$ converges to $Y$ in Hausdorff distance, we get that $\im \gamma \subset Y$.  Then by Lemma \ref{l:starlowersemicts},
  \begin{align}
    F_\rho^Y (u,v;\lambda) \leq \int_\gamma^* \rho \leq \liminf_{i\to\infty} \int_{\gamma_i \cap X_i}^* \rho=\liminf_{i\to\infty} F_\rho^{X_i}(u,v;\lambda), \qquad \forall u,v \in Y. \label{e:F_rho-lsc}
  \end{align}

  Let $\epsilon > 0$ and $w_i \in X_i \subset Z$ be so that $w_i \to w \in Y \subset Z$.  Then by the continuity of $\rho$, $\zeta$ and the weak convergence of $\mu_i\to\nu$, we have that there exists some $N_1 \in \N$ so that
  \begin{align}
    \zeta \left( \fint_{B(w_i,2(\lambda+1)r)} \rho ~d\mu_i \right) \leq \zeta \left( \frac{\mu(B(w,4 \lambda r))}{\mu(B(w,2(\lambda +1)r))} \fint_{B(w,4\lambda r)} \rho ~d\nu \right) + \frac{\epsilon}{4}, \qquad \forall i \geq N_1. \label{e:rho-weak-converge}
  \end{align}
  We inflated the ball on the right hand side so that $B(w_i,(\lambda+1)r) \subseteq B(w,2 \lambda r)$ to use weak convergence.  This happens for sufficiently large $i$.  We also have that
  \begin{align*}
    \fint_{B(w_i,2r)} \fint_{B(w_i,2r)} F_\rho^{X_i}(u,v;\lambda) ~d\mu_i(u) ~d\mu_i(v) \leq \zeta \left(\fint_{B(w_i,2(\lambda+1)r)} \rho ~d\mu_i \right) + o(s_i r),
  \end{align*}
  since the non-homogeneous Poincar\'e inequality holds in each $X_i$.  Thus, as $o(s_i r) \to 0$, we can choose some $N_2 \in \N$ so that for all $i \geq N_2$, we get
  \begin{align}
    \fint_{B(w_i,2r)} \fint_{B(w_i,2r)} F_\rho^{X_i}(u,v;\lambda) ~d\mu_i(u) ~d\mu_i(v) \leq \zeta \left(\fint_{B(w_i,2(\lambda+1)r)} \rho ~d\mu_i \right) + \frac{\epsilon}{4}. \qquad \forall i \geq N_2. \label{e:PI-converge}
  \end{align}

  Note that $F_\rho^{X_i}(u,v;\lambda),F_\rho^Y(u,v;\lambda) \leq d(u,v)$ always as we can just choose a curve fragment that are the two endpoints $u$ and $v$.  Thus, by weak convergence of $\mu_i \to \nu$, we can choose some $N_3 \in \N$ so that
  \begin{multline}
    \left(\frac{\mu(B(w_i,r))}{\mu(B(w_i,r))} \right)^2 \fint_{B(w,r)} \fint_{B(w,r)} F_\rho^{X_i}(u,v;\lambda) ~d\nu(u) ~d\nu(v) \\
    \leq \fint_{B(w_i,2r)} \fint_{B(w_i,2r)} F_\rho^{X_i}(u,v;\lambda) ~d\mu_i(u) ~d\mu_i(v) + \frac{\epsilon}{4}, \quad \forall i \geq N_3. \label{e:F_rho-weak-converge}
  \end{multline}
  Again, we used that $B(w,r) \subseteq B(w_i,2r)$ to apply weak convergence.  Finally, by \eqref{e:F_rho-lsc} and Fatou's lemma, we can find a $N_4 \in \N$ so that
  \begin{multline}
    \fint_{B(w,r)} \fint_{B(w,r)} F_\rho^Y(u,v;\lambda) ~d\nu(u) ~d\nu(v) \\
    \leq \fint_{B(w,r)} \fint_{B(w,r)} F_\rho^{X_i}(u,v;\lambda) ~d\nu(u) ~d\nu(v) + \frac{\epsilon}{4}, \quad \forall i \geq N_4. \label{e:F_rho-fatou}
  \end{multline}
  We prove our theorem by combining \eqref{e:rho-weak-converge} \eqref{e:PI-converge} \eqref{e:F_rho-weak-converge} \eqref{e:F_rho-fatou} and letting $\epsilon \to 0$.  Note that we will have $\tilde{\zeta}(t) = \left( \frac{\mu(B(w_i,2r))}{\mu(B(w_i,r))}\right)^2 \zeta\left( \frac{\mu(B(w,4\lambda r))}{\mu(B(w,2(\lambda+1)r))} t \right)$, but all the volume ratios are bounded from above by a constant depending only on $\lambda \geq 1$, the doubling constant of $Y$, and the uniform pointwise doubling constant of $X$.
\end{proof}

Lemma \ref{l:limit-PI-frag} allows us to show that tangent spaces are quasiconvex.

\begin{corollary} \label{c:tangent-qc}
  Suppose $X$ is a RNP-LDS.  Then for almost every $x \in X$, $(Y,d,\nu) \in \Tan(X,\mu,x)$ is quasiconvex.
\end{corollary}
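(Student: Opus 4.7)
The plan is to extract genuine rectifiable curves from the non-homogeneous Poincar\'e inequality in Proposition~\ref{l:limit-PI-frag} by probing it with constant functions, exploiting the crucial fact that in the tangent there is no additive $o(r)$ error term. Specifically, substituting $\rho\equiv\epsilon$ into \eqref{e:limit-tPI} gives, on any ball $B(w,r)\subset Y$,
\[
\fint_{B(w,r)}\fint_{B(w,r)}\tilde F_\epsilon(u,v;\lambda)\,d\nu(u)\,d\nu(v)\leq\tilde\zeta(\epsilon).
\]
Since $\int_\gamma^*\epsilon=(\len\gamma-\ms\gamma)+\epsilon\int_{\dom\gamma}\md_\gamma\geq \len\gamma-\ms\gamma$, this already controls the ``gap length'' $g(u,v):=\inf_{\gamma\in\Gamma(u,v;\lambda)}(\len\gamma-\ms\gamma)$. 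Letting $\epsilon\to 0^+$ and using continuity of $\tilde\zeta$ at $0$ with $\tilde\zeta(0)=0$ shows $g\equiv 0$ on a $\nu\times\nu$-full-measure subset of $B(w,r)^2$; a countable cover of $Y$ by such balls gives a full measure set $G\subset Y\times Y$ on which $g$ vanishes.

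For each good pair $(u,v)\in G$, I will turn a gap-minimising sequence $\gamma_n\in\Gamma(u,v;\lambda)$ with $\len\gamma_n-\ms\gamma_n\to 0$ into a genuine $1$-Lipschitz curve by compactness. Since $Y$ is complete and doubling, hence proper, and each $\len\gamma_n\leq\lambda d(u,v)$, after translating so $\inf\dom\gamma_n=0$ all graphs sit inside a common compact subset of $Y\times\R$. Hausdorff compactness of compact subsets extracts a subsequential limit $\gamma_\infty\in\Gamma$; a lower-semicontinuity argument for the gap (a close variant of Lemma~\ref{l:starlowersemicts} applied with $\rho\equiv 1$) shows $\gamma_\infty$ has zero gap. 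Thus $\dom\gamma_\infty$ is a genuine interval of length at most $\lambda d(u,v)$ and $\gamma_\infty$ joins $u$ to $v$.

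To upgrade from ``a.e.\ pair'' to ``every pair'' in $Y=\spt\nu$, I will use that $G$ is dense in $Y\times Y$ (by doubling of $\nu$). For arbitrary $u,v\in Y$, choose $(u_k,v_k)\in G$ with $u_k\to u$, $v_k\to v$, reparametrise the associated curves on $[0,1]$ (now Lipschitz with constants $\lambda d(u_k,v_k)\to\lambda d(u,v)$), and apply Arzel\`a--Ascoli together with properness of $Y$ to extract a uniform limit $\gamma\colon[0,1]\to Y$ from $u$ to $v$. Lower semicontinuity of length under uniform convergence then yields $\len\gamma\leq\lambda d(u,v)$, so $Y$ is $\lambda$-quasiconvex.

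The main technical point I expect to need care with is verifying lower semicontinuity of the gap under Hausdorff convergence of graphs and checking that the quasiconvexity constant $\lambda$ does not depend on the ball $B(w,r)$---both are ensured by the uniformity of $\lambda$ and $\tilde\zeta$ in Proposition~\ref{l:limit-PI-frag} and by essentially the same bookkeeping as in Lemma~\ref{l:starlowersemicts}.
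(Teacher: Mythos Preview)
Your argument is correct and follows the same strategy as the paper: feed a trivial upper gradient into the tangent-space inequality \eqref{e:limit-tPI}, use $\tilde\zeta(0)=0$ to force the gap functional to vanish, and extract genuine curves by compactness in the proper space $Y$. Two shortcuts the paper takes that you could adopt: first, you may set $\rho\equiv 0$ directly (since $\int_\gamma^* 0=\len\gamma-\ms\gamma$ and $\tilde\zeta(0)=0$), avoiding the $\epsilon\to 0$ limit; second, the paper passes from ``zero double integral'' to ``identically zero on $B\times B$'' in one step by asserting continuity of $\tilde F_0(\cdot,\cdot;\lambda)$, which collapses your density\,+\,Arzel\`a--Ascoli upgrade---though your version is arguably more transparent given the $\lambda$-constraint on admissible fragments.
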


\begin{proof}
  Let $u,v \in Y$.  Applying \eqref{e:limit-tPI} to $\rho \equiv 0$ with $B = B(u,2d(u,v))$ gives that
  $$\int_B \int_B \tilde{F}_\rho(x,y;\lambda) ~d\mu ~d\mu = 0.$$
  As $\tilde{F}_\rho \geq 0$ and is Lipschitz, we get then that we must have $\tilde{F}_\rho \equiv 0$ on $B \times B$ and so
  \begin{align*}
    \inf_{\gamma \in \Gamma(u,v;\lambda)} \int_\gamma^* 0 = 0.
  \end{align*}

  As curve fragments in $\Gamma(x,y;\lambda)$ have compact images in $Y$ and $Y$ is proper, we get that there exists some $\gamma \in \Gamma(u,v;\lambda)$ so that
  \begin{align*}
    \int_\gamma^* 0 = 0.
  \end{align*}
  Then by definition, we have that $\ms \gamma = \len \gamma$.  As the domain of $\gamma$ is compact in $\R$, it must be an interval and so $\gamma$ is a rectifiable curve so that $\len \gamma \leq \lambda d(x,y)$.  This proves quasiconvexity of $Y$.

\end{proof}

We now show that we can complete the *-integral into a true integral over full curves in a quasiconvex space.

\begin{lemma} \label{l:qc-frag}
  If $X$ is $C$-quasiconvex and $\rho : X \to [0,1]$, we have that
  \begin{align*}
   \inf_\gamma \int_\gamma \rho \leq C \inf_{\gamma \in \Gamma(x,y)} \int_\gamma^* \rho
  \end{align*}
  where the infimum on the left hand side is taken over all 1-Lipschitz curves from $x$ to $y$.
\end{lemma}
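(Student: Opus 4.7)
Fix an arbitrary $\gamma \in \Gamma(x,y)$; we will build a $1$-Lipschitz curve $\tilde\gamma$ on an interval from $x$ to $y$ satisfying $\int_{\tilde\gamma}\rho \le C\int_\gamma^*\rho$. Let $K=\dom\gamma$, let $[a,b]$ be the smallest closed interval containing $K$ (so $\len\gamma = b-a$ and $\ms\gamma = |K|$), and write $(a,b)\setminus K = \bigsqcup_i (a_i,b_i)$. Since $\gamma$ is $1$-Lipschitz, $d(\gamma(a_i),\gamma(b_i))\le b_i-a_i$, and $C$-quasiconvexity yields a $1$-Lipschitz curve $\sigma_i\colon[0,L_i]\to X$ from $\gamma(a_i)$ to $\gamma(b_i)$ with $L_i \le C\,d(\gamma(a_i),\gamma(b_i)) \le C(b_i-a_i)$. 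The idea is then simply to splice each $\sigma_i$ into $\gamma$ in place of the gap $(a_i,b_i)$, after suitably reparametrising so the result lands on an interval and remains $1$-Lipschitz.

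To make this precise, define an absolutely continuous, strictly increasing reparametrisation $\psi\colon[a,b]\to[0,T]$ by $\psi(a)=0$, $\psi'\equiv 1$ on $K$, and $\psi' \equiv L_i/(b_i-a_i)$ on each gap $(a_i,b_i)$; note $\psi$ is $C$-Lipschitz because $L_i/(b_i-a_i)\le C$. Set $T=\psi(b)$ and define $\tilde\gamma\colon[0,T]\to X$ by $\tilde\gamma(\psi(t))=\gamma(t)$ for $t\in K$, and $\tilde\gamma(s)=\sigma_i(s-\psi(a_i))$ for $s\in[\psi(a_i),\psi(b_i)]$. The two definitions agree at gap endpoints, and $\tilde\gamma(0)=x$, $\tilde\gamma(T)=y$.

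The only nontrivial verification is that $\tilde\gamma$ is $1$-Lipschitz. For $s=\psi(t), s'=\psi(t')$ with $t<t'$, the crucial fact is $\psi(t')-\psi(t) \ge t'-t$, which follows from $\psi'\ge 1$. A short case analysis (both $t,t'\in K$; one in $K$ and one in a gap; in two different gaps; in the same gap), combined with the triangle inequality and the $1$-Lipschitz property of $\gamma|_K$ and of each $\sigma_i$, yields $d(\tilde\gamma(s),\tilde\gamma(s'))\le s'-s$ in every case. Finally, splitting the line integral over $\psi(K)$ and the gap intervals, using the change of variables $s=\psi(t)$ (with $\psi'=1$ on $K$) on the first piece, and the trivial bound $\int_{\sigma_i}\rho \le L_i$ (since $\rho\le 1$ and $\md_{\sigma_i}\le 1$) on each gap, gives
\[
\int_{\tilde\gamma}\rho \;=\; \int_K \rho\circ\gamma\,\md_\gamma \;+\; \sum_i \int_{\sigma_i}\rho \;\le\; \int_K \rho\circ\gamma\,\md_\gamma + \sum_i L_i \;\le\; \int_K \rho\circ\gamma\,\md_\gamma + C(\len\gamma-\ms\gamma) \;\le\; C\int_\gamma^*\rho,
\]
the last step using $C\ge 1$. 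Taking the infimum over $\gamma\in\Gamma(x,y)$ completes the proof. The argument is essentially bookkeeping; the main place to be careful is ensuring $\psi$ is set up so that $\tilde\gamma$ inherits $1$-Lipschitzness even when the gap family is countably infinite and possibly dense in $[a,b]$, which is handled by the uniform pointwise bound $\psi'\ge 1$ that makes $\psi^{-1}$ itself $1$-Lipschitz.
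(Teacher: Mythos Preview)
Your approach is essentially the same as the paper's (fill each gap with a short curve supplied by quasiconvexity, splice, and reparameterise), and the final integral estimate is correct. However, there is a genuine error in your verification that $\tilde\gamma$ is $1$-Lipschitz: you assert that $\psi'\ge 1$, but this is false in general. On a gap $(a_i,b_i)$ you set $\psi'=L_i/(b_i-a_i)$, and quasiconvexity only gives the \emph{upper} bound $L_i\le C(b_i-a_i)$; the lower bound you actually have is $L_i\ge d(\gamma(a_i),\gamma(b_i))$, which can be strictly smaller than $b_i-a_i$ (even zero, if $\gamma(a_i)=\gamma(b_i)$). So $\psi'$ may lie anywhere in $[0,C]$, $\psi$ need not be strictly increasing, and the inequality $\psi(t')-\psi(t)\ge t'-t$ does not follow.

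The simplest fix is to pad short filling curves: whenever $L_i<b_i-a_i$, reparameterise $\sigma_i$ over $[0,b_i-a_i]$ at constant speed $L_i/(b_i-a_i)\le 1$ (still $1$-Lipschitz) and redefine $\psi'\equiv 1$ on that gap. Then $\psi'\ge 1$ genuinely holds, $\psi^{-1}$ is $1$-Lipschitz, and your case analysis goes through verbatim. The integral bound is unaffected because $\int_{\sigma_i}\rho$ is parametrisation-invariant and still bounded by $L_i\le C(b_i-a_i)$. Alternatively, keep your $\psi$ and prove $d(\gamma(t),\gamma(t'))\le \psi(t')-\psi(t)$ for $t,t'\in K$ directly: chain the triangle inequality through finitely many gap endpoints, use $d(\gamma(a_i),\gamma(b_i))\le L_i$ on the chosen gaps and $1$-Lipschitzness of $\gamma$ between them, then let the finite collection exhaust all gaps (both series $\sum(b_i-a_i)$ and $\sum L_i$ converge). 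The paper itself simply asserts the existence of a $1$-Lipschitz reparametrisation without giving these details.
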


\begin{proof}
  Let $\epsilon > 0$ and take $\tilde{\gamma} : K \to X$ in $\Gamma(x,y)$ so that
  \begin{align*}
    \int_{\tilde{\gamma}}^* \rho \leq \inf_{\gamma \in \Gamma(x,y)} \int_\gamma^* \rho + \epsilon.
  \end{align*}
  Let $\{(a_i,b_i)\}_{i=1}^\infty$ be the open intervals making up $[\inf K, \sup K] \backslash K$ so that $b_i \leq a_{i+1}$.  For each $(a_i,b_i)$, we can find a rectifiable curve $\gamma_i$ from $\gamma(a_i)$ to $\gamma(b_i)$ of length no more than $C |a_i-b_i|$.  We then define a new rectifiable curve $\gamma : I \to X$ which attaches these $\gamma_i$ to the path of $\tilde{\gamma}$ at the appropriate gaps and reparameterizing so that $\gamma$ is 1-Lipschitz.  Taking the integral, we get
  \begin{multline*}
    \int_\gamma \rho = \int_{\tilde{\gamma}} \rho + \sum_{i=1}^\infty \int_{\gamma_i} \rho \leq \int_{\tilde{\gamma}} \rho + \sum_{i=1}^\infty C |a_i - b_i| = \int_{\tilde{\gamma}} \rho + C (\len \tilde{\gamma} - \ms \tilde{\gamma}) \\
    \leq C \int_{\tilde{\gamma}}^* \rho \leq C\inf_{\gamma \in \Gamma(x,y)} \int_\gamma^* \rho + C\epsilon
  \end{multline*}
  In the first inequality, we used the fact that $\rho \leq 1$.  Taking $\epsilon \to 0$ completes the proof.
\end{proof}

Given a function $\rho : X \to [0,1]$, we can define
\begin{align*}
  F_\rho(x,y) = \inf_\gamma \int_\gamma \rho
\end{align*}
where the infimum is taken over all 1-Lipschitz $\gamma$ from $x$ to $y$.  Note that if $X$ is quasiconvex, then this quantity is finite for all $x$ and $y$.

Lemma \ref{l:limit-PI-frag}, Corollary \ref{c:tangent-qc}, Lemma \ref{l:qc-frag}, and Theorem \ref{t:PI-RNP-LDS} then gives us the following theorem.  Namely that tangents to RNP-LDS are non-homogeneous PI (NPI) spaces.

\begin{theorem}\label{t:tangentsareAPI}
  Suppose $X$ is a RNP-LDS.  Then for almost every $x \in X$ there exists a $\zeta : [0,\infty) \to \R$ and $\lambda \geq 1$ so that for every $(Y,d,\nu) \in \Tan(X,\mu,x)$, $Y$ is a quasiconvex doubling RNP-LDS and has for all $z \in Y$, $r > 0$, and $\rho : Y \to [0,1]$ that
  \begin{align*}
    \fint_{B(z,r)} \fint_{B(z,r)} F_\rho(x,y) ~d\nu(x) ~d\nu(y) \leq \zeta \left( \fint_{B(z,\lambda r)} \rho ~d\nu \right).
  \end{align*}
\end{theorem}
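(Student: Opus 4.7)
The plan is to assemble the four ingredients listed right before the theorem: Lemma \ref{l:limit-PI-frag}, Corollary \ref{c:tangent-qc}, Lemma \ref{l:qc-frag}, and Theorem \ref{t:PI-RNP-LDS}. Fix $\mu$-a.e.\ $x\in X$ at which the conclusion of Lemma \ref{l:limit-PI-frag} holds, producing the constant $\lambda\geq 1$ and continuous increasing $\tilde\zeta$ with $\tilde\zeta(0)=0$. Pick any $(Y,d,\nu)\in \Tan(X,\mu,x)$. That $Y$ is doubling is recorded in the remark preceding the statement: after restricting $X$ to a compact piece of uniform pointwise doubling constant (as in Lemma \ref{l:unif-pointwise-doubling-decomp}) and using that tangents agree at density points, the pointed Gromov--Hausdorff limit inherits doubling from the uniform bound. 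Quasiconvexity with constant $\lambda$ is exactly Corollary \ref{c:tangent-qc}.

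Next I upgrade the fragmented inequality \eqref{e:limit-tPI} to an inequality for the honest path integral. Since $Y$ is $\lambda$-quasiconvex, Lemma \ref{l:qc-frag} gives $F_\rho(u,v)\leq \lambda \inf_{\gamma\in\Gamma(u,v)}\int_\gamma^*\rho \leq \lambda\,\tilde F_\rho(u,v;\lambda)$ for every $u,v\in Y$ and every $\rho\colon Y\to[0,1]$. Integrating this pointwise bound over $B(z,r)\times B(z,r)$ against $\nu\times\nu$ and applying \eqref{e:limit-tPI} yields
\begin{align*}
\fint_{B(z,r)}\fint_{B(z,r)} F_\rho(u,v)\,d\nu(u)\,d\nu(v)\leq \lambda\,\tilde\zeta\!\left(\fint_{B(z,4\lambda r)}\rho\,d\nu\right).
\end{align*}
Setting $\zeta:=\lambda\tilde\zeta$ and relabeling $4\lambda$ as $\lambda$ gives the displayed inequality of the theorem.

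It remains to see that $Y$ is an RNP-LDS. Here the key observation is that \eqref{e:limit-tPI} carries no $o(r)$ correction, so what I have just proved is a genuine (non-asymptotic) nonhomogeneous Poincar\'e inequality on $Y$. Indeed, if $u\colon Y\to \R$ is 1-Lipschitz with $*$-upper gradient $\rho\colon Y\to[0,1]$, then Jensen's inequality and the definition of $*$-upper gradient yield
\begin{align*}
\fint_{B(z,r)}|u-u_{B(z,r)}|\,d\nu\leq \fint_{B(z,r)}\fint_{B(z,r)}\tilde F_\rho(u,v;\lambda)\,d\nu(u)\,d\nu(v),
\end{align*}
which is controlled by $\zeta\bigl(\fint_{B(z,\lambda r)}\rho\,d\nu\bigr)$ with zero error. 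Since $Y$ is doubling and hence pointwise doubling, the hypotheses of Theorem \ref{t:PI-RNP-LDS} (via Definition \ref{d:ANPI}) are met, so $Y$ is a RNP-LDS. The main thing to be careful about is that no step introduces a scale-dependent error: Lemma \ref{l:limit-PI-frag} is already free of an $o(r)$ term (since the $o_i(s_ir)$ in the defining approximation converges to zero), and Lemma \ref{l:qc-frag}, being purely geometric, is also scale-free; consequently the resulting inequality on $Y$ is cleaner than the one on $X$ and invoking Theorem \ref{t:PI-RNP-LDS} is immediate.
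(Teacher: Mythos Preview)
Your proof is correct and follows exactly the route the paper intends: you simply unpack the one-line assembly of Lemma \ref{l:limit-PI-frag}, Corollary \ref{c:tangent-qc}, Lemma \ref{l:qc-frag}, and Theorem \ref{t:PI-RNP-LDS}, checking the bookkeeping (the chain $F_\rho\le\lambda\,\tilde F_\rho(\cdot,\cdot;\lambda)$, the absence of an $o(r)$ term, and the verification of Definition \ref{d:ANPI}) explicitly. There is nothing to add.
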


\begin{remark}
Although the curves that form this Poincar\'e inequality are not fragmented, it is essential that the inequality only hold for $\rho$ that are bounded in supremum by some fixed number (for example, $1$).  If it were to hold for any bounded $\rho$ then a simple scaling argument would show that we in fact have a 1-Poincar\'e inequality in the tangent.  This is impossible by the result of Schioppa \cite{schioppa-pi} that constructs, for any $p\geq 1$, a space whose tangents (and the original space) satsify a $p'$-Poincar\'e inequality for every $p'>p$ but not for $p$.
\end{remark}

\begin{remark}
As a corollary we see that for almost every $x$ in a RNP-LDS, every element of $\Tan(X,\mu,x)$ is also a RNP-LDS.  Shortly after the first preprint of this paper appeared, Schioppa \cite{schioppa-tangents} proved that for almost every $x$ in a LDS, every element of $\Tan(X,\mu,x)$ is a LDS.
\end{remark}

\section{Restricted Alberti representations}\label{s:restrictedcurves}

We conclude this paper with a second characterisation of RNP-LDS.  Like our first characterisation using the Poincar\'e inequality, it is based upon connecting points in the space by curve fragments.  However, the significant difference is that it only considers those fragments that are ``significant'' from the point of view of the Alberti representations in the space.

These significant fragments will be the ones that produce the gradient of partial derivatives of any Lipschitz function at almost every point.  By using the fundamental theorem of calculus along such a curve fragment, we then see how the partial derivatives in fact form a total derivative; essentially by the same reason as why one may use the fundamental theorem of calculus along axis parallel lines in $\mathbb R^n$ to prove Rademacher's theorem.

To determine if a collection $\Gamma'$ of curve fragments are ``significant'' from the point of view of an Alberti representation $(\bP,\{\mu_\gamma\})$, we will need to restrict the Alberti representation to $\Gamma'$ and compare the resulting measure to $\mu$.

We will also wish to restrict to a subset of the domain of a curve fragment $\gamma$.  Although this restricted fragment is a fragment in its own right, call it $\tilde\gamma$, its relationship with $\bP$ and $\mu_\gamma$ may be completely different to that of $\gamma$.

To illustrate this problem, consider the Alberti representation of $[0,1]$ given by a single curve $\gamma(t)=t$ for $t\in [0,1]$, $\mu_\gamma = \mathcal L^1\llcorner [0,1]$ and $\bP = \delta_\gamma$.  If we wish to restrict the domain of $\gamma$ to $[0,1/2]$, then simply considering the curve $\tilde\gamma(t)=t$ for $t\in [0,1/2]$ is meaningless: $\mu_{\tilde\gamma}=0$ and $\bP(\tilde\gamma)=0$.  Instead, we must consider a subset of the domain of $\gamma$.

Since the original idea of restricting to $\Gamma'$ can also be formulated by restricting domains, we arrive to the following definition.

\begin{definition}
  Let $\mathcal K$ be the set of non-empty compact subsets of $\R$ equipped with the Hausdorff metric.  A \emph{restriction of curves} is a Borel function $\sigma \colon \Gamma \to \mathcal K\cup \{\emptyset\}$ (that is, $\sigma^{-1}(\mathcal K)$ is a Borel set and $\sigma|_{\mathcal K}$ is a Borel function) such that $\sigma(\gamma) \subset \dom \gamma$ for each $\gamma\in \Gamma$.
\end{definition}

We consider the measure induced by restricting Alberti representations to a restriction of curves.

\begin{definition}
Let $(X,d,\mu)$ have an Alberti representation $\cA = (\bP, \mu_\gamma)$ and $\sigma$ be a restriction of curves.  We define $\cA$ \emph{restricted to} $\sigma$ to be the measure
\[\cA \llcorner \sigma = \int_{\Gamma} \mu_\gamma \llcorner \gamma(\sigma(\gamma)) \text{d}\bP(\gamma).\]
\end{definition}

\begin{lemma}
If $\cA = (\bP, \mu_\gamma)$ is an Alberti representation of $(X,d,\mu)$ and $\sigma$ a restriction of curves then $\cA\llcorner \sigma$ is a Borel measure on $X$.  Moreover, $(\bP, \mu_\gamma\llcorner \gamma(\sigma(\gamma)))$ is an Alberti representation of $\cA \llcorner \sigma$.
\end{lemma}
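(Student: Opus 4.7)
The plan is essentially to unfold definitions, with the only nontrivial point being the measurability of the integrand. Fix a Borel set $B \subset X$. I first want to show that the map
\[ \gamma \longmapsto \bigl(\mu_\gamma \llcorner \gamma(\sigma(\gamma))\bigr)(B) = \mu_\gamma\bigl(\gamma(\sigma(\gamma)) \cap B\bigr) \]
is $\bP$-measurable, so that the integral defining $\cA \llcorner \sigma$ makes sense. The paper already assumes that $\gamma \mapsto \mu_\gamma(B)$ is $\bP$-measurable (this is implicit in the definition of an Alberti representation), so by $\mu_\gamma \ll \Hd^1 \llcorner \im\gamma$ and biLipschitzness of $\bP$-a.e.\ curve, it is equivalent to show measurability of $\gamma \mapsto \mathcal{L}^1(\sigma(\gamma) \cap \gamma^{-1}(B))$ (up to an absolutely continuous density depending on $\gamma$, which is itself handled by the standing measurability hypothesis). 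For $B$ open this follows from a short continuity argument using the Hausdorff metric on $\mathcal{K}$, Borelness of $\sigma$, and continuity of $\gamma$; the class of $B$ for which the assertion holds is closed under countable disjoint unions and under complements in finite-measure sets, so a monotone class argument extends it to all Borel $B$.

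Once measurability is in hand, $\cA \llcorner \sigma$ is then a well-defined set function on Borel sets. Countable additivity of $\cA \llcorner \sigma$ is inherited from countable additivity of each $\mu_\gamma$ via the monotone convergence theorem applied to the $\bP$-integral. Finiteness follows from the trivial bound
\[ \cA \llcorner \sigma(X) = \int_\Gamma \mu_\gamma(\gamma(\sigma(\gamma))) \, d\bP \leq \int_\Gamma \mu_\gamma(X) \, d\bP = \mu(X) < \infty. \]
Borel regularity is inherited from Borel regularity of the finite measures $\mu_\gamma \llcorner \gamma(\sigma(\gamma))$ together with the dominated convergence theorem applied to the defining $\bP$-integral.

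For the second assertion, I would simply verify the three clauses of Definition \ref{d:AR} for the pair $(\bP, \{\mu_\gamma \llcorner \gamma(\sigma(\gamma))\})$ with underlying measure $\cA \llcorner \sigma$. The set of non-biLipschitz $\gamma$ remains $\bP$-null because $\bP$ is unchanged. For each biLipschitz $\gamma$, since $\gamma(\sigma(\gamma)) \subset \im\gamma$ is Borel (it is the continuous image of a compact set) and $\mu_\gamma \ll \Hd^1 \llcorner \im\gamma$, we automatically have $\mu_\gamma \llcorner \gamma(\sigma(\gamma)) \ll \Hd^1 \llcorner \im\gamma$. Finally, the disintegration identity
\[ \cA \llcorner \sigma(B) = \int_\Gamma \bigl(\mu_\gamma \llcorner \gamma(\sigma(\gamma))\bigr)(B) \, d\bP(\gamma) \]
holds for every Borel $B \subset X$ by the very definition of $\cA \llcorner \sigma$.

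The only step that carries any real content is the measurability argument in the first paragraph; once that is granted, the remainder is a direct check against the definition. I expect the main obstacle to be phrasing the measurability proof cleanly, in particular handling the interaction between the Hausdorff-metric Borel structure on $\mathcal{K}$ and the pushforward by the varying map $\gamma$. A clean way to package this is to work on the product $\{(\gamma, t) : t \in \dom\gamma\} \subset \Gamma \times \R$, note that $\{(\gamma, t) : t \in \sigma(\gamma)\}$ is Borel (by continuity of $K \mapsto \mathbf{1}_K$ at points of continuity plus a monotone class argument), and then evaluate via Fubini.
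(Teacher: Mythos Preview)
Your proposal is correct and follows essentially the same outline as the paper's proof: isolate measurability of $\gamma \mapsto \mu_\gamma(\gamma(\sigma(\gamma))\cap B)$ as the only nontrivial point, then observe that countable additivity and the Alberti representation axioms follow by inspection. The paper argues measurability by first taking $B=C$ closed, noting that $\gamma\mapsto\gamma(\sigma(\gamma))$ and hence $\gamma\mapsto\gamma(\sigma(\gamma))\cap C$ are Borel maps into the hyperspace of compact sets, and then extending to all Borel $B$ by a $\sigma$-algebra argument; your version instead starts from open $B$, passes through $\mathcal{L}^1$ via biLipschitzness, and invokes a monotone class argument. Both routes are standard and at comparable levels of detail---in fact the paper's proof is terser than yours and glosses over exactly the joint measurability issue you flag. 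Your suggested product-space packaging (showing $\{(\gamma,t):t\in\sigma(\gamma)\}$ is Borel via continuity of $(K,t)\mapsto d(t,K)$ and then applying Fubini) is arguably the cleanest of the three formulations.
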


\begin{proof}
Let $\sigma$ be a restriction of curves and $C\subset X$ closed.  The function $\gamma\mapsto \gamma(\sigma(\gamma))$ into $X$ is Borel by composition and hence $\gamma\mapsto \gamma(\sigma(\gamma))\cap C$ is Borel too, also by composition.  Therefore $\gamma\mapsto \mu_\gamma(\gamma(\sigma(\gamma))\cap C$ is Borel and so $\cA\llcorner \sigma(C)$ is well defined.  The sets for which $\cA\llcorner \sigma$ is defined form a $\sigma$-algebra and so include the Borel sets.

Once we know that $\cA$ is well defined, it is clear that it is a measure and it has the given Alberti representation.
\end{proof}

\begin{definition} \label{d:abs-cont}
Suppose that $(X,d,\mu)$ has $n$ Alberti representations $\cA_1,\ldots,\cA_n$ and that $\sigma_1,\ldots,\sigma_n$ are restrictions of curves.  We define $\mu^*$ to be the part of $\mu$ that is absolutely continuous with respect to each $\cA_i \llcorner \sigma_i$.  That is, for each $1\leq i \leq n$ write
\[\mu = \mu \llcorner A_i + \mu \llcorner A_{i}^{c},\]
where $\mu \llcorner A_{i} \ll \cA_i \llcorner \sigma_i$ and $\mu \llcorner A_{i}\perp \cA_i \llcorner \sigma_i$ for each $1 \leq i \leq n$.  Let $A^{*}=A_{1}\cap \ldots \cap A_{n}$ and set $\mu^{*}=\mu \llcorner A^{*}$.  Then $\mu^* \leq \cA_i \llcorner \sigma_i$ for each $1\leq i \leq n$.  By applying \cite{bate}*{Lemma 2.3} (with Radon-Nikodym derivative $\leq 1$) we obtain an Alberti representation $\cA_i^* = (\bP_i,\mu_\gamma^*)$ of $\mu^*$ with $\mu_\gamma^* \leq \mu_\gamma \llcorner \gamma(\sigma_i(\gamma))$.
\end{definition}

To give some intuition behind this definition, we consider the following example.  Suppose that $X=[0,1]^{2}$ and that $\mu$ is Lebesgue measure on $X$ with $\mathcal A_{1}$ the Alberti representation (given by Fubini's theorem) supported on the horizontal line segments and $\mathcal A_{2}$ the Alberti representation supported on vertical line segments.
Also let $\sigma_{1}$ be the restriction of curves that restricts each horizontal segment to its left half and $\sigma_{2}$ the restriction of curves that restricts each vertical segment to its bottom half (and extended to the whole of $\Gamma$ arbitrarily).

Then $\mathcal A_{1} \llcorner \sigma_{1}$ is an Alberti representation of Lebesgue measure on the left half of the square and $\mathcal A_{2}\llcorner \sigma_{2}$ is an Alberti representation of Lebesgue measure on the bottom half of the square.  In particular, $\mu^{*}$ is Lebesgue measure restricted to the bottom left corner $[0,1/2]^{2}$.

Thus, $\mu^{*}$ is the part of $\mu$ that is significant for all of the restricted Alberti representations simultaneously.
Of course, except for very particular choices of the $\sigma_i$, $\mu^*$ will usually be trivial.  Necessarily, the curves that we restrict to must have positive $\bP_i$ measure, but they must also ``overlap'' in a set of positive $\mu$ measure.

There is an obvious decomposition result for restrictions of curves.
\begin{lemma}\label{l:decomposerestrictions}
For $n\in \mathbb N$, let $(X,d,\mu)$ be a metric measure space with $n$ Alberti representations  $(\bP_i,\{\mu_{\gamma,i}\})$.  For each $1\leq i \leq n$ let $\sigma_i^j$, $j\in \mathbb N$, be restrictions of curves such that, for $\bP_i$-a.e. $\gamma\in \Gamma$,
\[\mu_{\gamma,i} \left( \dom\gamma\setminus \bigcup_{j\in\N} \sigma_i^j(\gamma) \right) =0.\]
For each $j=(j_1,\ldots,j_n) \in\N^n$ let $\mu^{*,j}$ be the measure induced by $\sigma^{j_1},\ldots,\sigma^{j_n}$.  Then if $S\subset X$ is Borel with $\mu^{*,j}(S)=0$ for every $j\in\N^n$, $\mu(S)=0$.
\end{lemma}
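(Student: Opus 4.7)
The plan is to produce, for each $i$, a countable Borel partition $X = \bigsqcup_{j \in \N} W_i^j$ (modulo a $\mu$-null set) on whose pieces $\mu$ is absolutely continuous with respect to the single measure $\cA_i \llcorner \sigma_i^j$, and then to intersect these $n$ partitions to obtain a single Borel partition of $X$ indexed by $j \in \N^n$ on which $\mu$ is dominated by $\mu^{*,j}$. Summing the hypothesis $\mu^{*,j}(S)=0$ over $j \in \N^n$ will then yield $\mu(S)=0$.

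The key first step is to show $\mu \ll \tilde{\cA}_i := \sum_{j \in \N} \cA_i \llcorner \sigma_i^j$ for each $i$. Indeed, if $N \subset X$ is Borel with $\tilde{\cA}_i(N)=0$, then $\cA_i \llcorner \sigma_i^j(N)=0$ for every $j$, so for $\bP_i$-a.e.\ $\gamma$ one has $\mu_{\gamma,i}(\gamma(\sigma_i^j(\gamma)) \cap N) = 0$ for all $j$ simultaneously. Combined with the coverage hypothesis, which for biLipschitz $\gamma$ says $\mu_{\gamma,i}$ is concentrated on $\bigcup_j \gamma(\sigma_i^j(\gamma))$, this forces $\mu_{\gamma,i}(N)=0$ for $\bP_i$-a.e.\ $\gamma$, and integrating gives $\mu(N)=0$. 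Writing $\cA_i \llcorner \sigma_i^j = g_{i,j} \cdot \tilde{\cA}_i$ by Radon--Nikodym and setting $W_i^j := \{g_{i,j}>0\} \setminus \bigcup_{k<j} \{g_{i,k}>0\}$, I obtain the desired partition: since $\sum_j g_{i,j} > 0$ holds $\tilde{\cA}_i$-a.e., the $W_i^j$ cover $X$ up to a $\tilde{\cA}_i$-null, hence $\mu$-null, set, and on $W_i^j$ the two measures $\tilde{\cA}_i$ and $\cA_i \llcorner \sigma_i^j$ are equivalent, giving $\mu|_{W_i^j} \ll \cA_i \llcorner \sigma_i^j$.

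For each $j=(j_1,\ldots,j_n) \in \N^n$ define $W^j := \bigcap_{i=1}^n W_i^{j_i}$. These partition $X$ up to a $\mu$-null set, and on $W^j$ the restriction $\mu|_{W^j}$ is absolutely continuous with respect to every $\cA_i \llcorner \sigma_i^{j_i}$. I then claim $\mu|_{W^j} \leq \mu^{*,j}$: using the decomposition $\mu = \mu^{*,j} + \mu_1 + \cdots + \mu_n$ from Definition~\ref{d:abs-cont}, and choosing for each $i$ a Borel $N_i$ supporting $\mu_i$ with $\cA_i \llcorner \sigma_i^{j_i}(N_i)=0$, absolute continuity forces $\mu|_{W^j}(\bigcup_i N_i) = 0$; meanwhile on $X \setminus \bigcup_i N_i$ every $\mu_i$ vanishes, so $\mu$ coincides there with $\mu^{*,j}$, whence $\mu|_{W^j} \leq \mu^{*,j}$. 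Summing over $j$ now gives
\[
\mu(S) \;=\; \sum_{j \in \N^n} \mu(S \cap W^j) \;\leq\; \sum_{j \in \N^n} \mu^{*,j}(S) \;=\; 0.
\]

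The only subtle point I expect is the passage $\mu|_{W^j} \leq \mu^{*,j}$, which genuinely uses the combined Lebesgue-style decomposition of Definition~\ref{d:abs-cont} rather than merely pointwise absolute continuity with respect to each $\cA_i \llcorner \sigma_i^{j_i}$ individually; everything else reduces to a routine application of Radon--Nikodym and countable additivity.
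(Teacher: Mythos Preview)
Your argument is correct. One small technical point: the sum $\tilde{\cA}_i = \sum_j \cA_i\llcorner\sigma_i^j$ need not be $\sigma$-finite, so to invoke Radon--Nikodym you should replace it by the equivalent finite measure $\sum_j 2^{-j}\cA_i\llcorner\sigma_i^j$; nothing else changes.

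Your route differs from the paper's. The paper argues by induction on $n$: for $n=1$ it observes that $\mu^{*,j}(S)=0$ forces $(\cA_1\llcorner\sigma_1^j)(S)=0$ (since $\cA_1\llcorner\sigma_1^j\le\mu$ and the singular part of $\mu$ lives on a $(\cA_1\llcorner\sigma_1^j)$-null set), and then the coverage hypothesis gives $\mu_{\gamma,1}(\gamma^{-1}(S))=0$ for $\bP_1$-a.e.\ $\gamma$, hence $\mu(S)=0$. The inductive step fixes $k=(j_1,\dots,j_{n-1})$, applies the $n=1$ case to $\mu^{*,k}$ with its induced $n$-th Alberti representation, and then invokes the inductive hypothesis. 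By contrast, you build the Borel partition $\{W^j\}_{j\in\N^n}$ all at once via Radon--Nikodym densities and compare $\mu|_{W^j}$ directly to $\mu^{*,j}$ using the decomposition of Definition~\ref{d:abs-cont}. The paper's proof is shorter and avoids any explicit density computations; your version is more structural, making the domination $\mu|_{W^j}\le\mu^{*,j}$ transparent, and bypasses the somewhat delicate step in the inductive argument of checking that $\mu^{*,k}$ inherits an Alberti representation satisfying the same coverage hypothesis for the $n$-th family of restrictions.
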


\begin{proof}
We prove the result by induction on $n$ and first prove the result for $n=1$.  If $\mu^{*,j}(S)=0$ for every $j\in\N$ then, for $\bP_1$-a.e. $\gamma\in\Gamma$ we have $\mu_{\gamma,1}(\sigma^j(\gamma)\cap\gamma^{-1}(S))=0$.  Therefore, for $\bP_1$-a.e. $\gamma\in\Gamma$,
\[\mu_{\gamma,1}(\gamma^{-1}(S))\leq\sum_{j\in\N} \mu_{\gamma,1}(\sigma^j(\gamma)\cap\gamma^{-1}(S)) = 0.\]
In particular $\mu(S)=0$.

For the general case, suppose the result is true for $n-1$.  We first fix $k=(j_1,\ldots,j_{n-1})$ and for each $j\in \N$ let $\mu_j^*$ be the measure induced by $\sigma^{j_1},\ldots,\sigma^{j_{n-1}},\sigma^j$.  Then, by the $n=1$ case applied to $\mu^{*,k}$ and its Alberti representation induced by $(\bP_n,\{\mu_{\gamma,n}\})$, if $\mu^*_j(S)=0$ for every $j\in\N$ then $\mu^{*,k}(S) = 0$.  Therefore, if $\mu^{*,k'}(S)=0$ for every $k'\in \N^{n}$, we must have $\mu^{*,k}(S)=0$ for every $k\in\N^{n-1}$.  By the induction hypotheses we then have $\mu(S)=0$, as required.
\end{proof}

For the rest of this section, we fix a metric measure space $(X,d,\mu)$, Alberti representations $\cA_1,\ldots,\cA_n$ of $\mu$ and restrictions of curves $\sigma_1,\ldots,\sigma_n$.  All other definitions in this section will be implicitly given with respect to this choice.

We now consider a modification of $\rho_\epsilon$ that only gives weight to the part of a curve fragment contained within $\sigma(\gamma)$.  However, for reasons similar to those given at the start of the section, to do this we must consider concatenations of those curve fragments that are present in the restriction of curves.  Naturally, we must allow for translations of the domains of the curves that we concatenate together.

\begin{definition}
We define $\tilde\Gamma$ to be the set of all $\gamma \in \Gamma$ for which there exist $N\in \mathbb N$, $\gamma_1,\ldots, \gamma_N \in \Gamma$, $a_1,\ldots,a_N\in \R$, $t_1<\ldots <t_N \in \R$ and $\ell_1,\ldots,\ell_N \in \{1,\ldots,n\}$ such that:
\begin{itemize}
  \item The domain of $\gamma$ is covered by translations of restrictions of the domains of the $\gamma_i$:
  \[\dom\gamma \subset \bigcup_{i=1}^N [t_i,t_{i+1}]\cap ((t_i-a_i)+\sigma_{\ell_i}(\gamma_i)),\]
  \item For every $1\leq i \leq N$, when restricted to each $\dom\gamma \cap[t_i,t_{i+1}]$, $\gamma$ agrees with the translation of $\gamma_i$:
  \[\gamma(t) = \gamma_i(t+a_i-t_i) \quad \forall t \in \dom\gamma \cap[t_i,t_{i+1}].\]
\end{itemize}
For $x,y \in X$ and $\lambda \geq 1$, we define
\[\tilde\Gamma(x,y)= \tilde\Gamma\cap \Gamma(x,y) \text{ and }  \tilde{\Gamma}(x,y;\lambda) = \tilde\Gamma \cap \Gamma(x,y;\lambda).\]
Finally, for $0<\epsilon < 1$ we define
\[\tilde\rho_\epsilon(x,y) = \inf_{\gamma\in\tilde\Gamma(x,y)} \int_\gamma^* \epsilon.\]
As for $\rho_\epsilon$, $\tilde\rho_\epsilon$ is 1-Lipschitz in each argument and defines a metric on $X$.
\end{definition}

Note that $\tilde\rho_\epsilon$ agrees with $\rho_\epsilon$ (and $\tilde\Gamma$ with $\Gamma$) whenever we consider the trivial restrictions of curves $\sigma_i(\gamma) = \dom \gamma$ for each $\gamma\in\Gamma$ and $1\leq i\leq n$.  Note also that we do not impose an upper bound on the number of concatenations permitted to form an element of $\Gamma^*$.  Indeed, we will have no control over the number of concatenations required to connect two arbitrary points $x$ and $y$; we will instead be interested in the quantity $\tilde\rho_\epsilon(x,y)$.

\begin{remark}
Notice that, by the way we have defined $\tilde\Gamma$, we automatically have $\gamma|A \in \tilde\Gamma$ whenever $\gamma \in \tilde\Gamma$ and $A \subset \R$ is Borel.
\end{remark}

The important fact about restricting Alberti representations and Cheeger differentiability is that the universal property with respect to the restriction of curves holds $\mu^*$ almost everywhere.

\begin{lemma}
Let $\cA$ be an Alberti representation of $(X,d,\mu)$ and $\sigma$ a restriction of curves.  If for some Lipschitz $\phi\colon X \to \mathbb R^n$ and cone $C\subset \mathbb R^n$, $\mathcal A$ is in the $\phi$-direction of $C$, then so is $\cA\llcorner \sigma$.  If for $\delta> 0$, $\mathcal A$ has $\phi$-speed $\delta$ then so does $\cA\llcorner \sigma$.

In particular, if $\cA_1,\ldots,\cA_n$ are $\phi$-independent Alberti representations of $\mu$ then $\cA_1^*,\ldots, \cA_n^*$ are $\phi$-independent representations of $\mu^*$.
\end{lemma}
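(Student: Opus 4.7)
The plan is to observe that both the $\phi$-direction property and the $\phi$-speed property are almost-everywhere conditions on the parameter $t$ with respect to $\mu_\gamma$ (viewed as a measure on $\dom\gamma$ via the biLipschitz identification $\gamma$), combined with a $\bP$-a.e. quantifier over $\gamma\in\Gamma$. Restriction to $\sigma$ replaces $\mu_\gamma$ by $\mu_\gamma\llcorner\gamma(\sigma(\gamma))$, whose pullback to $\dom\gamma$ is supported on $\sigma(\gamma)\subset\dom\gamma$ and is absolutely continuous with respect to the original $\mu_\gamma$; in particular every $\mu_\gamma$-null set remains null for the restriction. The $\bP$ component is unchanged by the definition of $\cA\llcorner\sigma$.

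Concretely, I would first fix a $\bP$-conull set $\Gamma_0\subset\Gamma$ and, for each $\gamma\in\Gamma_0$, a $\mu_\gamma$-conull set $T_\gamma\subset\dom\gamma$ on which $(\phi\circ\gamma)'(t)$ exists and lies in $C\setminus\{0\}$ (respectively, $\|(f\circ\gamma)'(t)\|\geq\delta\Lip(f,\gamma(t))\Lip(\gamma,t)$ for the speed statement). For the restricted representation $(\bP,\{\mu_\gamma\llcorner\gamma(\sigma(\gamma))\})$, the relevant conull set is $T_\gamma\cap\sigma(\gamma)$, which differs from $\sigma(\gamma)$ by a $\mu_\gamma$-null set and therefore by a $(\mu_\gamma\llcorner\gamma(\sigma(\gamma)))$-null set. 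Thus the desired condition holds $(\mu_\gamma\llcorner\gamma(\sigma(\gamma)))$-a.e., and the $\phi$-direction and $\phi$-speed properties transfer verbatim to $\cA\llcorner\sigma$.

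For the second part, recall from Definition \ref{d:abs-cont} that $\cA_i^{*}=(\bP_i,\mu_\gamma^{*})$ is obtained from \cite{bate}*{Lemma 2.3} with $\mu_\gamma^{*}\leq\mu_\gamma\llcorner\gamma(\sigma_i(\gamma))$. Absolute continuity of $\mu_\gamma^{*}$ with respect to $\mu_\gamma\llcorner\gamma(\sigma_i(\gamma))$ again preserves null sets, so the first part applied to each $\cA_i\llcorner\sigma_i$ shows that every $\cA_i^{*}$ inherits the $\phi$-direction of the cone $C_i$ witnessing independence of the original $\cA_i$. Since the $C_i$ are linearly independent cones in $\R^n$, the family $\cA_1^{*},\ldots,\cA_n^{*}$ is $\phi$-independent as a collection of Alberti representations of $\mu^{*}$.

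There is no real obstacle here; the only subtlety is bookkeeping the identification between measures on $\im\gamma\subset X$ and on $\dom\gamma\subset\R$ used implicitly in the definition of $\phi$-direction, and keeping track of the chain of absolute continuities $\mu_\gamma^{*}\leq\mu_\gamma\llcorner\gamma(\sigma_i(\gamma))\ll\mu_\gamma$, which collapses the proof to the observation that shrinking the measure on the parameter set cannot break an a.e.\ pointwise condition.
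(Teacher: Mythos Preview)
Your argument is correct and is essentially identical to the paper's own proof: both reduce to the observation that the $\phi$-direction and $\phi$-speed conditions are $\mu_\gamma$-a.e.\ pointwise conditions, and since $\mu_\gamma^{*}\leq\mu_\gamma\llcorner\gamma(\sigma_i(\gamma))\ll\mu_\gamma$, any $\mu_\gamma$-null exceptional set remains null for the restricted measure. The paper states this in two sentences while you spell out the bookkeeping, but there is no substantive difference.
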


\begin{proof}
If $(\bP,\mu_\gamma)$ is in the $\phi$ direction of $C$ then $(\phi\circ\gamma)'(t) \in C$ for $\bP$-a.e. $\gamma\in \Gamma$ and $\mu_\gamma$-a.e. $t\in\dom\gamma$.  In particular it must be true that $(\phi\circ\gamma)'(t) \in C$ for $\bP$-a.e. $\gamma\in \Gamma$ and $\mu_\gamma\llcorner \gamma(\sigma(\gamma))$-a.e. $t\in \dom\gamma$.

The statement for speed is proved analogously.
\end{proof}

\begin{corollary}\label{c:restrictedspeed}
Suppose that $(X,d,\mu)$ has a universal collection of Alberti representations $\cA_1,\ldots,\cA_n$.  Then $\cA_1^*,\ldots,\cA_n^*$ is a universal collection of Alberti representations of $(X,d,\mu^*)$ (with the same constant).
\end{corollary}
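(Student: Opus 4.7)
The plan is to show that the very same Borel decomposition $X=X_1\cup\cdots\cup X_n$ produced by the universality of $\cA_1,\ldots,\cA_n$ already witnesses universality for the restricted representations $\cA_1^*,\ldots,\cA_n^*$, with the same constant $\kappa$. The argument is essentially a definition-chase whose only content is that absolute continuity of $\mu_\gamma^*$ with respect to $\mu_{\gamma,i}$ propagates the speed bound.

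Given a Lipschitz $f\colon X\to\R$, I would first apply the universality hypothesis to obtain a Borel decomposition $X=X_1\cup\cdots\cup X_n$ such that, for each $i$, the representation $(\bP_i,\mu_{\gamma,i}\llcorner X_i)$ of $\mu\llcorner X_i$ has $f$-speed $\kappa$. Unwinding the definition (as used, for instance, in the proof of Proposition~\ref{p:RNP-universal}), this means that for $\bP_i$-a.e.\ $\gamma\in\Gamma$ the speed inequality
\[\|(f\circ\gamma)'(t)\|\geq \kappa\,\Lip(f,\gamma(t))\,\Lip(\gamma,t)\]
holds for $\mu_{\gamma,i}$-a.e.\ $t\in\gamma^{-1}(X_i)$.

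I would then observe that $\cA_i^*=(\bP_i,\mu_\gamma^*)$ shares the same $\bP_i$-marginal, and that by Definition~\ref{d:abs-cont} one has $\mu_\gamma^*\leq \mu_{\gamma,i}\llcorner\gamma(\sigma_i(\gamma))\leq\mu_{\gamma,i}$ for $\bP_i$-a.e.\ $\gamma$. Hence $\mu_\gamma^*\llcorner\gamma^{-1}(X_i)\ll \mu_{\gamma,i}\llcorner\gamma^{-1}(X_i)$, so the displayed inequality continues to hold $\mu_\gamma^*$-a.e.\ on $\gamma^{-1}(X_i)$. This gives the $f$-speed $\kappa$ property for $(\bP_i,\mu_\gamma^*\llcorner X_i)$ viewed as a representation of $\mu^*\llcorner X_i$, completing the universality check with the same constant.

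There is no serious obstacle here: the substantive preliminary work is already contained in Definition~\ref{d:abs-cont} (ensuring each $\cA_i^*$ is a genuine Alberti representation of $\mu^*$ with the original $\bP$-marginal) and in the preceding lemma (providing $\phi$-independence and positive $\phi$-speed for the $\cA_i^*$). The only fresh ingredient needed is the almost-trivial monotonicity of the $f$-speed property under domination of the fragment measures, so the corollary follows immediately.
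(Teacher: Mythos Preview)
Your proposal is correct and follows essentially the same route as the paper: take the same Borel decomposition $X=X_1\cup\cdots\cup X_n$ coming from $\kappa$-universality of the $\cA_i$, and observe that the $f$-speed condition is inherited by the restricted representations because $\mu_\gamma^*\leq\mu_{\gamma,i}$. The paper phrases this last step as an invocation of the preceding lemma (speed is preserved under restriction), whereas you unpack it directly via absolute continuity of the fragment measures; the content is identical.
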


In particular, by combining this with Proposition \ref{p:RNP-universal}, we see such restricted Alberti representations are RNP-universal in an RNP-LDS.

\begin{proof}
By the previous lemma, the $\cA_i^*$ are independent (because the $\cA_i$ are) and so we just need to show the speed condition.

Suppose that the $\cA_i$ are $\kappa$-universal, for some $\kappa>0$.  Then for any Lipschitz $f\colon X \to \mathbb R$ there exists a Borel decomposition $X = X_1\cup\ldots \cup X_n$ such that each $\cA_i\llcorner X_i$ has $f$-speed $\kappa$.  By the previous lemma each $\cA_i^*\llcorner X_i$ has $f$-speed $\kappa$ and so this is the required decomposition.
\end{proof}

\section{Equivalence of RNP-differentiability and connecting points}\label{s:diff}

We first define what it means for Alberti representations to connect points.

\begin{definition}\label{d:connectedalbertireps}
Let $(U,\varphi)$ be an $n$-dimensional chart in a metric measure space $(X,d,\mu)$ with $n$ $\phi$-independent Alberti representations $\cA_1,\ldots,\cA_n$ each with $\phi$-speed $\delta$.  We say that the collection $\cA_1,\ldots,\cA_n$ \emph{connects points} if there exists a $D\geq 1$ such that, for any restrictions of curves $\sigma_1,\ldots,\sigma_n$,
\begin{equation}\label{e:alberticonnects}\sup_{0<\epsilon < 1}\limsup_{X\ni x\to x_0}\frac{\tilde\rho_\epsilon(x,x_0)}{\epsilon d(x,x_0)} < D\end{equation}
for $\mu^*$-a.e. $x_0 \in U$.
\end{definition}

We note that the quantity in the above supremum increases as $\epsilon$ decreases to 0, so that the supremum is in fact a limit.

The aim of this section is to prove a relationship between RNP differentiability and Alberti representations that connect points (see Theorem \ref{t:differentiability} below).  Before doing this, we mention a few remarks regarding our definition of what it means for a collection of Alberti representations to connect points in a metric measure space.

First notice that we cannot simply ask for the \emph{existence} of curves that connect points, if we expect to describe differentiability.  Indeed, in any geodesic metric space $X$ (that is, any two points in $X$ can be connected by a \emph{curve} whose length equals the distance between the two points), for any Alberti representation, there exist curves that connect points.  However, these geodesic curves may have total $\bP$ measure zero with respect to the Alberti representation.  Thus, at the very least, we must make a statement about every $\bP$-full measure set of curve fragments.  Similar ideas show that we must also allow restricting to $\mu_\gamma$-full measure subsets of $\gamma$, for $\bP$ almost every $\gamma$.

However, this alone is not enough, and we must consider \emph{positive} measure subsets of curve fragments, not just full measure subsets, as in Definition \ref{d:connectedalbertireps}.  For a simple example, consider the two Alberti representations of $\mathcal L^2\llcorner [0,1]^2$ given by Fubini's theorem with $\bP_1$ supported on horizontal line segments and $\bP_2$ supported on vertical line segments.  One can then define the Alberti representation $\bP = (\bP_1 + \bP_2)/2$ for which every point can be connected to near by points using any $\bP$-full set of curves.  However, this one Alberti representation alone is not enough to describe the two dimensional differentiability in the square. To rule out this case, and other similar examples, we must insist upon connecting points by arbitrary restrictions of curves.

In this section we prove the following theorem.

\begin{theorem}\label{t:differentiability}
A metric measure space $(X,d,\mu)$ is a RNP Lipschitz differentiability space if and only if there exists a countable Borel decomposition $X=\cup_i U_i$ such that each $\mu\llcorner U_i$ has a collection of $n_i$ Alberti representations that connect points.  In this case, each $U_i$ is a chart together with the Lipschitz function $\phi_i\colon X\to \R^{n_i}$ that realises the independence of the Alberti representations of $\mu\llcorner U_i$, and the gradient of partial derivatives forms the derivative at almost every point.
\end{theorem}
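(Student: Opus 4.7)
The plan is to adapt the machinery of Section \ref{s:findingcurves} to the restricted setting for the forward direction, and to run a fundamental-theorem-of-calculus argument along the concatenations of restricted curves for the converse. Throughout, the roles played by $\rho_\epsilon$, $\Gamma$, and $\mu$ in Section \ref{s:findingcurves} will be taken over by $\tilde\rho_\epsilon$, $\tilde\Gamma$, and $\mu^*$.

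For the forward direction, I start with Proposition \ref{p:RNP-universal} to get a countable Borel decomposition $X=\bigcup_i U_i$ where each $U_i$ carries a $\phi_i$-independent RNP-universal collection $\cA_{i,1},\ldots,\cA_{i,n_i}$, with some constant $\kappa_i$. Fix one such chart, drop subscripts, and let $\sigma_1,\ldots,\sigma_n$ be arbitrary restrictions of curves. The proof of Corollary \ref{c:restrictedspeed} applies equally well to RNP-valued Lipschitz targets, so $\cA_1^*,\ldots,\cA_n^*$ is a $\kappa$-RNP-universal collection of Alberti representations of $\mu^*$; since $\mu^*\ll\mu$, $(X,d,\mu^*)$ inherits the RNP-LDS structure, and Proposition \ref{p:nondiff-construct} applies with $\mu^*$ in place of $\mu$. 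I then rerun the arguments of Section \ref{s:findingcurves} in this restricted setting. The modified Lemma \ref{l:rho-derivative} is the key step: along any curve $\gamma|_{\sigma_i(\gamma)}$ coming from one of the $\cA_i^*$, the test concatenation $\gamma_s$ built in its proof again lies in $\tilde\Gamma$ (since $\tilde\Gamma$ is stable under gluing such pieces), so one concludes $\Lip(\tilde\rho_\epsilon(x,\cdot),z)\leq\epsilon/\kappa$ for $\mu^*$-a.e. $z$. Lemmas \ref{l:pokingholes}--\ref{l:easybootstrap} and Theorem \ref{t:connectingpoints} then transfer verbatim with $\tilde\rho_\epsilon^{A^c}$ and $\mu^*$ in the appropriate places, producing the uniform $D$ required by Definition \ref{d:connectedalbertireps}.

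For the converse, fix a chart $(U,\phi)$ with $n$ connecting Alberti representations $\cA_1,\ldots,\cA_n$, an RNP Banach space $V$, and a $1$-Lipschitz $f\colon X\to V$. Partial derivatives along $\cA_i$-curves (existing by \cite{bate}*{Proposition 2.9} and RNP), combined with $\phi$-independence and a measurable selection, yield a measurable gradient $\nabla f(x_0)\in L(\R^n,V)$ at $\mu$-a.e. $x_0\in U$, determined by $\nabla f(x_0)(\phi\circ\gamma_i)'(t_i)=(f\circ\gamma_i)'(t_i)$ for the selected pairs $(\gamma_i,t_i)$. Apply Lusin: for any $\eta>0$ pick compact $K\subseteq U$ with $\mu(U\setminus K)<\eta$ and $\nabla f|_K$ continuous, with modulus $\omega_K$. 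Define
\[\sigma_i(\gamma):=\{t\in\dom\gamma:\gamma(t)\in K,\ (\phi\circ\gamma)'(t),(f\circ\gamma)'(t)\text{ exist, and }(f\circ\gamma)'(t)=\nabla f(\gamma(t))(\phi\circ\gamma)'(t)\},\]
arranging the selection so the consistency relation holds $\cA_i$-a.e.\ on $K$; this gives $\cA_i\llcorner\sigma_i\geq\mu\llcorner K$, hence $\mu^*\geq\mu\llcorner K$. Now fix a density point $x_0\in K$ at which connecting points applies, set $g(y):=f(y)-\nabla f(x_0)(\phi(y)-\phi(x_0))$, and for any $\epsilon>0$ and $x$ near $x_0$ obtain $\gamma\in\tilde\Gamma(x_0,x)$ with $\int_\gamma^*\epsilon<D\epsilon d(x_0,x)$; unpacking, $\len\gamma\leq 2Dd(x_0,x)$ and $\len\gamma-\ms\gamma\leq D\epsilon d(x_0,x)$. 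On each restricted piece of $\gamma$ from some $\gamma_j\in\cA_{\ell_j}$, one computes
\[(g\circ\gamma_j)'(t)=(\nabla f(\gamma_j(t))-\nabla f(x_0))(\phi\circ\gamma_j)'(t),\]
whose norm is at most $\omega_K(3Dd(x_0,x))\cdot\|\phi\|_{\mathrm{Lip}}$. Extending $g\circ\gamma$ linearly across gaps and applying the fundamental theorem of calculus yields
\[\|g(x)-g(x_0)\|\leq\omega_K(3Dd(x_0,x))\cdot\|\phi\|_{\mathrm{Lip}}\cdot Dd(x_0,x)+\|g\|_{\mathrm{Lip}}\cdot D\epsilon d(x_0,x).\]
Sending $x\to x_0$ (so $\omega_K\to 0$) and then $\epsilon\to 0$ gives $\|g(x)-g(x_0)\|=o(d(x_0,x))$, so $\nabla f(x_0)$ is the Cheeger derivative at $x_0$; exhausting $U$ by such $K$ completes the argument.

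The main technical delicacies I expect are: (i) in the forward direction, verifying that the test concatenation required by the restricted analog of Lemma \ref{l:rho-derivative} actually lies in $\tilde\Gamma$, which rests on the exact stability of $\tilde\Gamma$ under gluing pieces of restricted Alberti-representation curves; and (ii) in the backward direction, constructing the measurable gradient $\nabla f$ in a manner compatible with the restrictions $\sigma_i$, so that the Fubini-type identity $\cA_i\llcorner\sigma_i\geq\mu\llcorner K$ is valid on a full measure set of $K$. The latter is where $\phi$-independence (and Bate's measurable selection for partial derivatives) enters essentially; without it the gradient may fail to be coherent across different curves of the same Alberti representation, and the restriction $\sigma_i$ would not carry enough mass for $\mu^*$ to include $\mu\llcorner K$.
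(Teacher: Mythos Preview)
Your forward direction has a uniformity gap. Definition \ref{d:connectedalbertireps} demands a single $D$ valid for \emph{every} choice of restrictions $\sigma_1,\ldots,\sigma_n$; rerunning Section \ref{s:findingcurves} on $(X,d,\mu^*)$ produces constants (the $\epsilon_0$ of Lemma \ref{l:pokingholes}, hence the bound $\delta/\epsilon_0$ of Lemma \ref{l:easybootstrap}) that depend on the pointwise structure of $\mu^*$ and therefore on the $\sigma_i$. The concern you flag under (i) --- stability of $\tilde\Gamma$ under gluing --- is not the obstacle; the obstacle is that your output $D$ varies with the restrictions. The paper does not rerun the bootstrap. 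Instead it observes that $g=\epsilon\kappa^{-1}\mathbf 1_{\tilde U}+\mathbf 1_{\tilde U^c}$ is a $*$-upper gradient of $\tilde\rho_\epsilon(x,\cdot)$ for a set $\tilde U$ of full $\mu^*$-measure, so that $\tilde\rho_\epsilon(x,y)\le\inf_{\gamma\in\Gamma(x,y)}\int_\gamma^* g$, and then invokes the already-proved Theorem \ref{t:connectingpoints} (with $A=\tilde U^c$) to bound the right side. The $\lambda$ there is fixed on a countable decomposition of $U$ \emph{before} any restrictions are introduced, which is what delivers the required uniformity.

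Your backward direction is circular at exactly the step you flag under (ii). The claim $\cA_i\llcorner\sigma_i\ge\mu\llcorner K$ requires that for $\bP_i$-a.e.\ $\gamma$ and $\mu_\gamma$-a.e.\ $t$ with $\gamma(t)\in K$ one has $(f\circ\gamma)'(t)=\nabla f(\gamma(t))(\phi\circ\gamma)'(t)$; but $\nabla f(\gamma(t))$ was built from one selected $n$-tuple of curves through $\gamma(t)$, and its agreement with the partial derivative along an arbitrary other $\gamma$ through that point is precisely the statement that $\nabla f$ is the Cheeger derivative --- which is what you are proving. No ``arrangement of the selection'' can enforce this without already knowing differentiability. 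The paper sidesteps the issue entirely: rather than fixing a gradient, it discretizes the possible values of $(\phi\circ\gamma)'$ and $(f\circ\gamma)'$ into countably many product balls $B(p_i,\epsilon)\times B(f_i,\epsilon)$, defines the $\sigma_i$ by membership in these balls (so no cross-curve consistency is needed), and uses Lemma \ref{l:decomposerestrictions} to cover $\mu$-a.e.\ point by some $\mu^{*,j}$. Lemma \ref{l:maindiff} then runs your FTC argument to produce an $\epsilon$-approximate derivative $T_\epsilon(x_0)$ on each piece, and a Cauchy argument as $\epsilon\to 0$ yields $Df(x_0)$. Your Lusin/continuity estimate is the right shape for the endgame, but it cannot start until the gradient is coherent across curves, and that coherence is the content of the theorem.
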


First notice that, if $f\colon X \to V$ is differentiable at $x_0$ and a gradient $\nabla f (x_0)$ exists, then we must necessarily have $Df(x_0)=\nabla f(x_0)$ by the definition of a gradient.  If $(X,d,\mu)$ is an RNP-LDS and $V$ has the RNP, then both of these quantities exist almost everywhere and so the derivative is given by the gradient at almost everypoint.  Thus we only need to prove the implications in the theorem.  We begin with the forward implication.

\begin{proof}[Proof of forward implication of Theorem \ref{t:differentiability}]
  Let $U\subset X$ be a Borel set for which the conclusion of Theorem \ref{t:connectingpoints} holds uniformly for some $\lambda>0$.  By reducing $U$ if necessary, we may suppose that $\mu\llcorner U$ has $n$ $\kappa$-universal Alberti representations.  It suffices to prove that these Alberti representations connect points.  To this end, fix $n$ restrictions of curves, let $\mu^*$ be defined as in Definition \ref{d:abs-cont} and let $U^* = \{x \in U : d\mu^*/d\mu(x) > 0\}$.

  Let $f_{\epsilon,x}(y) := \tilde{\rho}_\epsilon(x,y)$.  A straightforward modification of the proof of Lemma \ref{l:rho-derivative} with $\rho_\epsilon$ replaced by $\tilde\rho_\epsilon$ (which shows, for each $1\leq i \leq n$, that $|(f\circ\gamma)'(t)| \leq \epsilon$ for $\mathcal L^1$-a.e. $t\in \sigma_i(\gamma)$) gives that for every $x \in U$, there is some set $U_x \subset U^*$ of full measure on which
  \begin{align*}
    \Lip(f_{\epsilon,x},y) \leq \frac{\epsilon}{\kappa}, \qquad \forall y \in U_x.
  \end{align*}
  In particular, as $f_{\epsilon,x}$ is 1-Lipschitz, the function $g_x = \epsilon\kappa^{-1} {\bf 1}_{U_x} + {\bf 1}_{U_x^c}$ is then a *-upper gradient of $f_{\epsilon,x}$.

  Let $A$ be a countable dense subset of $X$.  We set $\tilde{U} = \bigcap_{a \in A} U_a$, which is still a full measure subset of $U^*$.  Now define the function $g = \kappa^{-1} \epsilon {\bf 1}_{\tilde{U}} + {\bf 1}_{\tilde{U}^c}$.  Note this that $g$ is the same as $\max_{a \in A} g_a$.  Thus, $g$ is a *-upper gradient for $f_{\epsilon,a}$ for all $a \in A$.  We claim that
  \begin{align}
    \tilde\rho_\epsilon(x,y) \leq \int_\gamma^* g, \qquad \forall x,y \in X, \forall \gamma \in \Gamma(x,y). \label{e:delta-eps-gradient}
  \end{align}

  Assuming \eqref{e:delta-eps-gradient}, we get that
  \[\tilde\rho_\epsilon(x,y) \leq \inf_{\gamma\in \Gamma(x,y)}\int_\gamma^* g = \rho_{\epsilon/\kappa}^{\tilde{U}}(x,y).\]
  Letting $x$ be a density point of $\tilde{U}$, which has full measure in $U$, we get from Theorem \ref{t:connectingpoints} that there exists some $\lambda > 0$ so that
  \begin{align*}
    \limsup_{y \to x} \frac{\tilde\rho_\epsilon(x,y)}{\epsilon d(x,y)} \leq \limsup_{y \to x} \frac{\rho_{\epsilon/\kappa}^{\tilde{U}}(x,y)}{\epsilon d(x,y)} < \lambda.
  \end{align*}
  As this holds for arbitrary $\epsilon > 0$, this proves the theorem.

  It remains to prove \eqref{e:delta-eps-gradient}.  Let $x,y \in X$ and $\gamma\in \Gamma(x,y)$.  By translating the domain of $\gamma$ if necessary, we may suppose that $\min \dom \gamma=0$.  Now let $A \ni x_i \to x$ and for each $i\in \mathbb N$ construct the curve
  \begin{align*}
    \gamma_i : \{-d(x_i,x)\} \cup \dom\gamma &\to X \\
    t &\mapsto \begin{cases}
      x_i, & t = -d(x_i,x), \\
      \gamma(t) & t \in \dom\gamma
    \end{cases}
  \end{align*}
  which is in $\Gamma(x_i,y)$.  As $g$ is a *-upper gradient of $f_{\epsilon,x_i}$, we get
  \begin{align*}
    \tilde\rho_\epsilon(x_i,y) = |f_{\epsilon,x_i}(y) - f_{\epsilon,x_i}(x_i)| \leq \int_{\gamma_i}^* g = d(x_i,x) + \int_\gamma^* g.
  \end{align*}
  We also have that $\tilde\rho_\epsilon(x,y) \leq \tilde\rho_\epsilon(x_i,y) + d(x_i,x)$ and so
  \begin{align*}
    \tilde\rho_\epsilon(x,y) \leq 2d(x_i,x) + \int_\gamma^* g.
  \end{align*}
  As $d(x_i,x) \to 0$, we get \eqref{e:delta-eps-gradient}.
\end{proof}

Recall from \cite{bate} the notion of \emph{separated} Alberti representations:
\begin{definition}[\cite{bate}*{Definition 7.3}]
For $\xi > 0$ we say $v_1, \ldots , v_m \in R^n$ are $\xi$-separated if, for any
$\lambda \in R^m\setminus \{0\}$,
\[\left\|\sum_{i=1}^m \lambda_i v_i\right\| > \xi \max_{1\leq i \leq m}\|\lambda_i v_i\|\]
and that closed cones $C_1, \ldots , C_m$ are $\xi$-separated if any choice of $v_i \in C_i \setminus\{0\}$ are $\xi$-separated. Further, we say that Alberti representations $\cA_1, \ldots , A_m$ are $\xi$-separated if there exists $\xi$-separated cones $C_1, \ldots , C_m$ such that each $\cA_i$ in the $\phi$-direction of $C_i$.
\end{definition}
Note that independent cones (and hence Alberti representations) are $\xi$-separated for some $\xi>0$.

For the rest of this section we fix a chart $(U,\phi)$ as in the hypothesis of Theorem \ref{t:differentiability} with $\delta,D$ and $n$ fixed and suppose that the Alberti representations are $\xi$-separated for $\xi>0$ also fixed.  In particular, there exists a $\lambda>0$ depending on $\xi$ such that
\begin{equation}\label{e:phiisanorm}\limsup_{x\to x_0} \frac{|(\phi(x)-\phi(x_0))\cdot v|}{d(x,x_0)} \geq \lambda \|v\|\end{equation}
for each $v\in\mathbb R^n$.  Note that in \cite{bate}, such a $\lambda$ was guaranteed to exist because of the uniqueness of the derivative at $x_0$, rather than the existence of independent Alberti representations.

We may choose such $\lambda$ in a Borel way (see \cite{bate-speight}) and so, by passing to a subset of $U$ if necessary (this does not change the hypotheses of Theorem \ref{t:differentiability}), we may suppose that there is a fixed $\lambda>0$ such that this inequality is true for each $x_0 \in U$.  Finally, we fix a RNP Banach space $V$ and a Lipschitz function $f\colon X \to V$.

In \cite{bate} a gradient of partial derivatives of a real valued Lipschitz function $f$ was formed by selecting certain curves and taking the partial derivative of $f$ along such a curve.  Using the universal condition it was then shown that such a gradient was in fact the derivative at almost every point.  In our case, we cannot select a single curve as we must work within the framework of restrictions of curves.  However, the next definition and following lemma appear from the same motivation.

\begin{definition}
Let $p_1,\ldots,p_n \in \mathbb R^n$ be linearly independent and $f_1,\ldots,f_n\in V$.  We define $T \in L(\R^n,V)$ to be the unique linear map such that
\[f_i = T(p_i)\]
for each $1\leq i \leq n$.
\end{definition}

We will use this definition with the $p_i$ partial derivatives of $\phi$ and the $f_i$ partial derivatives of $f$, so that $T$ behaves the way a derivative should via the chain rule.  Of course, we cannot precisely guarantee that the respective partial derivatives will exactly equal $p_i$ and $f_i$ and so we need to bound $\|T\|$ to take care of the error.  The following lemma is a complete analogue to \cite{bate}*{Lemma 7.5}.

\begin{lemma}\label{l:gradientbound}
Let $p_1,\ldots,p_n\in\mathbb R^n$ and $f_1,\ldots,f_n\in V$.  Suppose that the $p_i$ are $\xi$-separated and $\|p_i\|>\delta$ and $\|f_i\|\leq L$ for each $1\leq i \leq n$.  Then
\[\|T\| \leq Ln/\xi\delta.\]
\end{lemma}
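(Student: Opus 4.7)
The plan is to express an arbitrary vector $v \in \mathbb{R}^n$ in the basis $\{p_1,\ldots,p_n\}$ (which is a basis because $\xi$-separated vectors are in particular linearly independent) and then bound $\|T(v)\|$ directly using linearity together with the defining inequality for $\xi$-separation.

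Concretely, I would write $v = \sum_{i=1}^n \lambda_i p_i$. By linearity and the defining property $T(p_i) = f_i$, we have $T(v) = \sum_i \lambda_i f_i$, so the triangle inequality together with the hypothesis $\|f_i\| \leq L$ yields
\[
\|T(v)\| \;\leq\; L \sum_{i=1}^n |\lambda_i| \;\leq\; L\, n \max_{1\leq i \leq n} |\lambda_i|.
\]
Next, applying the $\xi$-separation condition to the vectors $\lambda_1 p_1,\ldots,\lambda_n p_n$ (assuming some $\lambda_i$ is nonzero, else $v=0$ and there is nothing to prove) together with the assumption $\|p_i\| > \delta$ gives
\[
\|v\| \;=\; \Bigl\|\sum_{i=1}^n \lambda_i p_i\Bigr\| \;>\; \xi \max_{1 \leq i \leq n} \|\lambda_i p_i\| \;=\; \xi \max_{1\leq i \leq n} |\lambda_i| \|p_i\| \;\geq\; \xi \delta \max_{1\leq i \leq n} |\lambda_i|.
\]
Combining the two displays yields $\|T(v)\| \leq Ln\|v\|/(\xi\delta)$, and taking the supremum over unit vectors $v$ gives the desired bound $\|T\| \leq Ln/(\xi\delta)$.

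There is no real obstacle here; the argument is a direct unwinding of the definition of $\xi$-separation, which is designed precisely to give a lower bound on $\|\sum \lambda_i p_i\|$ in terms of the largest $\|\lambda_i p_i\|$, converting a bound on $\|v\|$ into a bound on $\max_i |\lambda_i|$ and hence on $\sum_i |\lambda_i|$ (losing only a factor of $n$).
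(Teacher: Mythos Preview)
Your proof is correct and essentially identical to the paper's own argument: both expand $v$ in the basis $\{p_i\}$, bound $\|T(v)\|$ by $Ln\max_i|\lambda_i|$ via the triangle inequality, and then use $\xi$-separation together with $\|p_i\|>\delta$ to bound $\max_i|\lambda_i|$ by $\|v\|/(\xi\delta)$. The only cosmetic difference is that the paper fixes a unit vector $v$ at which $\|T\|$ is attained, whereas you work with arbitrary $v$ and take the supremum at the end.
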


\begin{proof}
Let $v \in \mathbb S^{n-1}$ such that $\|T\| = \|T(v)\|$ and $v = \sum_i \lambda_i p_i$.  Then
\begin{align*} \|T\| &= \|T(v)\|\\
&= \left\|T\left(\sum_{i=1}^n \lambda_i p_i\right)\right\|\\
&\leq \sum_{i=1}^n |\lambda_i| \|f_i\|\\
&\leq n L \max_{1\leq i \leq n} |\lambda_i|.
\end{align*}
However, the $p_i$ are $\xi$-separated and so
\[\|v\| > \xi \max_{1\leq i \leq n}\|\lambda_i p_i\| \geq \xi \delta \max_{1\leq i \leq n}|\lambda_i|.\]
Combining these inequalities gives the required result
\end{proof}

\begin{lemma}\label{l:maindiff}
Let $0<\epsilon<1$, $f_1,\ldots,f_n \in V$ and $p_1,\ldots,p_n\in \mathbb R^n$.  Set $\sigma_1,\ldots,\sigma_n$ to be any restrictions of curves that satisfy
\[\sigma_i(\gamma)\subset \{t\in\dom\gamma : \gamma(t)\in U,\ (\phi\circ\gamma)'(t) \in B(p_i,\epsilon),\ (f\circ\gamma)'(t) \in B(f_i,\epsilon)\}.\]

Then if $x_0 \in U$ satisfies \eqref{e:alberticonnects},
\begin{equation}\label{e:approxdiff}\limsup_{X\ni x \to x_0}\frac{\|f(x)-f(x_0) - T(\phi(x)-\phi(x_0))\|}{d(x,x_0)} \leq D'\epsilon,\end{equation}
for some $D'$ that depends only on the constants fixed for this section.
\end{lemma}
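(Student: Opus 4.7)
The plan is to use \eqref{e:alberticonnects} at $x_0$ to choose, for $x$ near $x_0$, a curve $\tilde\gamma\in\tilde\Gamma(x_0,x)$ with small $*$-integral, and then apply the fundamental theorem of calculus along $\tilde\gamma$ (using that both $\R^n$ and $V$ have RNP) to express $f(x)-f(x_0)-T(\phi(x)-\phi(x_0))$ as an integral whose integrand is pointwise controlled by $\epsilon$ on the ``good'' pieces.

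Fix $\eta>0$. For $x$ sufficiently close to $x_0$, \eqref{e:alberticonnects} yields $\tilde\gamma\in\tilde\Gamma(x_0,x)$ with $\int_{\tilde\gamma}^* \epsilon \leq (D+\eta)\epsilon\, d(x_0,x)$, whence
\[
\ms\tilde\gamma \leq (D+\eta)d(x_0,x) \quad\text{and}\quad \len\tilde\gamma-\ms\tilde\gamma \leq (D+\eta)\epsilon\, d(x_0,x).
\]
By the definition of $\tilde\Gamma$, the set $K:=\dom\tilde\gamma$ splits into pieces $K_i=K\cap[t_i,t_{i+1}]$ on which $\tilde\gamma$ agrees, up to a time translation, with some $\gamma_i\in\Gamma$ restricted to $\sigma_{\ell_i}(\gamma_i)$. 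The hypothesis on $\sigma_{\ell_i}$ therefore gives $(\phi\circ\tilde\gamma)'(t)\in B(p_{\ell_i},\epsilon)$ and $(f\circ\tilde\gamma)'(t)\in B(f_{\ell_i},\epsilon)$ for almost every $t\in K_i$.

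Next, extend $\phi\circ\tilde\gamma$ and $f\circ\tilde\gamma$ to the interval $[\inf K,\sup K]$ by linear interpolation over each gap, obtaining Lipschitz extensions $\Phi$ and $F$ valued in $\R^n$ and $V$ respectively (with Lipschitz constants no larger than those of $\phi\circ\tilde\gamma$ and $f\circ\tilde\gamma$). Since $\R^n$ and $V$ have RNP, $\Phi$ and $F$ are differentiable a.e. and satisfy the fundamental theorem of calculus; applying $T$ and subtracting gives
\[
f(x)-f(x_0)-T(\phi(x)-\phi(x_0)) = \int_K [F'(t)-T\Phi'(t)]\, dt + \int_{[\inf K,\sup K]\setminus K} [F'(t)-T\Phi'(t)]\, dt.
\]
On $K_i$, writing $F'-T\Phi' = (F'-f_{\ell_i}) - T(\Phi'-p_{\ell_i})$ gives $\|F'(t)-T\Phi'(t)\|\leq (1+\|T\|)\epsilon$; on the gaps we only have $\|F'\|\leq\Lip(f)$ and $\|\Phi'\|\leq\Lip(\phi)$, giving the cruder bound $\Lip(f)+\|T\|\Lip(\phi)$. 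Combining with the mass/gap estimates,
\[
\|f(x)-f(x_0)-T(\phi(x)-\phi(x_0))\| \leq (D+\eta)\epsilon\, d(x_0,x)\bigl[(1+\|T\|)+(\Lip(f)+\|T\|\Lip(\phi))\bigr].
\]

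It remains to bound $\|T\|$ by a constant depending only on the section data, via Lemma \ref{l:gradientbound}. The bound $\|f_i\|\leq \Lip(f)+\epsilon$ is immediate since $(f\circ\gamma)'(t)\in B(f_i,\epsilon)$ and $\gamma$ is $1$-Lipschitz; the bound $\|p_i\|\geq \delta\lambda-\epsilon$ follows from $\phi$-speed $\delta$ of $\cA_i$ together with \eqref{e:phiisanorm}; and $\xi$-separation of the $p_i$ is inherited, up to $O(\epsilon)$, from the $\xi$-separated cones realising $\cA_1,\ldots,\cA_n$. Thus $\|T\|\leq C$ for some constant $C=C(n,\xi,\delta,\lambda,\Lip(f))$, and letting $\eta\to 0$ produces the required $D'$. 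The main technical difficulty is the FTC bookkeeping on the fragmented domain $K$; this is precisely the step in which RNP of $V$ is essential, and is also the reason the gap contribution only costs $\len\tilde\gamma-\ms\tilde\gamma \lesssim \epsilon\, d(x_0,x)$, comparable to the main term.
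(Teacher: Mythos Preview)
Your proof is correct and follows essentially the same strategy as the paper's: pick $\tilde\gamma\in\tilde\Gamma(x_0,x)$ with small $*$-integral, extend to the full interval, apply the fundamental theorem of calculus (which is where RNP of $V$ enters), bound the integrand by $(1+\|T\|)\epsilon$ on $\dom\tilde\gamma$ and by $\Lip f+\|T\|\Lip\phi$ on the gaps, and control $\|T\|$ via Lemma~\ref{l:gradientbound}. The only cosmetic difference is that the paper extends the single $V$-valued function $(f-T\phi)\circ\tilde\gamma$ rather than $\phi\circ\tilde\gamma$ and $f\circ\tilde\gamma$ separately, and it disposes of the case where some $\sigma_i$ carries no mass (so the statement is vacuous) before bounding $\|T\|$.
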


\begin{proof}
First notice that it suffices to prove the lemma for sufficiently small $\epsilon$.  Therefore we may suppose that $\epsilon <\min(\Lip f, \delta\lambda/2)$ and $\epsilon$ is sufficiently small so that if $v_1,\ldots,v_n$ are $\xi$ separated with each $\|v_i\|>\delta\lambda$, then any choice of vectors $w_i\in B(v_i, \epsilon)$ for $1\leq i \leq n$ are $\xi/2$-separated.

Suppose that the Alberti representations in the hypotheses are $(\bP_i,\mu_\gamma^i)$ in the $\phi$-direction of $C_i$ for $1\leq i \leq n$ and such that $C_1,\ldots,C_n$ are $\xi$-separated.  Then for each $1\leq i \leq n$, $\bP_i$-a.e. $\gamma\in\Gamma$ and $\mu_\gamma^i$-a.e. $t\in\dom\gamma$,
\begin{align*}
(\phi\circ\gamma)'(t) &\in C_i\\
\|(\phi\circ\gamma)'(t)\| &> \delta \Lip(\phi,\gamma(t))\Lip(\gamma,t) > \delta\lambda \Lip(\gamma,t)\\
\Lip (f,\gamma(t))\Lip(\gamma,t) &\geq \|(f\circ\gamma)'(t)\|
\end{align*}
Therefore, if for each $1\leq i \leq n$ there exists a $\gamma\in\Gamma$ with $\mu_\gamma^i(\sigma_i(\gamma))>0$, we must have $\|f_i\| \leq 2\Lip f$ and $\|p_i\|>\delta\lambda/2$ for each $1\leq i \leq n$ and the $p_i$ must be $\xi/2$-separated.  If for some $1\leq i \leq n$ there does not exists such a $\gamma$  then the lemma is vacuously true (since no $x_0$ has connected neighbourhoods).  In particular, $\|T\|\leq 8\Lip f n/\xi\delta\lambda := D_1$ by Lemma \ref{l:gradientbound}.

We fix $x$ sufficiently close to $x_0$ so that $\rho_\epsilon(x,x_0)<D\epsilon d(x,x_0)$, let $\gamma\in\tilde\Gamma(x_0,x)$ with
\[\int_\gamma^* \epsilon < D\epsilon d(x,x_0)\]
and let $[a,b]$ be the smallest interval that contains $\dom\gamma$.

Note that, if $\gamma_i\in \Gamma$ and $t_i\in\R$, are as in the definition of $\gamma\in\tilde\Gamma$, then $(\phi\circ\gamma)'(t) = (\phi\circ\gamma_i)'(t-t_i)$ and $(f\circ\gamma)'(t)= (f\circ\gamma_i)'(t-t_i)$ for almost every $t\in\dom\gamma$.  Therefore the set
\[G = \{t\in\dom\gamma : \exists\ 1\leq i \leq n \text{ with } (\phi\circ\gamma)'(t) \in B(p_i,\epsilon),\ (f\circ\gamma)'(t) \in B(f_i,\epsilon)\}\]
satisfies $\mathcal L^1([a,b]\setminus G)\leq \len\gamma -\ms\gamma$.  For any $\eta>0$, by taking a suitable subset of $G$ if necessary, we may suppose that $G$ is compact and satisfies $\mathcal L^1([a,b]\setminus G)\leq \len\gamma -\ms\gamma +\eta$.

For a moment fix $t\in G$ and let $1\leq i \leq n$ be such that
\[\|(f\circ\gamma)'(t) -f_i\|<\epsilon \text{ and } \|(\phi\circ\gamma)'(t) -p_i\|<\epsilon\]
By definition, $T$ satisfies
\[f_j = T p_j \text{ for each } 1\leq j \leq n\]
and so
\begin{align*} \|(f\circ\gamma)'(t) - T ((\phi\circ\gamma)'(t))\| &\leq \|(f\circ\gamma)'(t) - f_i\| + \|f_i - T p_i\| + \|T(p_i - (\phi\circ\gamma)'(t))\|\\
&\leq \epsilon + \|T\|\epsilon\\
&\leq \epsilon(1+ 8\Lip f n/\xi\delta\lambda):=\epsilon D_2.
\end{align*}

Recall that $\gamma\in\tilde\Gamma$ is 1-Lipschitz and so $(f-T \phi)\circ\gamma$ is a $(\Lip f + D_1\Lip\phi)$-Lipschitz function defined on a subset of $[a,b]$.  We extend it (linearly on the complement of $G$) to a Lipschitz function defined on the whole of $[a,b]$ with the same Lipschitz constant and use the fundamental theorem of calculus to conclude that
\begin{align*} \|f(x)-f(x_0) -T (\phi (x)-\phi(x_0))\| &\leq\int_{G\cup([a,b]\setminus G)} \|(f\circ\gamma)'(t) - T ((\phi\circ\gamma)'(t))\|\\
&\leq \int_G \epsilon D_2 + \int_{[a,b]\setminus G} \left(\Lip f +D_1\Lip\phi\right)\\
&\leq \max(D_2,\Lip f+D_1\Lip \phi) \int_G \left(\epsilon + \len\gamma -\ms\gamma +\eta\right)\\
&\leq \max(D_2,\Lip f+D_1\Lip \phi) \int_\gamma^* \left(\epsilon +\eta\right)\\
&\leq \max(D_2,\Lip f+D_1\Lip \phi) \left(D\epsilon d(x,x_0) +\eta\right).
\end{align*}
This is true for any $\eta>0$ and so the conclusion is true for some $D'$ that depends only on $D,n,\xi,\delta,\lambda,\Lip\phi$ and $\Lip f$.
\end{proof}

Finally we prove Theorem \ref{t:differentiability}.

\begin{proof}[Proof of backward implication of Theorem \ref{t:differentiability}]
Since $f$ is Lipschitz and $X$ is separable, $f(X)$ is also separable and hence so is
\[T=\{(f\circ\gamma)'(t) \in V : \gamma\in\Gamma,\ t\in\dom \gamma,\ (f\circ\gamma)'(t) \text{ exists}\}.\]
Fix $\epsilon >0$ and let $P\subset \R^n$ and $F\subset T$ be countable such that
\[\mathbb R^n =\bigcup_{p\in P} B(p,\epsilon) \text{ and } T \subset \bigcup_{f\in F} B(f,\epsilon).\]

Let $K_m\subset U$ be an increasing sequence of compact sets such that $\mu(K_m)\to \mu(U)$ and let $Y = P^n \times F^n\times \N$.  As shown in \cite{bate}*{Lemma 2.8}, the set
\[\{(x,\gamma)\in X\times \Gamma : (f\circ\gamma)'(\gamma^{-1}(x)) \text{ and } (\phi\circ\gamma)'(\gamma^{-1}(x)) \text{ exist}\}\]
is Borel.  Therefore, for any $y = (p_1,\ldots,p_n,f_1,\ldots,f_n,m) \in Y$ we may define the restrictions of curves $\sigma_1^y,\ldots,\sigma_n^y$ by
\[\sigma_i^y(\gamma)= \{t\in\gamma^{-1}(K_m) : (\phi\circ\gamma)'(t) \in B(p_i,\epsilon),\ (f\circ\gamma)'(t) \in B(f_i,\epsilon)\}\]
 and let the induced measure be $\mu_y^*$.  Note that, by Lemma \ref{l:maindiff} and since the Alberti representations connect points, \eqref{e:approxdiff} holds for $\mu_y^*$-a.e. $x_0 \in U$ for some linear function $T_\epsilon(x_0)$ and some $D'$ independent of $x_0, y$ and $\epsilon$.

However, for a fixed $K_m$, for every $1\leq i \leq n$ and $\gamma\in\Gamma$,
\[\mathcal L^1\left(\gamma^{-1}(K_m) \setminus \bigcup_{y\in Y}\sigma_i^y\left(\gamma^{-1}(K_m)\right)\right)=0.\]
Therefore, by Lemma \ref{l:decomposerestrictions} applied to the measure $\mu\llcorner K_m$ with its induced Alberti representations, \eqref{e:approxdiff} is true for $\mu$-a.e. $x_0\in K_m$.  This is true for all $m\in \N$ and so \eqref{e:approxdiff} is true for $\mu$-a.e. $x_0\in U$.

Consider an $x_0$ in the full measure subset of $U$ that satisfies \eqref{e:approxdiff} for $\epsilon_m\to 0$ with linear function $T_m(x_0)$.  A simple triangle inequality argument shows that
\[\limsup_{x\to x_0} \frac{\|(T_m(x_0)-T_n(x_0))(\phi(x)-\phi(x_0))\|}{d(x,x_0)} \leq D'(\epsilon_n+\epsilon_m)\]
for every $n,m\in \N$.  However, a simple consequence of \eqref{e:phiisanorm} is
\[\lambda \|T_m(x_0)-T_n(x_0)\| \leq n\limsup_{x\to x_0} \frac{\|(T_m(x_0)-T_n(x_0))(\phi(x)-\phi(x_0))\|}{d(x,x_0)}.\]
Therefore, the $T_n(x_0)$ form a Cauchy sequence.  If $T(x_0)$ is its limit then the triangle inequality shows that $T(x_0)$ is actually $Df(x_0)$.
\end{proof}

\begin{remark}\label{r:connecting-char-Radon}
To conclude, we mention what happens when $(X,d,\mu)$ is simply a metric space with a Radon measure.  We can then find compact $X_i$ such that $\mu(X\setminus \cup_i X_i)=0$, each $(X_i,d)$ is complete and separable and $\mu(X_i)<\infty$. If $X$ is an RNP-LDS then so is each $X_i$ and so each $X_i$ has a decomposition as in Theorem \ref{t:differentiability}.  Since $X$ is a LDS its porous subsets all have measure zero and so Lemma \ref{l:porosity-subset} shows that the Alberti representations in fact connect almost every point to all points in $X$.

Conversely, if the Alberti representations of $X$ connect points, then the induced Alberti representations of each $X_i$ connect points and so each is an RNP-LDS.  In general this does not imply that $X$ is a RNP-LDS, see the example given in the introduction of \cite{bate-li}.  However, because the Alberti representations of $X$ connect points, every porous set in $X$ must have measure zero.

Indeed, let $S$ be a compact porous set and consider the restriction of curves that maps each $\gamma\in \Gamma$ to $\gamma^{-1}(S)$.
We consider this same restriction of curves for all Alberti representations.
For $\mu$-a.e.\ $x\in S$ and any $\epsilon>0$, if $y$ is sufficiently close to $x$, \eqref{e:alberticonnects} gives a $\gamma\in \tilde\Gamma(x,y)$ such that
\[\int_{\gamma}^{*}\epsilon \leq \epsilon D d(x,y).\]
Therefore, if $[a,b]$ is the smallest interval containing $\dom\gamma$,
\[\mathcal L^{1}([a,b]\setminus \gamma^{-1}(S)) \leq \epsilon D d(x,y).\]
In particular, since $\gamma$ is 1-Lipschitz,
\[B(y,2\epsilon D d(x,y)) \cap S \cap \gamma \neq \emptyset.\]
Thus, since $S$ is assumed to be porous, we must have $\mu(S)=0$.
In particular, Lemma \ref{l:porosity-subset} shows that $X$ is in fact a RNP-LDS.
\end{remark}

\begin{bibdiv}
\begin{biblist}

\bib{acp}{article}{
    AUTHOR = {Alberti, G.},
    AUTHOR = {Cs{\"o}rnyei, M.},
    AUTHOR = {Preiss, D.},
     TITLE = {Differentiability of {L}ipschitz functions, structure of null
              sets, and other problems},
      BOOK = {
        TITLE = {Proceedings of the {I}nternational {C}ongress of
                {M}athematicians. {V}olume {III}},
    PUBLISHER = {Hindustan Book Agency, New Delhi},
      },
     PAGES = {1379--1394},
     YEAR = {2010},
}

\bib{alberti-marchese}{article}{
   author = {Alberti, G.},
   author = {Marchese, A.},
   title = {On the differentiability of Lipschitz functions with respect to measures in the Euclidean space},
journal={Geometric and Functional Analysis},
year={2016},
pages={1--66},
issn={1420-8970},
doi={10.1007/s00039-016-0354-y},
url={http://dx.doi.org/10.1007/s00039-016-0354-y}
}

\bib{ambrosio}{article}{
  title = {Metric space valued functions of bounded variation},
  author = {Ambrosio, L.},
  journal = {Ann. Scuola Norm. Sup. Pisa Cl. Sci. (4)},
  volume = {17},
  number = {3},
  pages = {439-478},
  year = {1990},
}

\bib{ambrosio-kirchheim}{article}{
    AUTHOR = {Ambrosio, L.},
    AUTHOR = {Kirchheim, B.},
     TITLE = {Rectifiable sets in metric and {B}anach spaces},
   JOURNAL = {Math. Ann.},
    VOLUME = {318},
      YEAR = {2000},
    NUMBER = {3},
     PAGES = {527--555},
      ISSN = {0025-5831},
       DOI = {10.1007/s002080000122},
       URL = {http://dx.doi.org/10.1007/s002080000122},
}

\bib{bate}{article}{
  title = {Structure of measures in Lipschitz differentiability spaces},
  author = {Bate, D.},
  journal = {J. Amer. Math. Soc.},
  volume = {28},
  pages = {421-482},
  year = {2015},
}

\bib{bate-li}{article}{
  title = {Characterizations of rectifiable metric measure spaces},
  author = {Bate, D.},
  author = {Li, S.},
  note = {Preprint},
  year = {2015},
}

\bib{bate-speight}{article}{
  title = {Differentiability, porosity and doubling in metric measure spaces},
  author = {Bate, D.},
  author = {Speight, G.},
  journal = {Proc. Amer. Math. Soc.},
  volume = {141},
  year = {2013},
  pages = {971-985},
}

\bib{bourdon-pajot}{article}{
  title = {Poincar\'e inequalities and quasiconformal structure on the boundary of some hyperbolic buildings},
  author = {Bourdon, M.},
  author = {Pajot, H.},
  journal = {Proc. Amer. Math. Soc.},
  volume = {127},
  number = {8},
  pages = {2315-2324},
  year = {1999},
}

\bib{cheeger}{article}{
  title = {Differentiability of Lipschitz functions on metric measure spaces},
  author = {Cheeger, J.},
  journal = {Geom. Funct. Anal.},
  volume = {9},
  number = {3},
  pages = {428-517},
  year = {1999},
}

\bib{cheeger-colding}{article}{
  title = {Lower bounds on Ricci curvature and the almost rigidity of warped products},
  author = {Cheeger, J.},
  author = {Colding, T.},
  journal = {Ann. Math.},
  volume = {144},
  pages = {198-237},
  year = {1996},
}

\bib{cheeger-kleiner-rnp}{article}{
    AUTHOR = {Cheeger, J.},
    AUTHOR = {Kleiner, B.},
     TITLE = {Differentiability of Lipschitz maps from metric measure
              spaces to Banach spaces with the Radon-Nikod\'ym
              property},
   JOURNAL = {Geom. Funct. Anal.},
    VOLUME = {19},
      YEAR = {2009},
    NUMBER = {4},
     PAGES = {1017--1028},
      ISSN = {1016-443X},
       DOI = {10.1007/s00039-009-0030-6},
       URL = {http://dx.doi.org/10.1007/s00039-009-0030-6},
}

\bib{cheeger-kleiner}{article}{
  title = {Inverse limit spaces satisfying a Poincar\'e inequality},
  author = {Cheeger, J.},
  author = {Kleiner, B.},
  journal = {Anal. Geom. Metr. Spaces},
  volume = {3},
  pages = {15-39},
  year = {2015},
}

\bib{cheeger-kleiner-schioppa}{article}{
  title = {Infinitesimal structure of differentiability spaces, and metric differentiation},
  author = {Cheeger, J.},
  author = {Kleiner, B.},
  author = {Schioppa, A.},
  note = {Preprint},
  year = {2015},
}


\bib{dejarnett}{article}{
 author = {Dejarnette, N},
 title = {Self improving Orlicz-Poincar\'e inequalities},
 note = {PhD thesis}
}

\bib{diestel-uhl}{book}{
  title = {Vector measures},
  author = {Diestel, J.},
  author = {Uhl, J.},
  publisher = {American Mathematical Society},
  year = {1977},
  series = {Mathematical Surveys},
  volume = {15},
  address = {Providence, Rhode Island},
}

\bib{eriksson-bique}{article}{
  title = {Classifying Poincar\'e inequalities and the local geometry of RNP-differentiability spaces},
  author = {Eriksson-Bique, S.},
  note = {Preprint},
  year = {2016},
}

\bib{gigli}{article}{
    AUTHOR = {Gigli, N.},
     TITLE = {On the differential structure of metric measure spaces and
              applications},
   JOURNAL = {Mem. Amer. Math. Soc.},
    VOLUME = {236},
      YEAR = {2015},
    NUMBER = {1113},
     PAGES = {vi+91},
      ISSN = {0065-9266},
      ISBN = {978-1-4704-1420-7},
}

\bib{heikkinen1}{article}{
    AUTHOR = {Heikkinen, T.},
     TITLE = {Characterizations of {O}rlicz-{S}obolev spaces by means of
              generalized {O}rlicz-{P}oincar\'e inequalities},
   JOURNAL = {J. Funct. Spaces Appl.},
      YEAR = {2012},
     PAGES = {Art. ID 426067, 15},
      ISSN = {2090-8997},
       DOI = {10.1155/2012/426067},
       URL = {http://dx.doi.org/10.1155/2012/426067},
}

\bib{heinonen}{book}{
  title = {Lectures on analysis on metric spaces},
  author = {Heinonen, J.},
  publisher = {Springer-Verlag},
  year = {2001},
  series = {Universitext},
  address = {New York, New York},
}

\bib{heinonen-koskela}{article}{
    AUTHOR = {Heinonen, J.},
    AUTHOR = {Koskela, P.},
     TITLE = {Quasiconformal maps in metric spaces with controlled geometry},
   JOURNAL = {Acta Math.},
    VOLUME = {181},
      YEAR = {1998},
    NUMBER = {1},
     PAGES = {1--61},
      ISSN = {0001-5962},
       DOI = {10.1007/BF02392747},
       URL = {http://dx.doi.org/10.1007/BF02392747},
}

\bib{heinonen-koskela-note}{article}{
  title = {A note on Lipschitz functions, upper gradients, and the Poincar\'e inequality},
  author = {Heinonen, J.},
  author = {Koskela, P.},
  journal = {N.Z. Math. J.},
  volume = {28},
  pages = {37-42},
  year = {1999},
}

\bib{hkst}{book}{
  title={Sobolev spaces on metric measure spaces: an approach based on upper gradients},
  author = {Heinonen, J.},
  author = {Koskela, P.},
  author = {Shanmugalingam, N.},
  author = {Tyson, J.},
  publisher = {Cambridge University Press},
  year = {2015},
  series = {New Mathematical Monographs},
  address = {Cambridge, UK}
}

\bib{jerison}{article}{
  title = {The Poincar\'e inequality for vector fields satisfying H\"{o}rmander's condition},
  author = {Jerison, D.},
  journal = {Duke Math. J.},
  volume = {53},
  number = {2},
  pages = {503-523},
  year = {1986},
}

\bib{keith-mod}{article}{
    AUTHOR = {Keith, S.},
     TITLE = {Modulus and the {P}oincar\'e inequality on metric measure
              spaces},
   JOURNAL = {Math. Z.},
    VOLUME = {245},
      YEAR = {2003},
    NUMBER = {2},
     PAGES = {255--292},
      ISSN = {0025-5874},
       DOI = {10.1007/s00209-003-0542-y},
       URL = {http://dx.doi.org/10.1007/s00209-003-0542-y},
}

\bib{keith-lip-lip}{article}{
  title = {A differentiable structure for metric measure spaces},
  author = {Keith, S.},
  journal = {Adv. Math.},
  volume = {183},
  number = {2},
  pages = {271-315},
  year = {2004},
}

\bib{kirchheim}{article}{
  title = {Rectifiable metric spaces: local structure and regularity of the Hausdorff measure},
  author = {Kirchheim, B.},
  journal = {Proc. Amer. Math. Soc.},
  volume = {121},
  number = {1},
  pages = {113-123},
  year = {1994},
}

\bib{kleiner-mackay}{article}{
  title = {Differentiable structures on metric measure spaces: A Primer},
  author = {Kleiner, B.},
  author = {Mackay, J.},
  note = {Preprint},
  year = {2011},
}

\bib{laakso}{article}{
  title = {Ahlfors $Q$-regular spaces with arbitrary $Q > 1$ admitting weak Poincar\'e inequality},
  author = {Laakso, T.},
  journal = {Geom. Funct. Anal.},
  volume = {10},
  number = {1},
  pages = {111-123},
  year = {2000},
}

\bib{ledonne}{article}{
    AUTHOR = {Le Donne, E.},
     TITLE = {Metric spaces with unique tangents},
   JOURNAL = {Ann. Acad. Sci. Fenn. Math.},
    VOLUME = {36},
      YEAR = {2011},
    NUMBER = {2},
     PAGES = {683--694},
      ISSN = {1239-629X},
       DOI = {10.5186/aasfm.2011.3636},
       URL = {http://dx.doi.org/10.5186/aasfm.2011.3636},
}

\bib{lee-naor}{article}{
  title = {Extending Lipschitz functions via random metric partitions},
  author = {Lee, J.R.},
  author = {Naor, A.},
  journal = {Invent. Math.},
  volume = {160},
  number = {1},
  year = {2005},
  pages = {59-95},
}

\bib{porosityandmeasures}{article}{
    AUTHOR = {Mera, M. E.},
    author = {Mor{\'a}n, M.},
    author = {Preiss, D.},
    author = {Zaj{\'{\i}}{\v{c}}ek, L.},
     TITLE = {Porosity, {$\sigma$}-porosity and measures},
   JOURNAL = {Nonlinearity},
    VOLUME = {16},
      YEAR = {2003},
    NUMBER = {1},
     PAGES = {247--255},
      ISSN = {0951-7715},
       DOI = {10.1088/0951-7715/16/1/315},
       URL = {http://dx.doi.org/10.1088/0951-7715/16/1/315},
}


\bib{saloff-coste-1}{article}{
  title = {A note on Poincar\'e, Sobolev, and Harnack inequaltiies},
  author = {Saloff-Coste, L.},
  journal = {Internat. Math. Res. Notices},
  volume = {2},
  pages = {27-38},
  year = {1992},
}

\bib{saloff-coste-2}{article}{
  title = {Parabolic Harnack inequality for divergence form secord order differential operators},
  author = {Saloff-Coste, L.},
  journal = {Potential Anal.},
  volume = {4},
  number = {4},
  pages = {429-467},
  year = {1995},
}

\bib{schioppa-derivations}{article}{
  title = {Derivations and Alberti representations},
  author = {Schioppa, A.},
  note = {Preprint},
  year = {2013},
}

\bib{schioppa-pi}{article}{
  title = {The Poincar\'e Inequality does not improve with blow-up},
  author = {Schioppa, A.},
  note = {Preprint},
  year = {2015},
}

\bib{schioppa-tangents}{article}{
  title = {The Lip-lip equality is stable under blow-up},
  author = {Schioppa, A.},
  note = {Preprint},
  year = {2015},
}

\bib{schioppa-unrect}{article}{
Author = {Schioppa, A.},
Title = {An example of a differentiability space which is PI-unrectifiable},
Year = {2016},
note = {Preprint},
}

\bib{semmes}{article}{
  title = {Finding curves on general spaces through quantitative topology},
  author = {Semmes, S.},
  journal = {Selecta Math.},
  volume = {2},
  number = {2},
  year = {1996},
  pages = {155-295},
}

\bib{shanmu}{article}{
    AUTHOR = {Shanmugalingam, N.},
     TITLE = {Newtonian spaces: an extension of {S}obolev spaces to metric
              measure spaces},
   JOURNAL = {Rev. Mat. Iberoamericana},
    VOLUME = {16},
      YEAR = {2000},
    NUMBER = {2},
     PAGES = {243--279},
      ISSN = {0213-2230},
       DOI = {10.4171/RMI/275},
       URL = {http://dx.doi.org/10.4171/RMI/275},
}

\bib{tuominen1}{article}{
    AUTHOR = {Tuominen, H.},
     TITLE = {Characterization of {O}rlicz-{S}obolev space},
   JOURNAL = {Ark. Mat.},
    VOLUME = {45},
      YEAR = {2007},
    NUMBER = {1},
     PAGES = {123--139},
      ISSN = {0004-2080},
       DOI = {10.1007/s11512-006-0023-8},
       URL = {http://dx.doi.org/10.1007/s11512-006-0023-8},
}

\bib{weaver1}{article}{
    AUTHOR = {Weaver, N.},
     TITLE = {Lipschitz algebras and derivations. {II}. {E}xterior
              differentiation},
   JOURNAL = {J. Funct. Anal.},
    VOLUME = {178},
      YEAR = {2000},
    NUMBER = {1},
     PAGES = {64--112},
      ISSN = {0022-1236},
       DOI = {10.1006/jfan.2000.3637},
       URL = {http://dx.doi.org/10.1006/jfan.2000.3637},
}

\bib{weaver2}{article}{
    AUTHOR = {Weaver, N.},
     TITLE = {Lipschitz algebras},
 PUBLISHER = {World Scientific Publishing Co., Inc., River Edge, NJ},
      YEAR = {1999},
     PAGES = {xiv+223},
      ISBN = {981-02-3873-8},
       DOI = {10.1142/4100},
       URL = {http://dx.doi.org/10.1142/4100},
}

\end{biblist}
\end{bibdiv}

\end{document}